\theoremstyle{plain}
\newtheorem{thm}{Theorem}
\newtheorem{lem}[thm]{Lemma}
\newtheorem{con}[thm]{Conjecture}
\newtheorem{cor}[thm]{Corollary}
\newtheorem{prop}[thm]{Proposition}
\newtheorem{remark}[thm]{Remark}
\newtheorem{defn}[thm]{Definition}
\newtheorem{ex}[thm]{Example}
\numberwithin{thm}{section}
\numberwithin{equation}{section}
\newcommand{\x}{\mathsf{x}}
\newcommand{\zz}{{\overline{z}}}
\newcommand{\Li}{\mathrm{Li}}
\newcommand{\dd}{\mathrm{d}}
\newcommand{\sv}{{\mathrm{sv}}}
\font \rus= wncyr10
\newcommand{\sha}{\, \hbox{\rus x} \,}
\newcommand{\Lie}{\mathrm{Lie}}
\newcommand{\To}{\longrightarrow}
\newcommand{\sA}{{\mathcal A}}
\newcommand{\sB}{{\mathcal B}}
\newcommand{\sC}{{\mathcal C}}
\newcommand{\sH}{{\mathcal H}}
\newcommand{\sI}{{\mathcal I}}
\newcommand{\sM}{{\mathcal M}}
\newcommand{\sO}{{\mathcal O}}
\newcommand{\sP}{{\mathcal P}}
\newcommand{\sS}{{\mathcal S}}
\newcommand{\CC}{{\mathbb C}}
\newcommand{\NN}{{\mathbb N}}
\newcommand{\PP}{{\mathbb P}}
\newcommand{\QQ}{{\mathbb Q}}
\newcommand{\RR}{{\mathbb R}}
\newcommand{\ZZ}{{\mathbb Z}}
\title{Graphical functions and single-valued multiple polylogarithms}
\author{Oliver Schnetz}
\begin{document}
\begin{abstract}
Graphical functions are single-valued complex functions which arise from Feynman amplitudes. We study their properties and use their connection to
multiple polylogarithms to calculate Feynman periods. For the zig-zag and two more families of $\phi^4$ periods we give exact results modulo
products. These periods are proved to be expressible as integer linear combinations of single-valued multiple polylogarithms evaluated at one.
For the larger family of `constructible' graphs we give an algorithm that allows one to calculate their periods by computer algebra.
The theory of graphical functions is used in \cite{ZZ} to prove the zig-zag conjecture.
\end{abstract}
\maketitle
\section{Introduction}
\subsection{Feynman periods}
In four dimensional $\phi^4$ theory the period map assigns positive real numbers to 4-regular\footnote{A graph is 4-regular if every vertex has four edges.} internally 6-connected\footnote{
A 4-regular graph is internally 6-connected if the only way to split the graph with four edge cuts is by separating off a vertex.} graphs \cite{SchnetzCensus}. The periods determine the
contributions of primitive logarithmically divergent graphs to the beta function of the underlying quantum field theory.

Although periods are originally associated to sub-divergence free four-point\linebreak[4] graphs,
it is convenient to complete the graph by adding an extra vertex. This vertex, henceforth labeled $\infty$, is glued to the four external (half-)edges of the graph (see figure 1).
As a remnant of conformal symmetry, graphs with the same completion have identical period. By choice of $\infty$ one can consider the completed graph as an equivalence
class of four-point graphs with the same period.

\begin{center}
\fcolorbox{white}{white}{
  \begin{picture}(336,75) (24,-38)
    \SetColor{Black}
    \SetWidth{0.8}
    \Vertex(53,33){2.8}
    \Vertex(64,-13){2.8}
    \Vertex(40,-13){2.8}
    \Vertex(76,5){2.8}
    \Vertex(32,24){2.8}
    \Vertex(71,24){2.8}
    \Vertex(28,3){2.8}
    \Arc(52,9)(24.413,145,505)
    \Arc(86.871,38.557)(34.324,-170.683,-108.465)
    \Arc(85.63,2.475)(26.026,124.203,213.79)
    \Arc(18.357,-68.443)(71.964,51.658,83.102)
    \Arc(15.214,0.679)(28.734,-26.183,54.255)
    \Arc(10.5,43)(43.661,-66.371,-13.241)
    \Arc(51,61.5)(42.5,-118.072,-61.928)
    \Arc(73.318,-34.136)(39.172,87.539,145.591)
    \Text(44,-35)[lb]{\normalsize{\Black{$\overline{Z_5}$}}}

    \Vertex(133,33){2.8}
    \Vertex(132,-16){2.8}
    \Vertex(158,9){2.8}
    \Vertex(108,9){2.8}
    \Vertex(150,26){2.8}
    \Vertex(151,-8){2.8}
    \Vertex(115,-8){2.8}
    \Vertex(116,26){2.8}
    \Arc(133,9)(24.413,145,505)
    \Arc(162.86,39.265)(30.51,-168.151,-97.268)
    \Arc(173.5,8.5)(28.504,142.125,217.875)
    \Arc(168.6,-28.567)(39.034,105.757,159.662)
    \Arc(131.231,-54.462)(50.493,66.95,109.953)
    \Arc(97.667,10)(24.333,-41.112,41.112)
    \Arc[clock](106.1,-16.8)(25.962,83.587,3.976)
    \Arc(97.4,46.567)(39.034,-74.243,-20.338)
    \Arc(133.405,57.177)(36.204,-120.555,-62.718)
    \Text(129,-35)[lb]{\normalsize{\Black{$\overline{Z_6}$}}}

    \Vertex(191,-17){2.8}
    \Vertex(207,6){2.8}
    \Vertex(214,-17){2.8}
    \Vertex(231,6){2.8}
    \Vertex(236,-17){2.8}
    \Vertex(252,6){2.8}
    \Line(191,-17)(236,-17)
    \Line(191,-17)(207,6)
    \Line(207,6)(214,-17)
    \Line(214,-17)(231,6)
    \Line(231,6)(236,-17)
    \Line(236,-17)(252,6)
    \Line(252,6)(207,6)
    \Arc[clock](221.5,-5.5)(33.534,-159.944,-339.944)
    \Text(211,-35)[lb]{\normalsize{\Black{$Z_5$}}}

    \Vertex(284,-17){2.8}
    \Vertex(299,6){2.8}
    \Vertex(308,-17){2.8}
    \Vertex(321,6){2.8}
    \Vertex(332,-17){2.8}
    \Vertex(345,6){2.8}
    \Vertex(355,-17){2.8}
    \Line(284,-17)(355,-17)
    \Line(284,-17)(299,6)
    \Line(299,6)(345,6)
    \Line(299,6)(308,-17)
    \Line(308,-17)(321,6)
    \Line(321,6)(332,-17)
    \Line(332,-17)(345,6)
    \Line(345,6)(355,-17)
    \Arc[clock](319.375,-7.6)(36.602,-165.119,-373.267)
    \Text(315,-35)[lb]{\normalsize{\Black{$Z_6$}}}
  \end{picture}
}
Figure 1: Completed ($\overline{Z_\bullet}$) and uncompleted ($Z_\bullet$) zig-zag graphs with five and six loops.
\end{center}
\vskip2ex

The structure of $\phi^4$ periods was first studied systematically by Broadhurst and Kreimer in 1995 \cite{BK} (see also \cite{BK1}). They found by exact numerical methods that up to
seven loops (the number of independent cycles in the four-point graph) many periods are multiple zeta values (MZVs), i.e.\ rational linear combinations of multiple zeta sums
\begin{equation}\label{MZV}
\zeta(n_1,n_2,\ldots,n_r)=\sum_{1\leq k_1<k_2<\ldots<k_r}\frac{1}{k_1^{n_1}k_2^{n_2}\cdots k_r^{n_r}},\quad\hbox{with}\quad n_r\geq2.
\end{equation}
Multiple zeta sums (\ref{MZV}) span a $\QQ$ vector space $\sH$ which is conjectured to be graded by the weight $n=n_1+n_2+\ldots+n_r$.

There are four equivalent ways to define the period of a graph: position space, momentum space, parametric space, and dual parametric space \cite{SchnetzCensus}.
In the context of algebraic geometry one often uses Feynman (or Schwinger) parameters \cite{BEK}, \cite{BrFeyn}. In this article, however, it is essential to use position space.

\begin{center}
\fcolorbox{white}{white}{
  \begin{picture}(336,75) (44,-26)
    \SetWidth{0.8}
    \SetColor{Black}
    \Line(48,35)(96,35)
    \Line(48,-6)(96,-6)
    \Vertex(48,35){2.8}
    \Vertex(48,-6){2.8}
    \Vertex(96,-6){2.8}
    \Vertex(96,35){2.8}
    \Text(44,20)[lb]{\normalsize{\Black{$x$}}}
    \Text(93,20)[lb]{\normalsize{\Black{$\infty$}}}
    \Text(118,31)[lb]{\normalsize{\Black{$Q_e=1$}}}
    \Text(44,-22)[lb]{\normalsize{\Black{$x$}}}
    \Text(94,-22)[lb]{\normalsize{\Black{$1$}}}
    \Text(118,-13)[lb]{\normalsize{\Black{$Q_e=||x-e_1||^2$}}}
    \Line(228,35)(276,35)
    \Line(228,-6)(276,-6)
    \Vertex(228,35){2.8}
    \Vertex(276,35){2.8}
    \Vertex(228,-6){2.8}
    \Vertex(276,-6){2.8}
    \Text(225,20)[lb]{\normalsize{\Black{$x$}}}
    \Text(274,20)[lb]{\normalsize{\Black{$0$}}}
    \Text(300,31)[lb]{\normalsize{\Black{$Q_e=||x||^2$}}}
    \Text(225,-22)[lb]{\normalsize{\Black{$x$}}}
    \Text(274,-23)[lb]{\normalsize{\Black{$y$}}}
    \Text(300,-13)[lb]{\normalsize{\Black{$Q_e=||x-y||^2$}}}
  \end{picture}
}
Figure 2: Position space Feynman rules: edges correspond to quadrics (or 1).
\end{center}
\vskip2ex

In a completed Feynman graph we label the $V\geq3$ vertices by $0,$ $1,$ $\infty,$ $x_1,$ $\ldots,$ $x_{V-3}$,
where 0 is the origin of $\RR^4$, 1 corresponds to a unit vector $e_1$, and $x_i\in\RR^4$ for $1\leq i\leq V-3$.
Edges of the Feynman graph correspond to propagators. The propagator is the constant function 1 if the edge has one vertex $\infty$ or if it connects 0 and 1.
Otherwise the propagator of an edge $e$ from $x=(x^1,x^2,x^3,x^4)$ to $y=(y^1,y^2,y^3,y^4)$ is the reciprocal of the quadric
$$
Q_e=||x-y||^2=(x^1-y^1)^2+(x^2-y^2)^2+(x^3-y^3)^2+(x^4-y^4)^2.
$$
The period of the completed Feynman graph $\Gamma$ is then given by the $4(V-3)$-dimensional integral
\begin{equation}\label{Pdef}
P(\Gamma)=\left(\prod_{v\notin\{0,1,\infty\}} \int_{\RR^4}\frac{\dd^4x_v}{\pi^2}\right)\frac{1}{\prod_eQ_e},
\end{equation}
where the products are over vertices $v$ and edges $e$ of $\Gamma$.

Because edges that connect to infinity do not contribute to the integral we can remove infinity before we apply the Feynman rules. It is often useful to generalize
the definition of a period to non-$\phi^4$ graphs. The Fourier identity \cite{BK}, for example, maps $\phi^4$ graphs to non-$\phi^4$ graphs without changing the period. This map will
be used to link the period of the zig-zag graphs to the sequential family of graphs defined in the next subsection. Although completion
is also possible for non-$\phi^4$ graphs if one introduces edges of negative weights (similar to \S\ref{completion}) we use non-$\phi^4$ graphs in an uncompleted form. In this form
graphs have no vertex $\infty$ and the definition of the period (\ref{Pdef}) remains unchanged. We use capital Greek letters for completed and Latin letters for uncompleted graphs.

Position space Feynman rules were used in \cite{BK}, \cite{BK1}, and in most calculations of Feynman periods. In fact, until 2012 only eight $\phi^4$ periods have been calculated:
The trivial period 1 ($=P(Z_1)$), the wheels with three \cite{C4} and with four \cite{C5} spokes which are $Z_3$ and $Z_4$, respectively.\footnote{Wheels with
any number of spokes can be calculated \cite{B2}. However, wheels with more than four spokes do not exist in $\phi^4$ theory.}
The period of the zig-zag graph (see figure 1) with 5 loops was calculated by Kazakov in 1983 \cite{Kazakov}.
The 6 loop zig-zag period was derived by Broadhurst in 1985 \cite{B1} and confirmed by Ussyukina in 1991 \cite{Ussy}.
Until 2012 the only calculated non zig-zag periods were $G(3,1,0)$ in \cite{BK} which is $P_{6,2}$ in \cite{SchnetzCensus}, $P_{6,3}$ by E. Panzer, and the
bipartite graph $K_{4,4}$ in \cite{S3} which is $P_{6,4}$ in \cite{SchnetzCensus}. By exact numerical methods 24 more periods were determined up to eight loops
\cite{SchnetzCensus}, \cite{B3} all of which are MZVs.

Recent methods, partially based on the theory of graphical functions, allows one to calculate several hundred distinct periods up to eleven loops \cite{coact}.

The zig-zag periods were conjectured to all orders in \cite{BK}:
\begin{con}\label{zzcon} (Zig-zag conjecture). The period of the graph $Z_n$ is given by
\begin{equation} \label{PZ}
P(Z_n) = 4\frac{(2n-2)!}{n!(n-1)!}\Big(1-\frac{1-(-1)^n}{2^{2n-3}}\Big)\zeta(2n-3).
\end{equation}
\end{con}
In spite of its seeming simplicity the zig-zag conjecture remained open for 17 years.

Until 2012 all known $\phi^4$ periods were MZVs. Recently $\phi^4$ periods with extensions of MZVs by second and sixth roots of unity were calculated \cite{coact}.
Assuming transcendentality conjectures these periods are not MZVs.
There exists strong mathematical evidence that $\phi^4$ periods in general are of an even more general type \cite{K3}, \cite{BD}.

In this article we develop a method that allows one to calculate the zig-zag periods and two more families of $\phi^4$ periods (and `sequential' non-$\phi^4$ periods).
Modulo products of MZVs an explicit formula for these periods is given. For a more general class of `constructible' periods we present a computer algorithm that works up to eleven loops.

Finally, the zig-zag conjecture is proved in \cite{ZZ} using corollary \ref{maincor}.

\subsection{Sequential graphs}
An important family of graphs can be encoded by words in the three letter alphabet $0,1,2$.

\begin{center}
\fcolorbox{white}{white}{
  \begin{picture}(336,120) (28,-19)
    \SetWidth{0.8}
    \SetColor{Black}
    \Vertex(194,91){2.8}
    \Vertex(194,-5){2.8}
    \Vertex(82,43){2.8}
    \Vertex(114,43){2.8}
    \Vertex(146,43){2.8}
    \Vertex(178,43){2.8}
    \Vertex(210,43){2.8}
    \Vertex(242,43){2.8}
    \Vertex(274,43){2.8}
    \Vertex(306,43){2.8}
    \Line(82,43)(194,91)
    \Line(82,43)(194,-5)
    \Line(194,91)(114,43)
    \Line(194,91)(146,43)
    \Line(194,91)(178,43)
    \Line(178,43)(194,-5)
    \Line(210,43)(194,-5)
    \Line(242,43)(194,-5)
    \Line(194,91)(274,43)
    \Line(194,-5)(306,43)
    \Line(194,91)(306,43)
    \Line(82,43)(306,43)
    \Text(208,91)[lb]{\normalsize{\Black{$1$}}}
    \Text(208,-15)[lb]{\normalsize{\Black{$0$}}}
  \end{picture}
}
Figure 3: The sequential graph $G_{21120012}$.
\end{center}
\vskip2ex

\begin{defn}
Let $w$ be a word in 0,1,2. The sequential graph $G_w$ is the graph with two distinguished vertices 0 and 1 and a horizontal chain of vertices that connect either to 0, to 1, or to both 0 and 1
(see figure 3). Reading from left to right the connections are encoded in the letters 0, 1, or 2, respectively.
\end{defn}
Sequential graphs have well-defined periods $P(G_w)$ in four dimensions if the word $w$ begins and ends in 2 (corollary \ref{Pwelldef}).

Certain sequential graphs are related by duality to zig-zag graphs. With the notation
\begin{equation}
w^{\{n\}}=\underbrace{ww\ldots w}_{n}
\end{equation}
for the $n$ fold iteration of a word $w$ the graphical dual $Z_n^\star$ of the uncompleted zig-zag graph with $n$ loops is
\begin{eqnarray}
Z_{2m+3}^\star=G_{2(01)^{\{m\}}2}\;\cup\;\{e_{01}\}&\hbox{or}&G_{2(10)^{\{m\}}2}\;\cup\;\{e_{01}\},\nonumber\\
Z_{2m+4}^\star=G_{2(01)^{\{m\}}02}\;\cup\;\{e_{01}\}&\hbox{or}&G_{2(10)^{\{m\}}12}\;\cup\;\{e_{01}\},
\end{eqnarray}
where $e_{01}$ is the edge 01. The period of an uncompleted graph with a planar embedding equals the period of its planar dual \cite{BK}, \cite{SchnetzCensus}.
Hence we can express the zig-zag period in terms of the period of a sequential graph,
\begin{eqnarray}\label{Pzigzag}
P(Z_{2m+3})&=&P(G_{2(01)^{\{m\}}2})\;\;=\;\;P(G_{2(10)^{\{m\}}2}),\nonumber\\
P(Z_{2m+4})&=&P(G_{2(01)^{\{m\}}02})\;\;=\;\;P(G_{2(10)^{\{m\}}12}),
\end{eqnarray}
where we have dropped the edge $e_{01}$ because its propagator 1 does not contribute to the period.

Sequential graphs were independently analysed in \cite{Drummond}.

\subsection{Graphical functions}
A graphical function is the evaluation of a graph $G$ with three distinguished vertices $0,1,z$ with position space Feynman rules and no integration over $z$,
\begin{equation}\label{fdef4}
f_G(z)=\left(\prod_{v\notin\{0,1,z\}} \int_{\RR^4}\frac{\dd^4x_v}{\pi^2}\right)\frac{1}{\prod_eQ_e}.
\end{equation}
Here we assume four dimensions. Arbitrary dimensions greater than two are considered in \S \ref{Gf}.
A special case are graphical functions in two dimensions which we will define in \S \ref{2d}.

By symmetry $f_G$ depends only on two real parameters: the norm $||z||$ and the angle between $z$ and the unit vector $e_1$.
We can hence consider $f_G$ as a function on the complex plane $\CC$ where we identify $e_1$ with 1 and choose any of the two possible orientations.

Graphical functions are single-valued functions which are real analytic in $\CC\backslash\{0,1\}$ \cite{PropGF}.
Independence of the orientation in $\CC$ results in the reflection symmetry
$$f_G(z)=f_G(\zz).$$
Graphical functions also arise as conformal integrals in $N=4$ supersymmetric Yang-Mills theory \cite{SYM}.

Sequential graphs give rise to sequential (graphical) functions by appending a horizontal edge connected to $z$ (see figure 4).
The four dimensional graphical function
\begin{equation}\label{fI}
f_{\mathrm{I}}(z)=\frac{1}{z\zz(z-1)(\zz-1)}
\end{equation}
serves as an initial case for constructing sequential functions.

\begin{center}
\fcolorbox{white}{white}{
  \begin{picture}(336,120) (28,-27)
    \SetWidth{0.8}
    \SetColor{Black}
    \Vertex(40,83){2.8}
    \Vertex(40,35){2.8}
    \Vertex(40,-13){2.8}
    \Line(40,83)(40,-13)
    \Text(50,82)[lb]{\normalsize{\Black{$1$}}}
    \Text(50,33)[lb]{\normalsize{\Black{$z$}}}
    \Text(50,-17)[lb]{\normalsize{\Black{$0$}}}

    \Vertex(194,83){2.8}
    \Vertex(194,-13){2.8}
    \Vertex(82,35){2.8}
    \Vertex(114,35){2.8}
    \Vertex(146,35){2.8}
    \Vertex(178,35){2.8}
    \Vertex(210,35){2.8}
    \Vertex(242,35){2.8}
    \Vertex(274,35){2.8}
    \Vertex(306,35){2.8}
    \Vertex(338,35){2.8}
    \Line(82,35)(194,83)
    \Line(82,35)(194,-13)
    \Line(194,83)(114,35)
    \Line(194,83)(146,35)
    \Line(194,83)(178,35)
    \Line(178,35)(194,-13)
    \Line(210,35)(194,-13)
    \Line(242,35)(194,-13)
    \Line(194,83)(274,35)
    \Line(194,-13)(306,35)
    \Line(194,83)(306,35)
    \Line(82,35)(338,35)
    \Text(210,83)[lb]{\normalsize{\Black{$1$}}}
    \Text(210,-23)[lb]{\normalsize{\Black{$0$}}}
    \Text(337,19)[lb]{\normalsize{\Black{$z$}}}
  \end{picture}
}
Figure 4: The graphical function $f_{\mathrm{I}}(z)$ and the sequential function $f_{21120012}(z)$. Note that $f_{\mathrm{I}}(z)$ is not a sequential function
because $z$ has `vertical' edges.
\end{center}
\vskip2ex

The sequential function $f_w(z)$ is well-defined in four dimensions if the word $w$ begins with 2 (lemma \ref{convergencelemma}).
Zig-zag periods can be expressed as special values of sequential functions at 0 or 1,
\begin{eqnarray}
P(Z_{2m+3})&\!\!\!=&\!\!\!f_{2(01)^{\{m\}}1}(0)=f_{2(01)^{\{m\}}0}(1)=f_{2(10)^{\{m\}}1}(0)=f_{2(10)^{\{m\}}0}(1),\nonumber\\
P(Z_{2m+4})&\!\!\!=&\!\!\!f_{2(01)^{\{m\}}01}(0)=f_{2(01)^{\{m\}}00}(1)=f_{2(10)^{\{m\}}11}(0)=f_{2(10)^{\{m\}}10}(1).\nonumber\\
&&
\end{eqnarray}

Summarizing the notions of graphical and sequential functions we have
\begin{eqnarray*}
\hbox{graph with vertex }z&\stackrel{\hbox{\small Feynman rules}}{\longrightarrow}&\hbox{graphical function}\\
\hbox{sequential graph with horizontal edge to }z&\stackrel{\hbox{\small Feynman rules}}{\longrightarrow}&\hbox{sequential function.}
\end{eqnarray*}

\subsection{Single-valued multiple polylogarithms}
For any word $w$ in the two letter alphabet $0,1$ we inductively define multiple polylogarithms $L_w(z)$ by
$$\partial_z L_{wa}(z)=\frac{L_{w}(z)}{z-a},\quad\hbox{for $a\in\{0,1\}$},$$
and $L_w(0)=0$ unless $w=0^{\{n\}}$ in which case we have $L_{0^{\{n\}}}(z)=(\ln z)^n/n!$.
Multiple polylogarithms are multi-valued analytic functions on $\CC\backslash\{0,1\}$. The weight of $L_w$ is the length $|w|$ of the word $w$.

By taking appropriate linear combinations of products of multiple polylogarithms with
their complex conjugates one can construct single-valued multiple polylogarithms (SVMPs), see \cite{BrSVMP}, \cite{BrSVMPII}.
SVMPs span a shuffle-algebra over $\CC$ which is graded by the total weight
\begin{equation}
\sP=\bigoplus_{n\geq0}\sP_n.
\end{equation}
For every word $w$ in 0 and 1 of length $n$ there exists a basis element $P_w(z)\in\sP_n$.

By construction the regularized (setting $\ln(0)=0$) limit of SVMPs at 0 vanishes. With the notation
\begin{equation}\label{zetaw}
\zeta_{10^{\{n_1-1\}}\ldots10^{\{n_r-1\}}}=(-1)^r\zeta(n_1,\ldots,n_r)
\end{equation}
we have $L_w(1)=\zeta_w$. Hence at 1 (and at $\infty$) SVMPs evaluate to MZVs. For any ring $R\subseteq\CC$ we set
\begin{equation}
\sH^\sv(R)=\langle P_w(1),\;w\text{ word in 0,1}\rangle_R.
\end{equation}
The $\QQ$ algebra $\sH^\sv=\sH^\sv(\QQ)$ is a proper subalgebra of $\sH$, the $\QQ$ algebra of MZVs.
One can construct $\sH^\sv$ as the largest $\QQ$ sub-algebra of $\sH$ with only odd weight generators on which the Galois coaction on $\sH$ coacts \cite{Bsv}.

By holomorphic and antiholomorphic differentiation $\partial_z$ and $\partial_{\zz}$ SVMPs generate a bi-differential algebra
\begin{equation}\label{defA}
\sA=\CC\left[z,\frac{1}{z},\frac{1}{z-1},\zz,\frac{1}{\zz},\frac{1}{\zz-1}\right]\sP.
\end{equation}
We present results of the theory of SVMPs (many of which are due to F. Brown \cite{BrSVMP}, \cite{BrSVMPII}) in \S \ref{SVMPs}.
In particular there exist integrals with respect to $z$ and $\zz$ in $\sA$,
\begin{equation}
\int_0\dd z,\int_0\dd\zz:\sA\longrightarrow\sA.
\end{equation}
We give an algorithm that allows one to integrate in $\sA$ up to high weights ($\approx$ 30).
A residue theorem in \S \ref{residue} facilitates the integration of functions in $\sA$ over the complex plane.
In practice the integration over the complex plane is more memory and time consuming so that the implementation is limited to smaller weights.

The connection to graphical functions is established by the fact that in many cases graphical functions are expressible in terms of SVMPs in the sense that
\begin{equation}\label{fGinA}
f_G(z)=\frac{g(z)}{z-\zz},
\end{equation}
with $g(z)=-g(\zz)\in\sA$. Let $\sB$ denote the vector space of such $g(z)/(z-\zz)$. Likewise we define $\sB^0\subset\sB$ as the set of functions (\ref{fGinA})
where $g(z)=-g(\zz)\in\sP$. An example is $f_{\mathrm{I}}$, (\ref{fI}), which is in $\sB$ but not in $\sB^0$. By corollary \ref{maincor} all sequential functions
$f_w$ ($w\neq\mathrm{I}$) are in $\sB^0$.

\subsection{Completion}
There exist relations between graphical functions of different graphs. The best way to formulate these relations is by completing the graph.
The completion is obtained by adding a vertex $\infty$ in much the same way as for periods.

The completed graph has four labeled vertices $0,1,z,\infty$. For some edges $e$ in the completed graph we need to introduce propagators of negative integer weight $\nu_e$ which
correspond to quadrics $Q_e$ in the numerator of the integral (\ref{fdef4completed}). Graphically we indicate negative weight propagators by multiple wavy lines.
Completion adds edges to $\infty$ and from 0 to 1 in such a way that all unlabeled vertices have total degree four and the vertices $0,1,z,\infty$ have total degree zero (see figure 5).
Completion is always possible and unique (lemma \ref{completionlemma}).

\begin{center}
\fcolorbox{white}{white}{
  \begin{picture}(345,123) (4,-19)
    \SetWidth{0.8}
    \SetColor{Black}
    \Vertex(20,86){2.8}
    \Vertex(20,38){2.8}
    \Vertex(50,38){2.8}
    \Vertex(20,-10){2.8}
    \Line(20,86)(20,-10)
    \Line(20,86)(50,38)
    \Line(20,-10)(50,38)
    \PhotonArc[double,sep=4](131.026,36.814)(121.032,156.539,202.242){2.5}{5.5}
    \Photon[double,sep=4](20,37)(48,37){2.5}{3}
    \Text(25,85)[lb]{\normalsize{\Black{$1$}}}
    \Text(25,46)[lb]{\normalsize{\Black{$z$}}}
    \Text(25,-14)[lb]{\normalsize{\Black{$0$}}}
    \Text(48,46)[lb]{\normalsize{\Black{$\infty$}}}
    \Vertex(194,86){2.8}
    \Vertex(194,-10){2.8}
    \Vertex(82,38){2.8}
    \Vertex(114,38){2.8}
    \Vertex(146,38){2.8}
    \Vertex(178,38){2.8}
    \Vertex(210,38){2.8}
    \Vertex(242,38){2.8}
    \Vertex(274,38){2.8}
    \Vertex(306,38){2.8}
    \Vertex(338,38){2.8}
    \Vertex(306,86){2.8}
    \Line(82,38)(194,86)
    \Line(82,38)(194,-10)
    \Line(194,86)(114,38)
    \Line(194,86)(146,38)
    \Line(194,86)(178,38)
    \Line(178,38)(194,-10)
    \Line(210,38)(194,-10)
    \Line(242,38)(194,-10)
    \Line(194,86)(274,38)
    \Line(194,-10)(306,38)
    \Line(194,86)(306,38)
    \Line(82,38)(338,38)
    \Line(82,38)(306,87)
    \Line(114,38)(306,87)
    \Line(146,38)(306,87)
    \Line(210,38)(306,87)
    \Line(242,38)(306,87)
    \Line(274,38)(306,87)
    \Photon[double,sep=4](194,-10)(306,86){2.5}{8}
    \Photon(194,89)(305,89){2.5}{5}
    \Photon(194,86)(305,86){2.5}{5}
    \Photon(194,83)(305,83){2.5}{5}
    \Photon(197,85)(197,-9){2.5}{5}
    \Photon(194,85)(194,-9){2.5}{5}
    \Photon(191,85)(191,-9){2.5}{5}
    \Photon(338,39)(306,85){2.5}{3}
    \Text(182,86)[lb]{\normalsize{\Black{$1$}}}
    \Text(182,-20)[lb]{\normalsize{\Black{$0$}}}
    \Text(337,22)[lb]{\normalsize{\Black{$z$}}}
    \Text(311,87)[lb]{\normalsize{\Black{$\infty$}}}
  \end{picture}
}
Figure 5: The completion of the graphical functions $f_{\mathrm{I}}(z)$ and $f_{21120012}(z)$.
\end{center}
\vskip2ex

In four dimensions the graphical function of a completed graph $\Gamma$ is given by
\begin{equation}\label{fdef4completed}
f_\Gamma(z)=\left(\prod_{v\notin\{0,1,z,\infty\}} \int_{\RR^4}\frac{\dd^4x_v}{\pi^2}\right)\frac{1}{\prod_eQ_e^{\nu_e}}.
\end{equation}
Clearly, a graphical function does not change under completion,
\begin{equation}
f_\Gamma(z)=f_G(z).
\end{equation}
Because only completed graphs have a vertex $\infty$ we use the same symbol for their graphical functions.

It will be shown in \S \ref{completion} that a permutation of the four labels $0,1,z,\infty$ induces a M{\"o}bius transformation of the argument $z$.
Concretely, double transpositions of labels leave the argument $z$ invariant while a permutation $\phi$ of $\{0,1,\infty\}$ acts on $z$.
For a graph $\Gamma(a,b,c,d)$ with external labels $a,b,c,d\in\{0,1,z,\infty\}$ we have
\begin{eqnarray}\label{S4trafos}
f_{\Gamma(0,1,z,\infty)}&=&f_{\Gamma(1,0,\infty,z)}\quad=\quad f_{\Gamma(z,\infty,0,1)}\quad =\quad f_{\Gamma(\infty,z,1,0)},\\
f_{\Gamma(0,1,z,\infty)}&=&f_{\Gamma(\phi(0),\phi(1),\phi(z),\phi(\infty))},\nonumber
\end{eqnarray}
where we indicate the action of $\phi(z)\in\{z,1-z,\frac{z-1}{z},\frac{z}{z-1},\frac{1}{1-z},\frac{1}{z}\}$ on the argument $z$ by transforming the label $z$.
In total we obtain a 24-fold permutation symmetry which stabilizes the cross-ratio of the four labels.
The transformation $\phi$ maps $\sB$ into $\sB$ and $\sB^0$ into $\sB^0$ so that the class of graphical functions that is expressible in terms of SVMPs in the sense of (\ref{fGinA})
is invariant under the permutation of external labels.

\subsection{Appending an edge}
\begin{center}
\fcolorbox{white}{white}{
  \begin{picture}(336,79) (-52,-5)
    \SetWidth{0.8}
    \SetColor{Black}
    \GOval(32,34)(20,20)(0){0.882}
    \Vertex(32,54){2.8}
    \Vertex(32,14){2.8}
    \Vertex(52,34){2.8}
    \Text(30,60)[lb]{\normalsize{\Black{$1$}}}
    \Text(29,2)[lb]{\normalsize{\Black{$0$}}}
    \Text(57,32)[lb]{\normalsize{\Black{$z$}}}
    \Text(28,31)[lb]{\normalsize{\Black{$G$}}}

    \Line[arrow,arrowpos=1,arrowlength=5,arrowwidth=2,arrowinset=0.2](75,34)(100,34)

    \GOval(141,34)(20,20)(0){0.882}
    \Line(161,34)(196,34)
    \Vertex(141,54){2.8}
    \Vertex(141,14){2.8}
    \Vertex(198,34){2.8}
    \Text(139,60)[lb]{\normalsize{\Black{$1$}}}
    \Text(139,2)[lb]{\normalsize{\Black{$0$}}}
    \Text(205,32)[lb]{\normalsize{\Black{$z$}}}
    \Text(137,31)[lb]{\normalsize{\Black{$G_1$}}}
  \end{picture}
}
Figure 6: Appending an edge to the vertex $z$ in $G$ gives $G_1$.

\end{center}
\vskip2ex

A key result in the theory of graphical functions is that in many cases the graphical function is mapped from $\sB$ into $\sB^0$ by appending an edge to the vertex $z$
(see figure 6). Under quite general assumptions given in theorem \ref{appendthm} we obtain
\begin{equation}\label{inteq1}
f_{G_1}(z)=-\frac{1}{2(z-\zz)}\left(\int_0\dd z\!\int_0\dd\zz+\!\int_0\dd\zz\!\int_0\dd z\right)(z-\zz)f_G(z).
\end{equation}
The simplicity of this equation is special to four dimensions. In arbitrary dimensions appending an edge is significantly more complicated, see proposition \ref{appendprop}.

\subsection{Constructible graphs}
We have seen that permuting the labeled vertices and appending an edge to $z$ maps $\sB$ into $\sB$.
A third operation that is trivially of this type is adding an edge that connects two labeled vertices. In fact, this edge contributes non-trivially to the graphical function
only when it connects 0 or 1 to $z$. In this case the graphical function picks up a factor of $1/z\zz$ or $1/(z-1)(\zz-1)$, respectively.

The empty graph with four vertices is the completion of the sequential graph of the empty word. Its graphical function is one,
\begin{equation}
f_\emptyset=1.
\end{equation}
With the help of the above three transformations we can construct graphs from the empty graph and calculate their graphical functions (see figure 7).

\begin{center}
\fcolorbox{white}{white}{
  \begin{picture}(336,130) (-75,28)
    \SetWidth{0.8}
    \SetColor{Black}
    \GBox(80,117)(112,149){0.882}
    \Text(74,151)[lb]{\normalsize{\Black{$\bullet$}}}
    \Text(114,151)[lb]{\normalsize{\Black{$\bullet$}}}
    \Text(74,111)[lb]{\normalsize{\Black{$\bullet$}}}
    \Text(114,111)[lb]{\normalsize{\Black{$\bullet$}}}
    \Line[arrow,arrowpos=1,arrowlength=5,arrowwidth=2,arrowinset=0.2](75,128)(42,95)
    \Line[arrow,arrowpos=1,arrowlength=5,arrowwidth=2,arrowinset=0.2](96,112)(96,95)
    \Line[arrow,arrowpos=1,arrowlength=5,arrowwidth=2,arrowinset=0.2](117,128)(150,95)

    \GBox(16,53)(48,85){0.882}
    \Text(0,85)[lb]{\normalsize{\Black{$\pi(\bullet)$}}}
    \Text(0,41)[lb]{\normalsize{\Black{$\pi(\bullet)$}}}
    \Text(46,85)[lb]{\normalsize{\Black{$\pi(\bullet)$}}}
    \Text(46,41)[lb]{\normalsize{\Black{$\pi(\bullet)$}}}

    \GBox(80,53)(112,85){0.882}
    \Text(74,87)[lb]{\normalsize{\Black{$\bullet$}}}
    \Text(78,30)[lb]{\normalsize{\Black{$\bullet$}}}
    \Text(114,87)[lb]{\normalsize{\Black{$\bullet$}}}
    \Text(114,47)[lb]{\normalsize{\Black{$\bullet$}}}
    \Line(80,53)(80,37)

    \GBox(144,53)(176,85){0.882}
    \Arc(160,53)(16,-178.264,-1.736)
    \Text(138,87)[lb]{\normalsize{\Black{$\bullet$}}}
    \Text(178,87)[lb]{\normalsize{\Black{$\bullet$}}}
    \Text(138,47)[lb]{\normalsize{\Black{$\bullet$}}}
    \Text(178,47)[lb]{\normalsize{\Black{$\bullet$}}}
  \end{picture}
}
Figure 7: Construction of graphical functions which are in $\sB$. The bullets $\bullet$ stand for one of the four labels $0,1,z,\infty$ and $\pi$ is a permutation
of $0,1,z,\infty$ together with a transformation of $z$ which is generated by (\ref{S4trafos}).

\end{center}
\vskip2ex

For these `constructible' graphs the graphical functions can be calculated by computer up to weights $\approx 30$ which corresponds to 15 internal (unlabeled) vertices.
In particular, all sequential graphs are constructible.

The periods of completed primitive Feynman graphs can be calculated by using two-dimensional complex integration
if the graph decomposes into at most two constructible graphs under the following steps:
\begin{itemize}
\item Label four vertices by $0,1,z,\infty$.
\item Delete $0,1,z,\infty$ and decompose the graph into its connected components.
\item Add $0,1,z,\infty$ to each connected component in the same way the vertices of the component are connected to $0,1,z,\infty$ in the original graph.
\item Complete each component by adding edges between labeled vertices.
\end{itemize}
Such `constructible' periods can be calculated by computer up to eleven loops \cite{Polylogproc}, \cite{Hyperlogproc}.

It was observed in \cite{SchnetzCensus} that the known $\phi^4$ periods have an integer structure
in the sense that they are integer linear combinations of MZVs. This phenomenon is partially explained in remark \ref{Rsvremark} where we argue that constructible periods
are in $\sH^\sv(\ZZ)$ (the proof will be in \cite{PropGF}).

\subsection{Reduction modulo products}
Although integration in $\sA$ is well-suited for computer calculation, closed results are hard to obtain. An all orders result for sequential functions is only available in three cases:
For the case of a 2 followed by a sequence of 0s (or 1s), see example \ref{0ex}, for the case of a 2 followed by a string of 0s which contains one 1, see \cite{Drummond},
and for the case of the zig-zag graphs, see \cite{ZZ}. The result for zig-zag graphs leads to the proof of the zig-zag conjecture (\ref{PZ}).

A practical option to obtain general results is to calculate in the ideal $I_n$ generated by MZVs of weights between two and $n$.
Iterating (\ref{inteq1}) gives results for all sequential functions $f_w$ modulo $I_{|w|-2}$.
By setting the external variable $z$ to 1 this leads to a formula for the periods of sequential graphs modulo products of MZVs:
We extend the definition of $\zeta_w$, (\ref{zetaw}), to the letter 2 by
\begin{equation}
\zeta_{u2v}=\zeta_{u1v}-\zeta_{u0v}
\end{equation}
for words $u,v$ in 0,1,2 and define $\sH_{>0}^2$ as the $\QQ$ vector space spanned by non-trivial products of MZVs.
Then the sequential period of a word $2w2$ is given by (see theorem \ref{PGthm})
\begin{equation}\label{modproducts1}
P(G_{2w2})\equiv2(-1)^{|w|}(\zeta_{\widetilde{w}01w0}-\zeta_{\widetilde{w}10w0}) \mod \sH_{>0}^2,
\end{equation}
where $\widetilde{w}$ is $w$ in reversed order,
\begin{equation}
\widetilde{w}=a_na_{n-1}\cdots a_2a_1\quad\text{if}\quad w=a_1a_2\cdots a_{n-1}a_n.
\end{equation}

\subsection{Zig-zag graphs and generalizations}
The periods of three families of $\phi^4$ graphs are sequential. One family is the zig-zag family (see (\ref{Pzigzag}) and figure 1). Equation (\ref{modproducts1})
gives in this case (proposition \ref{modprop})
\begin{eqnarray}
P(Z_{2n+3})&\equiv&2\zeta(2^{\{n\}},3,2^{\{n\}})-2\zeta(2^{\{n-1\}},3,2^{\{n+1\}})\mod \sH_{>0}^2,\nonumber\\
P(Z_{2n+4})&\equiv&2\zeta(2^{\{n\}},3,2^{\{n+1\}})-2\zeta(2^{\{n+1\}},3,2^{\{n\}})\mod \sH_{>0}^2.
\end{eqnarray}
Due to a result by Zagier on MZVs of the above type \cite{Zagier} (see also \cite{Li}) this proves the zig-zag conjecture modulo products.

\begin{center}
\fcolorbox{white}{white}{
  \begin{picture}(345,305) (11,-10)
    \SetWidth{0.8}
    \SetColor{Black}
    \Arc(272,220)(48,90,450)
    \COval(292,176)(14,18)(0){White}{White}
    \Vertex(272,220){2.8}
    \Vertex(272,268){2.8}
    \Vertex(320,220){2.8}
    \Vertex(224,220){2.8}
    \Vertex(237,186){2.8}
    \Vertex(237,253){2.8}
    \Vertex(304,256){2.8}
    \Vertex(271,172){2.8}
    \Vertex(306,186){2.8}
    \Arc[clock](199.845,292.647)(76.248,-18.86,-72.322)
    \Arc[clock](179.67,219.61)(66.345,30.217,-28.454)
    \Arc[clock](215.351,164.202)(56.193,83.205,7.977)
    \Arc[clock](398.667,225.51)(99.819,-156.683,-196.584)
    \Arc[clock](326.867,129.735)(54.172,-97.283,-173.923)
    \Arc[clock](271,129.8)(66.208,121.914,58.086)
    \Arc[clock](334.327,157.982)(63.884,167.325,103.882)
    \Line(237,253)(272,220)
    \Line(304,256)(272,220)
    \Line(272,220)(307,186)
    \Line(272,220)(271.5,172)
    \Vertex(16,220){2.8}
    \Vertex(176,220){2.8}
    \Vertex(112,220){2.8}
    \Vertex(48,220){2.8}
    \Vertex(144,220){2.8}
    \Vertex(80,220){2.8}
    \Vertex(96,172){2.8}
    \Vertex(96,268){2.8}
    \Line(16,220)(96,268)
    \Line(96,268)(112,220)
    \Line(112,220)(96,172)
    \Line(96,172)(176,220)
    \Line(176,220)(96,268)
    \Line(16,220)(96,172)
    \Line(16,220)(176,220)
    \Line(144,220)(96,172)
    \Line(96,268)(80,220)
    \Line(48,220)(96,172)
    \Text(268,232)[lb]{\normalsize{\Black{$\infty$}}}
    \Text(85,147)[lb]{\normalsize{\Black{$G_{201202}$}}}
    \Text(263,147)[lb]{\normalsize{\Black{$\overline{A_{2,1}}$}}}
    \Vertex(16,76){2.8}
    \Vertex(96,124){2.8}
    \Vertex(48,76){2.8}
    \Vertex(80,76){2.8}
    \Vertex(112,76){2.8}
    \Vertex(144,76){2.8}
    \Vertex(176,76){2.8}
    \Vertex(96,28){2.8}
    \Line(16,76)(96,124)
    \Line(176,76)(96,124)
    \Line(96,28)(176,76)
    \Line(144,76)(96,28)
    \Line(112,76)(96,28)
    \Line(16,76)(176,76)
    \Line(16,76)(96,28)
    \Line(96,124)(48,76)
    \Line(80,76)(96,28)
    \Line(96,124)(112,76)
    \Arc(272,76)(48.166,132,492)
    \Vertex(273,124){2.8}
    \Vertex(320,76){2.8}
    \Vertex(271.5,28){2.8}
    \Vertex(224,77){2.8}
    \Vertex(305,111){2.8}
    \Vertex(238,110){2.8}
    \Vertex(237,44){2.8}
    \Vertex(308,44){2.8}
    \Vertex(272,76){2.8}
    \Arc[clock](201.845,149.647)(76.248,-18.86,-72.322)
    \Arc[clock](180.67,76.61)(66.345,30.217,-28.454)
    \Arc[clock](215.351,22.202)(56.193,83.205,7.977)
    \Arc[clock](399.667,83.51)(99.819,-156.683,-196.584)
    \Arc[clock](326.867,273.735)(54.172,-97.283,-173.923)
    \Text(268,88)[lb]{\normalsize{\Black{$\infty$}}}
    \Line(237,44)(320,76)
    \Line(237,111)(272,76)
    \Line(305,110)(272,76)
    \Line(272,76)(272,28)
    \Line(272,76)(307,45)
    \Text(85,2)[lb]{\normalsize{\Black{$G_{210202}$}}}
    \Text(263,2)[lb]{\normalsize{\Black{$\overline{B_{2,1}}$}}}
    \Text(94,276)[lb]{\normalsize{\Black{$1$}}}
    \Text(94,130)[lb]{\normalsize{\Black{$1$}}}
    \Text(94,160)[lb]{\normalsize{\Black{$0$}}}
    \Text(94,16)[lb]{\normalsize{\Black{$0$}}}
  \end{picture}
}
Figure 8: The completed $A$ and $B$ families of $\phi^4$ graphs.
\end{center}
\vskip2ex

It is remarkable that the periods of the zig-zag graphs reduce to MZVs in the Hoffman basis \cite{Hof} whereas
the somewhat similar but much simpler periods of the wheels with spokes (see (\ref{PWSn})) directly reduce to single Riemann zeta sums.
The different scenarios seem to reflect the difference in topology of the corresponding graphs.

The second and the third family of sequential $\phi^4$ periods arise from alternating words in 0 and 1 with one internal letter 2 (on the left of figure 8).
The $A$ type sequence has different letters immediately to the left and to the right of the middle letter 2 whereas the $B$ type sequence has equal letters.
The $A$ and $B$ graphs are the planar duals of the sequential graphs (on the right of figure 8).
The number of letters in the left and right sequences is one less than the number of internal arcs on the left and right side of the $A$ and $B$ graphs.
We have $A_{m,n}=A_{n,m}$, $B_{m,n}=B_{n,m}$, $A_{n,0}=B_{n,0}$; otherwise the graphs are non-isomorphic.
Their periods are given modulo products by MZVs of 2s with 1 or 3 in three slots, see e.g.\ (\ref{Aexample}).

By corollary \ref{Pw} the (constructible) periods of the $A$ and $B$ families are in $\sH^\sv(\ZZ)$. They can be calculated and reduced to a standard basis of MZVs up to loop order 12.
Assuming standard transcendentality conjectures, type $A$ and $B$ periods are examples of $\phi^4$ periods which are proved up to 12 loops to be MZVs which
cannot be expressed in terms of a single Riemann zeta sum.

\subsection{Two dimensions}
A special case are graphical functions in two dimensions. Whereas in higher dimensions propagators correspond to bosonic particles,
in two dimension we consider holomorphic and antiholomorphic propagators which are more closely related to fermionic particles.
A definition of two-dimensional graphical functions is given in \S \ref{2d}. In \ref{concellzeta} it is conjectured that the maximum weight piece of the periods of
graphical functions in two dimensions reduce modulo products to the sum of two cell zeta values \cite{cellzetas}.

\subsection{Computer implementation}
All algorithms of this article are implemented in Maple and available under \cite{Polylogproc}. In particular constructible periods can be calculated
by a $\tt period(edge set)$ command. A more powerful implementation is currently developed \cite{Hyperlogproc}.
\vskip1ex

\noindent{\bf Acknowledgements.} The article was written while the author was visiting scientist at Humboldt University, Berlin. The author is highly indebted
to Francis Brown for sharing his knowledge on SVMPs and for many very valuable discussions.

\section{Single-valued multiple polylogarithms}\label{SVMPs}

\subsection{Preliminaries on shuffle algebras and formal power series}
Let $R$ be a commutative unitary ring. Consider the two letter alphabet $X=\{\x_0,\x_1\}$ and let $X^\ast$ be the set of words in $X$ together with the
empty word 1. The shuffle algebra $\mathrm{Sh}_R\langle X \rangle$ is the free $R$-module over $X^\ast$ together with the shuffle product which is defined recursively
by  $w \sha 1 = 1 \sha w = w$ and 
$$ a u \sha b v  = a ( u \sha bv) + b ( au \sha v)$$
for all  $a,b \in X$, and $u,v,w \in X^\ast$. The shuffle product, extended linearly, makes $\mathrm{Sh}_R\langle X\rangle$ into a commutative unitary ring.

A Lyndon word is a non empty word $l\in X^\ast$ which is inferior to each of its strict right factors (for the lexicographical ordering),
i.e.\ if $l=uv$, $u\neq1$ then $l<v$. By Radford's theorem the $\QQ$ algebra $\mathrm{Sh}_\QQ\langle X \rangle$ is the polynomial algebra generated by Lyndon words.
For every word $w\in X^\ast$ let $\widetilde{w}$ denote the word $w$ in reversed order. We linearly extend $\widetilde{\bullet}$ to elements in $\mathrm{Sh}_R\langle X\rangle$.
The length (i.e.\ the number of letters) of a word $w$ is $|w|$. The shuffle algebra is graded by the length.

The deconcatenation coproduct is defined to be the linear map
\begin{eqnarray*}
\Delta : \mathrm{Sh}_R\langle X\rangle &  \To & \mathrm{Sh}_R\langle X\rangle \otimes_R \mathrm{Sh}_R\langle X\rangle\\
\Delta (w)  &=  &\sum_{uv =w} u \otimes v
\end{eqnarray*}
and the antipode is the linear map defined by $w \mapsto (-1)^{|w|}\widetilde{w}$.
With these definitions, $\mathrm{Sh}_R\langle X\rangle$ is a commutative, graded Hopf algebra over $R$.

The dual of $\mathrm{Sh}_R\langle X \rangle$ is the $R$-module of non-commutative formal power series
$$
R\langle \langle X \rangle \rangle = \{ S= \sum_{w \in X^\ast} S_w w, \quad  S_w \in R \}
$$
equipped with the concatenation product. We define on $R\langle \langle X \rangle \rangle$ a (completed) coproduct
$$
\Delta^\star: R\langle \langle X\rangle \rangle  \To  R \langle \langle X \rangle \rangle \widehat{\otimes}_R R \langle \langle X \rangle\rangle
$$
for which the elements $\x_0, \x_1$ are primitive: $\Delta^\star( \x_i) = 1\otimes \x_i + \x_i \otimes 1$ for $i=0,1$.
The same antipode as in $\mathrm{Sh}_R\langle X\rangle$ turns $R\langle \langle X\rangle \rangle$ into a completed cocommutative but not commutative Hopf algebra.
The duality between $T\in\mathrm{Sh}_R\langle X \rangle$ and $S\in R\langle \langle X\rangle \rangle$ is defined as
\begin{equation}\label{Sp}
(T|S)=\sum_{w\in X^\ast}T_wS_w.
\end{equation}

The set of Lie monomials in $R \langle \langle X \rangle \rangle$ is defined by induction: the letters $\x_0$, $\x_1$ are Lie monomials and the bracket
$[x,y]=xy-yx$ of two Lie monomials $x$ and $y$ is a Lie monomial. A Lie polynomial (respectively a Lie series) is a finite (respectively infinite) $R$-linear
combination of Lie monomials. The set $\Lie_R\langle X\rangle$ of Lie polynomials is a free Lie algebra and the set of Lie series
$\Lie_R\langle\langle X\rangle\rangle$ is its completion with respect to the augmentation ideal ker$\epsilon$, where $\epsilon:R\langle\langle X\rangle\rangle\to R$
projects onto the empty word. The bracket form of Lyndon words is recursively defined as $P(x)=x$ for all $x\in X$ and $P(\ell)=[P(u),P(v)]$ if $\ell=uv$ for Lyndon words
$u,v$ and $v$ being as long as possible. A basis for $\Lie_R\langle X\rangle$ is given by the bracket forms $P(\ell)$ of Lyndon words.

An invertible  series  $S \in R\langle \langle X \rangle \rangle^\times$ (i.e., with invertible leading term $S_1$) is group-like if
$\Delta^\star(S) = S \otimes S$. Equivalently, the coefficients $S_w$ of $S$ define a homomorphism for the shuffle product:
$S_{u \sha v} = S_u S_{v}$ for all $u, v \in X^\ast$, where $S_{\bullet}$ is extended by linearity on the left-hand side.
The condition for $S\in R\langle \langle X \rangle \rangle^\times$ to be group-like is equivalent to the condition that $S$ is a Lie exponential,
i.e.\ that there exists an $L\in\Lie_R\langle X\rangle$ such that $S=\exp(L)$. By the formula for the antipode, it follows that for such a series $S=S(\x_0,\x_1)$,
its inverse is given by
\begin{equation} \label{Sinversion}
S( \x_0, \x_1)^{-1} = \widetilde{S} (-\x_0, -\x_1).
\end{equation}
In the following we often use the letters `0' and `1' for $\x_0$ and $\x_1$.

\subsection{Iterated integrals}\label{itint}
In \cite{Chen} Chen develops the theory of iterated path integration on general manifolds. Here we need only the elementary one-dimensional case.
For a fixed path $\gamma:[0,1]\To\CC\backslash\{0,1\}$ from $y=\gamma(0)$ to $z=\gamma(1)$ and differential forms $\omega_0(t)=\dd t/t$ and $\omega_1(t)=\dd t/(t-1)$ we define
\begin{equation}
I(y;a_1\ldots a_n;z)_\gamma=\int_{0<t_1<\ldots<t_n<1}\gamma^\ast\omega_{a_1}(t_1)\wedge\ldots\wedge\gamma^\ast\omega_{a_n}(t_n),\quad a_1,\ldots,a_n\in\{0,1\}
\end{equation}
(where the simplex ${0<t_1<\ldots<t_n<1}$ is endowed with the standard orientation and $\gamma^\ast\omega$ is the pullback of $\omega$ by $\gamma$)
as the iterated path integral of the word $a_1\ldots a_n$ along $\gamma$. Iterated path integrals have the following properties:
\begin{description}
\item[I0]
$I(y;z)_\gamma=1$ (by definition).
\item[I1]
$I(y;w;z)_\gamma$ is independent of the parametrization of $\gamma$.
\item[I2]
$I(y;w;z)_\gamma$ is a homotopy invariant; it only depends on the homotopy class of $\gamma$.
\item[I3]
$I(z;w;z)_\gamma=0$ for the constant path $\gamma=z$ and $|w|\geq1$.
\item[I4]
$I(z;w;y)_{\gamma^{-1}}=(-1)^{|w|}I(y;\widetilde{w};z)_\gamma$ (path reversal), where the inverse $\gamma^{-1}$ of $\gamma$ is $\gamma$ with reversed orientation.
\item[I5]
$$I(y;a_1\ldots a_n;z)_\gamma=\sum_{k=0}^nI(y;a_1\ldots a_k;x)_{\gamma_1} I(x;a_{k+1}\ldots a_n;z)_{\gamma_2}$$
(path composition), where $\gamma_1(1)=\gamma_2(0)=x\in\CC\backslash\{0,1\}$ and $\gamma=\gamma_1\gamma_2$ is the composition of (first) $\gamma_1$ and (second) $\gamma_2$.
\item[I6]
$$I(y;a_1\ldots a_r;z)_\gamma I(y;a_{r+1}\ldots a_{r+s};z)_\gamma=\sum_{\sigma\in\sS(r,s)}I(y;a_{\sigma(1)}\ldots a_{\sigma(r+s)};z)_\gamma$$
(shuffle product), where $\sS(r,s)$ is the set of $(r,s)$-shuffles:
with $n=r+s$ and $\sS_n$ the group of permutations of $\{1,\ldots,n\}$,
$$\sS(r,s)=\{\sigma\in\sS_n:\sigma^{-1}(1)<\ldots<\sigma^{-1}(r)\hbox{ and }\sigma^{-1}(r+1)<\ldots<\sigma^{-1}(n)\}.$$
\item[I7]
$$I(\phi(y);w;\phi(z))_{\phi(\gamma)}=I(y;\phi^\ast w;z)_\gamma$$
(chain rule), where $\phi$ is a M\"obius transformation that permutes the singular points $0,1,\infty$ and $\phi^\ast(a_i)$ is induced by $\phi^\ast\omega_{a_i}$.
Concretely, $\phi$ is one out of the six transformations $z\mapsto\{z,1-z,\frac{z-1}{z},\frac{z}{z-1},\frac{1}{1-z},\frac{1}{z}\}$ which induce the transformations
of letters `0', `1' (extended linearly to words and to iterated integrals)
\begin{eqnarray}\label{fmap}
z:\hbox{id},\hspace{3.1cm}&&\hspace{-3ex}1-z:(\text{`0'}\mapsto\text{`1'},\text{`1'}\mapsto\text{`0'}),\nonumber\\
\frac{z-1}{z}:(\text{`0'}\mapsto -\text{`0'}+\text{`1'},\text{`1'}\mapsto -\text{`0'}),&&\hspace{-3ex}
\frac{z}{z-1}:(\text{`0'}\mapsto \text{`0'}-\text{`1'},\text{`1'}\mapsto -\text{`1'}),\nonumber\\
\frac{1}{1-z}:(\text{`0'}\mapsto -\text{`1'},\text{`1'}\mapsto \text{`0'}-\text{`1'}),&&\hspace{-3ex}
\frac{1}{z}:(\text{`0'}\mapsto -\text{`0'},\text{`1'}\mapsto -\text{`0'}+\text{`1'}).
\end{eqnarray}
\item[I8]
$$
\partial_zI(y;wa;z)=\frac{1}{z-a}I(y;w;z),\quad\partial_yI(y;aw;z)=-\frac{1}{y-a}I(y;w;z).
$$
\end{description}
By {\bf I6} the vector space of iterated path integrals for fixed $\gamma$ is a shuffle algebra. It is convenient to form the generating (Chen) series
\begin{equation}
S_\gamma(\x_0,\x_1)=\sum_{w\in X^\ast}I(y;w;z)_\gamma\,w\in\CC\langle\langle X\rangle\rangle,
\end{equation}
where we identify the letters $\x_0$ and $\x_1$ with 0 and 1 in $I$. From the properties of the iterated path integral $S_\gamma$ inherits the properties
\begin{description}
\item[S1]
$S_\gamma$ is independent of the parametrization of $\gamma$.
\item[S2]
$S_\gamma$ is a homotopy invariant.
\item[S3]
$S_\gamma=1$ for the constant path $\gamma$.
\item[S4]
$S_{\gamma^{-1}}=(S_\gamma)^{-1}$.
\item[S5]
$S_{\gamma_1}S_{\gamma_2}=S_{\gamma_1\gamma_2}$, if $\gamma_1(1)=\gamma_2(0)$ and $\gamma_1\gamma_2$ is the path $\gamma_1$ followed by $\gamma_2$.
\item[S6]
$S_\gamma$ is a Lie exponential: $\Delta^\star S_\gamma=S_\gamma\otimes S_\gamma$.
\item[S7]
$S_{\phi(\gamma)}=\phi^\ast S_\gamma$ for a M\"obius transformation $\phi$ that permutes $0,1,\infty$.
The action of $\phi$ on $\x_0$, $\x_1$ is dual (transpose) to its action on `0' and `1' in (\ref{fmap})\footnote{Here the letters 0 and 1 are dual to the letters $\x_0$, $\x_1$.},
\begin{eqnarray}\label{fmapdual}
z:\hbox{id},\hspace{3.5cm}&&\hspace{-3ex}1-z:(\x_0\mapsto \x_1,\x_1\mapsto \x_0),\nonumber\\
\frac{z-1}{z}:(\x_0\mapsto -\x_0-\x_1,\x_1\mapsto \x_0),&&\hspace{-3ex}\frac{z}{z-1}:(\x_0\mapsto \x_0,\x_1\mapsto-\x_0 -\x_1),\nonumber\\
\frac{1}{1-z}:(\x_0\mapsto \x_1,\x_1\mapsto -\x_0-\x_1),&&\hspace{-3ex}\frac{1}{z}:(\x_0\mapsto -\x_0-\x_1,\x_1\mapsto \x_1).
\end{eqnarray}
\item[S8]
$$
\partial_zS_\gamma=S_\gamma\left(\frac{\x_0}{z}+\frac{\x_1}{z-1}\right),\quad\partial_yS_\gamma=-\left(\frac{\x_0}{y}+\frac{\x_1}{y-1}\right)S_\gamma,
$$
if $\gamma(0)=y$ and $\gamma(1)=z$.
\end{description}
If the homotopy class of the path $\gamma$ from $a$ to $b$ is clear from the context we write $S(a,b)$ and consider $S$ as a function of the initial and the end point.

If the path $\gamma$ approaches the singular values 0 and 1 the following limits are well-defined \cite{BrSVMPII},
\begin{eqnarray}
\lim_{\epsilon\to0}\mathrm{e}^{\x_0\ln|\epsilon|}S(\epsilon,z)=F_0(z),&&
\lim_{\epsilon\to0}\mathrm{e}^{\x_1\ln|\epsilon|}S(1-\epsilon,z)=F_1(z),\nonumber\\
\lim_{\epsilon\to0}S(y,\epsilon)\mathrm{e}^{-\x_0\ln|\epsilon|}=F_0(y)^{-1},&&
\lim_{\epsilon\to0}S(y,1-\epsilon)\mathrm{e}^{-\x_1\ln|\epsilon|}=F_1(y)^{-1}.
\end{eqnarray}
To write the above equations in more convenient form we define regularized limits by nullifying every positive power of $\ln\epsilon$ in the limit $\epsilon\to0$.
In our context this prescription is equivalent to regularization by tangential base points introduced by Deligne in \cite{Deligne}.
With this notation we have $S(0,z)=S(z,0)^{-1}=F_0(z)$ and $S(1,z)=S(z,1)^{-1}=F_1(z)$.

\subsection{Multiple zeta values}
Regularized iterated integrals for the path $\mathrm{id}:[0,1]\to[0,1]$ span the shuffle algebra $\sH$ of MZVs,
\begin{equation}\label{zetawdef}
\sH=\langle\zeta_w,\,w\in X^\ast\rangle_\QQ,\quad\zeta_w=I(0;w;1).
\end{equation}
Regularization extends $\sH$ to all words. In particular, $\zeta_0=\zeta_1=0$.
If $w$ begins with 1 and ends with 0 then $\zeta_w$ can be converted into a sum by (\ref{zetaw}) and (\ref{MZV}).
The generating function of regularized MZVs (denoting $\x_0,\x_1$ as indices)
\begin{equation}
Z_{\x_0,\x_1}= \sum_{w\in X^\ast} \zeta_w\, w\;=\;S(0,1) \in \CC \langle \langle X\rangle \rangle
\end{equation}
is Drinfeld's associator. It is group-like and by (\ref{Sinversion}) and (\ref{fmapdual}) for $z\to1-z$,
\begin{equation}\label{Zid}
Z_{\x_0,\x_1}=Z_{\x_1,\x_0}^{-1}=\widetilde{Z}_{-\x_1,-\x_0}=\widetilde{Z}_{-\x_0,-\x_1}^{-1}.
\end{equation}

The weight $|w|$ of $\zeta_w$ induces a filtration on $\sH$. The conjecture that the weight gives a grading on $\sH$ is proved in the motivic analogue.
We define subspaces $\sH_k\subseteq\sH$ of weight $k$ MZVs.

Shuffling the summation indices of a product of two MZVs in the sum representation yields the set of quasi-shuffle identities.
Conjecturally regularized shuffle and quasi-shuffle relations generate all relations between MZVs. The dimension of $\sH_k$ is
conjectured (and proved in the motivic analogue \cite{MMZ}) to have the following generating series
\begin{equation}
\sum_{k=0}^\infty \dim{\sH_k}t^k=\frac{1}{1-t^2-t^3}.
\end{equation}
By the shuffle identity, MZVs span a ring over the integers,
\begin{equation}
\sH(\ZZ)=\langle\zeta_w,\,w\in X^\ast\rangle_\ZZ\subset\sH.
\end{equation}

An important factor algebra of $\sH$ is obtained by factorizing out the ideal generated by $\zeta(2)$. This factor algebra coacts on $\sH$ by
\begin{equation}\label{coprod}
\Delta:\sH\To\sH/\zeta(2)\sH\otimes\sH.
\end{equation}
An explicit formula for $\Delta$ is given in \cite{GON}.

The reduction of $\Delta$ modulo $\zeta(2)$ on both sides of $\otimes$ in (\ref{coprod}) turns $\sH/\zeta(2)\sH$ into a Hopf algebra.
A main conjecture on MZVs (a theorem for the motivic analogue) states that this Hopf-algebra is non-canonically isomorphic to the shuffle Hopf-algebra on
words in letters of odd weight greater or equal three \cite{DEC}. Conventionally one uses the `$f$-alphabet' $f_3$, $f_5$, \ldots\ for the letters.
The isomorphism into the $f$-alphabet extends to $\sH$ by adding a generator $f_2$ of weight two.
The polynomial algebra generated by $f_2$ is tensored to the right to the Hopf-algebra in odd letters so that an MZV maps under the isomorphism to a sum of words
in odd letters concatenated to the right by a power of $f_2$. The coaction $\Delta$ deconcatenates the word in the $f$-alphabet and fulfills $\Delta f_2^k=1\otimes f_2^k$.

The number of odd letters in a word $w$ in the $f$-alphabet is the coradical depth of $w$. The coradical depth gives the $f$-alphabet a second grading (together with the weight).
A computer implementation of the basis dependent isomorphism into the $f$-alphabet is in \cite{Zetaproc}.

\subsection{Multiple polylogarithms}
Multiple polylogarithms for words $w$ in the letters 0 and 1 are recursively defined by
\begin{equation}
L_{wa}(z)=\int\frac{L_w(z)}{z-a}\dd z,\quad a\in\{0,1\},
\end{equation}
with initial condition $L_w(0)=0$ unless $w$ is a sequence of zeros in which case we have $L_{0^{\{n\}}}(z)=(\ln z)^n/n!$.
Multiple polylogarithms are multi-valued analytic functions on $\PP^1\CC\backslash\{0,1,\infty\}$ with monodromies around 0, 1, and $\infty$.
They can be expressed as regularized iterated integrals from 0 to $z$,
\begin{equation}
L_w(z)=I(0;w;z)_\gamma,
\end{equation}
where the dependence on the homotopy class of $\gamma$ gives rise to the multi-valuedness of the multiple polylogarithm.
If $z$ is in the unit ball $|z|<1$ and not on the negative real axis we assume that $\gamma$ is homotopic to a straight line from 0 to $z$.

The generating series of multiple polylogarithms is denoted by
\begin{equation} 
L_{\x_0,\x_1}(z)=\sum_{w\in X^\ast } L_w(z)w\;=\;S(0,z),
\end{equation}
where we often suppress the indices $\x_0,\x_1$. It is the unique solution to the Knizhnik-Zamolodchikov equation \cite{K-Z}, \cite{KZ1}
\begin{equation} \label{KZ}
\partial_z L(z) = L(z)\left(\frac{\x_0}{z} + \frac{\x_1}{z-1}\right)
\end{equation}
which satisfies the asymptotic condition (see \cite{BrSVMPII} where the opposite convention is used: differentiation of $L_w(z)$ corresponds to deconcatenation of $w$ to the left.)
\begin{equation} \label{Lat0}
L(z)=\mathrm{e}^{\x_0\ln(z)} h_0(z)
\end{equation}
for all  $z$ in the neighborhood of the origin, where $h_0(z)$ is a function taking values in $\CC\langle\langle X\rangle \rangle$ which is holomorphic at $0$ and satisfies $h(0)=1$.
 
The series $L(z)$ is a group-like formal power series. In particular, the polylogarithms $L_w(z)$ satisfy the shuffle product formula
\begin{equation} \label{Lshuff}
L_{w \sha w'}(z) = L_w(z) L_{w'}(z) \hbox{ for all } w,w' \in X^\ast.
\end{equation}

Drinfeld's associator $Z$ is the regularized limit of $L(z)$ at the point $z=1$.
\begin{lem}\label{h1}
There exists a function $h_1(z)$ taking values in the series $\CC\langle \langle X\rangle \rangle$, which is holomorphic at $z=1$ where it takes the value $h(1)=1$, such that
\begin{equation}\label{Lat1}
L(z) = Z\mathrm{e}^{\x_1 \ln(1-z)} h_1(z).
\end{equation}
\end{lem}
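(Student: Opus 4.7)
\smallskip
\noindent\textbf{Proof plan.} The strategy is to exploit the involution $z\mapsto 1-z$, which exchanges the two singularities $0$ and $1$ of the KZ equation and interchanges the letters $\x_0$ and $\x_1$, thereby reducing the lemma to the analogue of (\ref{Lat0}) at $w=0$.

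First I would set $M(w)=L(1-w)$. The chain rule applied to (\ref{KZ}) gives
$$\partial_w M(w) \;=\; M(w)\left(\frac{\x_1}{w}+\frac{\x_0}{w-1}\right),$$
which is precisely the KZ equation with $\x_0$ and $\x_1$ swapped. By the identical formal power series construction behind (\ref{Lat0}) --- solving the equation order by order in $\x_0,\x_1$ --- there exists a series $\tilde h(w)\in\CC\langle\langle X\rangle\rangle$, holomorphic at $w=0$ with $\tilde h(0)=1$, such that $L^{(0)}(w)=\mathrm{e}^{\x_1\log w}\,\tilde h(w)$ solves this swapped equation.

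Second, since any two invertible solutions of a first-order linear equation $\partial_w M = M\cdot A(w)$ differ by left multiplication by a $w$-independent element of $\CC\langle\langle X\rangle\rangle$, we have $M(w) = C\cdot L^{(0)}(w)$ for some constant $C$. Taking regularized limits at $w=0$: on the one hand $M(w)\to L(1)=Z$ by the definition of Drinfeld's associator, and on the other hand $L^{(0)}(w)\to \tilde h(0)=1$ because regularization kills all positive powers of $\log w$. Hence $C=Z$.

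Finally, substituting back $w=1-z$ gives $L(z) = Z\,\mathrm{e}^{\x_1\log(1-z)}\,\tilde h(1-z)$, so setting $h_1(z)=\tilde h(1-z)$ yields the claim, with $h_1$ holomorphic at $z=1$ and $h_1(1)=\tilde h(0)=1$. The only nontrivial step is the existence of $\tilde h$, which is entirely parallel to the standard asymptotic analysis yielding $h_0$; the identification $C=Z$ is essentially a restatement of the definition of Drinfeld's associator, so no real obstacle arises there.
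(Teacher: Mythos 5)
Your proof is correct and takes essentially the same route as the paper: the paper also exploits the involution $z\mapsto 1-z$ (via the chain rule {\bf S7} and path composition {\bf S5} for Chen series, which is the iterated-integral packaging of your "two solutions of the transformed KZ equation differ by a left constant") and identifies the constant as $Z$ through the regularized limit, taking $h_1(z)$ to be $h_0(1-z)$ with the letters $\x_0,\x_1$ swapped, up to a branch factor $\exp(2k\pi\mathrm{i}\x_1)$ that is $1$ for the standard real-segment branch you use.
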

\begin{proof}
For $0<z<1$ we obtain from (\ref{fmapdual}) for $z\to1-z$ that $L_{\x_0,\x_1}(z)=S_{\x_1,\x_0}(1,1-z)$. By {\bf S5} we have $S_{\x_1,\x_0}(1,1-z)=S_{\x_1,x_0}(1,0)L_{\x_1,\x_0}(1-z)$
and from {\bf S4}, (\ref{Sinversion}) and (\ref{Zid}) this equals $Z_{\x_0,x_1}\exp(\x_1\ln(1-z))h_0(1-z)$. Depending on the sheet of $L$ at $z=1$ there exists a $k\in\ZZ$
such that $h_1(z)=h_0(1-z)\exp(2k\pi\mathrm{i}\x_1)$ which has the required properties.
\end{proof}

For $i \in \{0, 1\}$, let $\sM_i$ denote analytic continuation around a path winding once around the point $i$ in the positive direction. The operators $\sM_i$ act on the series
$L(z)$ and its complex conjugate $L(\zz)$, commute with multiplication, $\partial_z$, and $\partial_\zz$.

\begin{lem} \label{lemmonodromy} \cite{L-D}. The monodromy operators $\sM_{0}$, $\sM_1$ act as follows:
\begin{eqnarray}\label{mon}
\sM_{0} L(z) & = &\mathrm{e}^{2\pi\mathrm{i}\x_0} L(z), \\
\sM_{1} L(z) & = &Z\mathrm{e}^{2\pi\mathrm{i}\x_{1}}Z^{-1} L(z).\nonumber 
\end{eqnarray}
\end{lem}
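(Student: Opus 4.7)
The plan is to read off each monodromy directly from the local asymptotic expansion of $L(z)$ at the corresponding singular point, using the fact that the only source of multivaluedness is the explicit logarithm in the exponential prefactor.

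For the first identity, I would start from the defining asymptotics (\ref{Lat0}), namely $L(z)=\mathrm{e}^{\x_0\ln z}\,h_0(z)$ with $h_0$ holomorphic at $0$ and $h_0(0)=1$. In a small punctured disk around $0$ that is disjoint from $1$, the function $h_0$ is single-valued, so analytic continuation along a small positively oriented loop around $0$ changes only $\ln z$, via $\ln z\mapsto\ln z+2\pi\mathrm{i}$. Since $\x_0$ commutes with itself, this gives
\[
\sM_0 L(z)=\mathrm{e}^{\x_0(\ln z+2\pi\mathrm{i})}h_0(z)=\mathrm{e}^{2\pi\mathrm{i}\x_0}\mathrm{e}^{\x_0\ln z}h_0(z)=\mathrm{e}^{2\pi\mathrm{i}\x_0}L(z),
\]
which is the first formula in (\ref{mon}).

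For the second identity I would use Lemma \ref{h1}, which gives the expansion $L(z)=Z\,\mathrm{e}^{\x_1\log(1-z)}\,h_1(z)$ with $h_1$ holomorphic at $1$ and $h_1(1)=1$. In a small punctured disk around $1$ that is disjoint from $0$, the factors $Z$ and $h_1(z)$ are single-valued, so analytic continuation along a small positively oriented loop around $1$ acts only on $\log(1-z)$. A positively oriented loop $z=1+\ve\mathrm{e}^{\mathrm{i}\theta}$ sends $1-z=-\ve\mathrm{e}^{\mathrm{i}\theta}$ once around $0$ in the positive direction, so $\log(1-z)\mapsto\log(1-z)+2\pi\mathrm{i}$, and hence
\[
\sM_1 L(z)=Z\,\mathrm{e}^{\x_1(\log(1-z)+2\pi\mathrm{i})}h_1(z)=Z\,\mathrm{e}^{2\pi\mathrm{i}\x_1}\mathrm{e}^{\x_1\log(1-z)}h_1(z).
\]
Inserting $Z^{-1}Z=1$ between $\mathrm{e}^{2\pi\mathrm{i}\x_1}$ and the remaining factors and recognizing $Z\mathrm{e}^{\x_1\log(1-z)}h_1(z)=L(z)$ yields $\sM_1 L(z)=Z\mathrm{e}^{2\pi\mathrm{i}\x_1}Z^{-1}L(z)$, as required.

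There is essentially no obstacle to the argument once Lemma \ref{h1} is in hand; the only thing to be careful about is the orientation of the loop around $z=1$ and the corresponding sign of the increment of $\log(1-z)$, which is why I would include the explicit parametrization $z=1+\ve\mathrm{e}^{\mathrm{i}\theta}$. The commutation $\mathrm{e}^{\x_i\alpha}\mathrm{e}^{\x_i\beta}=\mathrm{e}^{\x_i(\alpha+\beta)}$ used in both steps is automatic because each argument involves only one of the non-commuting letters.
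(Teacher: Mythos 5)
Your proposal is correct and follows essentially the same route as the paper, which derives both formulae from the asymptotic expansions (\ref{Lat0}) and (\ref{Lat1}) together with $\sM_0\ln z=\ln z+2\pi\mathrm{i}$ and $\sM_1\ln(1-z)=\ln(1-z)+2\pi\mathrm{i}$. You merely spell out the details (single-valuedness of $h_0$, $h_1$ near the respective points and the orientation check for the loop around $1$) that the paper leaves implicit.
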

\begin{proof}
The formulae follow from $(\ref{Lat0})$, $(\ref{Lat1})$ and the equations $\sM_0\ln z=\ln z + 2\pi\mathrm{i}$, $\sM_1\ln (1-z)=\ln(1-z) + 2\pi\mathrm{i}$.
\end{proof}

A sum representation of multiple polylogarithms in the unit ball can be derived from
\begin{equation}\label{classicalpolyasL}
(-1)^rL_{10^{\{n_1-1\}}\ldots10^{\{n_r-1\}}}(z) =\Li_{n_1,\ldots,n_r}(z) = \sum_{1\leq k_1,\ldots<k_r} \frac{z^{k_r}}{k_1^{n_1}\ldots k_r^{n_r}},
\end{equation}
which expresses $L_{10^{\{n-1\}}}$ in terms of the classical polylogarithm in the case $r=1$.
\begin{lem}
We have the following explicit expression of $L_w$ in terms of the $\Li$s.
\begin{eqnarray}\label{explicitLw}
&&L_{0^{\{n_0\}}10^{\{n_1-1\}}\ldots10^{\{n_r-1\}}}(z)\\
&=&\sum_{\genfrac{}{}{0pt}{}{k_0\geq0,k_i\geq n_i\,(i\geq1)}{\sum k_i=\sum n_i}}(-1)^{k_0+n_0+r}\prod_{i=1}^r\binom{k_i-1}{n_i-1}\frac{(\ln z)^{k_0}}{k_0!}\Li_{k_1,\ldots,k_r}(z),\nonumber
\end{eqnarray}
where $\Li_\emptyset=1$.
\begin{proof}
With the above expression for $L$ we have $\partial_zL_{wa}(z)=L_w(z)/(z-a)$. Because $L_{0^{\{n\}}}(z)=(\ln z)^n/n!$ and $L_w(0)=0$ if $w\neq0^{\{n\}}$ the lemma follows.
\end{proof}
\end{lem}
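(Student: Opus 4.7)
The plan is to prove the formula by induction on the total weight $|w| = n_0 + n_1 + \cdots + n_r$, exploiting the defining recursion $\partial_z L_{wa}(z) = L_w(z)/(z-a)$ together with the boundary condition $L_w(0)=0$ for $w \neq 0^{\{n\}}$ (in the regularized sense $\ln(0)=0$). Concretely, one shows that the right-hand side of (\ref{explicitLw}) satisfies exactly the same differential system and initial data.

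For the base case $r=0$, only the term $k_0=n_0$ survives in the sum, and it evaluates to $(\ln z)^{n_0}/n_0!$ as required. For the induction step, write $w$ as $w' \cdot a$ for some $a\in\{0,1\}$ and a shorter word $w'$, and check that $\partial_z$ of the RHS for $w$ equals $1/(z-a)$ times the RHS for $w'$. Differentiating a generic term $(\ln z)^{k_0}/k_0! \cdot \Li_{k_1,\ldots,k_r}(z)$ via the product rule produces two contributions: a log-derivative contribution of $(1/z)(\ln z)^{k_0-1}/(k_0-1)!\,\Li_{k_1,\ldots,k_r}(z)$, and a $\Li$-derivative contribution. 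Here one uses $\partial_z\Li_{k_1,\ldots,k_r}(z)=\Li_{k_1,\ldots,k_{r-1},k_r-1}(z)/z$ when $k_r\geq 2$ and $\partial_z\Li_{k_1,\ldots,k_{r-1},1}(z)=\Li_{k_1,\ldots,k_{r-1}}(z)/(1-z)$ when $k_r=1$.

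In the case $a=1$, the word gains a new final block with index $n_{r+1}=1$, so the constraint $k_{r+1}\geq 1$ allows the value $k_{r+1}=1$ to occur. The terms with $k_{r+1}=1$ feed through $1/(1-z)=-1/(z-1)$ and, after pairing with the sign $(-1)^{k_0+n_0+(r+1)}$, reproduce exactly the formula for $w'$ divided by $(z-1)$. The remaining contributions (the log-derivative term with any $k_{r+1}\geq 1$, and the $\Li$-derivative term with $k_{r+1}\geq 2$) then have to cancel, which follows after shifting $k_0\to k_0-1$ in the former and $k_{r+1}\to k_{r+1}-1$ in the latter: the summands become identical in absolute value but opposite in sign. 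In the case $a=0$, the last index becomes $n_r+1\geq 2$, so $k_r\geq 2$ throughout and no $1/(1-z)$ term ever appears; the identification requires Pascal's identity $\binom{k_r'}{n_r}=\binom{k_r'-1}{n_r-1}+\binom{k_r'-1}{n_r}$ applied after shifting $k_r\to k_r-1$ in the $\Li$-derivative contribution. One piece of the Pascal split reproduces the formula for $w'$ divided by $z$, and the other piece cancels the log-derivative contribution after a similar shift $k_0\to k_0-1$.

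The boundary condition is easy: when $r\geq 1$, every term of the sum contains a factor $\Li_{k_1,\ldots,k_r}(z)$, which vanishes at $z=0$, so after regularizing $\ln z$ at $0$ the whole expression vanishes, matching $L_w(0)=0$. The main obstacle is bookkeeping: one must track the parameters $(k_0,\ldots,k_r)$, the signs, and the binomial coefficients carefully through the index shifts, and recognize the Pascal rearrangement in the $a=0$ case that produces the required cancellation. Once the two differential identities and the initial condition are in place, uniqueness of the solution to the recursion $\partial_z L_{wa}=L_w/(z-a)$ with the prescribed behavior at $0$ finishes the proof.
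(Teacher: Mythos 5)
Your proposal is correct and follows essentially the same route as the paper, which simply observes that the right-hand side of (\ref{explicitLw}) satisfies the recursion $\partial_z L_{wa}(z)=L_w(z)/(z-a)$ together with the initial conditions $L_{0^{\{n\}}}(z)=(\ln z)^n/n!$ and $L_w(0)=0$ otherwise. You have merely written out in full the differentiation, index shifts, and Pascal cancellation that the paper leaves implicit, and all of these details check out.
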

\begin{ex}
For $r=1$ we obtain
\begin{equation}\label{r1ex}
L_{0^{\{n_0\}}10^{\{n_1-1\}}}=\sum_{k=0}^{n_0}(-1)^{k+1}\binom{n_1+k-1}{n_1-1}\frac{(\ln z)^{n_0-k}}{(n_0-k)!}\Li_{n_1+k}(z).
\end{equation}
\end{ex}

\subsection{Brown's construction of single-valued multiple polylogarithms}\label{FrancisSVMPs}
Multiple polylogarithms $L_w(z)$ can be combined with their complex conjugates $L_w(\zz)$ to kill the monodromy at 0, 1, and $\infty$, rendering the function
single-valued on $\PP^1\CC\backslash\{0,1,\infty\}$. Because the monodromy (\ref{mon}) is homogeneous in the weight (if one gives $\pi$ the weight one) a single-valued
expression will decompose into components of pure weight, where the weight of a product of multiple polylogarithms with MZVs is the sum of the weights
of the polylogarithms plus the weight of the MZVs (see examples \ref{wt2ex}, \ref{lox0} and corollary \ref{homcor}).
The vector space $\sP$ of single-valued multiple polylogarithms (SVMPs) has a direct decomposition with respect to the weight
\begin{equation}
\sP=\bigoplus_{n\geq0}\sP_n.
\end{equation}
At weight one single-valued logarithms are $P_0(z)=L_0(z)+L_0(\zz)=\ln(z\zz)$ and $P_1(z)=L_1(z)+L_1(\zz)=\ln((z-1)(\zz-1))$. They span the two-dimensional vector space $\sP_1$.
The differential operator $\partial_z$ maps SVMPs into the differential algebra of SVMPs over $\sO=\CC[z,\frac{1}{z},\frac{1}{z-1}]$.
Likewise antiholomorphic differentiation $\partial_\zz$ generates an $\overline{\sO}$ algebra. Together $\sA=\sO\overline{\sO}\sP$ is the $\partial_z,\partial_\zz$ bi-differential
algebra generated by SVMPs. It is a direct sum of its weighted components,
\begin{equation}
\sA=\bigoplus_{n\geq0}\sA_n,
\end{equation}
where functions in $\sO\overline{\sO}$ have weight zero.

Because $\partial_z$ (or $\partial_\zz$) decreases the weight of a SVMP by one while generating a denominator $z$ or $z-1$ it is clear that the dimension of the vector
space $\sP_n$ is at most $2^n$. The following theorem states that $\sP$ is a shuffle algebra with $\dim\sP_n=2^n$.
\begin{thm}[F. Brown, \cite{BrSVMP}]\label{Francisthm}
There exists a unique family of single-valued functions $\{P_w(z):\;w\in X^\ast,\;z\in\CC\backslash\{0,1\}\}$, each of which is an explicit linear combination
of the functions $L_{w}(\zz)L_{w'}(z)$ where $w,w'\in X^\ast$, which satisfy the differential equations
\begin{equation}
\partial_zP_{wa}(z)=\frac{P_w(z)}{z-a},\hbox{ for }a\in\{0,1\},
\end{equation}
such that $P_\emptyset(z)=1$, $P_{0^{\{n\}}}(z)=\frac{1}{n!}(\ln z\zz)^n$ for all $n\in\NN$, and $\lim_{z\to0}P_w(z)=0$ if $w$ is not of the form $0^{\{n\}}$.
The functions $P_w(z)$ satisfy the shuffle relations, and are linearly independent over $\sO\overline{\sO}$. Every linear combination of the functions
$L_{w}(\zz)L_{w'}(z)$, where $w,w'\in X^\ast$, which is single-valued, can be written as a unique linear combination of functions $P_w(z)$.
\end{thm}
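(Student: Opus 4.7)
The natural strategy, following Brown, is to construct the whole family $\{P_w(z)\}$ at once by writing down a generating series and solving the single-valuedness condition algebraically in a noncommutative Hopf algebra. Set $\mathcal{L}(z) = \sum_{w\in X^\times} P_w(z)\, w$ and look for it in the form
$$\mathcal{L}(z) = \widetilde{L}_{\bar z}(\x_0, \x_1')\cdot L_z(\x_0, \x_1),$$
where $\x_1'$ is a Lie series in $\x_0, \x_1$ with MZV coefficients to be determined. The right factor has monodromy $e^{2\pi i \x_0}$ at $0$ and $Z e^{2\pi i\x_1}Z^{-1}$ at $1$ by Lemma \ref{lemmonodromy}. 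Path reversal and orientation reversal of $\bar z$ around a loop in $z$ turn these into monodromies $e^{-2\pi i \x_0}$ and $\widetilde Z(\x_0,\x_1')^{-1} e^{-2\pi i \x_1'}\widetilde Z(\x_0,\x_1')$ acting on the left of $\widetilde{L}_{\bar z}$ from the right. Single-valuedness at $0$ is automatic, while single-valuedness at $1$ becomes an algebraic equation for $\x_1'$ involving $Z$; a standard fixed-point argument in the pronilpotent free Lie algebra (using \eqref{Zid}) gives a unique solution. Since $\widetilde L, L$ are group-like, so is $\mathcal L(z)$, hence the coefficients $P_w(z)$ automatically satisfy the shuffle relations of Theorem \ref{Francisthm}.

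Next I would verify the stated analytic properties. The differential equation $\partial_z P_{wa}(z) = P_w(z)/(z-a)$ follows by differentiating the ansatz for $\mathcal{L}(z)$, using property \textbf{S8} on the right factor and holomorphy in $z$ of the left factor; extracting coefficients of the word $wa$ gives the recursion. The normalizations $P_{0^{\{n\}}}(z) = (\ln z\bar z)^n/n!$ and $\lim_{z\to 0}P_w(z)=0$ for $w\neq 0^{\{n\}}$ follow by inserting \eqref{Lat0} into both factors of $\mathcal{L}(z)$, noting that only the purely $\x_0$-power terms survive as $z\to 0$, that these combine to $e^{\x_0(\ln z + \ln \bar z)} = \sum (\x_0\ln z\bar z)^n/n!$, and that all other words inherit the vanishing of regularized iterated integrals at the basepoint.

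Uniqueness is then a short induction on $|w|$: if $P_w, P_w'$ both satisfy the theorem's hypotheses, their difference $D_w$ satisfies $\partial_z D_{wa} = D_w/(z-a)$ and, by the same argument applied to $\partial_{\bar z}$, also an antiholomorphic recursion; combined with the boundary behavior at $0$ this forces $D_w = 0$ for all $w$. Linear independence over $\sO\overline{\sO}$ is proved by the same weight-induction: in a putative relation $\sum_w f_w(z,\bar z) P_w(z) = 0$, apply $\partial_z$ (respectively $\partial_{\bar z}$) repeatedly to isolate the longest words appearing with nonzero coefficient, then use the $z\to 0$ normalization to conclude $f_w=0$. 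Finally, the claim that every single-valued linear combination $F$ of $L_w(\bar z)L_{w'}(z)$ is an $\sH$-linear combination of the $P_w$ follows inductively: choose the top-weight piece of $F$, match its leading behavior at $z=0$ with a unique $\sH$-linear combination of $P_w$'s of that weight, subtract, and iterate on the lower weights.

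The main obstacle is the algebraic step of solving for the twisted alphabet element $\x_1'$; the existence and uniqueness of $\x_1'$ as a Lie series, and the verification that with this choice the monodromy at $1$ genuinely cancels to all orders, is where the bulk of the work lies. Everything else---shuffle relations, differential equations, boundary behavior, uniqueness, independence---then follows by formal manipulations in the Hopf algebra $\CC\langle\langle X\rangle\rangle$ together with the elementary asymptotic analysis of $L(z)$ at $0$ and $1$ recalled in \S\ref{itint}.
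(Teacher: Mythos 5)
Your plan is correct and follows essentially the same route as the paper: the generating-series ansatz $P_{\x_0,\x_1}(z)=\widetilde{L}_{\x_0,\x_1'}(\zz)L_{\x_0,\x_1}(z)$, the observation that the monodromy at $0$ cancels for any choice of $\x_1'$, the reduction of single-valuedness at $1$ to the algebraic condition (\ref{x1cond}), and its solution by a fixed-point recursion in the weight filtration (the paper's lemma \ref{Lielem}), with the shuffle relations, differential equations and boundary behaviour then read off from group-likeness, {\bf S8} and (\ref{Lat0}). The remaining points (uniqueness, independence over $\sO\overline{\sO}$, and the spanning claim), which the paper defers to Brown's article, are handled by the standard weight inductions you indicate.
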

The functions $P_w(z)$ can be constructed explicitly. To this end we make for the generating series
\begin{equation}
P_{\x_0,\x_1}(z)=\sum_{w\in X^\ast } P_w(z)\,w
\end{equation}
the ansatz \cite{BrSVMPII}
\begin{equation}\label{Pgenseries}
P_{\x_0,\x_1}(z)=\widetilde{L}_{\x_0,\x_1'}(\zz)L_{\x_0,\x_1}(z)
\end{equation}
for a yet to be determined series $\x_1'\in\CC\langle\langle X\rangle\rangle$ which is substituted in $\widetilde{L}$.
For any $\x_1'$ the monodromy of $P_{\x_0,\x_1}(z)$ at zero vanishes: from (\ref{mon}) we have
$$
\sM_0P_{\x_0,\x_1}(z)=\widetilde{L}_{\x_0,\x_1'}(\zz)\mathrm{e}^{-2\pi\mathrm{i}\x_0}\mathrm{e}^{2\pi\mathrm{i}\x_0}L_{\x_0,\x_1}(z)=P_{\x_0,\x_1}(z).
$$
For the monodromy at one we obtain
$$
\sM_1P_{\x_0,\x_1}(z)=\widetilde{L}_{\x_0,\x_1'}(\zz)(\widetilde{Z}^{-1})_{\x_0,\x_1'}\mathrm{e}^{-2\pi\mathrm{i}\x_{1}'}\widetilde{Z}_{\x_0,\x_1'}
Z_{\x_0,\x_1}\mathrm{e}^{2\pi\mathrm{i}\x_{1}}Z^{-1}_{\x_0,\x_1}L_{\x_0,\x_1}(z).
$$
With (\ref{Zid}) a sufficient condition for trivial monodromy at one is
$$
Z_{-\x_0,-\x_1'}\mathrm{e}^{-2\pi\mathrm{i}\x_1'}Z_{-\x_0,-\x_1'}^{-1}=Z_{\x_0,\x_1}\mathrm{e}^{-2\pi\mathrm{i}\x_{1}}Z^{-1}_{\x_0,\x_1}
$$
which holds if
\begin{equation}\label{x1cond}
Z_{-\x_0,-\x_1'}\x_1'Z_{-\x_0,-\x_1'}^{-1}=Z_{\x_0,\x_1}\x_{1}Z^{-1}_{\x_0,\x_1}.
\end{equation}
Recall that $\sH(\ZZ)$ is the ring of integer MZVs.
\begin{lem}\label{Lielem}
There exists an $\x_1'\in\Lie_{\sH(\ZZ)}\langle\langle X\rangle\rangle$ such that (\ref{x1cond}) holds.
Moreover, $\x_1'\equiv\x_1$ modulo words with at least two $\x_1s$ (depth two) or modulo words of length greater than or equal to four.
\end{lem}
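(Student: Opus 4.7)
The strategy is to solve (\ref{x1cond}) recursively in the weight filtration. Denote the right--hand side by $M:=Z_{\x_0,\x_1}\x_1 Z_{\x_0,\x_1}^{-1}$; since $Z$ is group--like with coefficients in $\sH(\ZZ)$ and $\x_1$ is primitive, $M$ lies in $\Lie_{\sH(\ZZ)}\langle\langle X\rangle\rangle$. I would set $\x_1'=\sum_{n\geq 1}y_n$ with $y_n$ a homogeneous Lie polynomial of weight $n$, forcing $y_1=\x_1$, and then expand the left--hand side $Z_{-\x_0,-\x_1'}\,\x_1'\,Z_{-\x_0,-\x_1'}^{-1}$ in the weight grading. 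At weight $n$ the expansion takes the form $y_n+R_n(y_1,\ldots,y_{n-1})=M_n$, where the remainder $R_n$ is a Lie polynomial in its arguments built from iterated commutators with $L_k:=(\log Z)_k$, $k<n$, with coefficients in $\sH(\ZZ)$. Setting $y_n:=M_n-R_n(y_1,\ldots,y_{n-1})$ determines $\x_1'$ uniquely weight by weight; because $\text{ad}$ of a Lie series preserves Lie series, each $y_n$ is automatically a Lie polynomial without the need for a separate Lie projection.

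For the congruence statement I would compute the lowest weights explicitly. Since $\zeta_0=\zeta_1=0$ the Lie series $L=\log Z$ vanishes in weight one, and the shuffle relations $\zeta_0\sha\zeta_0=2\zeta_{00}$ and $\zeta_0\sha\zeta_1=\zeta_{01}+\zeta_{10}$ force the weight--two part of $Z$ (hence of $L$) to be $\zeta(2)[\x_0,\x_1]$, which has depth two in $\x_1$. Substituting the trial value $\x_1'=\x_1$ makes both sides of (\ref{x1cond}) agree at weight two, giving $y_2=0$. At weight three the only contribution on either side comes from $[L_2,\x_1]=\zeta(2)[[\x_0,\x_1],\x_1]$, which is again of depth two; hence the depth--one component of $y_3$, namely the coefficient of $[\x_0,[\x_0,\x_1]]$, vanishes. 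This yields $\x_1'\equiv\x_1$ modulo Lie series that are either of depth $\geq 2$ in $\x_1$ or of length $\geq 4$.

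The main obstacle I anticipate is the integrality assertion $\x_1'\in\Lie_{\sH(\ZZ)}$ rather than merely $\Lie_{\sH(\QQ)}$, because the expansion of $R_n$ naively involves the factorials coming from $\exp(\text{ad}\, L)$. The clean way to handle this is to avoid BCH altogether: expand $Z_{-\x_0,-\x_1'}\,\x_1'\,Z_{-\x_0,-\x_1'}^{-1}$ directly as a product of three concatenated series with $\sH(\ZZ)$--coefficients in $X^{\times}$, and use that $Z$ and $Z^{-1}=\widetilde{Z}(-\x_0,-\x_1)$ have coefficients in $\sH(\ZZ)$ by (\ref{Sinversion}) together with the fact that $\sH(\ZZ)$ is closed under the shuffle product. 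The resulting coefficients of $R_n$, being obtained by integer linear combinations of products of $\zeta_w$'s, lie in $\sH(\ZZ)$; the recursive definition $y_n=M_n-R_n$ then keeps $\x_1'$ in $\Lie_{\sH(\ZZ)}\langle\langle X\rangle\rangle$ at every step.
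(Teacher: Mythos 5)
Your overall strategy coincides with the paper's: the paper sets $F(\x_0,\x_1)=Z_{\x_0,\x_1}\x_1Z_{\x_0,\x_1}^{-1}-\x_1$, observes that $F$ is a Lie series with coefficients in $\sH(\ZZ)$ (via $Z^{-1}=\widetilde{Z}(-\x_0,-\x_1)$, which is exactly your integrality argument) and of lowest weight three, and solves (\ref{x1cond}) by the fixed-point iteration $\x_1'^{(k+1)}=\x_1+F(\x_0,\x_1)+F(-\x_0,-\x_1'^{(k)})$, convergent in the weight filtration. Your weight-by-weight triangular solve $y_n=M_n-R_n$ is the same recursion unrolled, and your appeal to primitivity under conjugation by a group-like element replaces the paper's ``$H(t)=\exp(G(t))$, differentiate at $t=0$'' device for seeing that everything stays Lie. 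Both treatments of integrality rest on the same fact, that $\sH(\ZZ)$ is a ring containing the coefficients of $Z$ and $Z^{-1}$.

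The gap is in the depth-two congruence. Your explicit computation covers only weights $2$ and $3$; that establishes $\x_1'\equiv\x_1$ modulo weight four, but $\x_1'\equiv\x_1$ modulo depth two is a statement about every weight. What is needed --- and what the paper isolates as ``$F\equiv0$ modulo depth two'' --- is the observation that every nonempty word occurring in $Z$ with nonzero coefficient contains at least one $\x_1$ (because $\zeta_{0^{\{n\}}}=\zeta_0^n/n!=0$ after regularization), so that every term of $Z\x_1Z^{-1}-\x_1=[L,\x_1]+\tfrac12[L,[L,\x_1]]+\cdots$ carries at least two $\x_1$'s; one then propagates this through the recursion by induction, using that each occurrence of the second argument of $F$ in $F(-\x_0,-\x_1'^{(k)})$ is replaced by a series all of whose terms have depth at least one. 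Relatedly, your parenthetical that $\zeta(2)[\x_0,\x_1]$ ``has depth two in $\x_1$'' is a slip --- it has depth one; it is the commutator $[L_2,\x_1]=\zeta(2)[[\x_0,\x_1],\x_1]$ that has depth two, as your next sentence correctly uses. With that general-weight depth argument supplied, your proof closes.
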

\begin{proof}
Define the function
$$
H_{\x_0,\x_1}(t)=Z_{\x_0,\x_1}\mathrm{e}^{t\x_{1}}Z^{-1}_{\x_0,\x_1}.
$$
Because $Z^{-1}_{\x_0,\x_1}=\widetilde{Z}_{-\x_0,-\x_1}$, equation (\ref{Zid}), we have $H\in\sH(\ZZ)[t]\langle\langle X\rangle\rangle$.
As a product of three Lie exponentials $H(t)=\mathrm{e}^{G(t)}$ is a Lie exponential. Because $G(0)=0$ we have
$\frac{\dd}{\dd t}|_{t=0}H(t)=\frac{\dd}{\dd t}|_{t=0}G(t)\in\Lie_{\sH(\ZZ)}\langle\langle X\rangle\rangle$. Differentiation yields
$$
F(\x_0,\x_1)=Z_{\x_0,\x_1}\x_{1}Z^{-1}_{\x_0,\x_1}-\x_{1}\in\Lie_{\sH(\ZZ)}\langle\langle X\rangle\rangle.
$$
Explicitly,
$$
F(\x_0,\x_1)=\zeta(2)[[\x_0,\x_1],\x_1]+\zeta(3)([[[\x_0,\x_1],\x_1],\x_1]-[\x_0,[[\x_0,\x_1],\x_1]])+\ldots.
$$
Clearly, $F\equiv0$ modulo depth two and $F\equiv\zeta(2)[[\x_0,\x_1],\x_1]$ modulo weight four.
We recursively solve (\ref{x1cond}) by defining
$$
\x_1'^{(0)}=\x_1,\quad\x_1'^{(k+1)}=\x_1+F(\x_0,\x_1)+F(-\x_0,-\x_1'^{(k)}).
$$
Because $F$ has lowest weight three, the recursion converges in the weight filtration. Define $\x_1'$ as the limit of the recursion.
Then $\x_1'=\x_1+F(\x_0,\x_1)+F(-\x_0,-\x_1')$ fulfills (\ref{x1cond}). Because by induction every $\x_1'^{(k)}\in\Lie_{\sH(\ZZ)}\langle\langle X\rangle\rangle$
we have $\x_1'\in\Lie_{\sH(\ZZ)}\langle\langle X\rangle\rangle$. Modulo depth two $\x_1'^{(k)}=\x_1$ for all $k$. Modulo weight four the recursion stabilizes after
$k=1$ for which we have $\x_1'^{(1)}\equiv\x_1$ modulo weight four.
\end{proof}
In (\ref{x1primeinLie}) we will show that only certain (single-valued) MZVs appear in the series of $\x_1'$.
The first non-trivial contributions to $\x_1'$ have four generators. Explicitly,
\begin{equation}\label{x1}
\x_1'=\x_1+2\zeta(3)([[[\x_0,\x_1],\x_1],\x_1]-[\x_0,[[\x_0,\x_1],\x_1]])+\zeta(5)(\dots)+\ldots,
\end{equation}
where the $\zeta(5)$ contribution consists of eight bracket words in six generators (see \cite{Polylogproc}).

For later use we define the following---in general multi-valued---multiple polylogarithms.
\begin{defn}\label{P0def}
For any word $w$ in 0 and 1 let
\begin{equation}\label{P0defeq}
P_w^0(z)=\sum_{uv=w}L_{\widetilde{u}}(\zz)L_v(z).
\end{equation}
\end{defn}
The multiple polylogarithms $P^0_w$ equal $P_w$ modulo $\sH(\ZZ)$. Their generating series is the untwisted version of equation (\ref{Pgenseries})
\begin{equation}
P^0_{\x_0,\x_1}(z)=\widetilde{L}_{\x_0,\x_1}(\zz)L_{\x_0,\x_1}(z).
\end{equation}
Up to weight three or depth one the SVMPs $P_w$ equal $P_w^0$.
\begin{ex}\label{wt2ex}
At weight two we obtain
\begin{eqnarray}\label{wt2}
P_{00}(z)&=&L_{00}(\zz)+L_{0}(\zz)L_{0}(z)+L_{00}(z)\;=\;\frac{1}{2}P_0(z)^2\;=\;\frac{1}{2}(\ln z\zz)^2,\nonumber\\
P_{01}(z)&=&L_{10}(\zz)+L_{0}(\zz)L_{1}(z)+L_{01}(z),\nonumber\\
P_{10}(z)&=&L_{01}(\zz)+L_{1}(\zz)L_{0}(z)+L_{10}(z),\nonumber\\
P_{11}(z)&=&L_{11}(\zz)+L_{1}(\zz)L_{1}(z)+L_{11}(z)\;=\;\frac{1}{2}P_1(z)^2\;=\;\frac{1}{2}(\ln (z-1)(\zz-1))^2.\nonumber\\
\end{eqnarray}
By the shuffle identity $P_{0,1}+P_{1,0}=P_0P_1$ so that there exists only one genuinely new SVMP of weight two. One may take $P_{0,1}-P_{1,0}$ as this new function and,
by the shuffle identity on $L_w$, we obtain
\begin{eqnarray}\label{BWdilog}
P_{01}(z)-P_{10}(z)&=&2L_{10}(\zz)-2L_{10}(z)+(L_1(z)-L_1(\zz))(L_0(z)+L_0(\zz))\nonumber\\
&=&4\mathrm{i\,Im}(\Li_2(z)+\ln(1-z)\ln|z|)\;\;=\;\;4\mathrm{i}D(z),
\end{eqnarray}
where $D$ is the Bloch-Wigner dilogarithm \cite{Zagierdilog}.
\end{ex}
\begin{ex}\label{lox0}
At weight four the following SVMPs gain a $\zeta(3)$-term from (\ref{x1}),
\begin{equation}
P_w(z)=P^0_w(z)+c_wL_1(\zz),\quad\hbox{with}
\end{equation}
\begin{eqnarray}\label{weightfour}
c_{0011}=-2\zeta(3),&&c_{0101}=4\zeta(3),\quad c_{1010}=-4\zeta(3),\quad c_{1100}=2\zeta(3),\nonumber\\
c_{0111}=2\zeta(3),&&c_{1011}=-6\zeta(3),\quad c_{1101}=6\zeta(3),\quad c_{1110}=-2\zeta(3).
\end{eqnarray}
A formula for $c_w$ will be given in lemma \ref{modintlemma}, equation (\ref{modintsconst}).
\end{ex}
\begin{ex}\label{ex0s}
With (\ref{r1ex}) we obtain at weight $n_0+n_1+1$
\begin{eqnarray}\label{formula0s}
P_{0^{\{n_0\}}10^{\{n_1\}}}&=&\sum_{k=0}^{n_0}(-1)^{k+1}\binom{n_1+k}{n_1}\frac{(\ln z\zz)^{n_0-k}}{(n_0-k)!}\Li_{n_1+k+1}(z)\nonumber\\
&&+\;\sum_{k=0}^{n_1}(-1)^{k+1}\binom{n_0+k}{n_0}\frac{(\ln z\zz)^{n_1-k}}{(n_1-k)!}\Li_{n_0+k+1}(\zz).
\end{eqnarray}
\end{ex}

From (\ref{x1cond}) we obtain for the function $\x_1'=\x_1'(\x_0,\x_1)$ the identities
\begin{eqnarray}\label{x1id}
&&\x_1'(-\x_0,-\x_1)=-\widetilde{\x_1'}(\x_0,\x_1),\nonumber\\
&&\x_1'(\x_0,\widetilde{\x_1'})=\widetilde{\x_1'}(\x_0,\x_1')=\x_1.
\end{eqnarray}
The holomorphic and antiholomorphic differentials of the generating series $P_{\x_0,\x_1}$ are
\begin{eqnarray}\label{diffonP}
\partial_z P_{\x_0,\x_1}&=&P_{\x_0,\x_1}\left(\frac{\x_0}{z}+\frac{\x_1}{z-1}\right),\nonumber\\
\partial_\zz P_{\x_0,\x_1}&=&\left(\frac{\x_0}{\zz}+\frac{\x_1'}{\zz-1}\right)P_{\x_0,\x_1}.
\end{eqnarray}
Upon projection $(\bullet|aw)$ (see (\ref{Sp})) onto the word $aw$ the second identity gives
\begin{eqnarray}\label{diffonaw}
\partial_\zz P_{aw}(z)&=&\frac{\delta_{a,0}}{\zz}P_w(z)+\frac{1}{\zz-1}\sum_{aw=uv}(\x_1'|u)P_v(z)\nonumber\\
&=&\frac{P_w(z)}{\zz-a}+\frac{1}{\zz-1}\sum_{aw=uv}(\x_1'-\x_1|u)P_v(z).
\end{eqnarray}

To study the number theoretic content of SVMPs we now reduce to the ground field $\QQ$.
In $P_w$---due to contributions from $\x_1'$---multiple polylogarithms of lower weights mix with MZVs.
\begin{defn}
The total weight of a product of MZVs with holomorphic and antiholomorphic multiple polylogarithms is the sum of the individual weights.
\end{defn}
\begin{cor}\label{homcor}
$P_w$ is homogeneous of total weight $|w|$.
\end{cor}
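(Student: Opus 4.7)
The plan is to extract the homogeneity directly from the generating-series identity (\ref{Pgenseries}), once one upgrades Lemma \ref{Lielem} with a mild weight-tracking statement. I grade the series algebra $\sH(\ZZ)\langle\langle X\rangle\rangle$ by declaring that a monomial $\zeta_u\,w$ has degree $|u|-|w|$, and extend the grading to function-valued series by assigning degree $|u|$ to each polylogarithm $L_u(z)$ or $L_u(\zz)$. Tautologically, both $L_{\x_0,\x_1}(z)=\sum_w L_w(z)\,w$ and $\widetilde{L}_{\x_0,\x_1}(\zz)$ are homogeneous of degree zero in this grading.

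The key step, which I expect to be the main subtlety, is to verify that $\x_1'$ is homogeneous of degree $-1$: equivalently, every Lie word of length $\ell$ appearing in the expansion of $\x_1'$ must come with an MZV coefficient of weight exactly $\ell-1$. Since Drinfeld's associator $Z=\sum_w \zeta_w\,w$ is manifestly of degree zero, the correction $F(\x_0,\x_1) = Z_{\x_0,\x_1}\x_1 Z_{\x_0,\x_1}^{-1}-\x_1$ is of degree $-1$. Moreover, the substitution $\x_1 \mapsto \x_1'^{(k)}$ in $F$ is degree-preserving whenever $\x_1'^{(k)}$ itself has degree $-1$. Hence the recursion $\x_1'^{(k+1)} = \x_1 + F(\x_0,\x_1) + F(-\x_0,-\x_1'^{(k)})$ in the proof of Lemma \ref{Lielem} preserves degree $-1$ by induction on $k$, and the property passes to the limit in the weight filtration.

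Granting this, $\widetilde{L}_{\x_0,\x_1'}(\zz)$ is of degree zero, and so is the product
\begin{equation*}
\widetilde{L}_{\x_0,\x_1'}(\zz)\,L_{\x_0,\x_1}(z) \;=\; \sum_w P_w(z)\,w.
\end{equation*}
Projecting onto the word $w$, which carries degree $-|w|$, forces $P_w(z)$ to be of degree $|w|$. Unwinding the grading, this degree is exactly the total weight defined just above the corollary, namely the sum of the MZV weight and the weights of the holomorphic and antiholomorphic polylogarithms in each constituent monomial of $P_w(z)$.
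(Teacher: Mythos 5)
Your proof is correct and follows essentially the same route as the paper: assign the generators $\x_0,\x_1$ degree $-1$ so that $L(z)$, $L(\zz)$, $Z$ become homogeneous of degree zero, deduce from the recursion in the proof of Lemma \ref{Lielem} that $\x_1'$ has total weight $-1$, and conclude that the generating series $P_{\x_0,\x_1}=\widetilde{L}_{\x_0,\x_1'}(\zz)L_{\x_0,\x_1}(z)$ has total weight zero. The paper states the $\x_1'$ step more tersely, but your explicit induction through $\x_1'^{(k)}$ is exactly what it relies on.
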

\begin{proof}
Let the generators $\x_0$, $\x_1$ have weight $-1$. Then $L(z)$, $L(\zz)$, $Z$ have weight 0.
From the proof of lemma \ref{Lielem} it follows that the total weight of $\x_1'$ equals $-1$. Hence $P_{\x_0,\x_1}$ has total weight 0.
\end{proof}

By construction the regularized values of the $P_w(z)$ at 0 vanish. Their regularized values at 1 form a ring which by lemma \ref{Lielem} is a sub-ring of $\sH(\ZZ)$.
\begin{defn}\label{Rsvdef}For any ring $R\subseteq\CC$ define the ring
\begin{equation}
\sH^\sv(R)=\langle P_w(1),\;w\in X^\ast\rangle_R,
\end{equation}
and let $\sH^\sv=\sH^\sv(\QQ)\subset\sH$ be the $\QQ$ algebra of regularized values of SVMPs at 1.
\end{defn}
We will see in the next section that the regularized values $P_w(z)$ at infinity equivalently span $\sH^\sv(R)$. Moreover,
$\x_1'\in\Lie_{\sH^\sv(\ZZ)}\langle\langle X\rangle\rangle$ by theorem \ref{Stabilitythm}.

\begin{remark}
In \cite{Bsv} F. Brown shows that the $\QQ$ algebra $\sH^\sv$ of `single-valued' MZVs has a deep algebraic structure.
In particular, assuming transcendentality conjectures we can characterize $\sH^\sv$.
As a shuffle algebra $\sH^\sv$ is generated as an algebra by MZVs of odd weight on which $\Delta:\sH^\sv\to\sH/\zeta(2)\sH\otimes\sH^\sv$ coacts.
The number of generators at weight $2k+1$ equals the number of weight $2k+1$ Lyndon words
in odd generators of weight greater or equal three. There exists a canonical map $\psi:\sH\rightarrow\sH^\sv$.
\end{remark}

\begin{ex}\label{Rsvex}
Up to weight ten $\sH^\sv$ is generated by $\zeta(3)$, $\zeta(5)$, $\zeta(7)$, $\zeta(9)$. At weight 11 we have in addition to $\zeta(11)$ the MZV
$$
g_{335}=\zeta(3,3,5)-\frac{4}{7}\zeta(5)\zeta(2)^3+\frac{6}{5}\zeta(7)\zeta(2)^2+45\zeta(9)\zeta(2)
$$
corresponding to the Lyndon word 335. In fact, for $\Delta'x=\Delta x-1\otimes x-x\otimes 1$,
$$
\Delta'g_{335}\equiv\zeta(3,5)\otimes\zeta(3)-\frac{5}{2}\zeta(5)\otimes\zeta(3)^2\in\sH/\zeta(2)\sH\otimes\sH^\sv.
$$
The ring $\sH^\sv(\ZZ)$ is spanned up to weight eleven by $2\zeta(3)$, $\zeta(5)$, $2\zeta(3)^2$, $\frac{1}{8}\zeta(7)$, $\zeta(3)\zeta(5)$, $\frac{1}{72}\zeta(9)$,
$\frac{4}{3}\zeta(3)^3$, $\frac{1}{8}\zeta(3)\zeta(7)$, $\frac{1}{2}\zeta(5)^2$, $\frac{1}{384}\zeta(11)$, $\frac{1}{5}(g_{335}+\zeta(11))+\frac{3}{2}\zeta(3)^2\zeta(5)$,
and $2\zeta(3)^2\zeta(5)$.
\end{ex}

\subsection{Permuting $0,1,\infty$}
A special property of SVMPs on $\PP^1\CC\backslash\{0,1,\infty\}$ is that there exists a group of M\"obius transformations that permute the singular points.
\begin{defn}
Let $\sS_3$ be the group of M\"obius transformations of $\PP^1\CC$ that permute $\{0,1,\infty\}$,
\begin{equation}
\sS_3=\left\{z\to \phi(z),\;\phi(z)\in\left\{z,1-z,\frac{z-1}{z},\frac{z}{z-1},\frac{1}{1-z},\frac{1}{z}\right\}\right\}.
\end{equation}
\end{defn}
The generating series $P_{\x_0,\x_1}$ of SVMPs transforms as follows:
\begin{lem}
The following identities hold
\begin{equation}\label{x1trafo}
\x_1'(\x_0,-\x_0-\x_1)=-\x_0-\x_1'(\x_0,\x_1).
\end{equation}
\begin{eqnarray}\label{S3trafos}
P_{\x_0,\x_1}(1-z)&=&\widetilde{L}_{\x_1',\x_0}(\zz)\widetilde{Z}_{\x_0,\x_1'}Z_{\x_0,\x_1}L_{\x_1,\x_0}(z)\nonumber\\
&=&P_{\x_0,\x_1}(1)P_{\x_1,\x_0}(z)\;\;=\;\;P_{\x_1',\widetilde{\x_1'}(\x_1',\x_0)}(z)P_{\x_0,\x_1}(1),\nonumber\\
P_{\x_0,\x_1}\left(\frac{z-1}{z}\right)&=&\widetilde{L}_{-\x_0-\x_1',\x_0}(\zz)\widetilde{Z}_{\x_0,-\x_0-\x_1'}Z_{\x_0,-\x_0-\x_1}L_{-\x_0-\x_1,\x_0}(z)\nonumber\\
&=&P_{\x_0,-\x_0-\x_1}(1)P_{-\x_0-\x_1,\x_0}(z)\nonumber\\
&=&P_{-\x_0-\x_1',\widetilde{\x_1'}(-\x_0-\x_1',\x_0)}(z)P_{\x_0,-\x_0-\x_1}(1),\nonumber\\
P_{\x_0,\x_1}\left(\frac{z}{z-1}\right)&=&\widetilde{L}_{\x_0,-\x_0-\x_1'}(\zz)L_{\x_0,-\x_0-\x_1}(z)\;\;=\;\;P_{\x_0,-\x_0-\x_1}(z),\nonumber\\
P_{\x_0,\x_1}\left(\frac{1}{1-z}\right)&=&\widetilde{L}_{\x_1',-\x_0-\x_1'}(\zz)\widetilde{Z}_{\x_0,\x_1'}Z_{\x_0,\x_1}L_{\x_1,-\x_0-\x_1}(z)\nonumber\\
&=&P_{\x_0,\x_1}(1)P_{\x_1,-\x_0-\x_1}(z)\;\;=\;\;P_{\x_1',\widetilde{\x_1'}(\x_1',-\x_0-\x_1')}(z)P_{\x_0,\x_1}(1),\nonumber\\
P_{\x_0,\x_1}\left(\frac{1}{z}\right)&=&\widetilde{L}_{-\x_0-\x_1',\x_1'}(\zz)\widetilde{Z}_{\x_0,-\x_0-\x_1'}Z_{\x_0,-\x_0-\x_1}L_{-\x_0-\x_1,\x_1}(z)\nonumber\\
&=&P_{\x_0,-\x_0-\x_1}(1)P_{-\x_0-\x_1,\x_1}(z)\nonumber\\
&=&P_{-\x_0-\x_1',\widetilde{\x_1'}(-\x_0-\x_1',\x_1')}(z)P_{\x_0,-\x_0-\x_1}(1).
\end{eqnarray}
\end{lem}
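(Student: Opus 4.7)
The plan is to pass everything down to the level of the holomorphic generating series $L_{\x_0,\x_1}(z)$, use the chain rule \textbf{S7} (via the letter transformations in (\ref{fmap})) combined with the path composition rule \textbf{S5} to rewrite $L_{\x_0,\x_1}(f(z))$ and its antiholomorphic partner $\widetilde L_{\x_0,\x_1'}(\overline{f(z)})$, and then reassemble through the defining ansatz (\ref{Pgenseries}). Concretely, a M\"obius $f \in \sS_3$ sends $0$ to some point $p \in \{0,1,\infty\}$; by \textbf{S7} one has $L_{\x_0,\x_1}(f(z)) = S_{f(\x_0),f(\x_1)}(p,z)$, and path composition then splits this as $S_{f(\x_0),f(\x_1)}(p,0) \cdot L_{f(\x_0),f(\x_1)}(z)$, inserting an associator factor whenever $p \neq 0$.

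Before handling (\ref{S3trafos}), I would establish (\ref{x1trafo}). The idea is to show that $-\x_0 - \x_1'(\x_0,\x_1)$ satisfies the defining equation (\ref{x1cond}) with $\x_1$ replaced by $-\x_0-\x_1$, and then invoke uniqueness of the Lie-series solution constructed in the proof of lemma \ref{Lielem}. One substitutes and notes that the transformation $\x_1\mapsto -\x_0-\x_1$ is the letter transformation (\ref{fmapdual}) induced by $z\mapsto z/(z-1)$, under which the relevant associator factors on both sides of (\ref{x1cond}) transform compatibly. The uniqueness follows because the recursion defining $\x_1'$ in lemma \ref{Lielem} is convergent in the weight filtration and its fixed point is determined by the identity and by the boundary condition $\x_1' \equiv \x_1$ modulo depth two.

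For each of the six identities in (\ref{S3trafos}), the derivation of the first form proceeds uniformly. Take $z\mapsto 1-z$ as a template: \textbf{S7} gives $L_{\x_0,\x_1}(1-z) = S_{\x_1,\x_0}(1,z)$, and \textbf{S5} together with $S_{\x_1,\x_0}(1,0)=Z_{\x_1,\x_0}^{-1}=Z_{\x_0,\x_1}$ (by (\ref{Zid})) yields $L_{\x_0,\x_1}(1-z) = Z_{\x_0,\x_1} L_{\x_1,\x_0}(z)$. Taking complex conjugates with the parameter $\x_1'$ in place of $\x_1$, reversing (so $\widetilde{AB}=\widetilde B\widetilde A$), and multiplying according to (\ref{Pgenseries}) produces the first expression. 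The remaining five cases are analogous, with the appropriate associator factor arising from composing the path from $0$ to $p=f(0)$. The second form, displayed as $P_{\x_0,\x_1}(1)\cdot P_{\x_1,\x_0}(z)$ and its analogues, is obtained by recognizing the regularized limit $P_{\x_0,\x_1}(1)=\widetilde Z_{\x_0,\x_1'}Z_{\x_0,\x_1}$ at the singular point that $f(0)$ hits.

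The main technical step, and where I expect the difficulty to concentrate, is producing the third form (with the associator pushed to the right of $P$). This requires rewriting $\widetilde L_{\x_1',\x_0}(\zz)\widetilde Z_{\x_0,\x_1'}$ as $\widetilde L_{\x_1',\widetilde{\x_1'}(\x_1',\x_0)}(\zz)$ times a pure-scalar associator, which amounts to applying the defining equation (\ref{x1cond}) to the pair $(\x_1',\x_0)$ and invoking the compatibility identities (\ref{x1id}) together with the newly proven (\ref{x1trafo}). One then systematically tabulates how each of the six transformations acts on the pair $(\x_0,\x_1')$ via (\ref{fmapdual}) and (\ref{x1trafo}), checking in each case that the resulting twist matches the stated parameters $\widetilde{\x_1'}(\x_1',\x_0)$, $\widetilde{\x_1'}(-\x_0-\x_1',\x_0)$, etc. The bookkeeping is the true obstacle: one has to keep track of which side of $L$ each associator sits on and which version of $\x_1'$ is induced, but once (\ref{x1trafo}) and (\ref{x1id}) are in hand the six cases all collapse to the same pattern.
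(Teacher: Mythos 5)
Your treatment of the first forms in (\ref{S3trafos}) --- reduce everything to $L$ via \textbf{S7} and \textbf{S5}, pick up an associator whenever $f(0)\neq 0$, and reassemble through the ansatz (\ref{Pgenseries}) --- is exactly the paper's route, and your identification of the scalar factors as regularized values of $P$ at the point $f(0)$ hits is correct. The genuine gap is in your proposed derivation of (\ref{x1trafo}). You want to prove it \emph{first}, by substituting $\x_1\mapsto-\x_0-\x_1$ into (\ref{x1cond}) and asserting that ``the relevant associator factors on both sides transform compatibly.'' But $z\mapsto z/(z-1)$ sends the endpoint $1$ to $\infty$, so $Z_{\x_0,-\x_0-\x_1}$ is \emph{not} obtained from $Z_{\x_0,\x_1}$ by the letter substitution (\ref{fmapdual}); relating the two requires a hexagon-type identity for regularized paths through $\infty$ --- essentially the identity $\widetilde{X}_{\x_0,\x_1'}X_{\x_0,\x_1}=1$ of the corollary that \emph{follows} this lemma and is deduced from it. As written, your verification step is either circular or unproven. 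The paper avoids this entirely: since $z\mapsto z/(z-1)$ fixes the base point $0$, one has $L_{\x_0,\x_1}(z/(z-1))=L_{\x_0,-\x_0-\x_1}(z)$ with \emph{no} associator, so (\ref{Pgenseries}) gives $P_{\x_0,\x_1}(z/(z-1))=\widetilde{L}_{\x_0,-\x_0-\x_1'}(\zz)L_{\x_0,-\x_0-\x_1}(z)$ directly; comparing with the defining expression $P_{\x_0,-\x_0-\x_1}(z)=\widetilde{L}_{\x_0,\x_1'(\x_0,-\x_0-\x_1)}(\zz)L_{\x_0,-\x_0-\x_1}(z)$, cancelling the invertible holomorphic factor, and using linear independence of the $L_w(\zz)$ yields (\ref{x1trafo}) as a \emph{consequence} of the $z/(z-1)$ case. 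The logical order must therefore be reversed.

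A second, smaller soft spot: passing from the first to the second and third forms is not just ``recognizing the regularized limit.'' For $1-z$, say, one must commute the constant $\widetilde{Z}_{\x_0,\x_1'}Z_{\x_0,\x_1}$ past the antiholomorphic factor $\widetilde{L}_{\x_1',\x_0}(\zz)$, which changes its alphabet to the twisted one appearing in $P_{\x_1,\x_0}(z)$ resp.\ $P_{\x_1',\widetilde{\x_1'}(\x_1',\x_0)}(z)$; these conjugation identities are precisely what you would have to supply. The paper sidesteps this algebra by observing that both sides of each asserted equality satisfy the same differential equation in $\partial_z$ (resp.\ $\partial_\zz$, using $\widetilde{\x_1'}(\x_0,\x_1')=\x_1$ from (\ref{x1id})) and checking a single initial value for the generating transformations $z\to 1-z$ and $z\to z/(z-1)$. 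Your direct algebraic route can be made to work, but the ODE-plus-initial-value argument is both shorter and avoids the bookkeeping you correctly identify as the obstacle.
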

\begin{proof}
We first prove the first set of equations in (\ref{S3trafos}). The first formula for $P_{\x_0,\x_1}(\frac{z}{z-1})$ is a consequence of (\ref{Pgenseries}) and (\ref{fmapdual}).
By the argument in the proof of lemma \ref{h1}
$$
L_{\x_0,\x_1}(1-z)=Z_{\x_0,\x_1}L_{\x_1,\x_0}(z),
$$
leading to the first formula for $P_{\x_0,\x_1}(1-z)$.
If we apply the transformation $z\to\frac{z}{z-1}$ to $P_{\x_0,\x_1}(1-z)$ we obtain the first formula for $P_{\x_0,\x_1}(\frac{1}{1-z})$.
The transformation $z\to1-z$ applied to $P_{\x_0,\x_1}(\frac{z}{z-1})$ gives with the above formula for $L_{\x_0,\x_1}(1-z)$ the result for $P_{\x_0,\x_1}(\frac{z-1}{z})$.
Another transformation $z\to\frac{z}{z-1}$ yields the first formula for $P_{\x_0,\x_1}(\frac{1}{z})$ by (\ref{fmapdual}).

The second set of equations is equivalent to the first because they fulfill the same differential equations with respect to $\partial_z$.
An initial value is easily checked for the transformations $\tau:z\to1-z$ and $\sigma:z\to\frac{z}{z-1}$.
The other cases follow by iterating $\tau$ and $\sigma$.

The third set of equations follows like the second set of equations by antiholomorphic differentiation $\partial_\zz$ using (\ref{diffonP})
and the second identity in (\ref{x1id}).

Because $L_{\x_0,-\x_0-\x_1}(z)$ is invertible (it is a Lie exponential) we obtain from the transformation $z\to\frac{z}{z-1}$
$$
\widetilde{L}_{\x_0,\x_1'(\x_0,-\x_0-\x_1)}(\zz)=\widetilde{L}_{\x_0,-\x_0-\x_1'}(\zz).
$$
By the independence of the multiple polylogarithms in $\zz$ we obtain (\ref{x1trafo}).
\end{proof}
If we substitute $z=1$ in $P_{\x_0,\x_1}(\frac{z}{z-1})=P_{\x_0,-\x_0-\x_1}(z)$ we obtain $P_{\x_0,\x_1}(\infty)=P_{\x_0,-\x_0-\x_1}(1)$.
The ring of SVMPs generates at $\infty$ the same MZVs as at 1.

If we set $z=1$ in $P_{\x_0,\x_1}(\frac{1}{z})$ we get the following corollary:
\begin{cor}
Let $a,b,c$ be linear expressions in $\x_0,\x_1$ such that $a+b+c=0$. Then $X_{\x_0,\x_1}=Z_{a,b}Z_{b,c}Z_{c,a}$ fulfills the identity
\begin{equation}\label{Xid}
\widetilde{X}_{\x_0,\x_1'}X_{\x_0,\x_1}=1.
\end{equation}
\end{cor}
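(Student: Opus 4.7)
The plan is to specialize the sixth identity of (\ref{S3trafos}),
$$P_{\x_0,\x_1}(1/z)=\widetilde{L}_{-\x_0-\x_1',\x_1'}(\zz)\,\widetilde{Z}_{\x_0,-\x_0-\x_1'}\,Z_{\x_0,-\x_0-\x_1}\,L_{-\x_0-\x_1,\x_1}(z),$$
at the regularized value $z=1$ and compare with the direct ansatz (\ref{Pgenseries}), which via $L_{\x_0,\x_1}(1)=Z_{\x_0,\x_1}$ yields $P_{\x_0,\x_1}(1)=\widetilde{Z}_{\x_0,\x_1'}Z_{\x_0,\x_1}$. Using $L_{u,v}(1)=Z_{u,v}$ and its reversed counterpart at $\zz=1$, both sides of the transformation formula reduce to concatenations of $Z$ and $\widetilde{Z}$ factors, and the equality becomes
$$\widetilde{Z}_{\x_0,\x_1'}\,Z_{\x_0,\x_1}=\widetilde{Z}_{-\x_0-\x_1',\x_1'}\,\widetilde{Z}_{\x_0,-\x_0-\x_1'}\,Z_{\x_0,-\x_0-\x_1}\,Z_{-\x_0-\x_1,\x_1}.$$

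I then multiply on the right by $Z_{\x_0,\x_1}^{-1}=Z_{\x_1,\x_0}$ (from (\ref{Zid})) and on the left by $\widetilde{Z}_{\x_0,\x_1'}^{-1}$. The latter inverse equals $\widetilde{Z}_{\x_1',\x_0}$ because tilde is an anti-homomorphism for concatenation and so commutes with inversion after swapping the order of factors. The rearranged equation reads
$$1=\widetilde{Z}_{\x_1',\x_0}\,\widetilde{Z}_{-\x_0-\x_1',\x_1'}\,\widetilde{Z}_{\x_0,-\x_0-\x_1'}\cdot Z_{\x_0,-\x_0-\x_1}\,Z_{-\x_0-\x_1,\x_1}\,Z_{\x_1,\x_0},$$
which is precisely $\widetilde{X}_{\x_0,\x_1'}X_{\x_0,\x_1}=1$ for the triple $(a,b,c)=(\x_0,-\x_0-\x_1,\x_1)$.

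To pass from this specific triple to a general linear triple $(a,b,c)$ with $a+b+c=0$, I apply the continuous algebra endomorphism of $\CC\langle\langle X\rangle\rangle$ sending $\x_0\mapsto a$, $\x_1\mapsto b$ to the identity just obtained. This substitution is well defined because $a,b$ lie in the augmentation ideal, and the defining condition (\ref{x1cond}) of $\x_1'$ is preserved by such substitutions, so that $\x_1'(\x_0,\x_1)$ is mapped to $\x_1'(a,b)$. After rewriting the resulting concatenation in the correct cyclic order via $Z_{u,v}=Z_{v,u}^{-1}$, one obtains the claim for $(a,b,c)$; the degenerate case with $a,b$ linearly dependent follows by a continuity argument in the weight filtration.

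The main obstacle is the last step: one has to keep careful track of how $\x_1'$ and the tilde operation interact with the substitution and the reversal of cyclic order, so that the output really matches the $\widetilde{X}_{\x_0,\x_1'}$ associated to the new triple $(a,b,c)$. Once this bookkeeping is in place, the identity is a direct consequence of evaluating (\ref{S3trafos}) at $z=1$ together with the elementary reciprocity $Z_{u,v}^{-1}=Z_{v,u}$ from (\ref{Zid}).
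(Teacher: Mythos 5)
Your proposal follows essentially the same route as the paper: specialize the formula for $P_{\x_0,\x_1}(\tfrac{1}{z})$ in (\ref{S3trafos}) at the regularized point $z=1$, compare with $P_{\x_0,\x_1}(1)=\widetilde{Z}_{\x_0,\x_1'}Z_{\x_0,\x_1}$, rearrange via $Z_{\x_0,\x_1}^{-1}=Z_{\x_1,\x_0}$ from (\ref{Zid}) to obtain the case $(a,b,c)=(\x_0,-\x_0-\x_1,\x_1)$, and then extend by a change of variables. The only divergence is at the degenerate endpoint, where the paper disposes of linearly dependent $a,b$ directly by noting that $Z$ is a Lie exponential with no linear term, so all brackets vanish and $Z_{a,b}=Z_{b,c}=Z_{c,a}=1$; this is cleaner than your continuity argument in the weight filtration, which as stated does not obviously degenerate to the dependent locus.
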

\begin{proof}
If we substitute $z=1$ into $P_{\x_0,\x_1}(\frac{1}{z})$ in (\ref{S3trafos}) we obtain the identity
$$
\widetilde{Z}_{-\x_0-\x_1',\x_1'}\widetilde{Z}_{\x_0,-\x_0-\x_1'}Z_{\x_0,-\x_0-\x_1}Z_{-\x_0-\x_1,\x_1}=\widetilde{Z}_{\x_0,\x_1'}Z_{\x_0,\x_1}.
$$
By (\ref{Zid}) we have $Z_{\x_0,\x_1}=Z_{\x_1,\x_0}^{-1}$ and (\ref{Xid}) is true for $a=\x_0$, $b=-\x_0-\x_1$, $c=\x_1$. By a change of variables this proves (\ref{Xid}) for
linearly independent $a$ and $b$. If $a$ and $b$ are linearly dependent then---because $Z$ is a Lie-exponential---$Z_{a,b}=Z_{b,c}=Z_{c,a}=1$.
\end{proof}
Note that in general $X_{\x_0,\x_1}$ is not 1. It follows from the second identity for $P_{\x_0,\x_1}(\frac{1}{z})$ that the canonical map
$\psi:\sH\rightarrow\sH^\sv$ maps $X_{\x_0,\x_1}$ to 1.

\subsection{Integration of single-valued multiple polylogarithms}\label{integration}
A consequence of theorem \ref{Francisthm} is that the integrals $\int\dd z/z$, $\int\dd z/(z-1)$ of SVMPs of weight $n$ are unique in $\sP_{n+1}$.
If we integrate (\ref{diffonaw}) and use induction over the weight we see that the antiholomorphic integrals $\int\dd\zz/\zz$ and $\int\dd\zz/(\zz-1)$ also exist.
However from the induction they pick up terms of lower weights so that the antiholomorphic integrals are unique in $\sP_{>0}=\bigoplus_{n\geq1}\sP_n$ where only weights
up to $n+1$ contribute. The weight grading hence singles out holomorphic integration; only the filtration by weights less or equal $n$ is symmetric under complex conjugation.

Both holomorphic and antiholomorphic integrals have the property that their regularized limit at zero vanishes. They are henceforth written as $\int_0$.
\begin{defn}
For $P\in\sP$ let $\int_0\frac{P(z)}{z-a}\dd z$, $a\in\{0,1\}$ be the unique function $F\in\sP_{>0}$ such that $\partial_zF(z)=\frac{P(z)}{z-a}$.
Likewise $G(z)=\int_0\frac{P(z)}{\zz-a}\dd\zz\in\sP_{>0}$ is uniquely defined by $\partial_\zz G(z)=\frac{P(z)}{\zz-a}$.
For $b\in\{1,\infty\}$ we define $\int_b\frac{P(z)}{z-a}\dd z=F(z)-F(b)$ and $\int_b\frac{P(z)}{\zz-a}\dd\zz=G(z)-G(b)$, where $F(b)$ and $G(b)$ are regularized limits.
\end{defn}
The integral $\int_0\dd z/(z-a)$ is calculated by appending the letter $a$ to the basis SVMPs $P_w$. The antiholomorphic integral is inductively defined by integrating
(\ref{diffonaw}),
\begin{equation}\label{intonw}
\int_0\frac{P_w(z)}{\zz-a}\dd\zz=P_{aw}(z)-\sum_{aw=uv}(\x_1'-\x_1|u)\int_0\frac{P_v(z)}{\zz-1}\dd\zz,
\end{equation}
where $a\in\{0,1\}$. Note that $u$ has at least length 4 so that an inductive algorithm rapidly terminates.
\begin{ex}\label{lox}
For words $w$ with length at most two we have
\begin{equation}\label{loworderexamples}
\int_0\frac{P_w(z)}{\zz-a}\dd\zz=P_{aw}(z),\quad\hbox{if }|w|\leq2.
\end{equation}
For words $w$ of length three with $c_{aw}\neq0$ in (\ref{weightfour}) antiholomorphic integration picks up a $\zeta(3)$ term (for $P^0_w(z)$ see definition \ref{P0def}),
$$
\int_0\frac{P_w(z)}{\zz-a}\dd\zz=P_{aw}^0(z)-c_{aw}L_1(z)\;=\;P_{aw}(z)-c_{aw}P_1(z).
$$
\end{ex}
Equation (\ref{intonw}) uses $\x_1'$ to give a formula for antiholomorphic integration. Likewise the construction of the basis $P_w$ uses $\x_1'$.
The series $\x_1'$ grows rapidly with the weight. Therefore $\x_1'$ is only accessible up to weight $\approx11$.
An alternative approach to integration of MZVs provides holomorphic and antiholomorphic integration without using $\x_1'$ and enables us
to construct the basis $P_w$ to high weights ($\approx30$). We need the following propositions:
\begin{prop}\label{limprop0}
Let $P\in\sP$ be a SVMP. Then
\begin{equation}\label{limeq0}
\lim_{z\to0}z\partial_z\int_0\frac{P(z)}{\zz-a}\dd\zz=\delta_{a,0}P(0),\hbox{ for }a\in\{0,1\},
\end{equation}
where $\delta_{a,0}$ is the Kronecker delta.
\end{prop}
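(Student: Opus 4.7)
The strategy is to expand $F(z):=\int_0 \frac{P(z)}{\zz-a}\dd\zz\in\sP_{>0}$ in the canonical basis $\{P_w\}$ of $\sP$, compute $z\partial_z F$ using the defining differential equation of the $P_w$'s, and then read off the $z\to0$ limit. The role of the hypothesis $a=0$ versus $a=1$ will appear only at the very last step, when we identify one particular coefficient in the expansion of $F$.

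First I would write $F = \sum_{|w|\geq 1} c_w P_w$; since $F\in\sP_{>0}$, no $P_\emptyset=1$ term is present. Using $\partial_z P_{w'b}(z)=P_{w'}(z)/(z-b)$ from theorem \ref{Francisthm}, for a word $w=w'b$ with $b\in\{0,1\}$ we get
\begin{equation*}
z\partial_z P_{w'b}(z)=\frac{z}{z-b}\,P_{w'}(z).
\end{equation*}
I would now split the sum defining $z\partial_z F$ according to the last letter $b$ of $w$. In the $b=1$ case $z/(z-1)\to 0$ and each $P_{w'}(z)$ grows at most polynomially in $\ln(z\zz)$; since $z$ kills any power of $\ln(z\zz)$ in the limit, all such terms vanish. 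In the $b=0$ case $z\partial_z P_{w'0}=P_{w'}$, whose regularized limit at $0$ vanishes unless $w'=\emptyset$ (by theorem \ref{Francisthm} again). Therefore only $w=0$ survives and
\begin{equation*}
\lim_{z\to 0} z\partial_z F(z) = c_0,
\end{equation*}
the coefficient of $P_0$ in the basis expansion of $F$.

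It remains to compute $c_0$. Writing $P=\sum_w d_w P_w$ (so that $d_\emptyset=P(0)$ is the regularized value at $0$), linearity gives
\begin{equation*}
F = \sum_w d_w \int_0 \frac{P_w(z)}{\zz-a}\dd\zz.
\end{equation*}
Applying the recursive formula (\ref{intonw}) term by term, the leading piece is $d_w\,P_{aw}(z)$ and the corrections are $\sum_{aw=uv}(\x_1'-\x_1|u)\int_0 P_v/(\zz-1)\,\dd\zz$. By lemma \ref{Lielem} every word $u$ contributing to $\x_1'-\x_1$ has length at least $4$, and after unwinding (\ref{intonw}) recursively every correction term is a linear combination of basis SVMPs $P_{1\cdots}$ whose index word begins with the letter $1$. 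None of these can equal $P_0$. Consequently $c_0$ is contributed only by the leading terms $P_{aw}$ with $aw=0$, forcing $a=0$ and $w=\emptyset$, and giving $c_0=\delta_{a,0}\,d_\emptyset=\delta_{a,0}P(0)$.

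The main obstacle I anticipate is the bookkeeping in the last step: verifying rigorously that no correction introduced through $\x_1'-\x_1$ can ever land in the one-dimensional subspace $\CC\cdot P_0$. The length bound $|u|\geq 4$ together with the observation that the recursion in (\ref{intonw}) adds the letter $1$ on the left at each stage handles this, but one should spell out the simple induction on weight to exclude any spurious $P_0$ contribution. All other steps are routine manipulations with the canonical basis and its differential structure.
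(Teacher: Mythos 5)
Your proof is correct and follows essentially the same route as the paper's: both rest on the recursion (\ref{intonw}), the identity $z\partial_z P_{w'b}=\tfrac{z}{z-b}P_{w'}$, and the vanishing of the regularized limits at $0$ of the $P_{w'}$ for nonempty $w'$. The only difference is organizational: the paper runs a direct induction on the length of the word and kills the $\x_1'-\x_1$ correction terms by invoking the statement itself with $a=1$, whereas you extract the coefficient of $P_0$ in the basis expansion of $\int_0 P(z)(\zz-a)^{-1}\dd\zz$ and note that the corrections only produce basis words beginning with the letter $1$.
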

\begin{proof}
By linearity we may assume without restriction that $P=P_w$ for some word $w$. We use induction over the length of $w$.
If $w=\emptyset$ then $\int_0\frac{1}{\zz-a}\dd\zz=\ln(z-a)(\zz-a)$ and (\ref{limeq0}) follows.
For the word $wb$, $b\in\{0,1\}$, we obtain from (\ref{intonw}) by induction
\begin{eqnarray*}
\lim_{z\to0}z\partial_z\int_0\frac{P_{wb}(z)}{\zz-a}\dd\zz&=&\lim_{z\to0}z\partial_zP_{awb}(z)-\sum_{awb=uv}(\x_1'-\x_1|u)\lim_{z\to0}z\partial_z\int_0\frac{P_v(z)}{\zz-1}\dd\zz\\
&=&\lim_{z\to0}z\frac{P_{aw}(z)}{z-b}\;=\;\delta_{b,0}P_{aw}(0)\;=\;0.\\
\end{eqnarray*}
\end{proof}
\begin{prop}\label{limprop1}
Let $P\in\sP$ be a SVMP. Then
\begin{equation}\label{limeq1}
\lim_{z\to1}(z-1)\partial_z\int_0\frac{P(z)}{\zz-a}\dd\zz=\delta_{a,1}P(1),\hbox{ for }a\in\{0,1\}.
\end{equation}
\end{prop}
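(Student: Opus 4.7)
The plan is to mirror the proof of Proposition \ref{limprop0}, reducing by linearity to $P=P_w$ and inducting on $|w|$. For the base case $w=\emptyset$, the formula $\int_0\dd\zz/(\zz-a)=P_a(z)$ gives $(z-1)\partial_z P_a(z)=(z-1)/(z-a)$, whose regularized limit at $z=1$ is $\delta_{a,1}$, matching $\delta_{a,1}P_\emptyset(1)$.

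For the inductive step on $w=w'b$, I would apply (\ref{intonw}), differentiate in $z$, and multiply by $(z-1)$. The first term $(z-1)\partial_z P_{aw'b}(z)=(z-1)P_{aw'}(z)/(z-b)$ has regularized limit $\delta_{b,1}P_{aw'}(1)$ at $z=1$ (for $b=0$ the factor $(z-1)/z$ kills the logarithmic growth of $P_{aw'}$). In each remaining summand, $v$ has length $|v|\leq|w'b|-3<|w'b|$ since $(\x_1'-\x_1|u)=0$ for $|u|\leq 3$ by Lemma \ref{Lielem}, so the inductive hypothesis (at $a=1$) applies and yields $P_v(1)$. The total limit becomes
\[
\delta_{b,1}P_{aw'}(1)-\sum_{aw'b=uv}(\x_1'-\x_1|u)P_v(1),
\]
and the task reduces to identifying this with $\delta_{a,1}P_{w'b}(1)$. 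Note the contrast with Proposition \ref{limprop0}, where the analogous sum vanished because the inner integrals had $\delta_{1,0}=0$; here the inner integrals give nonzero contributions and must be controlled by an exact cancellation.

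The heart of the argument---and the expected main obstacle---is this identity, which I would establish in generating-series form. Multiplying through by $aw'b$ and summing over $(a,b,w')$, both sides package into the claim $P_{\x_0,\x_1}(1)\,\x_1=\x_1'\,P_{\x_0,\x_1}(1)$. From (\ref{Pgenseries}) one reads $P_{\x_0,\x_1}(1)=\widetilde{Z}_{\x_0,\x_1'}Z_{\x_0,\x_1}$, and the duality $\widetilde{Z}_{\x_0,\x_1'}=Z_{-\x_0,-\x_1'}^{-1}$ from (\ref{Zid}) rearranges the desired identity to precisely (\ref{x1cond}), the defining equation of $\x_1'$. This is no coincidence: $\x_1'$ was introduced exactly so that $P_{\x_0,\x_1}(z)$ has trivial monodromy at $z=1$, and the present proposition is the infinitesimal analogue of that cancellation.
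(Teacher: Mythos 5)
Your proof is correct, but it follows a genuinely different route from the paper's. The paper disposes of this proposition in two lines: it substitutes $z\to 1-z$ into proposition \ref{limprop0}, observes that $\partial_z\int_1=\partial_z\int_0$, and invokes the stability of $\sP$ under $z\mapsto 1-z$ (available from (\ref{S3trafos})) to identify $Q(z)=P(1-z)$ with an element of $\sP$. You instead rerun the induction of proposition \ref{limprop0} directly at $z=1$, where the correction terms from (\ref{intonw}) no longer vanish, and you correctly isolate the required cancellation as the concatenation identity $P_{\x_0,\x_1}(1)\,\x_1=\x_1'\,P_{\x_0,\x_1}(1)$; your reduction of this to (\ref{x1cond}) via $P_{\x_0,\x_1}(1)=\widetilde{Z}_{\x_0,\x_1'}Z_{\x_0,\x_1}=Z_{-\x_0,-\x_1'}^{-1}Z_{\x_0,\x_1}$ checks out, and the bookkeeping (the sum only involves suffixes $v$ with $|v|\leq|w|-3$, so the induction closes; the base cases agree since $P_0(1)=P_1(1)=0$) is sound, with the understanding that all limits are regularized. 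What each approach buys: the paper's argument is shorter but leans on the $\sS_3$-stability machinery, whereas yours stays entirely inside the recursive definition of the antiholomorphic integral and makes transparent that the proposition is exactly the infinitesimal form of the trivial-monodromy condition defining $\x_1'$ --- a conceptually clarifying, if longer, derivation.
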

\begin{proof}
Substituting $z\to1-z$ in (\ref{limeq0}) yields for all $Q\in\sP$, $b\in\{0,1\}$
$$
\lim_{1-z\to0}(z-1)\partial_z\int_1\frac{Q(1-z)}{\zz-(1-b)}\dd\zz=\delta_{b,0}Q(0).
$$
Because $\partial_z\int_1=\partial_z\int_0$ the proposition follows from a substitution $a=1-b$, $P(z)=Q(1-z)$.
\end{proof}
The main tool for calculating holomorphic and antiholomorphic integrals is an explicit formula for their commutator.
\begin{lem}\label{commutelem}
Let $P\in\sP$ and $a,b\in\{0,1\}$. The integrals $\int_0\frac{\dd z}{z-a}$, $\int_0\frac{\dd\zz}{\zz-b}$ fulfill the following commutation relation:
\begin{eqnarray}\label{commute}
&&\hspace{-1cm}\left(\int_0\frac{\dd z}{z-a}\int_0\frac{\dd\zz}{\zz-b}-\int_0\frac{\dd\zz}{\zz-b}\int_0\frac{\dd z}{z-a}\right)P(z)\nonumber\\
&=&\left(\left.\int_0\frac{\dd\zz}{\zz-b}P(z)\right|_{z=a}-\left.\int_0\frac{\dd z}{z-a}P(z)\right|_{z=b}\right)\cdot P_1(z).
\end{eqnarray}
\end{lem}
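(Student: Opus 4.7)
Set $F=(AB-BA)P$ and $G=(c_a-c_b)P_1(z)$, where $A=\int_0\dd z/(z-a)$, $B=\int_0\dd\zz/(\zz-b)$, $c_a=BP|_{z=a}$ and $c_b=AP|_{z=b}$. The goal is $F=G$. Both functions lie in $\sP_{>0}$: $F$ because $ABP,BAP\in\sP_{>0}$ by the construction of the integration operators, and $G$ because $P_1\in\sP_1$. Since the regularized value at $z=0$ of any element of $\sP_{>0}$ vanishes, $c_a=0$ whenever $a=0$ and $c_b=0$ whenever $b=0$, so the right-hand side is only "genuinely'' nonzero when the relevant subscript is $1$.

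First I would show $\partial_z\partial_\zz(F-G)=0$. This is immediate for $G$ because $P_1(z)=L_1(z)+L_1(\zz)$ is harmonic. For $F$, Schwarz commutation of $\partial_z$ and $\partial_\zz$ on the SVMPs $ABP$ and $BAP$, together with the defining identities $\partial_z(AQ)=Q/(z-a)$ and $\partial_\zz(BQ)=Q/(\zz-b)$ valid for any $Q\in\sP$, yields
\[
\partial_z\partial_\zz ABP=\partial_\zz\!\left(\frac{BP}{z-a}\right)=\frac{P}{(z-a)(\zz-b)}=\partial_z\!\left(\frac{AP}{\zz-b}\right)=\partial_z\partial_\zz BAP.
\]

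The second step is the kernel characterization: any $X\in\sP_{>0}$ annihilated by $\partial_z\partial_\zz$ must lie in $\CC P_0\oplus\CC P_1$. Indeed, for $|w|\geq2$ a direct computation (peeling off one letter on each side) gives $\partial_z\partial_\zz P_w\neq0$, and the linear independence of the basis $\{P_w\}$ over $\sO\overline{\sO}$ from Theorem \ref{Francisthm} rules out cancellations among these mixed partials of higher-weight basis elements. Thus $F-G=\beta P_0+\gamma P_1$ for some $\beta,\gamma\in\CC$.

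Finally, I would determine $\beta$ and $\gamma$ from the asymptotic limits $\lim_{z\to 0}z\partial_z(F-G)=\beta$ and $\lim_{z\to 1}(z-1)\partial_z(F-G)=\gamma$. Writing $\partial_z F=BP/(z-a)-\partial_z BAP$ and applying Propositions \ref{limprop0} and \ref{limprop1} to $AP\in\sP_{>0}$ yields $\lim_{z\to 0}z\partial_z BAP=\delta_{b,0}AP(0)=0$ and $\lim_{z\to 1}(z-1)\partial_z BAP=\delta_{b,1}AP(1)=c_b$, while direct evaluation gives $\lim_{z\to 0}zBP/(z-a)=0$ and $\lim_{z\to 1}(z-1)BP/(z-a)=c_a$. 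Hence $\lim_{z\to 0}z\partial_z F=0$ and $\lim_{z\to 1}(z-1)\partial_z F=c_a-c_b$, matching the corresponding limits of $\partial_z G=(c_a-c_b)/(z-1)$. Therefore $\beta=\gamma=0$ and $F=G$. The delicate ingredient in this plan is the kernel description in the second step; the rest is a direct application of the defining properties of $A,B$ and the two limit propositions.
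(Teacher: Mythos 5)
Your proof is correct and follows essentially the same route as the paper: both apply $\partial_z\partial_\zz$ to annihilate the commutator (via $\partial_z\partial_\zz ABP=P/((z-a)(\zz-b))=\partial_z\partial_\zz BAP$), invoke that the kernel of $\partial_z\partial_\zz$ in $\sP$ is spanned by $1$, $P_0$, $P_1$, and then pin down the coefficients with the regularized limits of Propositions \ref{limprop0} and \ref{limprop1} at $z=0$ and $z=1$. The only differences are cosmetic: you subtract the claimed right-hand side before taking limits, and you spell out the kernel characterization that the paper leaves implicit in its ansatz.
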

\begin{proof}
Applying $\partial_z\partial_{\zz}=\partial_{\zz}\partial_z$ to the left hand side of the above equation leads to the ansatz
$$
\left(\int_0\frac{\dd z}{z-a}\int_0\frac{\dd\zz}{\zz-b}-\int_0\frac{\dd\zz}{\zz-b}\int_0\frac{\dd z}{z-a}\right)P(z)=c+c_0P_0(z)+c_1P_1(z).
$$
In the regularized limit $z\to0$ the left hand side vanishes, hence $c=0$.

Applying $\partial_z$ to both sides of the above equation yields
\begin{equation}\label{proof1}
\left(\frac{1}{z-a}\int_0\frac{\dd\zz}{\zz-b}-\partial_z\int_0\frac{\dd\zz}{\zz-b}\int_0\frac{\dd z}{z-a}\right)P(z)=\frac{c_0}{z}+\frac{c_1}{z-1}.
\end{equation}

Multiplication by $z$ and taking the limit $z\to0$ gives
$$
c_0=\lim_{z\to0}\left(\frac{z}{z-a}\int_0\frac{\dd\zz}{\zz-b}P(z)-z\partial_z\int_0\frac{\dd\zz}{\zz-b}Q(z)\right),
$$
where $Q(z)=\int_0\frac{\dd z}{z-a}P(z)$.
The first term on the right hand side vanishes. By proposition \ref{limprop0}
we get for the second term on the right hand side $-\delta_{b,0}Q(0)$. From $Q(0)=0$ we obtain $c_0=0$.

Multiplication of (\ref{proof1}) by $z-1$ and taking the limit $z\to1$ gives
$$
c_1=\lim_{z\to1}\left(\frac{z-1}{z-a}\int_0\frac{\dd\zz}{\zz-b}P(z)-(z-1)\partial_z\int_0\frac{\dd\zz}{\zz-b}Q(z)\right).
$$
With proposition \ref{limprop1} we have
$$
c_1=\delta_{a,1}\left.\int_0\frac{\dd\zz}{\zz-b}P(z)\right|_{z=1}-\delta_{b,1}Q(1).
$$
This completes the proof.
\end{proof}
\begin{remark}\label{comm0}
The integrals $\int_0\dd z/z$ and $\int_0\dd\zz/\zz$ commute.
\end{remark}
\begin{remark}\label{commrem}
The above lemma allows us to construct the integrals $\int_0\dd z$ and $\int_0\dd\zz$ by the following steps.
Assume $P\in\sP_n$ of weight $n>0$. Then $\partial_{\zz}P(z)=Q_0(z)/\zz+Q_1(z)/(\zz-1)$ with $Q_0,Q_1\in\sP_{n-1}$ and
$$
P(z)=\int_0\left(\frac{Q_0(z)}{\zz}+\frac{Q_1(z)}{\zz-1}\right)\dd\zz.
$$
The integral $\sI(z)=\int_0\frac{P(z)}{z-a}\dd z$ is given up to a multiple polylogarithm in $\zz$ by integrating the holomorphic multiple polylogarithms in $P(z)$.
Using the above equation and (\ref{commute}) we obtain
\begin{equation}\label{commrem1}
\sI(z)=\sum_{b=0,1}\int_0\frac{\dd\zz}{\zz-b}\int_0\frac{Q_b(z)}{z-a}\dd z+\left(P(a)-\left.\sum_{b=0,1}\int_0\frac{Q_b(z)}{z-a}\dd z\right|_{z=b}\right)\cdot P_1(z).
\end{equation}
Now, $Q_0,Q_1$ are of smaller weight than $P$. We hence may assume that $\int_0\frac{Q_b(z)}{z-a}\dd z$ is known by induction for $b=0,1$. The above equation fixes
$\sI$ by integration of the antiholomorphic multiple polylogarithms up to a holomorphic multiple polylogarithm. Altogether $\sI$ is given up to a constant which is fixed by the
condition that the regularized limit of $\sI$ at $z=0$ vanishes.

Antiholomorphic integration is given by the analogous algorithm.
\end{remark}
\begin{defn}\label{Indef}
Let $I_n$ be the ideal in $\sH$ generated by MZVs of weights between two and $n$.
\end{defn}
So (assuming transcendentality conjectures, otherwise we only have inclusions) $I_0=I_1=0$, $I_2=\zeta(2)\sH$ and $I_3=\zeta(2)\sH+\zeta(3)\sH$.
Recall the definition of $P_w^0(z)$ in (\ref{P0defeq}). We have for $a,b\in\{0,1\}$,
\begin{equation}\label{diffeq0}
\partial_z P_{wa}^0(z)=\frac{P_w^0(z)}{z-a},\quad\partial_{\zz} P_{bw}^0(z)=\frac{P_w^0(z)}{\zz-b}.
\end{equation}
Moreover, from (\ref{Sinversion}) we have for words $w$ of positive length
\begin{equation}\label{zetaop}
\zeta_w+(-1)^{|w|}\zeta_{\widetilde{w}}\equiv0\mod I_{|w|-1}.
\end{equation}
Hence
\begin{equation}\label{P0w1}
P_w^0(1)\equiv(1-(-1)^{|w|})\zeta_w\mod I_{|w|-1}.
\end{equation}
Because $P_w$ is of total weight $|w|$ and $P_w(0)=0$ we have
\begin{equation}\label{PP0}
P_w(z)\equiv P_w^0(z)\mod I_{|w|-1}.
\end{equation}

\begin{lem}\label{modintlemma}
Let $a,b\in\{0,1\}$ and $w$ be a word in 0 and 1. Then
\begin{eqnarray}\label{modints}
\int_0\frac{P_{bw}(z)}{z-a}\dd z&\equiv&P^0_{bwa}(z)+c_{bwa}L_1(\zz)\mod I_{|w|},\nonumber\\
\int_0\frac{P_{wa}(z)}{\zz-b}\dd\zz&\equiv&P^0_{bwa}(z)-c_{bwa}L_1(z)\mod I_{|w|},
\end{eqnarray}
where (see examples \ref{lox0} and \ref{lox})
\begin{equation}\label{modintsconst}
c_{bwa}=(1+(-1)^{|w|})(\delta_{a,1}\zeta_{bw}-\delta_{b,1}\zeta_{wa}).
\end{equation}
\end{lem}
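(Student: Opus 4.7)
The plan is to treat both integrals via the generating-series formula (\ref{Pgenseries}) for $P_w$ and to track precisely which MZV coefficients fail to lie in $I_{|w|}$. The holomorphic integral is immediate: by uniqueness of the SVMP basis $\int_0 P_{bw}(z)/(z-a)\,\dd z=P_{bwa}(z)$ exactly, so the claim reduces to $P_{bwa}(z)\equiv P^0_{bwa}(z)+c_{bwa}L_1(\zz)\mod I_{|w|}$. For the antiholomorphic integral I apply (\ref{intonw}) to the input word $wa$ with letter $b$,
\[
\int_0\frac{P_{wa}(z)}{\zz-b}\,\dd\zz=P_{bwa}(z)-\sum_{bwa=uv}(\x_1'-\x_1|u)\int_0\frac{P_v(z)}{\zz-1}\,\dd\zz.
\]
Since $(\x_1'-\x_1|u)$ is an MZV of weight $|u|-1$, it lies in $I_{|u|-1}\subseteq I_{|w|}$ whenever $|u|\leq|w|+1$, i.e.\ $|v|\geq 1$. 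Only the term with $u=bwa$, $v=\emptyset$ survives modulo $I_{|w|}$, and since $\int_0 \dd\zz/(\zz-1)=P_1(z)=L_1(z)+L_1(\zz)$ we obtain
\[
\int_0\frac{P_{wa}(z)}{\zz-b}\,\dd\zz\equiv P_{bwa}(z)-(\x_1'-\x_1|bwa)\,P_1(z)\mod I_{|w|}.
\]

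Next I compute $P_{bwa}(z)-P^0_{bwa}(z)\mod I_{|w|}$ by expanding the ansatz $P=\widetilde{L}_{\x_0,\x_1'}(\zz)L_{\x_0,\x_1}(z)$ as a series in $\delta=\x_1'-\x_1$. At first order in $\delta$ each substitution contributes a factorisation $bwa=\alpha\cdot d\cdot\beta\cdot u_2$ with $|d|\geq 4$, carrying the MZV coefficient $(\delta|d)$ of weight $|d|-1$ and the polylog factor $L_{\widetilde{\alpha\x_1\beta}}(\zz)L_{u_2}(z)$. This coefficient lies in $I_{|w|}$ unless $|d|\geq|w|+2=|bwa|$, which forces $d=bwa$ and $\alpha=\beta=u_2=\emptyset$; the lone surviving first-order contribution is therefore $(\delta|bwa)\,L_1(\zz)$. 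At order $k\geq 2$ the total-length constraint $\sum|d_i|\leq|bwa|$ combined with $|d_i|\geq 4$ forces at least one $|d_i|\leq|w|+1$, so its $(\delta|d_i)$ lies in $I_{|w|}$ and kills the whole product. Hence
\[
P_{bwa}(z)\equiv P^0_{bwa}(z)+(\x_1'-\x_1|bwa)\,L_1(\zz)\mod I_{|w|},
\]
which settles the holomorphic case once $(\x_1'-\x_1|bwa)\equiv c_{bwa}$ is known, and combined with the antiholomorphic display above yields $\int_0 P_{wa}/(\zz-b)\,\dd\zz\equiv P^0_{bwa}(z)-(\x_1'-\x_1|bwa)\,L_1(z)\mod I_{|w|}$.

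It remains to identify $(\x_1'-\x_1|bwa)\mod I_{|w|}$ with $c_{bwa}$. The same order-$k\geq 2$ argument applied to the recursion defining $\x_1'$ in the proof of lemma \ref{Lielem} gives $\delta\equiv F(\x_0,\x_1)+F(-\x_0,-\x_1)\mod I_{|w|}$ with $F=Z\x_1 Z^{-1}-\x_1$. Expanding $(F|bwa)$ as $\sum_{i:bwa_i=1}\zeta_{bwa_{<i}}(-1)^{|bwa_{>i}|}\zeta_{\widetilde{bwa_{>i}}}$ and noting that a product $\zeta_u\zeta_v$ of two MZVs can escape $I_{|w|}$ only when one factor is $\zeta_\emptyset=1$ (using $\zeta_0=\zeta_1=0$) leaves at most the two boundary summands $i=1$ (present iff $b=1$) and $i=|bwa|$ (present iff $a=1$). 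The analogous computation for $F(-\x_0,-\x_1)$ uses $(Z_{-\x_0,-\x_1}|u)=(-1)^{|u|}\zeta_u$ and $(Z_{-\x_0,-\x_1}^{-1}|v)=\zeta_{\widetilde v}$ from (\ref{Zid}) to account for the sign flips. Summing the two pieces and applying the antipode identity (\ref{zetaop}) in the form $\zeta_{a\widetilde w}\equiv(-1)^{|w|}\zeta_{wa}\mod I_{|w|}$ collapses the answer to $(1+(-1)^{|w|})(\delta_{a,1}\zeta_{bw}-\delta_{b,1}\zeta_{wa})=c_{bwa}$. The main obstacle is this last symmetrisation: properly tracking the signs from $\x_i\mapsto-\x_i$ in $Z$ and $Z^{-1}$ and merging the $b=1$ contribution of $F$ with the $a=1$ contribution of $F(-\x_0,-\x_1)$ (and vice versa) via (\ref{zetaop}) into the symmetric form of (\ref{modintsconst}).
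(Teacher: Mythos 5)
Your proof is correct, but it takes a genuinely different route from the paper's. The paper proceeds by induction on $|w|$: the key input is the commutation relation of lemma \ref{commutelem} (itself built on propositions \ref{limprop0} and \ref{limprop1}), which converts $\int_0\frac{\dd z}{z-a}$ acting on $P_{bw}$ into $\int_0\frac{\dd\zz}{\zz-b}$ acting on $P_{wa}$ plus the correction $(P_{bw}(a)-P_{wa}(b))P_1(z)$; the constant $c_{bwa}$ then emerges from the boundary values via (\ref{P0w1}), and the answer is pinned down by differentiating, using (\ref{PP0}), and fixing the constant by the regularized limit at $0$. You instead argue non-inductively from the generating series (\ref{Pgenseries}), expanding in $\delta=\x_1'-\x_1$ and using weight counting against $I_{|w|}$ to show that exactly one correction term, $(\delta|bwa)L_1(\zz)$, survives; the antiholomorphic integral is then handled by (\ref{intonw}) with the same counting, and $c_{bwa}$ is identified as $(\x_1'-\x_1|bwa)\bmod I_{|w|}$ computed from $F=Z\x_1Z^{-1}-\x_1$ via the antipode relations (\ref{Sinversion}), (\ref{Zid}) and (\ref{zetaop}). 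I checked the delicate points: the conventions in $\widetilde{L}_{\x_0,\x_1'}(\zz)$ (which decomposition of $bwa$ pairs with $\delta$, and in which order) are consistent with (\ref{diffonaw}) and reproduce $c_{0011}=-2\zeta(3)$ of example \ref{lox0}, and the final symmetrisation indeed collapses to (\ref{modintsconst}). Your approach buys a conceptual identification of $c_{bwa}$ as the extremal coefficient of the twisting series and proves the refinement $P_w\equiv P_w^0+c_wL_1(\zz)\bmod I_{|w|-2}$ (stated in the paper only as a consequence of the lemma) as an intermediate step; the paper's approach avoids the sign and reversal bookkeeping in the ansatz and reuses machinery (the commutator lemma) that is needed anyway for the integration algorithm of remark \ref{commrem}.
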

\begin{proof}
We prove (\ref{modints}) by induction over the length of $w$ using remark \ref{commrem}. The statement reduces to lemma \ref{Lielem} and (\ref{loworderexamples}) if $w=\emptyset$.
Taking $\partial_z$ on both sides, the first equation in (\ref{modints}) reduces to an identity by (\ref{PP0}). Hence the result holds up to an antiholomorphic function.
Moreover, we have by induction and (\ref{PP0})
$$
P_{bw}(z)\equiv\int_0\frac{P_w(z)}{\zz-b}\dd\zz\mod I_{|w|}.
$$
Using lemma \ref{commutelem} we obtain
$$
\int_0\frac{P_{bw}(z)}{z-a}\dd z\equiv\int_0\frac{\dd\zz}{\zz-b}P_{wa}(z)+(P_{bw}(a)-P_{wa}(b))P_1(z)\mod I_{|w|}.
$$
Applying $\partial_\zz$ to the right hand side we obtain from (\ref{PP0}) and the second identity in (\ref{diffeq0})
$$
\int_0\frac{P_{bw}(z)}{z-a}\dd z\equiv P^0_{bwa}+(\delta_{a,1}P^0_{bw}(1)-\delta_{b,1}P^0_{wa}(1))L_1(\zz)+f(z)\mod I_{|w|}
$$
for some analytic function $f$. The constant in front of $L_1(\zz)$ reduces to (\ref{modintsconst}) by (\ref{P0w1}).
Altogether the first equation in (\ref{modints}) holds up to a constant. The constant is zero by the regularized limit $z\to0$.

Because $P^0_w(\zz)=P^0_{\widetilde{w}}(z)$ the second equation in (\ref{modints}) follows from complex conjugating the first and swapping $a$ and $b$.
\end{proof}
Note that for any word $w$
\begin{equation}\label{csym}
c_w=-c_{\widetilde{w}}.
\end{equation}
We obtain a refinement of (\ref{PP0}), see example \ref{lox0},
\begin{equation}
P_w(z)\equiv P_w^0(z)+c_wL_1(\zz)\mod I_{|w|-2}.
\end{equation}

\subsection{A residue theorem}\label{residue}
From the series expansions of the multiple polylogarithms $L_w$ at zero we know that every $f\in\sA$ (see (\ref{defA})) has a Laurent series of the form
\begin{equation}\label{series0}
f(z)=\sum_{k=0}^{K_0}\sum_{m=M_0}^\infty\sum_{n=N_0}^\infty c_{k,m,n}^0(\ln z\zz)^kz^m\zz^n,
\end{equation}
for $K_0\in\NN$, $M_0,N_0\in\ZZ$ and constants $c^0_{k,m,n}\in\CC$. The Laurent series converges for $0<|z|<1$.
Likewise at $z=1$ we have the expansion
\begin{equation}\label{series1}
f(z)=\sum_{k=0}^{K_1}\sum_{m=M_1}^\infty\sum_{n=N_1}^\infty c_{k,m,n}^1(\ln (z-1)(\zz-1))^k(z-1)^m(\zz-1)^n,
\end{equation}
for $K_1\in\NN$, $M_1,N_1\in\ZZ$, $c^1_{k,m,n}\in\CC$, and $0<|z-1|<1$. At $z=\infty$ we have
\begin{equation}\label{series8}
f(z)=\sum_{k=0}^{K_\infty}\sum_{m=-\infty}^{M_\infty}\sum_{n=-\infty}^{N_\infty} c_{k,m,n}^\infty(\ln z\zz)^kz^m\zz^n,
\end{equation}
for $K_\infty\in\NN$, $M_\infty,N_\infty\in\ZZ$, $c^\infty_{k,m,n}\in\CC$, and $|z|>1$.
\begin{defn}
With the above expansions of $f\in\sA$ the holomorphic and the antiholomorphic residues at $z=a$, $a\in\{0,1,\infty\}$, are
\begin{equation}
\mathrm{res}_a(f)=c_{0,-1,0}^a,\quad\overline{\mathrm{res}}_a(f)=c_{0,0,-1}^a.
\end{equation}
\end{defn}
For certain $f\in\sA$ the two-dimensional integral over the complex plane $\int_\CC f(z)\dd^2z$ is well-defined. In this case the integral is given by residues.
\begin{thm}\label{residuethm}
Assume $f\in\sA$ such that the integral $\int_\CC f(z)\dd^2z$ exists. Let $F\in\sA$ be an antiholomorphic primitive of $f$, $\partial_\zz F(z)=f(z)$. Then
\begin{equation}\label{residueeq}
\frac{1}{\pi}\int_\CC f(z)\dd^2z=\mathrm{res}_\infty(F)-\mathrm{res}_0(F)-\mathrm{res}_1(F).
\end{equation}
\end{thm}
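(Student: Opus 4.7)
The plan is to apply Stokes' theorem to the differential form $F\,dz$ on the complement of small disks around the three singular points $0,1,\infty$, and then evaluate the resulting contour integrals via the Laurent expansions (\ref{series0}), (\ref{series1}), (\ref{series8}).

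First I would rewrite the measure: since $dz\wedge d\zz=-2i\,d^2z$ and $\partial_z F\,dz\wedge dz=0$, the identity $\partial_\zz F=f$ gives
\[
\dd(F\,dz)=\partial_\zz F\,d\zz\wedge dz=-f\,dz\wedge d\zz=2if\,d^2z,
\]
so $f\,d^2z=-\tfrac{i}{2}\dd(F\,dz)$. Let $U_\ve=\{z:|z|>\ve,\,|z-1|>\ve,\,|z|<1/\ve\}$. Applying Stokes on $U_\ve$ (whose boundary consists of the large circle $|z|=1/\ve$ oriented counterclockwise, and the two small circles $|z|=\ve$, $|z-1|=\ve$ oriented clockwise) yields
\[
\int_{U_\ve}f\,d^2z=-\frac{i}{2}\left(\oint_{|z|=1/\ve}F\,dz-\oint_{|z|=\ve}F\,dz-\oint_{|z-1|=\ve}F\,dz\right),
\]
where all circles on the right are now counterclockwise. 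By hypothesis the left-hand side converges to $\int_\CC f\,d^2z$ as $\ve\to0$.

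Next I would compute each contour integral using the Laurent expansion. On $|z|=\ve$ parametrize $z=\ve e^{i\theta}$, so $\zz=\ve e^{-i\theta}$, $\ln z\zz=2\ln\ve$, $dz=i\ve e^{i\theta}d\theta$, giving
\[
\oint_{|z|=\ve}(\ln z\zz)^k z^m\zz^n\,dz=i\ve^{m+n+1}(2\ln\ve)^k\int_0^{2\pi}e^{i(m-n+1)\theta}\,d\theta,
\]
which vanishes unless $n=m+1$, in which case it equals $2\pi i\,\ve^{2m+2}(2\ln\ve)^k$. As $\ve\to0$ the terms with $m\geq 0$ vanish, the term $m=-1,\,k=0$ contributes exactly $2\pi i\,c^0_{0,-1,0}=2\pi i\,\mathrm{res}_0(F)$, and the remaining terms ($m=-1$ with $k\geq1$, or $m\leq -2$) would diverge. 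The analogous computation at $z=1$ and $z=\infty$ (the latter using $R=1/\ve$ in $|z|=R$ to pick out $c^\infty_{0,-1,0}$) extracts $2\pi i\,\mathrm{res}_1(F)$ and $2\pi i\,\mathrm{res}_\infty(F)$.

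The main obstacle is justifying that the potentially divergent contributions (coefficients $c^a_{k,-1,0}$ with $k\geq1$, and $c^a_{k,m,m+1}$ with $m\leq-2$) actually vanish. I would argue this by using the hypothesis that $\int_\CC f\,d^2z$ exists: a term $(\ln z\zz)^k z^{-1}\zz^0$ in $F$ near $0$ differentiates under $\partial_\zz$ to $k(\ln z\zz)^{k-1}z^{-1}\zz^{-1}$, which is non-integrable near $z=0$ for any $k\geq 1$, and similarly more singular terms in $F$ produce non-integrable behavior in $f$. Since $f$ is assumed integrable, each such obstructing coefficient of $F$ must vanish in a neighbourhood of every singularity, so only the $m=-1,n=0,k=0$ contribution survives the limit. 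Assembling everything,
\[
\int_\CC f\,d^2z=-\frac{i}{2}\cdot 2\pi i\bigl(\mathrm{res}_\infty(F)-\mathrm{res}_0(F)-\mathrm{res}_1(F)\bigr)=\pi\bigl(\mathrm{res}_\infty(F)-\mathrm{res}_0(F)-\mathrm{res}_1(F)\bigr),
\]
which proves (\ref{residueeq}).
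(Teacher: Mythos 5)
Your proof is correct and follows essentially the same route as the paper: both write $f\,\dd^2z$ as the exact form $\dd\bigl(F\,\dd z/(2\mathrm{i})\bigr)$, apply Stokes' theorem on the complement of $\epsilon$-disks around $0,1,\infty$, evaluate the boundary circles term by term from the Laurent expansions (\ref{series0})--(\ref{series8}), and invoke the existence of $\int_\CC f\,\dd^2z$ to conclude that the coefficients of the divergent terms $\epsilon^{2m+2}(\ln\epsilon)^k$ vanish, leaving only the $m=-1$, $k=0$ residue contributions. The only (immaterial) difference is that you justify the vanishing of the obstructing coefficients by the non-integrability of the corresponding terms of $f=\partial_\zz F$, whereas the paper reads it off from the convergence of the boundary integrals themselves; also note that at $\infty$ the divergent terms are those with $m\geq0$ rather than $m\leq-2$, as your ``analogous computation'' implicitly requires.
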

\begin{proof}
Firstly, notice that
$$
f(z)\dd^2z=-\frac{f(z)}{2\mathrm{i}}\dd z\wedge\dd\zz=\dd\frac{F(z)}{2\mathrm{i}}\dd z
$$
is exact on $\PP^1\CC\backslash\{0,1,\infty\}$. Let $S^\pm_a(r)$ be the $\pm$ oriented sphere around $a$ with radius $r$ and $0<\epsilon<1$.
Let $M_\epsilon$ be the oriented manifold with boundaries $S^+_0(\epsilon^{-1})$, $S^-_0(\epsilon)$, and $S^-_1(\epsilon)$. Then $f(z)\dd^2z$ is exact on $M_\epsilon$
and by Stokes' theorem we have
$$
\int_{M_\epsilon} f(z)\dd^2z=\frac{1}{2\mathrm{i}}\left(\int_{S^+_0(\epsilon^{-1})}+\int_{S^-_0(\epsilon)}+\int_{S^-_1(\epsilon)}\right)F(z)\dd z.
$$
Using (\ref{series0}), (\ref{series1}), (\ref{series8}), and the parametrization $S_a(\epsilon)=\{a+\epsilon\mathrm{e}^{\mathrm{i}\phi},\phi\in[0,2\pi)\}$ we obtain
\begin{eqnarray*}
\int_{S^-_a(\epsilon)}F(z)\dd z&=&-2\pi\mathrm{i}\sum_{k,m}c^a_{k,m,m+1}(2\ln\epsilon)^k\epsilon^{2m+2},\hbox{ for }a\in\{0,1\},\\
\int_{S^+_0(\epsilon^{-1})}F(z)\dd z&=&2\pi\mathrm{i}\sum_{k,m}c^\infty_{k,m,m+1}(-2\ln\epsilon)^k\epsilon^{-2m-2}.
\end{eqnarray*}
Because $\int_\CC f(z)\dd^2z$ exists we have for $a=0,1$ that $c^a_{k,m,m+1}=0$ if $m<-1$ or if $m=-1$ and $k\neq0$.
Likewise $c^\infty_{k,m,m+1}=0$ if $m>-1$ or if $m=-1$ and $k\neq0$. In the limit $\epsilon\to0$ only the $m=-1$, $k=0$ terms survive.
These terms give (\ref{residueeq}).
\end{proof}
\begin{remark}
Theorem \ref{residuethm} remains valid if we interchange the role of $z$ and $\zz$.

Moreover, notice that $F=\int f\dd\zz$ always exists in $\sA$. It is unique up to a function $G\in\sO$. After a partial fraction decomposition $G$ is a linear
combination of $z^m$ for $m\in\ZZ$ and $(z-1)^n$ for a negative integer $n$. If $m,n\neq-1$ these terms do not contribute to the residues in (\ref{residueeq}).
If $m,n=-1$ the contributions of these terms cancel in (\ref{residueeq}). This confirms that (\ref{residueeq}) is independent of the choice of $F$.

Note that the right hand side of (\ref{residueeq}) is well-defined even if the integral on the left hand side diverges.
\end{remark}

\begin{ex}
The two-dimensional integral of powers of the Bloch-Wigner dilogarithm $\int_\CC D(z)^n\dd^2z$ exist for $n\geq3$. Because of the reflection symmetry $D(1-z)=-D(z)$ the integrals
of odd powers of $D$ vanish. A computer calculation yields with $g_{335}$ from example \ref{Rsvex}
\begin{eqnarray}
\frac{1}{\pi}\int_\CC D(z)^4\dd^2z&=&\frac{9}{2}\zeta(3)-\frac{27}{4}\zeta(5)+\frac{189}{32}\zeta(7),\nonumber\\
\frac{1}{\pi}\int_\CC D(z)^6\dd^2z&=&-\frac{2025}{8}\zeta(5)+\frac{17145}{64}\zeta(7)-\frac{585}{64}\zeta(9)-\frac{3304683}{1024}\zeta(11)\nonumber\\
&&+\,135\zeta(3)^2\zeta(5)+\frac{81}{2}g_{335}.
\end{eqnarray}
Note that by theorem \ref{Stabilitythm} and (\ref{BWdilog}) the right hand side is in $\sH^\sv(\ZZ)/4^n$. The right hand side of (\ref{residueeq}) gives for $n=2$ the value
$\zeta(3)/2$.
\end{ex}

\subsection{Stability of SVMPs with coefficients in $\sH^\sv(\ZZ)$}
SVMPs over the ring $\sH^\sv(\ZZ)$ (see definition \ref{Rsvdef}) are stable under canonical operations.
\begin{defn}\label{Adef}
Let
\begin{equation}
\sP^\sv=\langle P_w,\;w\in X^\ast\rangle_{\sH^\sv(\ZZ)}
\end{equation}
be the shuffle ring of SVMPs over $\sH^\sv(\ZZ)$ and
\begin{equation}
\sA^\sv=\sO_\QQ\overline{\sO}_\QQ\sP^\sv,
\end{equation}
where $\sO_\QQ=\QQ[z,\frac{1}{z},\frac{1}{z-1}]$ and $\overline{\sO}_\QQ=\QQ[\zz,\frac{1}{\zz},\frac{1}{\zz-1}]$, its bi-differential $\QQ$ algebra.
Let $\sH^\sv(\ZZ)_n$, $\sP_n^\sv$ and $\sA_n^\sv$ denote the subspaces of total weight $n$.
\end{defn}
We do not assume that the decomposition into total weight subspaces is direct. In the following theorem and the subsequent corollary we prove that the natural
number theoretic framework of SVMPs are the ring $\sH^\sv(\ZZ)$ and the $\QQ$ algebra $\sH^\sv$ of single-valued MZVs.
\begin{thm}\label{Stabilitythm}
The $\ZZ$ module $\sP^\sv_n$ is stable under the $\sS_3$ group of M\"obius transformations permuting $\{0,1,\infty\}$. The integrals
$\int_a\dd z/z$, $\int_a\dd z/(z-1)$, $\int_a\dd \zz/\zz$, $\int_a\dd \zz/(\zz-1)$ for $a\in\{0,1,\infty\}$ map $\sP^\sv_n$ into $\sP^\sv_{n+1}$. Moreover,
\begin{equation}\label{diffstab}
\partial_z\sP^\sv_n=\frac{\sP^\sv_{n-1}}{z}+\frac{\sP^\sv_{n-1}}{z-1},\quad\partial_\zz\sP^\sv_n=\frac{\sP^\sv_{n-1}}{\zz}+\frac{\sP^\sv_{n-1}}{\zz-1}.
\end{equation}
We have
\begin{equation}
\sP_n^\sv(0)=\sP_n^\sv(1)=\sP_n^\sv(\infty)=\sH^\sv(\ZZ)_n\subset\sH(\ZZ)_n,
\end{equation}
where $\sH(\ZZ)_n$ is the $\ZZ$ module of integer MZVs of weight $n$. The series of $\x_1'$ is single-valued,
\begin{equation}\label{x1primeinLie}
\x_1'\in\Lie_{\sH^\sv(\ZZ)}\langle\langle X\rangle\rangle,
\end{equation}
with total weight $-1$. Finally, for $f\in\frac{\sP^\sv}{z^\alpha\zz^\beta(z-1)^\gamma(\zz-1)^\delta}$ with $\alpha,\beta,\gamma,\delta\in\{0,1,2\}$,
\begin{equation}\label{resstab}
\mathrm{res}_a(f)\in\sH^\sv(\ZZ),\quad\overline{\mathrm{res}}_a(f)\in\sH^\sv(\ZZ),\quad\hbox{for }a\in\{0,1,\infty\}\\
\end{equation}
and
\begin{equation}\label{2dintstab}
\frac{1}{\pi}\int_\CC f(z)\dd^2z\in\sH^\sv(\ZZ).
\end{equation}
\end{thm}
\begin{proof}
The following statements are obvious: the stability of $\sP^\sv$ under the transformation $z\to\frac{z}{z-1}$ by (\ref{S3trafos}) and the stability of $\sP^\sv$
under $\int_0\dd z/z$, $\int_0\dd z/(z-1)$. $\sP^\sv(0)=\sH^\sv(\ZZ)$ is trivial, $\sP^\sv(1)=\sH^\sv(\ZZ)$ because $\sH^\sv(\ZZ)$ is a ring and $\sP^\sv(\infty)=\sH^\sv(\ZZ)$ because
$\sP^\sv(\infty)=\sP^\sv(1)$ by (\ref{S3trafos}). From lemma \ref{Lielem} we have $\x_1'\in\Lie_{\sH(\ZZ)}\langle\langle X\rangle\rangle$ hence $\sH^\sv(\ZZ)=\sP^\sv(1)\subset\sH(\ZZ)$.
The integrals $\int_1\dd z/(z-a)$, $\int_\infty\dd z/(z-a)$, $a\in\{0,1\}$, differ from $\int_0\dd z/(z-a)$ by a value in $\sP^\sv(1)$, or $\sP^\sv(\infty)$, respectively.
The first equation in (\ref{diffstab}) is also clear.

The group $\sS_3$ is generated by $z\to\frac{z}{z-1}$ and $z\to1-z$. The stability of $\sP^\sv$ under $z\to1-z$ is proved by induction over the total weight.
The statement is trivial for weight zero. Assume $P\in\sP_n$. Then from
$$
P(z)=\int_1\left(\frac{Q_0(z)}{z}+\frac{Q_1(z)}{z-1}\right)\dd z+P(1)
$$
we have $Q_0,Q_1\in\sP^\sv_{n-1}$ and
$$
P(1-z)=\int_0\left(\frac{Q_0(1-z)}{z-1}+\frac{Q_1(1-z)}{z}\right)\dd z+P(1)\in\sP^\sv_n
$$
by induction.

Next, we show by induction that $\x_1'\in\Lie_{\sH^\sv(\ZZ)}\langle\langle X\rangle\rangle$. Consider $P_w$ for a word $w$ of length $n$.
We start with the observation from (\ref{diffonaw}) that for any word $u$, $\lim_{z\to0}\zz\partial_\zz P_{au}(z)=\delta_{a,0}P_u(0)\in\sH^\sv(\ZZ)$. By stability of $\sP^\sv$
under $z\to1-z$ we obtain
$$
\lim_{z\to1}(\zz-1)\partial_\zz P_w(z)=\lim_{z\to0}\zz\partial_\zz P_w(1-z)\in\sH^\sv(\ZZ).
$$
On the other hand from (\ref{diffonaw}) we obtain
$$
\lim_{z\to1}(\zz-1)\partial_\zz P_w(z)=\sum_{w=uv}(\x_1'|u)P_v(1)=(\x_1'|w)+\sum_{\genfrac{}{}{0pt}{}{w=uv}{|u|<|w|}}(\x_1'|u)P_v(1).
$$
This proves (\ref{x1primeinLie}) by induction over the length of $w$.
It was already proved in corollary \ref{homcor} that $\x_1$ is of total weight $-1$ (the weight of $\x_1$).

From (\ref{diffonaw}) we obtain the second equation in (\ref{diffstab}). It follows by induction and (\ref{intonw}) that the antiholomorphic integrals
$\int_a\dd \zz/\zz$, $\int_a\dd \zz/(\zz-1)$ map from $\sP^\sv_n$ into $\sP^\sv_{n+1}$.

The stability of the residue (\ref{resstab}) follows for $a=0$ by expanding all multiple polylogarithms $L_w(z)$ and $L_{w'}(\zz)$ in $f$ up to the linear term:
In (\ref{explicitLw}) only the terms with $k_0=0$ contribute to the residue so that this expansion does not generate denominators.
At $a=1$ the result follows from $a=0$ by $\mathrm{res}_1f(z)=-\mathrm{res}_0f(1-z)$.
At $a=\infty$ the result follows from $a=0$ by $\mathrm{res}_\infty f(z)=\mathrm{res}_0 z^{-2}f(1/z)$. The result for $\overline{\mathrm{res}}$ follows by complex
conjugation.

For (\ref{2dintstab}) we first prove using integration by parts and induction over the weight that $\int_0\dd\zz(\zz-a)^m$ for $m\in\{-2,-1,0\}$
and $a\in\{0,1\}$ maps from $\sP^\sv$ into $\sP^\sv(\zz-a)^{m+1}+\sP^\sv$.
Then we use the residue theorem \ref{residuethm}. Repeating the argument that lead to (\ref{resstab}) the residues in $z$ of $\int_0f(z)\dd\zz$ are in $\sH^\sv(\ZZ)$.
\end{proof}

\begin{cor}
The $\QQ$ algebra $\sA^\sv$ is stable under $\sS_3$ transformations, $\partial_z$, $\partial_\zz$, $\int_a\dd z$, $\int_a\dd \zz$ for $a\in\{0,1,\infty\}$. We have
\begin{equation}\label{Asveq1}
\sA^\sv_n(0)=\sA^\sv_n(1)=\sA^\sv_n(\infty)=\sH^\sv_n\subset\sH_n
\end{equation}
whenever the regularized limits exist. For $f\in\sA^\sv$ and $a\in\{0,1,\infty\}$,
\begin{equation}\label{Asveq2}
\mathrm{res}_af\in\sH^\sv,\quad\overline{\mathrm{res}}_af\in\sH^\sv,\quad\frac{1}{\pi}\int_\CC f(z)\dd^2z\in\sH^\sv.
\end{equation}
\end{cor}
\begin{proof}
The corollary follows from the previous theorem by taking the tensor product over $\ZZ$ with $\QQ$ and using integration by parts.
To show (\ref{Asveq1}) and (\ref{Asveq2}) one also needs (\ref{explicitLw}).
\end{proof}

\section{Graphical functions}\label{Gf}
\subsection{Definition and convergence}\label{convergence}
In this subsection we define graphical functions in
$$d=2\lambda+2>2$$
dimensions. Although we are mainly interested in four dimensions, graphical functions exist in any dimensions greater than or equal to two.
Two dimensions, however, are a special case which is postponed to \S \ref{2d}.

The position space propagator of an edge $e$ with vertices $x$ and $y$ is
$$\frac{1}{Q_e^\lambda}=\frac{1}{||x-y||^{2\lambda}}.$$
The power in the denominator originates from Fourier transforming a $||p||^{-2}$ momentum space propagator.
We first define uncompleted graphical functions and turn to completion in \S \ref{completion}.
\begin{defn}\label{fGdef}
The graphical function $f_G^{(\lambda)}(z)$ is defined by applying position space Feynman rules to the graph $G$ with distinguished vertices
$0,1,z$, namely
\begin{equation}\label{fdefnoncomp}
f_G^{(\lambda)}(z)=\left(\prod_{v\notin\{0,1,z\}} \int_{\RR^d}\frac{\dd^dx_v}{\pi^{d/2}}\right)\frac{1}{\prod_eQ_e^{\lambda}},
\end{equation}
where the first product is over all vertices $\neq0,1,z$ and the second product is over all edges of $G$.
The vertex 0 corresponds to the origin in $\RR^d$ whereas $1$ stands for any unit vector $e_1\in\RR^d$ and $z\neq0,e_1$.
\end{defn}

By rotational symmetry $f_G^{(\lambda)}(z)$ only depends on $||z||$ and the angle between $z$ and $e_1$. If we identify $e_1$ with $1\in\CC$ we can consider
$f_G^{(\lambda)}(z)$ as a function on the complex plane with the symmetry $f_G^{(\lambda)}(z)=f_G^{(\lambda)}(\zz)$ under complex conjugation.
In the following we always do so unless the argument of $f_G^{(\lambda)}$ is explicitly specified as a $d$-dimensional vector.
We can calculate the graphical functions of a complex argument $z$ by the integral (\ref{fdefnoncomp}) with the identification $\RR^d\cong\CC\times\RR^{d-2}$,
\begin{equation}\label{coords}
e_1\sim(1,0^{\{d-1\}}),\quad z\sim\left(\frac{z+\zz}{2},\frac{z-\zz}{2i},0^{\{d-2\}}\right).
\end{equation}
In four dimensions ($\lambda=1$) we often drop the superscript $(1)$.

A variant of graphical functions can be defined in momentum space where massive propagators are algebraic. We do not pursue this here.

For graphical functions in $N=4$ supersymmetric Yang-Mills theory see \cite{Drummond}, \cite{SYM}.

\begin{ex}\label{exI}
Our main examples are sequential graphs (see figure 4). The initial case is
\begin{equation}\label{Ieq}
f_{\mathrm{I}}^{(\lambda)}(z)=\frac{1}{[z\zz(z-1)(\zz-1)]^\lambda}.
\end{equation}
Sequentially appending edges gives for any word $w$ that begins with 2 the graphical functions $f^{(\lambda)}_w(z)$ as a $d|w|$-dimensional integral.
In particular,
$$f^{(\lambda)}_2(z)=\frac{1}{\pi^{d/2}}\int_{\RR^d}\frac{\dd^dx}{||x||^{2\lambda}||x-1||^{2\lambda}||x-z||^{2\lambda}}$$
which can be calculated with Gegenbauer techniques (see example \ref{onevertex}).
\end{ex}

Next, we give a criterion when the integral (\ref{fdefnoncomp}) is well-defined. We need the following definition:
\begin{defn}\label{internaldef}
For a subgraph $g$ of a graph $G$ a vertex $v$ is called internal if $v\neq0,1,z$ and all edges adjacent to $v$ in $G$ are in $g$. All other vertices
are called external. An edge in $g$ is internal if both vertices are internal otherwise it is external.
\end{defn}
Note that the notion `internal' crucially depends on the subgraph $g$. A vertex that is internal in $g$ may fail to be internal in a smaller subgraph.
Only if $g=G$ all vertices $\neq0,1,z$ are internal.
\begin{lem}\label{finitelem}
The integral in definition \ref{fdefnoncomp} is convergent if and only if it is `infrared' and `ultraviolet' finite.
The integral $f_G^{(\lambda)}$ is infrared finite if and only if  for every subgraph $g$ of $G$ with $V^{\mathrm{int}}_g$ internal vertices and $N_g>0$ edges,
\begin{equation}\label{infrared}
(d-2)N_g>dV^{\mathrm{int}}_g.
\end{equation}
The integral $f_G^{(\lambda)}$ is ultraviolet finite if and only if  for every subgraph $g$ of $G$ with $N_g>0$ and with at most one of its $V_g$ vertices in $\{0,1,z\}$ one has
\begin{equation}\label{ultraviolet}
(d-2)N_g<d(V_g-1).
\end{equation}
\end{lem}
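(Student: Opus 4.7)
The plan is to show that the integral (\ref{fdefnoncomp}) can diverge only through two boundary phenomena---coincidences of vertex positions (UV) and escapes of integration variables to infinity (IR)---and to extract the two inequalities from power counting on the corresponding scaling regions. Since $\prod_e Q_e^{-\lambda}$ is smooth and strictly positive on the open configuration space where the points $0$, $e_1$, $z$, $\{x_v\}$ are pairwise distinct and finite, and since the integrand is non-negative, absolute convergence coincides with convergence, and any obstruction is confined to the two boundary strata above.

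For the necessity of (\ref{ultraviolet}), given a subgraph $g$ with at most one labeled vertex, I would fix a reference vertex $v_0\in g$ and change variables $x_v=x_{v_0}+\epsilon y_v$ for the remaining $V_g-1$ vertices of $g$ (if $v_0$ is itself internal, I split off its centre-of-mass integration, which does not interact with $\epsilon$). Collecting the Jacobian with the scaling of the $N_g$ edges of $g$---edges of $G$ with one endpoint outside $g$ stay bounded---yields a radial measure $\epsilon^{d(V_g-1)-(d-2)N_g}\,\dd\epsilon/\epsilon$ times a bounded angular integral, so integrability near $\epsilon=0$ is the strict inequality (\ref{ultraviolet}), and violation produces a logarithmic or polynomial divergence on this region. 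Dually, for (\ref{infrared}), substitute $x_v=\Lambda y_v$ for every $v\in V^{\mathrm{int}}_g$: by Definition~\ref{internaldef} every edge of $G$ incident to $V^{\mathrm{int}}_g$ lies in $g$, so the local factor is $\Lambda^{dV^{\mathrm{int}}_g-(d-2)N_g}\,\dd\Lambda/\Lambda$ times bounded, whose integrability at $\Lambda\to\infty$ is exactly (\ref{infrared}). An arbitrary scaling of a set $S$ of integration variables is subsumed by taking $g$ to be the subgraph of all edges of $G$ adjacent to $S$, so the stated subgraph conditions are equivalent to the absence of divergence on every scaling region.

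The main obstacle is the sufficient direction: showing that the two subgraph conditions force global convergence and not just the absence of each isolated scaling divergence. My plan is to invoke a Hepp-style sector decomposition adapted to position space. Pairwise distances $\|x_u-x_v\|$ and norms $\|x_v\|$ are totally ordered in each sector, inducing a nested flag of subgraphs and a compactifying change to sector scaling parameters $\epsilon_1,\ldots,\epsilon_k$ that resolves all UV and IR singularities simultaneously. On each sector the absolute value of the integrand becomes a product of monomials in the $\epsilon_i$ times a factor bounded away from the singular boundary, and the exponent of each $\epsilon_i$ is, up to sign, one of the quantities $d(V_g-1)-(d-2)N_g$ or $dV^{\mathrm{int}}_g-(d-2)N_g$ for a subgraph $g$ in the flag. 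A Weinberg-type argument then shows that (\ref{infrared}) and (\ref{ultraviolet}) imply strict integrability in each $\epsilon_i$, giving absolute convergence sector by sector. Since there are only finitely many sectors, the integral converges. Adapting the standard sector/flag analysis to position space, with the labeled vertices $0,1,z$ playing the role of fixed external points (so that no new divergent directions appear at them), is the only technical work required.
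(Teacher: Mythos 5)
Your proposal is correct and follows essentially the same route as the paper: necessity of (\ref{infrared}) and (\ref{ultraviolet}) by power counting on the scaling regions attached to each subgraph, and sufficiency by a Weinberg-type convergence theorem. The only difference is presentational --- the paper simply cites \cite{Wein} for the statement that subgraph-wise power-counting finiteness implies convergence, whereas you sketch its proof via a Hepp-style sector decomposition in position space.
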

\begin{proof}
In \cite{Wein} it is shown that a Feynman integral is finite if the integral is finite by power-counting for every subgraph $g$ of $G$.
If $N_g=0$ convergence is trivial. Otherwise there are two sources for divergences: infrared, when the integral does not converge for large values of the integration variables;
ultraviolet, when the coincidence of integration variables generates a singularity that cannot be integrated over.

Infrared convergence by power-counting is equivalent to the condition that the sum of degrees of the denominators is larger than the dimension of the integral.
This gives (\ref{infrared}). If $g$ is minimal with an ultraviolet divergence by power-counting then the integral diverges when all vertices in $g$ coincide in the integral of $f_g$.
Because $\{0,1,z\}$ are distinct $g$ has at most one labeled vertex.
The locus where all integration variables coincide is $d(V_g-1)$ dimensional. Power-counting gives (\ref{ultraviolet}).
\end{proof}
\begin{remark}
A $\phi^4$ graph in $d=4$ dimensions is never infrared divergent: counting half-edges gives $2N_g^{\mathrm{int}}+N_g^{\mathrm{ext}}=4V_g^{\mathrm{int}}$ which
implies (\ref{infrared}) because the number $N_g^{\mathrm{ext}}$ of external edges is positive.
\end{remark}
\begin{lem}\label{convergencelemma}
The graphical function $f^{(1)}_w(z)$ of a non-empty word $w$ is well-defined in $d=4$ dimensions if and only if $w$ begins with 2.
\end{lem}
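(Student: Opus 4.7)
The plan is to reduce convergence of $f^{(1)}_w(z)$ to the explicit power-counting inequalities of lemma \ref{finitelem} in the case $\lambda=1$: IR requires $N_g>2V_g^{\mathrm{int}}$ and UV requires $N_g<2(V_g-1)$ on every subgraph with an edge (UV only on subgraphs with at most one labeled vertex). First I would spell out the underlying graph $G'_w$ of $f^{(1)}_w$ for $w=a_1\cdots a_n$: labeled vertices $0,1,z$; internal chain vertices $x_1,\dots,x_n$; connection edges $0x_i$ whenever $a_i\in\{0,2\}$ and $1x_i$ whenever $a_i\in\{1,2\}$; chain edges $x_ix_{i+1}$ for $1\le i\le n-1$; and the appended edge $x_nz$. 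A quick tally gives the degree of $x_1$ in $G'_w$: it equals $2$ when $a_1\in\{0,1\}$ (one connection, one chain edge) and $3$ when $a_1=2$ (two connections, one chain edge); every other chain vertex has degree at least $3$.

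For the \emph{only if} direction, assume $a_1\in\{0,1\}$ and let $g$ consist of the two edges incident to $x_1$ in $G'_w$ (namely $0x_1$ or $1x_1$ together with $x_1x_2$, or $x_1z$ if $n=1$) and their three endpoints. Since every $G$-edge at $x_1$ lies in $g$, the vertex $x_1$ is internal in $g$; no other chain vertex is internal, so $V_g^{\mathrm{int}}=1$ and $N_g=2$, violating the strict IR inequality $N_g>2V_g^{\mathrm{int}}$. By lemma \ref{finitelem}, this forces divergence of $f^{(1)}_w$.

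For the \emph{if} direction, assume $a_1=2$, so every chain vertex has degree $\ge3$ in $G'_w$. For IR, take any subgraph $g$ with $N_g>0$ and let $S$ be the set of chain vertices internal in $g$. If $S=\emptyset$ the IR inequality is trivial. Otherwise all $G'_w$-edges at vertices of $S$ lie in $g$, so double-counting half-edges yields
\[
\sum_{x_i\in S}d_i\;=\;2N^{\mathrm{int}(S)}_g+N^{\mathrm{ext}(S)}_g,\qquad N_g\;\ge\;N^{\mathrm{int}(S)}_g+N^{\mathrm{ext}(S)}_g\;=\;\sum_{x_i\in S}d_i-N^{\mathrm{int}(S)}_g,
\]
where $N^{\mathrm{int}(S)}_g$ and $N^{\mathrm{ext}(S)}_g$ count edges of $g$ with two respectively one endpoint in $S$. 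Since the only $G'_w$-edges between chain vertices are chain edges and $S$ splits into $m\ge1$ maximal contiguous blocks along the chain, $N^{\mathrm{int}(S)}_g\le |S|-m$, whence
\[
N_g\;\ge\;3|S|-(|S|-m)\;=\;2|S|+m\;>\;2|S|,
\]
which is the strict IR inequality. For UV, any subgraph containing an internal $x_i$ with $a_i=2$ already contains both $0$ and $1$ and so is disqualified from the UV test; for the remaining subgraphs, a block-wise count shows that edges sit very sparsely relative to vertices and that $N_g<2(V_g-1)$ holds automatically.

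The core of the argument is the degree-sum estimate in the IR step, which must accommodate arbitrary (not merely minimal) subgraphs uniformly; this is the only place where the hypothesis $a_1=2$ is used non-trivially, since without it the minimum chain-vertex degree drops to $2$ and the inequality $2|S|+m>2|S|$ degrades to $\ge2|S|$ at $x_1$. UV is comparatively routine because sequential graphs in four dimensions are sufficiently sparse that the only divergence mechanism available is the infrared one at the leftmost chain vertex.
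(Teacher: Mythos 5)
Your proposal is correct and follows the same overall route as the paper: both directions are reduced to the power-counting criteria of lemma \ref{finitelem}, the divergence for $a_1\in\{0,1\}$ is exhibited by the valency-two leftmost chain vertex, and infrared finiteness for $a_1=2$ comes from a half-edge count exploiting that every chain vertex then has degree at least three. Your infrared count is organized differently, though: you bound the number of internal--internal edges by $|S|-m$ via the decomposition of $S$ into $m$ contiguous blocks of the chain, getting $N_g\ge 2|S|+m>2V_g^{\mathrm{int}}$, whereas the paper instead uses that every internal vertex is joined to $0$ or $1$ (so $V_g^{\mathrm{int}}<N_g^{\mathrm{ext}}$, with strictness supplied by the leftmost internal vertex having two external edges) and adds this to $3V_g^{\mathrm{int}}\le 2N_g^{\mathrm{int}}+N_g^{\mathrm{ext}}$; both counts are valid and invoke $a_1=2$ at the same spot. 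The one part you should not wave away is the ultraviolet bound. Your opening remark about subgraphs in which some $x_i$ with $a_i=2$ is internal is beside the point, since the UV criterion of lemma \ref{finitelem} applies to every subgraph with at most one labeled vertex regardless of internality, and the inequality $N_g<2(V_g-1)$ is not ``automatic'': for $g$ consisting of $0$ together with a single contiguous block of $t$ chain vertices all connected to $0$ one has $N_g=2t-1$ against $2(V_g-1)=2t$, so the margin is exactly one edge. Your indicated block-wise count does close this (with $m$ blocks one gets $N_g\le 2t-m<2t$, and subgraphs avoiding both $0$ and $1$ are subforests of the chain, hence satisfy $N_g\le V_g-1$), but it needs to be written out; the paper does so by bounding the valency of $0$ in $g$ by $V_g-1$ and that of every other vertex by three, with strictness coming from the leftmost chain vertex of $g$.
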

\begin{proof}
If $w$ does not begin with 2 then the leftmost vertex has valence two leading to an infrared divergence.

Let $w$ begin with 2. To show infrared finiteness we observe that every internal vertex has at least valence three. Counting half-edges gives
$$3V^{\mathrm{int}}_g\leq 2N^{\mathrm{int}}_g+N^{\mathrm{ext}}_g.$$
Moreover, every internal vertex is connected to at least one external vertex 0 or 1,
$$V^{\mathrm{int}}_g<N^{\mathrm{ext}}_g.$$
The inequality is strict because either $V^{\mathrm{int}}_g=0$ or the leftmost vertex in $V_g^{\mathrm{int}}$ has at least two external edges.
Together we obtain
$$
4V^{\mathrm{int}}_g<2N^{\mathrm{int}}_g+2N^{\mathrm{ext}}_g=2N_g.
$$

Ultraviolet convergence is trivial if neither 0 nor 1 is in $g$ because in this case $0<N_g\leq V_g-1$.
Otherwise we can assume without restriction that $0\in g$. Let $v$ be the valence of 0 in $g$. Every other vertex in $g$ has at most
valence three, hence by counting half-edges
$$
v+3(V_g-1)>2N_g.
$$
The inequality is strict because the leftmost vertex in $g$ has valence at most two. Moreover, $v\leq V_g-1$ because every edge in $g$ which is connected to 0 is adjacent
to a vertex in $g\backslash\{0\}$. Together we obtain
$$
4(V_g-1)>2N_g.
$$
\end{proof}
\begin{cor}\label{Pwelldef}
The sequential period $P(G_w)$ of a non-empty word $w$ is well-defined in four dimensions if and only if $w$ begins and ends in 2.
\end{cor}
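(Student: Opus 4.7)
\smallskip

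\noindent\textbf{Proof proposal.} The natural plan is to apply the power-counting criterion of Lemma \ref{finitelem} directly to $G_w$, where the distinguished vertex set is now $\{0,1\}$ rather than $\{0,1,z\}$, and to adapt the argument of Lemma \ref{convergencelemma}. The key structural difference from that lemma is that there is no extra vertex $z$ attached on the right, so the rightmost chain vertex of $G_w$ must play exactly the role that was previously played only by the leftmost chain vertex.

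For the direction ($\Rightarrow$) I argue by contraposition. If $w$ begins with a letter $a\in\{0,1\}$, the leftmost chain vertex $v$ has valency exactly two in $G_w$: one horizontal edge to its right neighbor and one vertical edge to $a$. Choosing $g$ to consist of these two edges makes $v$ internal with $V_g^{\mathrm{int}}=1$ and $N_g=2$, which violates the infrared bound $N_g>2V_g^{\mathrm{int}}$ of Lemma \ref{finitelem} in $d=4$. The mirror-image argument on the rightmost chain vertex handles the case when $w$ ends in 0 or 1.

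For ($\Leftarrow$) assume $w$ begins and ends with 2. Then every chain vertex of $G_w$ has valency at least three: interior chain vertices contribute two horizontal edges plus at least one vertical edge, while the two boundary chain vertices each contribute their single horizontal edge together with the two vertical edges forced by the letter 2. Infrared finiteness then follows essentially verbatim from Lemma \ref{convergencelemma}: for any subgraph $g$ with $V_g^{\mathrm{int}}\geq 1$ and $N_g>0$, the valency bound gives $3V_g^{\mathrm{int}}\leq 2N_g^{\mathrm{int}}+N_g^{\mathrm{ext}}$, while the leftmost internal chain vertex of $g$ contributes at least two external edges (either the two vertical edges forced by letter 2 if it is globally leftmost, or one vertical edge plus the external horizontal edge to its non-internal left neighbor otherwise), yielding the strict inequality $V_g^{\mathrm{int}}<N_g^{\mathrm{ext}}$ and hence $2V_g^{\mathrm{int}}<N_g$.

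For ultraviolet finiteness consider a subgraph $g$ with $N_g>0$ containing at most one of $\{0,1\}$. If $g$ contains neither, all its edges are horizontal, so $g$ is a sub-path of the chain and $N_g\leq V_g-1<2(V_g-1)$. If WLOG only $0\in V_g$, then each chain vertex of $g$ has valency at most three in $g$ (two horizontal edges plus the possible edge to $0$; any edge to $1$ is absent from $g$), while the leftmost and rightmost chain vertices of $g$ drop to valency at most two because one of their horizontal edges is outside $g$. Together with the bound $v_0\leq V_g-1$ on the valency of $0$ in $g$, half-edge counting yields $2N_g\leq 4(V_g-1)-2$ for $V_g\geq 4$, and the small cases $V_g\in\{2,3\}$ are immediate. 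The step I expect to require the most care is the strict inequality $V_g^{\mathrm{int}}<N_g^{\mathrm{ext}}$ in the infrared analysis: it is precisely the assumption that $w$ begins and ends with 2 which prevents the boundary chain vertices from having valency two and thus supplies the extra external half-edge needed for strictness.
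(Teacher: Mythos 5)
Your proposal is correct and is essentially the paper's own argument: the paper adjoins a disconnected vertex $z$ to view $P(G_w)$ as a constant graphical function and then declares the power counting of Lemma \ref{finitelem} to be ``analogous'' to the proof of Lemma \ref{convergencelemma}, which is exactly the adaptation you carry out in detail (infrared strictness from the boundary chain vertices, ultraviolet half-edge counting with $v_0\leq V_g-1$). The only caveat --- shared with the statement of the corollary itself --- is the degenerate word $w=2$ of length one, whose single chain vertex has valency two and gives an infrared divergence, so both your $(\Leftarrow)$ argument and the statement implicitly require $|w|\geq 2$.
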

\begin{proof}
We add a disconnected vertex $z$ to $G_w$ to make $P_w$ a constant graphical function. With lemma \ref{finitelem} the proof is analogous to the proof of corollary \ref{convergencelemma}.
\end{proof}

\subsection{General properties}
Graphical functions have the following general properties
\begin{lem}\label{prop123}
Let $G$ be a graph with three distinguished vertices $0,1,z$ which has a graphical function $f_G^{(\lambda)}:\CC\backslash\{0,1\}\to\RR$. Then
\begin{enumerate}
\item[(G1)]
\begin{equation}\label{reflection}
f_G^{(\lambda)}(z)=f_G^{(\lambda)}(\zz).
\end{equation}
\item[(G2)] $f_G^{(\lambda)}$ is a single-valued.
\item[(G3)] $f_G^{(\lambda)}$ is real analytic in $\CC\backslash\{0,1\}$.
\end{enumerate}
\end{lem}
\begin{proof}
(G1) follows from using coordinates (\ref{coords}) in (\ref{fdefnoncomp}) and the substitution $x_v=(x^1_v,x^2_v,x^3_v,\ldots,x^d_v)\mapsto(x^1_v,-x^2_v,x^3_v,\ldots,x^d_v)$.

To prove (G2) we first consider the graphical function as a function of $z\in\RR^d$.
We add a small $\epsilon>0$ to the quadrics in the denominator of the propagators so that the singular locus of the integrand does not intersect the chain of integration.
If we vary the argument $z$ along a closed smooth path in $\RR^d$ the graphical function changes smoothly until it is back to its initial value.
Transition to $\CC$ by the decomposition $\RR^d\cong\CC\otimes\RR^{d-2}$ yields (G2) in the limit $\epsilon\to0$.

(G3) can be proved in general using parametric representations of graphical functions \cite{PropGF}. We will only need the result for sequential functions
which we state separately in the following corollary. For self-containedness we prove the corollary in the next subsection.
\end{proof}

\begin{cor}\label{realanalyticcor}
Let $w$ be a word in 0,1,2 that begins with 2. Then $f_w^{(1)}(z)$ is real analytic in $\CC\backslash\{0,1\}$.
\end{cor}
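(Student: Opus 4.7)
My plan is to apply Lemma \ref{realanalytic}: it suffices to check that the underlying graph $G$ of $f_w^{(1)}(z)$ satisfies the hypothesis that $G\setminus\{0\}$ is series parallel. The graph $G$ has vertex set $\{0,1,z,v_1,\ldots,v_n\}$ with $n=|w|$, a horizontal chain $v_1-v_2-\cdots-v_n-z$, and for each letter $a_i$ of $w=a_1\cdots a_n$ a vertical edge $v_i-0$ if $a_i\in\{0,2\}$ and a vertical edge $v_i-1$ if $a_i\in\{1,2\}$. Deleting the vertex $0$ leaves the chain $v_1-v_2-\cdots-v_n-z$ together with the edges $1-v_i$ for exactly those $i$ with $a_i\in\{1,2\}$; the hypothesis that $w$ begins with $2$ guarantees that $v_1$ is adjacent to $1$ in this subgraph.

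I would then perform a left-to-right series/parallel reduction, maintaining the following invariant: after processing $v_1,\ldots,v_k$, the remaining graph consists of the sub-chain $v_{k+1}-\cdots-v_n-z$, one edge $1-v_{k+1}$, and the original edges $1-v_j$ for $j\geq k+2$ with $a_j\in\{1,2\}$. The base case $k=0$ holds because $a_1=2$. For the inductive step, $v_{k+1}$ has degree exactly two in the current graph (one edge to $1$ and one to $v_{k+2}$), so the iterated-edge rule contracts it into a new edge $1-v_{k+2}$; if $a_{k+2}\in\{1,2\}$ this creates a parallel pair with the original $1-v_{k+2}$, which the double-edge rule collapses to a single edge. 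Either way the invariant is preserved, and after $n$ iterations only the single edge $1-z$ remains.

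Hence $G\setminus\{0\}$ is series parallel and Lemma \ref{realanalytic} applied with $\lambda=1$ yields the corollary. The argument is essentially routine bookkeeping; the only point that needs care is the precise statement and verification of the inductive invariant, together with the observation that the assumption that $w$ begins with $2$ is used exactly to initialize the induction by placing $v_1$ on the edge to $1$ in $G\setminus\{0\}$.
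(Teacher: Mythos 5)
Your reduction is combinatorially correct: in $G\setminus\{0\}$ the vertex $v_1$ has degree two because $a_1=2$, the left-to-right series/parallel sweep preserves your invariant, and the graph collapses to the single edge $1$--$z$; this matches the paper's own remark, made right after Lemma \ref{realanalytic}, that sequential functions are series parallel after the removal of $0$. However, the paper deliberately does \emph{not} derive the corollary from Lemma \ref{realanalytic}. Its detailed proof of the corollary runs through the Gegenbauer machinery: proposition \ref{prop3} shows $f_w^{(1)}\in\sC^{(1)}_{0,2}$, proposition \ref{prop4} shows the same for $f_w^{(1)}((y+e_1)/2)$, and proposition \ref{prop1} then gives real analyticity away from $\{0\}\cup\{\|z\|=1\}$ and away from $\{e_1/2\}\cup\{\|z-e_1/2\|=1/2\}$ respectively; since these two exceptional loci meet only in $\{0,e_1\}$, the corollary follows. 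The reason for this ordering is that the paper's ``proof'' of Lemma \ref{realanalytic} consists of the single sentence that one generalizes the proof of the corollary. So within this paper your argument is circular: all of the analytic content you delegate to the lemma is, in the source, supplied only by the proof of the very corollary you are proving. If Lemma \ref{realanalytic} were established independently your deduction would be complete (modulo one small omission: you should invoke Lemma \ref{convergencelemma} to know that $f_w^{(1)}$ exists at all, since the hypotheses of Lemma \ref{realanalytic} presuppose a well-defined graphical function); as a proof to be inserted into this paper, the substance is missing and you would need to reproduce the two-chart $\sC^{(\lambda)}_{p,q}$ argument.
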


\subsection{Gegenbauer polynomials}
Most conveniently Gegenbauer polynomials are defined by the following generating series \cite{C4}, \cite{Vilenkin}, \cite{Askey},
\begin{equation}\label{seriesdef}
\frac{1}{(1-2xt+t^2)^\lambda}=\sum_{n=0}^\infty C_n^{(\lambda)}(x)t^n.
\end{equation}
With (\ref{seriesdef}) we can expand a position space propagator in Gegenbauer polynomials. Let $\angle(x,y)$ be the angle between $x$ and $y$ and let $x_<=\min\{x,1/x\}$. Then
\begin{equation}\label{propexpand}
\frac{1}{||x-y||^{2\lambda}}=\frac{1}{||x||^\lambda||y||^\lambda}\sum_{n=0}^\infty C_n^{(\lambda)}(\cos\angle(x,y))\left(\frac{||x||}{||y||}\right)^{n+\lambda}_{\!<}.
\end{equation}
The main property of Gegenbauer polynomials is their orthogonality with respect to $d$-dimensional angular integrations. If we denote the integral
over the $d-1$-dimensional unit-sphere in $d$ dimensions by $\int\dd\hat{x}$, normalized by $\int1\dd\hat{x}=1$, we have
\begin{equation}\label{ortho}
\int\dd\hat{y}\;C_m^{(\lambda)}(\cos\angle(x,y))C_n^{(\lambda)}(\cos\angle(y,z))=\frac{\lambda\delta_{m,n}}{n+\lambda}C_n^{(\lambda)}(\cos\angle(x,z)).
\end{equation}

\begin{ex}\label{onevertex}
The sequential function $f_2^{(\lambda)}$ exists for all $\lambda>1/2$. It can be calculated with Gegenbauer techniques.
Series expansion of $||x-e_1||^{-2\lambda}$ and $||x-z||^{-2\lambda}$ with (\ref{ortho}) plus an elementary radial integration gives
for $||z||<1$ (one may specify $a_{\ell,m,n}=b_{\ell,m,n}=\delta_{\ell,0}\delta_{m,n}$, $p=0$, $q=2\lambda$, $\alpha=\lambda$ in (\ref{Gegenbauerint1}))
\begin{equation}
f_2^{(1)}(z)=2\sum_{n=0}^\infty C^{(1)}_n(\cos\angle(z,e_1))\left(\frac{1}{(n+1)^2}-\frac{\ln||z||}{n+1}\right)||z||^n
\end{equation}
and for $\lambda>1$
\begin{equation}
f_2^{(\lambda)}(z)=\frac{1}{\Gamma(\lambda)(\lambda-1)}\sum_{n=0}^\infty C^{(\lambda)}_n(\cos\angle(z,e_1))\left(\frac{||z||^{n-2\lambda+2}}{n+1}-\frac{||z||^n}{n+2\lambda-1}\right).
\end{equation}
The result for $||z||>1$ can be deduced from $||z||<1$ by the inversion formula
\begin{equation}
f_2^{(\lambda)}(z/||z||^2)=||z||^{4\lambda-2}f_2^{(\lambda)}(z),
\end{equation}
which follows from the defining integral by a coordinate transformation $x\mapsto x/||z||$.

Now we set $z\in\CC$ and $e_1=1$. In the case $\lambda=1$ we use the identity
\begin{equation*}
C^{(1)}_n(\cos\angle(z,1))|z|^n=\frac{z^{n+1}-\zz^{n+1}}{z-\zz}
\end{equation*}
which we obtain from the generating series. This relates $f_2^{(1)}$ to the dilogarithm,
\begin{equation}\label{f24dim}
f_2^{(1)}(z)=\frac{2}{z-\zz}(\Li_2(z)-\Li_2(\zz)+[\ln(1-z)-\ln(1-\zz)]\ln|z|).
\end{equation}
The combination of logarithms and dilogarithms in the bracket is $2\mathrm{i}$ times the Bloch-Wigner dilogarithm (\ref{BWdilog}). We obtain
\begin{equation}\label{4iD}
f_2^{(1)}(z)=\frac{4iD(z)}{z-\zz}.
\end{equation}
By the symmetry of $D$ under inversion $z\mapsto 1/z$ the above equation holds also for $|z|>1$. We will re-derive equation (\ref{4iD}) using SVMPs in example \ref{f2example}.

In the case $\lambda>1$ we obtain from the generating series of the Gegenbauer polynomials by elementary integration for $z\in\CC$ and $|z|<1$,
\begin{equation}
\sum_{n=0}^\infty\frac{C^{(\lambda)}_n(\cos\angle(z,1))|z|^n}{n+a}=\int_0^1\frac{t^{a-1}\dd t}{(1-tz)^\lambda(1-t\zz)^\lambda}.
\end{equation}
This allows us to express $f_2^{(\lambda)}$ as an integral,
\begin{equation}\label{f2lambda}
f_2^{(\lambda)}(z)=\frac{1}{\Gamma(\lambda)(\lambda-1)}\int_0^1\frac{(z\zz)^{1-\lambda}-t^{2\lambda-2}}{(1-tz)^\lambda(1-t\zz)^\lambda}\dd t.
\end{equation}
For integer $\lambda$ the above integral has vanishing residues in $t$. For $\lambda=2$ we obtain
\begin{equation*}
f_2^{(2)}(z)=\frac{1}{z\zz(z-1)(\zz-1)}.
\end{equation*}
The simplicity of the result stems from the fact that in $d=6$ dimensions the three-valent vertex is `unique' in the sense of \cite{Kazakov}.
For $\lambda=3$ we have
\begin{equation*}
f_2^{(3)}(z)=\frac{2z\zz-z-\zz+2}{2[z\zz(z-1)(\zz-1)]^2}.
\end{equation*}
For general even dimensions greater than four we obtain a rational expression in $z$ and $\zz$ with singularities in $z=0$ and $z=1$.
It is trivially single-valued and extends unchanged
to $|z|>1$.

For odd dimensions one also obtains a logarithm-free result. It is a rational expression in $z,\zz,|z|,|z-1|$ with singularities at $z=0$ and $z=1$
which extends unchanged to $|z|>1$. The expression is explicitly single-valued.
\end{ex}

To prove corollary \ref{realanalyticcor} we need to define certain classes of functions
\begin{defn}\label{Cdef}
A function $f:\RR^d\to\RR$ is in $\sC_{p,q}^{(\lambda)}$ for $p,q\in\ZZ$ if there exist constants $A,B,\alpha,\beta>0$, $L\in\NN$ such that $f$ admits the following expansions,
\begin{equation}\label{eqle1}
f(z)=\sum_{\ell=0}^L\sum_{m=0}^\infty\sum_{n=0}^ma_{\ell,m,n}(\ln||z||)^\ell||z||^{m-p}C_n^{(\lambda)}(\cos\angle(z,e_1))\quad\hbox{for }||z||<1
\end{equation}
and
\begin{equation}\label{eqgr1}
f(z)=\sum_{\ell=0}^L\sum_{m=0}^\infty\sum_{n=0}^mb_{\ell,m,n}(\ln||z||)^\ell||z||^{-m-q}C_n^{(\lambda)}(\cos\angle(z,e_1))\quad\hbox{for }||z||>1,
\end{equation}
with
\begin{equation}\label{conditionAB}
|a_{\ell,m,n}|\leq Am^\alpha\quad\hbox{and}\quad |b_{\ell,m,n}|\leq Bm^\beta\quad\hbox{for all }\ell\leq L,m,n\leq m.
\end{equation}
\end{defn}

\begin{ex}\label{expluse1}
By (\ref{propexpand}) we have $||x-e_1||^{-2\lambda}\in\sC_{0,2\lambda}^{(\lambda)}$.
Replacing $t$ by $-t$ in the generating series (\ref{seriesdef}) gives
\begin{equation}
\frac{1}{||x+e_1||^{2\lambda}}=\frac{1}{||x||^\lambda}\sum_{n=0}^\infty (-1)^nC_n^{(\lambda)}(\cos\angle(x,e_1))||x||^{n+\lambda}_<,
\end{equation}
hence also $||x+e_1||^{-2\lambda}\in\sC_{0,2\lambda}^{(\lambda)}$.
\end{ex}

\begin{prop}\label{prop1}
If $f$ in $\sC_{p,q}^{(\lambda)}$ for some $p,q\in\ZZ$ then $f$ is real analytic in $\RR^d\backslash(\{0\}\cup\{||z||=1\})$.
\end{prop}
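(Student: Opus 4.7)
The plan is to establish real analyticity by showing that on a complex neighborhood of any point $z_0 \in \RR^d$ with $0 < ||z_0|| < 1$ (the case $||z_0|| > 1$ is entirely analogous, using (\ref{eqgr1}) in place of (\ref{eqle1})), the series converges absolutely and uniformly, thereby defining a holomorphic extension of $f$.

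First I would verify that each individual summand $(\ln||z||)^\ell\,||z||^{m-p}\,C_n^{(\lambda)}(\cos\angle(z,e_1))$ is real analytic on $\RR^d\setminus\{0\}$. Since $||z||^2 = z\cdot z$ is polynomial and nonvanishing away from the origin, all real powers of $||z||$ and the logarithm $\ln||z||$ are real analytic there. Moreover, using that $C_n^{(\lambda)}(x)$ is a polynomial of the same parity as $n$, one can write
\[
||z||^n\,C_n^{(\lambda)}(\cos\angle(z,e_1)) = \sum_{k\leq n/2} c_{n,k}\,(z\cdot e_1)^{n-2k}(z\cdot z)^{k},
\]
which is a polynomial of degree $n$ in the coordinates of $z$; hence the Gegenbauer factor times $||z||^{m-p}$ is real analytic on $\RR^d \setminus \{0\}$.

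Next I would complexify. On a complex ball $B\subset\CC^d$ around $z_0$, the polynomial $z\cdot z$ stays away from $0$ if the radius is small enough, so we can pick a holomorphic branch of $||z||$ that is real-positive on real points, and correspondingly of $||z||^{m-p}$ and $\ln||z||$. Shrinking $B$, we may also ensure $||z||\leq r_1$ for some $r_1 < 1$. The decisive ingredient is a Cauchy-type bound for $C_n^{(\lambda)}(w)$ at complex $w$: from (\ref{seriesdef}) one obtains $|C_n^{(\lambda)}(w)|\leq K_U\,\rho_U^{\,n}$ for $w$ in any compact set $U\subset\CC$, where $\rho_U$ is the reciprocal of the minimal modulus of the roots of $1-2wt+t^2$ as $w$ ranges over $U$. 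Since $\cos\angle(z,e_1)=(z\cdot e_1)/||z||$ is continuous on $B$ and takes values near the real interval $[-1,1]$, the roots of $1-2wt+t^2$ lie near the unit circle, so $\rho_U$ can be made arbitrarily close to $1$ by further shrinking $B$; in particular we may arrange $r_1\rho_U < 1$.

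Putting the pieces together: on $B$, the absolute value of the $(\ell,m,n)$-th summand is bounded, via the hypotheses (\ref{conditionAB}), by a constant multiple of $m^\alpha\,|\ln||z|||^\ell\, r_1^{m-p}\,\rho_U^{\,n}$; since $n\leq m$ the inner sum over $n$ contributes a factor $(m+1)\rho_U^{\,m}$, and the outer sum $\sum_m m^{\alpha+1}(r_1\rho_U)^m$ converges geometrically. By Weierstrass' theorem on uniform limits of holomorphic functions, $f$ extends holomorphically to $B$, and hence is real analytic at $z_0$. The main obstacle is obtaining an exponential bound on $|C_n^{(\lambda)}(w)|$ for complex $w$ with base arbitrarily close to $1$; this is resolved cleanly by applying the Cauchy inequality to the generating function (\ref{seriesdef}) and invoking continuity of the roots of $1-2wt+t^2$ in $w$ near the interval $[-1,1]$.
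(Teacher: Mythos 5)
Your proof is correct and follows essentially the same route as the paper's: term-by-term real analyticity of the summands together with convergence of the series, which the paper dispatches in three sentences by asserting that absolute convergence lets "the analytic expansions commute with the sums." Your complexification step and the Cauchy-type bound $|C_n^{(\lambda)}(w)|\leq K_U\rho_U^{\,n}$ with $\rho_U$ close to $1$ supply exactly the detail the paper leaves implicit, so this is a legitimate (and more complete) write-up of the same argument.
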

\begin{proof}
Assume without restriction that $e_1=(1,0,\ldots,0)$. The Gegenbauer polynomials $C_n^{(\lambda)}(\cos\angle(z,e_1))$ are polynomials in $z^1/||z||$ and hence real analytic for $z\neq0$.
The norm $||z||$ is real analytic for $z\neq0$ and so is $\ln||z||$. Due to (\ref{conditionAB}) the sum in (\ref{eqle1}) is absolutely convergent for
$||z||<1$ and the sum in (\ref{eqgr1}) is absolutely convergent for $||z||>1$. Hence the analytic expansions commute with the sums and
the claim follows.
\end{proof}

\begin{prop}\label{prop2}
The function classes $\sC_{p,q}^{(\lambda)}$ have the following properties:
\begin{enumerate}
\item[(C1)]
$$\sC_{p_1,q_1}^{(\lambda)}\sC_{p_2,q_2}^{(\lambda)}\subseteq\sC_{p_1+p_2,q_1+q_2}^{(\lambda)}$$
\item[(C2)] If $f\in\sC_{p,q}^{(\lambda)}$ and $\alpha\in\RR$ with
\begin{equation}\label{alphacondition}
2-q<2\alpha<2\lambda+2-p
\end{equation}
then
\begin{equation}\label{addedge}
\int_{\RR^d}\frac{\dd^dx}{\pi^{d/2}}\frac{f(x)}{||x||^{2\alpha}||x-z||^{2\lambda}}\in\sC^{(\lambda)}_{\max\{0,p+2\alpha-2\},\min\{2\lambda,q+2\alpha-2\}}.
\end{equation}
\end{enumerate}
\end{prop}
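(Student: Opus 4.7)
For part (C1), I would multiply the defining series of $f_1\in\sC^{(\lambda)}_{p_1,q_1}$ and $f_2\in\sC^{(\lambda)}_{p_2,q_2}$ term-by-term on $||z||<1$. The logarithmic and radial factors combine trivially as $(\ln||z||)^{\ell_1+\ell_2}\,||z||^{(m_1+m_2)-(p_1+p_2)}$, while the Gegenbauer factors are handled by the linearization formula
\[
C^{(\lambda)}_{n_1}(t)\,C^{(\lambda)}_{n_2}(t)=\sum_{k=|n_1-n_2|}^{n_1+n_2}c^{(\lambda)}_{n_1,n_2,k}\,C^{(\lambda)}_k(t),
\]
whose coefficients $c^{(\lambda)}_{n_1,n_2,k}\ge 0$ are explicit (e.g.\ from Dougall's formula) and grow at most polynomially in $n_1,n_2$. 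Setting $m=m_1+m_2$ and $n=k$ preserves the admissibility constraint $n\le m$ since $k\le n_1+n_2\le m_1+m_2$. The bound (\ref{conditionAB}) for the product follows from $a_{\ell_1,m_1,n_1}a_{\ell_2,m_2,n_2}\le A_1A_2\,m^{\alpha_1+\alpha_2}$ together with the fact that the number of triples $(m_1,m_2,k)$ contributing to a fixed $(m,n)$ is polynomial in $m$. The $||z||>1$ expansion is treated identically.

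For part (C2), the plan has four steps. First, substitute the appropriate series for $f$ (one for $||x||<1$, the other for $||x||>1$) and expand the propagator $||x-z||^{-2\lambda}$ via (\ref{propexpand}), interchanging the absolutely convergent sums with the integral. Second, integrate over the unit sphere in $x$: by the orthogonality (\ref{ortho}), only terms whose Gegenbauer degrees in $\cos\angle(x,e_1)$ and $\cos\angle(x,z)$ agree survive, producing the prefactor $\lambda(n+\lambda)^{-1}C^{(\lambda)}_n(\cos\angle(e_1,z))$ times a purely radial integrand in $r=||x||$. Third, evaluate the radial integral over $(0,\infty)$ by splitting at the breakpoints $r=1$ (switch of the expansion of $f$) and $r=||z||$ (switch of the propagator expansion); each sub-interval reduces to an elementary integral of the form $\int r^{\beta}(\ln r)^{\ell}\,dr$, whose convergence at $0$ and $\infty$ is secured precisely by (\ref{alphacondition}). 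Fourth, reassemble the pieces into a series of the form (\ref{eqle1}) for $||z||<1$ and (\ref{eqgr1}) for $||z||>1$; tracking the leading small-$z$ contribution from the region $r<||z||$ yields the left exponent $\max\{0,p+2\alpha-2\}$, while the large-$z$ behaviour reflects the competition between the propagator prefactor $||z||^{-2\lambda}$ and the bulk decay of $\int f(x)||x||^{-2\alpha}d^dx$ governed by $q$, giving the right exponent $\min\{2\lambda,q+2\alpha-2\}$.

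The main obstacle is the bookkeeping in the last two steps. A resonance occurs whenever the exponent $\beta$ in a radial integral equals $-1$---that is, whenever $p+2\alpha-2$, $q+2\alpha-2$, or $2\lambda$ coincides with one of the countably many radial exponents appearing in the expansion of $f$ or of the propagator. At such resonances the primitive $\int r^{-1}dr=\ln r$ produces an extra $\ln||z||$, raising the log-degree by one. One must check that for fixed $\alpha$ these collisions occur for only finitely many $(m,n)$ at each log level, so that the bound on $L$ in Definition \ref{Cdef} and the polynomial growth (\ref{conditionAB}) are preserved; the latter follows from the original polynomial bounds on $a_{\ell,m,n}$, $b_{\ell,m,n}$, the uniform boundedness of $\lambda/(n+\lambda)$, and geometric-series estimates in $m$. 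The admissibility constraint $n\le m$ in the output series is then a short combinatorial check, since the radial integration leaves the Gegenbauer index unchanged while the new $m$ aggregates at least as many units from the original radial powers.
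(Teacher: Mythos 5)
Your proposal follows essentially the same route as the paper's appendix: for (C1) Dougall's linearization formula with non-negative coefficients, bounded by specializing the Gegenbauer polynomials at $1$, and for (C2) the expansion (\ref{propexpand}) of the propagator together with the orthogonality (\ref{ortho}), a radial integration split at $r=1$ and $r=||z||$ under the convergence condition (\ref{alphacondition}), and the resonance terms $\int r^{-1}\dd r=\ln r$ that raise the logarithm degree (appearing in the paper as the Kronecker-delta terms of (\ref{Gegenbauerint1}) and (\ref{Gegenbauerint2})). One minor imprecision: the resonance $m-n=p+2\alpha-2$ generically occurs for infinitely many pairs $(m,n)$, not finitely many, but this is harmless because each such term raises the log-degree by exactly one, so $L$ increases by at most one uniformly.
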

The proof of the proposition is technical, so we moved it to appendix A.

\begin{prop}\label{prop3}
Let $w$ be a word that begins with 2. Then $f_w^{(1)}\in\sC_{0,2}^{(1)}$.
\end{prop}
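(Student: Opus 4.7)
I would proceed by induction on $|w|$, using proposition \ref{prop2}. For the base case $w=2$, the explicit series representation of $f_2^{(1)}(z)$ from example \ref{onevertex}, together with the inversion $f_2^{(1)}(z)=||z||^{-2}f_2^{(1)}(z/||z||^2)$ to cover $||z||>1$, directly exhibits $f_2^{(1)}$ in the form required by definition \ref{Cdef} with $p=0$, $q=2$, $L=1$: the expansion coefficients are supported on the diagonal $m=n$ and equal $2/(n+1)^2$ or $\pm 2/(n+1)$, hence uniformly bounded, so $f_2^{(1)}\in\sC_{0,2}^{(1)}$.

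For the inductive step, assume $f_w^{(1)}\in\sC_{0,2}^{(1)}$ and let $a\in\{0,1,2\}$; write $S_0=\{0\}$, $S_1=\{1\}$, $S_2=\{0,1\}$ for the labeled neighbors of the new chain vertex. Singling out the integration over this vertex gives
$$f_{wa}^{(1)}(z)=\int_{\RR^4}\frac{\dd^4 y}{\pi^2}\,\frac{f_w^{(1)}(y)}{||y-z||^2\prod_{j\in S_a}||y-j||^2}.$$
Since $||y-e_1||^{-2}\in\sC_{0,2}^{(1)}$ by example \ref{expluse1}, property (C1) of proposition \ref{prop2} lets me absorb this factor into the integrand whenever $1\in S_a$, producing a new function $g\in\sC^{(1)}_{0,q}$ with $q=4$ if $a\in\{1,2\}$ and $q=2$ if $a=0$. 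Property (C2), applied with $\lambda=1$ and with $\alpha=1$ if $0\in S_a$ (cases $a\in\{0,2\}$) or $\alpha=0$ if $a=1$, then places $f_{wa}^{(1)}$ in $\sC^{(1)}_{\max\{0,2\alpha-2\},\min\{2,q+2\alpha-2\}}=\sC_{0,2}^{(1)}$. The admissibility inequality $2-q<2\alpha<4$ of (\ref{alphacondition}) reads $0<2<4$, $-2<0<4$, and $-2<2<4$ in the three cases $a=0,1,2$ respectively, so it holds in each.

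The substantive work is packaged into proposition \ref{prop2} (proved in appendix A), which rests on the nontrivial Gegenbauer estimates behind (C1) and (C2); the present argument is essentially exponent bookkeeping. The one conceptual subtlety is the need to absorb $||y-e_1||^{-2}$ into $f_w^{(1)}$ via (C1) before invoking (C2), so that the singularity of the new edge at the labeled vertex $1$ does not obstruct the admissibility condition — the statement of (C2) only supplies the factor $||y||^{-2\alpha}$ explicitly.
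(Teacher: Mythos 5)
Your proof is correct and is exactly the argument the paper has in mind: its proof of this proposition reads in full ``Straight forward induction over the length of $w$ using proposition \ref{prop2},'' and your write-up simply makes that induction explicit (base case from example \ref{onevertex} with the inversion formula, then (C1) to absorb $||y-e_1||^{-2}$ followed by (C2) with the correct choices of $\alpha$ and $q$ in each of the three cases). The exponent bookkeeping and the admissibility checks $2-q<2\alpha<4$ all verify.
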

\begin{proof}
We use example \ref{expluse1} and (\ref{addedge}) to prove the result for $f_2$.
The general case follows by straight forward induction over the length of $w$ using proposition \ref{prop2}.
\end{proof}

\begin{prop}\label{prop4}
Let $w$ be a word that begins with 2 and let $g(y)=f_w^{(1)}((y+e_1)/2)$. Then $g\in\sC_{0,2}^{(1)}$.
\end{prop}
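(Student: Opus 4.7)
My plan is to reduce the proposition to an adaptation of the inductive argument that proves Proposition~\ref{prop3}, after performing a coordinate shift on the integration variables. Starting from the defining integral (\ref{fdefnoncomp}) for $f_w^{(1)}((y+e_1)/2)$, I substitute $x_v = (u_v+e_1)/2$ for every integrated vertex. Each quadric becomes $Q_e'/4$ where $Q_e'$ is the squared distance between the corresponding new positions; the labeled positions $0, e_1, z$ transform into $-e_1, e_1, y$ respectively. The Jacobian gives an overall positive constant $4^{N-2V^{\mathrm{int}}}$, which equals $4^k$ where $k$ is the number of letters $2$ in $w$ (a short combinatorial count using that internal vertices of the chain contribute $n-1$ chain edges, each letter contributes $\gamma_0 + \gamma_1$ edges, and the last vertex contributes one more edge to $z$). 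Thus
\begin{equation*}
g(y) = c_w \left(\prod_v \int_{\RR^4} \frac{\dd^4 u_v}{\pi^2}\right) \frac{1}{\prod_e Q_e'},
\end{equation*}
where every propagator is of type $1/||u_v \pm e_1||^2$, $1/||u_v - u_{v'}||^2$, or $1/||u_v - y||^2$. The crucial observation is that by Example~\ref{expluse1}, \emph{both} $1/||u \pm e_1||^2$ belong to $\sC_{0,2}^{(1)}$, so the $-e_1$ label behaves identically to how the $0$ label did in Proposition~\ref{prop3}.

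I would then integrate out the internal vertices $u_1, u_2, \ldots, u_n$ one at a time from left to right along the horizontal chain, maintaining the inductive invariant that after the $k$-th integration the partial integrand, viewed as a function of $u_{k+1}$, lies in $\sC_{0,2}^{(1)}$. Proposition~\ref{prop2}(C2) is applied with $\alpha=0$ at each step: the letter-edges $1/||u_k+e_1||^{2\gamma_0}\,||u_k-e_1||^{2\gamma_1}$ are absorbed into the integrand using (C1), pushing it from $\sC_{0,2}^{(1)}$ into $\sC_{0, 2+2(\gamma_0+\gamma_1)}^{(1)}$; integration against $1/||u_k - u_{k+1}||^2$ (or $1/||u_k - y||^2$ at the terminal step) then produces $\sC_{\max\{0,-2\},\,\min\{2,\,2(\gamma_0+\gamma_1)\}}^{(1)} = \sC_{0,2}^{(1)}$, since any valid letter $a_k \in \{0,1,2\}$ forces $\gamma_0+\gamma_1 \geq 1$.

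The base case $k=1$ is where the hypothesis that $w$ begins with $2$ is essential: it guarantees $a_1 = 2$, so $u_1$ is connected to both $-e_1$ and $e_1$, and the pre-integration integrand $1/||u_1+e_1||^2\,||u_1-e_1||^2 \in \sC_{0,4}^{(1)}$ satisfies the hypothesis $q > 2$ of (C2) with $\alpha=0$. Were $w$ to begin with $0$ or $1$, only one of these propagators would be present, giving $q = 2$, and (C2) would fail at the first step. After the final integration against $1/||u_n - y||^2$, one obtains $g \in \sC_{0,2}^{(1)}$, as required.

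The main obstacle is simply the case-by-case bookkeeping: at each of the $n$ integration steps one must verify the pair of inequalities $2 - q < 0 < 2\lambda + 2 - p$ for the current values of $p, q$, depending on the current letter $a_k$, and verify that $\min\{2, q-2\} = 2$ is preserved. Because the invariant $p=0, q=2$ is maintained throughout and $\gamma_0 + \gamma_1 \geq 1$ bumps $q$ temporarily up to $\geq 4$ before each application of (C2), the verification is uniform and automatic. Beyond this, one only needs to note that the integrations are carried out in an order compatible with the chain structure of the sequential graph, so that at each step exactly one propagator $1/||u_k - u_{k+1}||^2$ plays the role of $1/||x-z||^{2\lambda}$ in (C2).
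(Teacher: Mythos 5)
Your proposal is correct and follows essentially the same route as the paper: shift the integration variables by $x_v\mapsto(x_v+e_1)/2$ so that the labels $0,1,z$ become $-e_1,e_1,y$, invoke example \ref{expluse1} to see that the propagator to $-e_1$ lies in $\sC_{0,2}^{(1)}$ just as the propagator to $0$ did, and then run the same induction over the length of $w$ via proposition \ref{prop2} as in proposition \ref{prop3}. The only difference is that you spell out the bookkeeping of the (C1)/(C2) applications, which the paper leaves implicit.
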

\begin{proof}
We substitute $x_v\mapsto (x_v+e_1)/2$ in the integral (\ref{fdefnoncomp}) defining $f_w^{(1)}$.
The integration measure changes by a constant. After extracting all factors $1/2$ from the propagators we find that
a propagator from $x_v$ to 0 is changed to a propagator from $x_v$ to $-e_1$ and the propagator from
$x_n$ (say) to $z$ is changed to a propagator from $x_n$ to $2z-e_1$. After a substitution $z=(y+e_1)/2$ the latter becomes a propagator from $x_n$ to $y$.
All other propagators remain intact. Because the propagator from $x_1$ to $-e_1$ is in $\sC_{0,2}^{(1)}$ (example \ref{expluse1}) we can use
induction over the length of the word $w$ to prove the result with proposition \ref{prop2}
\end{proof}

\begin{proof}[Proof of corollary \ref{realanalyticcor}]
By propositions \ref{prop1} and \ref{prop3} $f_w^{(1)}(z)$ is real analytic for all $z$ except 0 and the unit sphere.
By propositions \ref{prop1} and \ref{prop4} $f_w^{(1)}((y+e_1)/2)$ is real analytic for all $y$ except 0 and the unit sphere in $y$.
Hence $f_w^{(1)}(z)$ is real analytic except for $z=e_1/2$ and the vectors $z$ with $||2z-e_1||=1$ which is a sphere of radius $1/2$ around $e_1/2$.
The intersection of the two loci of possible non-analycities are 0 and $e_1$. Transition to $\CC$ proves the lemma.
\end{proof}

\subsection{Completion}\label{completion}
There exists a 24-fold symmetry relating graphical functions of different graphs. For the formulation of this symmetry
we need to complete the graph in a way similar to the completion of periods.

Completion adds a vertex with label $\infty$. We weight the edges of the graph by real exponents. Positive integer weights may be graphically
represented by multiple lines. For the weight $-1$ which lifts the edge-quadric into the numerator we choose to draw a wavy line. Integer negative weights may
be represented by multiple wavy lines (see figure 5). We will see that integer weights suffice to complete graphs in three and four dimensions.

\begin{defn}
A graph $\Gamma$ is completed in $d=2\lambda+2$ dimensions if it has the labels $0,1,z,\infty$ and weights such that every unlabeled (internal) vertex has weighted valence
\begin{equation}\label{nuint}
\nu^{\mathrm{int}}=\frac{2d}{d-2}
\end{equation}
and every labeled (external) vertex has valence
\begin{equation}\label{nuext}
\nu^{\mathrm{ext}}=0.
\end{equation}
The graphical function of $\Gamma$ is
\begin{equation}\label{fGamma}
f_\Gamma^{(\lambda)}(z)=\left(\prod_{v\notin\{0,1,z,\infty\}} \int_{\RR^d}\frac{\dd^dx_v}{\pi^{d/2}}\right)\frac{1}{\prod_eQ_e^{\lambda\nu_e}}.
\end{equation}
If $\Gamma\backslash\{\infty\}$ equals $G$ up to an edge from 0 to 1 of any weight then $f_\Gamma^{(\lambda)}=f_G^{(\lambda)}$ (definition \ref{fGdef}).
In this case $\Gamma$ is the completion of $G$.
\end{defn}
Because $\Gamma$, in contrast to $G$, has a vertex $\infty$ we may use the same symbol $f^{(\lambda)}_\bullet$ for completed and for uncompleted graphical functions.

Every graph has a unique completion.
\begin{lem}\label{completionlemma}
Let $G$ be a graph with three labeled vertices $0,1,z$. Then $G$ has a unique completion. In dimensions three and four the completed graph has integer edge-weights
if $G$ has integer edge-weights.
\end{lem}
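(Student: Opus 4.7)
The plan is to construct $\Gamma$ explicitly and then argue that the construction is forced by the valency conditions (\ref{nuint}) and (\ref{nuext}). For each internal vertex $v$ of $G$ I would add a single new edge from $v$ to $\infty$ of weight $\nu^{\mathrm{int}} - \nu_v^G$, where $\nu_v^G$ denotes the weighted valency of $v$ in $G$. This is the unique way to satisfy (\ref{nuint}) at $v$, since the only edges at $v$ in $\Gamma$ not already present in $G$ are those running to the new vertex $\infty$.

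After this step, there remain at most four undetermined weights, namely $w_{z\infty}$, $w_{0\infty}$, $w_{1\infty}$, and $w_{01}$, corresponding to the edges between labeled vertices that completion is allowed to add. The four valency-zero conditions at $0,1,z,\infty$ give a linear system in these four unknowns. The $z$-equation decouples and forces $w_{z\infty} = -\nu_z^G$; the remaining three equations in $(w_{0\infty},w_{1\infty},w_{01})$ have coefficient matrix
$$\begin{pmatrix}1&0&1\\0&1&1\\1&1&0\end{pmatrix}$$
of determinant $-2$, yielding a unique rational solution. Consistency of the $\infty$-equation with the $0$- and $1$-equations is automatic by the handshake identity $\sum_v \nu_v^G = 2W$ for $G$, where $W$ denotes the total edge weight of $G$.

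For the integrality statement, the internal-vertex weights $\nu^{\mathrm{int}} - \nu_v^G$ lie in $\ZZ$ whenever $\nu^{\mathrm{int}} \in \ZZ$. Solving the $3\times 3$ system by elimination and using the handshake identity produces
$$w_{01} \;=\; \tfrac{1}{2}V^{\mathrm{int}}\nu^{\mathrm{int}} - W, \qquad w_{0\infty} \;=\; W - \nu_0^G - \tfrac{1}{2}V^{\mathrm{int}}\nu^{\mathrm{int}},$$
and symmetrically for $w_{1\infty}$. All four new weights are integers whenever $V^{\mathrm{int}}\nu^{\mathrm{int}}$ is even. In dimensions $d=3,4$ we have $\nu^{\mathrm{int}}=6,4$ respectively, both integers and both even, so integrality holds for every $G$ with integer weights. (In $d=6$, where $\nu^{\mathrm{int}}=3$ is odd, half-integer $w_{01}$ can occur when $V^{\mathrm{int}}$ is odd, which is why the integrality assertion is restricted to $d=3,4$.)

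The only delicate point is the linear-algebra bookkeeping for the labeled vertices: verifying that after eliminating $w_{z\infty}$, the $3\times 3$ system has full rank and is consistent. Both facts follow from the non-vanishing determinant together with the handshake identity; everything else in the argument is direct construction.
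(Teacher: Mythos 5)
Your proposal is correct and follows essentially the same route as the paper: first the internal-to-$\infty$ and $z$-to-$\infty$ weights are forced pointwise, then the triangle $0,1,\infty$ (with the $01$ edge) is determined by an invertible linear system, with integrality in $d=3,4$ coming from the half-edge count because $\nu^{\mathrm{int}}=2d/(d-2)$ is an even integer there. The only cosmetic difference is that your worry about ``consistency'' of the $\infty$-equation is moot once the $3\times3$ determinant is $-2$; the handshake identity is needed only for the integrality step, exactly as in the paper.
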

\begin{proof}
Let $v$ be an internal vertex of $G$ with weighted edge-valence $n_v$. Completion uniquely connects $v$ to $\infty$ by an edge of weight $\nu_{v\infty}=2d/(d-2)-n_v$.
Clearly, in three or four dimensions $\nu_{v\infty}\in\ZZ$ if $n_v\in\ZZ$. Let the vertex $z$ in $G$ have weighted edge-valence $n_z$. Then
completion connects $z$ to $\infty$ by an edge of weight $-n_z$ which is integer if $n_z$ is.
With these edges we partially complete $G$ to $\Gamma_0$ in which the vertices $0,1,\infty$ have weighted valence $n_0,n_1,n_\infty$, respectively.
In $\Gamma$ we have to add to $\Gamma_0$ a weighted triangle $0,1,\infty$ with the edge-weights
$$
\nu_{01}=\frac{-n_0-n_1+n_\infty}{2},\quad\nu_{0\infty}=\frac{-n_0+n_1-n_\infty}{2},\quad\nu_{1\infty}=\frac{n_0-n_1-n_\infty}{2}.
$$
This completes the graph. The completion is unique: Changing the edge-weights $\nu_{ij}$ by $\delta\nu_{ij}$ ($i,j\in\{0,1,\infty\}$) without changing the valence
$\nu_i=0$ of the external vertices leads to the non-degenerate linear system
$$\delta\nu_{01}+\delta\nu_{0\infty}=0,\quad\delta\nu_{01}+\delta\nu_{1\infty}=0,\quad\delta\nu_{0\infty}+\delta\nu_{1\infty}=0.$$
Finally, counting weighted half-edges in $\Gamma_0$ gives
$$
\frac{2d}{d-2}V^{\mathrm{int}}_{\Gamma_0}+n_0+n_1+n_\infty=2N_{\Gamma_0},
$$
where $V^{\mathrm{int}}_{\Gamma_0}$ is the number of internal vertices in $\Gamma_0$ and $N_{\Gamma_0}$ is the weighted sum of edges in $\Gamma_0$.
In three and four dimensions we have $\frac{d}{d-2}V^{\mathrm{int}}_{\Gamma_0}\in\ZZ$ and hence $n_0+n_1+n_\infty\in2\ZZ$ if $N_{\Gamma_0}\in\ZZ$.
This gives the integrality of $\nu_{01}$, $\nu_{0\infty}$, and $n_{1\infty}$.
\end{proof}
\begin{lem}
A completed graph $\Gamma$ has a convergent graphical function (\ref{fGamma}) if and only if every subgraph $\gamma$ of $\Gamma$ with at most one of its
$V_\gamma\geq2$ vertices in $\{0,1,z,\infty\}$ has
\begin{equation}\label{ultravioletcompleted}
(d-2)N_\gamma<d(V_\gamma-1),
\end{equation}
where $N_\gamma$ is the weighted edge sum of $\gamma$.
\end{lem}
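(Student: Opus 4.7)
The plan is to reduce convergence of $f_\Gamma^{(\lambda)}$ to convergence of the uncompleted graphical function $f_G^{(\lambda)}$, where $G=\Gamma\setminus\{\infty\}$ with the (immaterial) completion edge from $0$ to $1$ also discarded. Since $Q_{01}=1$ this edge contributes a factor $1$ for any weight, and the missing vertex $\infty$ together with its incident edges is precisely what is removed from the integrand in passing from (\ref{fGamma}) to (\ref{fdefnoncomp}). Hence $f_\Gamma^{(\lambda)}=f_G^{(\lambda)}$, and the weighted analogue of lemma \ref{finitelem} (whose power-counting proof goes through verbatim with weighted edge sums $N_g=\sum_{e\in g}\nu_e$) characterises convergence by two families of conditions on subgraphs $g\subseteq G$: the UV bound $(d-2)N_g<d(V_g-1)$ for every $g$ with at most one vertex in $\{0,1,z\}$ and $V_g\geq 2$, and the IR bound $(d-2)N_g>dV_g^{\mathrm{int}}$ for every $g$ with $V_g^{\mathrm{int}}\geq 1$. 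The remainder of the argument translates both families into the single condition (\ref{ultravioletcompleted}).

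The UV half is immediate: a subgraph $g\subseteq G$ with at most one vertex in $\{0,1,z\}$ is the same as a subgraph $\gamma\subseteq\Gamma$ with $\infty\notin\gamma$ and at most one vertex in $\{0,1,z,\infty\}$, with equal vertex count $V_\gamma=V_g$ and equal weighted edge sum $N_\gamma=N_g$, so the UV bound reads exactly as (\ref{ultravioletcompleted}). The IR half is the substantive step. Given a nonempty set $V^{\ast}\subseteq V_G\setminus\{0,1,z\}$, the IR bound becomes $(d-2)W>d|V^{\ast}|$, where $W$ is the total weight of $G$-edges incident to $V^{\ast}$. I take $\gamma$ to be the induced subgraph of $\Gamma$ on $V^{\ast}\cup\{\infty\}$, which has $V_\gamma=|V^{\ast}|+1$ and exactly one external vertex, namely $\infty$. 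Writing
\[
A=\!\!\sum_{\substack{e\subseteq V^{\ast}}}\!\nu_e,\quad B=\!\!\sum_{\substack{e\colon V^{\ast}\to\{0,1,z\}}}\!\nu_e,\quad C=\!\!\sum_{v\in V^{\ast}}\!\nu_{v\infty},\quad D=\!\!\sum_{\substack{e\colon V^{\ast}\to V_G^{\mathrm{int}}\setminus V^{\ast}}}\!\nu_e,
\]
the completion identity $\nu^{\mathrm{int}}=2d/(d-2)$ summed over $v\in V^{\ast}$ gives $\tfrac{2d}{d-2}|V^{\ast}|=2A+B+C+D$. Since $N_\gamma=A+C$ and $W=A+B+D$, eliminating $B+D$ yields the key identity
\[
(d-2)N_\gamma=2d|V^{\ast}|-(d-2)W,
\]
which turns $(d-2)N_\gamma<d(V_\gamma-1)=d|V^{\ast}|$ into $(d-2)W>d|V^{\ast}|$, i.e.\ the IR bound for $V^{\ast}$ in $G$. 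Thus induced $\gamma\ni\infty$ with a single external vertex correspond bijectively to the IR tests in $G$.

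It remains to check that condition (\ref{ultravioletcompleted}) for non-induced subgraphs follows from the induced case and hence adds no new constraints: deleting edges among $V^{\ast}$ (which are $G$-edges of positive weight $\lambda$) decreases $N_\gamma$, and for the completion edges to $\infty$ the algebra above shows that each such edge contributes exactly the term that converts a vertex of $V^{\ast}$ into its IR scaling power, so removing them still satisfies the bound whenever the IR condition at the corresponding subset holds. The main obstacle is the signed valency bookkeeping in the identity for $(d-2)N_\gamma$: one must track that the weights $\nu_{v\infty}$ of the completion edges (which may be negative when the $G$-valency at $v$ exceeds $2d/(d-2)$) combine correctly with the positive weights of $G$-edges. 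Once this identity is verified, the two halves together show that convergence of $f_\Gamma^{(\lambda)}$ is equivalent to (\ref{ultravioletcompleted}) holding for every subgraph of $\Gamma$ with $V_\gamma\geq 2$ and at most one vertex in $\{0,1,z,\infty\}$.
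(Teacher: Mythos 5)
Your proposal is correct and follows essentially the same route as the paper: reduce to lemma \ref{finitelem} for $G=\Gamma\setminus\{\infty\}$, identify the ultraviolet tests with the subgraphs avoiding $\infty$, and convert each infrared test into the bound (\ref{ultravioletcompleted}) for the induced subgraph on $V^\ast\cup\{\infty\}$ via the completion identity $\tfrac{2d}{d-2}|V^\ast|=2A+B+C+D$, which is exactly the paper's half-edge count $N_\infty=\sum_v(\tfrac{2d}{d-2}-n_v)$ combined with $\sum_v n_v=2N_g^{\mathrm{int}}+N_g^{\mathrm{ext}}$. The one step you flag as delicate (non-induced subgraphs through $\infty$, where dropping a negative-weight completion edge could in principle increase $N_\gamma$) is passed over in silence in the paper's own proof as well, so your argument matches it in both substance and level of rigour.
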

\begin{proof}
For subgraphs $\gamma$ with $\infty\notin\gamma$ condition (\ref{ultravioletcompleted}) is equivalent to (\ref{ultraviolet}) for subgraphs in $G=\Gamma\backslash\{\infty\}$.
It remains to show that for $\gamma$ with $\infty\in\gamma$ condition (\ref{ultravioletcompleted}) is equivalent to (\ref{infrared}).

Let $g$ be a subgraph of $G$. To check infrared finiteness we can assume without restriction that every external edge of $g$ connects to exactly one internal vertex.
From $g$ we construct a graph $\gamma$ by connecting all internal vertices to $\infty$ such that their weighted valence becomes $\frac{2d}{d-2}$ and then cutting all other external edges.
Now, $\gamma$ is a subgraph of $\Gamma$ which connects to $\infty$ but not to $0,1,z$. Conversely deleting $\infty$ in $\gamma$ and
adding external edges such that all vertices become internal leads back to $g$. We have to show that (\ref{ultravioletcompleted}) for $\gamma$ is equivalent to (\ref{infrared})
for $g$.

We have $N_\gamma=N_\infty+N^{\mathrm{int}}_g$ is the weighted sum of edges that connect $\gamma$ with $\infty$ plus the number of internal edges in $g$.
Moreover, by completion we have
$$
N_\infty=\sum_{v\,\mathrm{int}}\left(\frac{2d}{d-2}-n_v\right),
$$
where the sum is over internal vertices $v$ of $g$ and $n_v$ is the valence of $v$ in $g$. Because
$$
V_\gamma-1=V^{\mathrm{int}}_g=\sum_{v\,\mathrm{int}}1
$$
we have that (\ref{ultravioletcompleted}) is equivalent to
$$
dV^{\mathrm{int}}_g<(d-2)\left[\left(\sum_{v\,\mathrm{int}}n_v\right)-N^{\mathrm{int}}_g\right].
$$
Counting half-edges in $g$ gives $\sum_{v\,\mathrm{int}}n_v=2N^{\mathrm{int}}_g+N^{\mathrm{ext}}_g$ implying (\ref{infrared}).
Reversing the arguments shows that (\ref{infrared}) implies (\ref{ultravioletcompleted}).
\end{proof}

\begin{center}
\fcolorbox{white}{white}{
  \begin{picture}(336,130) (-45,38)
    \SetWidth{0.8}
    \SetColor{Black}
    \GBox(80,117)(112,149){0.882}
    \Text(74,151)[lb]{\normalsize{\Black{$0$}}}
    \Text(74,111)[lb]{\normalsize{\Black{$1$}}}
    \Text(114,111)[lb]{\normalsize{\Black{$z$}}}
    \Text(114,151)[lb]{\normalsize{\Black{$\infty$}}}
    \Line[arrow,arrowpos=1,arrowlength=5,arrowwidth=2,arrowinset=0.2](75,128)(42,95)
    \Line[arrow,arrowpos=1,arrowlength=5,arrowwidth=2,arrowinset=0.2](96,112)(96,95)
    \Line[arrow,arrowpos=1,arrowlength=5,arrowwidth=2,arrowinset=0.2](117,128)(150,95)

    \GBox(16,53)(48,85){0.882}
    \Text(8,87)[lb]{\normalsize{\Black{$\infty$}}}
    \Text(8,43)[lb]{\normalsize{\Black{$1$}}}
    \Text(42,40)[lb]{\normalsize{\Black{$1/z$}}}
    \Text(50,87)[lb]{\normalsize{\Black{$0$}}}
    
    \GBox(80,53)(112,85){0.882}
    \Text(74,87)[lb]{\normalsize{\Black{$1$}}}
    \Text(74,43)[lb]{\normalsize{\Black{$0$}}}
    \Text(102,42)[lb]{\normalsize{\Black{$1\!-\!z$}}}
    \Text(114,87)[lb]{\normalsize{\Black{$\infty$}}}

    \GBox(144,53)(176,85){0.882}
    \Text(138,87)[lb]{\normalsize{\Black{$0$}}}
    \Text(138,40)[lb]{\normalsize{\Black{$1/z$}}}
    \Text(178,43)[lb]{\normalsize{\Black{$1$}}}
    \Text(178,87)[lb]{\normalsize{\Black{$\infty$}}}
  \end{picture}
}
Figure 9: The $\sS_4$ symmetry of completed graphs is generated by three transformations.

\end{center}
\vskip2ex

Let $\sS_4$ be the symmetric group that permutes the four external vertices $0,1,z,\infty$. It is generated by the transpositions $(0,\infty)$, $(0,1)$, $(1,z)$.
The symmetric group $\sS_4$ has a factor group $\sS_3$ which in our context is realized as M{\"o}bius transformations
on $\PP^1\CC\backslash\{0,1,\infty\}$. The homomorphism $\phi:\sS_4\rightarrow\sS_3$ is determined by the images of the generating transpositions. Concretely we have
\begin{equation}\label{transpos}
\phi[(0,\infty)]=z\mapsto\frac{1}{z},\quad\phi[(0,1)]=z\mapsto 1-z,\quad\phi[(1,z)]=z\mapsto\frac{1}{z}.
\end{equation}
Let $\sigma\in\sS_4$ be a permutation of the external labels and $\pi=\phi(\sigma)\circ\sigma$ be $\sigma$ followed by a transformation of the label $z$
(see figure 9) indicating that the transformed graphical function has the argument $\phi(\sigma)$.

\begin{thm}\label{completionthm}
Completed graphical functions are invariant under $\pi=\phi(\sigma)\circ\sigma$,
\begin{equation}
f^{(\lambda)}_\Gamma(z)=f^{(\lambda)}_{\sigma(\Gamma)}(\phi(\sigma)(z)).
\end{equation}
\end{thm}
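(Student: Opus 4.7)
The plan is to show that under each of the three generating transpositions of $\mathcal{S}_4$, the integral defining $f^{(\lambda)}_\Gamma$ transforms into the integral defining $f^{(\lambda)}_{\sigma(\Gamma)}$ evaluated at the corresponding Möbius image of $z$. The engine is a conformal change of variables on $\mathbb{R}^d\cup\{\infty\}$, and the reason it succeeds is exactly the completion condition: the weighted valency $\nu^{\mathrm{int}}=2d/(d-2)$ at each unlabeled vertex and $\nu^{\mathrm{ext}}=0$ at each labeled vertex will force the propagator factors produced by the change of variables to cancel against the Jacobians.

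For the transposition $(0,1)$, I will substitute $x_v\mapsto e_1-x_v$ in every integration variable. This is an isometry with unit Jacobian; it sends the quadric $\|x_v\|^2$ to $\|x_v-e_1\|^2$ and vice versa, and the propagator from some $x_v$ to $z$ becomes the propagator from $x_v$ to $e_1-z$. In the coordinate identification (\ref{coords}), $e_1-z$ corresponds to the complex number $1-z$, giving exactly $f^{(\lambda)}_{\sigma(\Gamma)}(1-z)$.

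For the transposition $(0,\infty)$, I will substitute $x_v = y_v/\|y_v\|^2$. The Jacobian is $\|y_v\|^{-2d}\,\dd^d y_v$, and the key identity $\|x_{v_1}-x_{v_2}\|^2 = \|y_{v_1}-y_{v_2}\|^2 / (\|y_{v_1}\|^2\|y_{v_2}\|^2)$ turns each propagator $Q_e^{-\lambda\nu_e}$ into $\|y_{v_1}\|^{2\lambda\nu_e}\|y_{v_2}\|^{2\lambda\nu_e}/\|y_{v_1}-y_{v_2}\|^{2\lambda\nu_e}$. Collecting factors at each internal vertex $v$ gives $\|y_v\|^{2\lambda\sum_e\nu_e}=\|y_v\|^{2d}$, which exactly cancels the Jacobian. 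At the labeled vertices the accumulated factor is $\|e_1\|^0=1$ and $\|z\|^0=1$, because weighted external valency is zero. The geometry of the inversion interchanges $0$ and $\infty$, fixes $e_1$, and sends $z$ to $z/\|z\|^2$; using the reflection symmetry (G1) this is the same argument as $1/z$. For the transposition $(1,z)$, I compose a rotation $R$ sending the unit vector $z/\|z\|$ to $e_1$ with the dilation by $1/\|z\|$; the same valency bookkeeping cancels the Jacobian $\|z\|^d$ against the propagator scaling, and a direct computation of the new position of the $z$-label in terms of its norm and angle with $e_1$ produces $1/\bar z$, hence $1/z$ after (G1).

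Since the three transpositions generate $\mathcal{S}_4$ and the transformation rules (\ref{transpos}) are compatible with the group law $\phi\colon\mathcal{S}_4\to\mathcal{S}_3$, the theorem follows by composing. The main obstacle I anticipate is the careful bookkeeping for edges incident to $0$ and $\infty$ after inversion, because the would-be divergent factors $\|y_0\|$ must be interpreted correctly: the point is that the weighted valency at those labeled vertices is zero, so no such factor is produced in the first place. A secondary point is making the rotation in the $(1,z)$ step rigorous by reducing to the two invariants $\|z\|$ and $\angle(z,e_1)$; once this reduction is made, the rotation is irrelevant and only the dilation matters, which streamlines the verification.
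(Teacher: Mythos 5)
Your proposal follows the paper's proof essentially verbatim: the same reduction to the three generating transpositions, the same changes of variables (reflection $x\mapsto e_1-x$, inversion $x\mapsto x/||x||^2$, dilation plus rotation), and the same use of the completion conditions $\nu^{\mathrm{int}}=2d/(d-2)$ and $\nu^{\mathrm{ext}}=0$ to absorb the Jacobians. One bookkeeping point to repair when you write out the inversion step: at an internal vertex $v$ carrying edges to $\infty$ the collected factor is $||y_v||^{2d-2\lambda\nu_{v\infty}}$ rather than $||y_v||^{2d}$, so the Jacobian is not cancelled outright --- the leftover $||y_v||^{-2\lambda\nu_{v\infty}}$ is exactly the new propagator from $v$ to the image of $\infty$ at the origin, which is how the edges $v\infty$ of $\Gamma$ become the edges $v0$ of $\sigma(\Gamma)$ (and the old $v0$ edges become trivial edges to the new $\infty$).
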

If we consider the graphical function as a function on $\RR^d$ we have to interpret $1-z$ as $e_1-z$ for the unit vector $e_1\in\RR^d$ and $1/z$ as $z/||z||^2$.
\begin{proof}
Because the $\sS_4$ is generated by the three transpositions in (\ref{transpos}) it is sufficient to prove the identities depicted in figure 9.
Let $x_i$, $i=5,6,\ldots,V$ denote the internal vertices of $\Gamma$ and for $v\in\{0,1,z,\infty\}$ let $N_{xv}$ be the sum of the weights of all the edges from internal vertices
to $v$. The weighted sum of internal edges is $N^{\mathrm{int}}$. Counting weights of half-edges that connect to internal vertices we obtain
\begin{equation}\label{halfedges}
\frac{d}{\lambda}(V-4)=2N^{\mathrm{int}}+N_{x0}+N_{x1}+N_{xz}+N_{x\infty}.
\end{equation}
For $v,w\in\{0,1,z,\infty\}$ let $\nu_{vw}$ be the weight of the edge connecting $v$ with $w$. In particular, $\nu_{vw}=0$ if there exists no edge from $v$ to $w$.
Indexed by the labels $0,1,z,\infty$ the integral in (\ref{fGamma}) is of the shape (working in $d$ dimensions)
\begin{equation}\label{ff0}
f_{0,1,z,\infty}=\frac{1}{||z||^{2\lambda\nu_{0z}}}\frac{1}{||z-e_1||^{2\lambda\nu_{1z}}}f^0_{0,1,z,\infty},
\end{equation}
where $f^0$ has no edges that connect external vertices.

A variable transformation $x_i\mapsto1-x_i$ leaves the integration measure invariant. Propagators between internal vertices remain intact, whereas
a propagator from an internal vertex $x_i$ to an external vertex $v$ is mapped to a propagator from $x_i$ to $1-v$.
Hence $f^0_{0,1,z,\infty}=f^0_{1,0,1-z,\infty}$ and by swapping the labels 0 and 1 in (\ref{ff0}) we have
$$f_{1,0,1-z,\infty}=\frac{1}{||e_1-z||^{2\lambda\nu_{1z}}}\frac{1}{||-z||^{2\lambda\nu_{0z}}}f^0_{1,0,1-z,\infty}=f_{0,1,z,\infty}.$$
This proves the middle identity in figure 9.

A variable transformation $x\mapsto x/||x||^2$ changes the integration measure by $\dd^dx_i\mapsto||x_i||^{-2d}\dd^dx_i$ (as can be seen
e.g.\ in angular coordinates). Propagators between internal vertices change according to
\begin{eqnarray*}
||x_i-x_j||^{-2\lambda\nu_{ij}}&=&(||x_i||^2-2||x_i||||x_j||\cos\angle(x_i,x_j)+||x_j||^2)^{-\lambda\nu_{ij}}\\
&\mapsto&\left(\frac{||x_i||\cdot||x_j||}{||x_i-x_j||}\right)^{2\lambda\nu_{ij}},
\end{eqnarray*}
where $\nu_{ij}$ is the weight of the edge between $x_i$ and $x_j$. Similarly internal edges that connect to $0,1,z$ transform as
\begin{eqnarray*}
||x_i||^{-2\lambda\nu_{i0}}&\mapsto&||x_i||^{2\lambda\nu_{i0}},\\
||x_i-e_1||^{-2\lambda\nu_{i1}}&\mapsto&\left(\frac{||x_i||}{||x_i-e_1||}\right)^{2\lambda\nu_{i1}},\\
||x_i-z||^{-2\lambda\nu_{iz}}&\mapsto&\left(\frac{||x_i||}{||z||\cdot||x_i-z/||z||^2||}\right)^{2\lambda\nu_{iz}}.
\end{eqnarray*}
If $x_i$ is connected to $\infty$ by a weight $\nu_{i\infty}$ then by completeness (\ref{nuint})
$$\lambda(\nu_{i0}+\nu_{i1}+\nu_{iz}+\sum_j\nu_{ij})=d-\lambda\nu_{i\infty}.$$
Altogether, after the transformation $x_i$ still connects to $x_j$ by the weight $\nu_{ij}$, to $z/||z||^2$ by weight $\nu_{iz}$
but the connection to 0 has now the weight $\nu_{i\infty}$. Moreover we pick up a total factor of $||z||^{-2\lambda N_{xz}}$ yielding
$f^0_{0,1,z,\infty}=||z||^{-2\lambda N_{xz}}f^0_{\infty,1,1/z,0}$. Including fully external edges and swapping labels 0 and $\infty$ we obtain from (\ref{ff0})
\begin{eqnarray*}
f_{\infty,1,1/z,0}&=&||z||^{2\lambda\nu_{z\infty}}\left(\frac{||z||}{||e_1-z||}\right)^{2\lambda\nu_{1z}}f^0_{\infty,1,1/z,0}\\
&=&||z||^{2\lambda(\nu_{z\infty}+\nu_{0z}+\nu_{1z}+N_{xz})}f_{0,1,z,\infty}.
\end{eqnarray*}
Due to (\ref{nuext}) this equals $f_{0,1,z,\infty}$ establishing the left hand side of figure 9.

A variable transformation $x\mapsto||z||x$ changes the integration measure by $\dd^dx_i\mapsto||z||^d\dd^dx_i$. Propagators between internal vertices
and propagators that connect internal vertices to $0,1,z$ change according to
\begin{eqnarray*}
||x_i-x_j||^{-2\lambda\nu_{ij}}&\mapsto&(||z||\cdot||x_i-x_j||)^{-2\lambda\nu_{ij}},\\
||x_i||^{-2\lambda\nu_{i0}}&\mapsto&(||z||\cdot||x_i||)^{-2\lambda\nu_{i0}},\\
||x_i-e_1||^{-2\lambda\nu_{i1}}&\mapsto&(||z||\cdot||x_i-e_1/||z||\,||)^{-2\lambda\nu_{i1}},\\
||x_i-z||^{-2\lambda\nu_{iz}}&\mapsto&(||z||\cdot||x_i-z/||z||\,||)^{-2\lambda\nu_{iz}}.
\end{eqnarray*}
Now, we rotate the coordinate system in such a way that $z$ points into the direction of $e_1$ and $e_1$ points into the direction of $z$ changing
$z/||z||$ to $e_1$ and vice versa. This swaps labels $1$ and $z$ together with a substitution $z\mapsto z/||z||^2$ plus an overall power of $||z||$,
$f^0_{0,1,z,\infty}=f^0_{0,1/z,1,\infty}||z||^{d(V-4)-2\lambda(N^{\mathrm{int}}+N_{x0}+N_{x1}+N_{xz})}$. By (\ref{halfedges}) and by swapping labels 1 and $z$
in (\ref{ff0}) we obtain
\begin{eqnarray*}
f_{0,1/z,1,\infty}&=&||z||^{2\lambda\nu_{01}}\left(\frac{||z||}{||e_1-z||}\right)^{2\lambda\nu_{1z}}f^0_{0,1/z,1,\infty}\\
&=&||z||^{\lambda(2\nu_{01}+2\nu_{0z}+2\nu_{1z}+N_{x0}+N_{x1}+N_{xz}-N_{x\infty})}f_{0,1,z,\infty}.
\end{eqnarray*}
Zero valence at external vertices (\ref{nuext}) gives rise to the four equations
\begin{eqnarray*}
\nu_{01}+\nu_{0z}+\nu_{0\infty}+N_{x0}=0,&&\nu_{01}+\nu_{1z}+\nu_{1\infty}+N_{x1}=0,\\
\nu_{0z}+\nu_{1z}+\nu_{z\infty}+N_{xz}=0,&&\nu_{0\infty}+\nu_{1\infty}+\nu_{z\infty}+N_{x\infty}=0,
\end{eqnarray*}
Adding the first three and subtracting the fourth equation proves the right hand side of figure 9.

This establishes the theorem in $d$ dimensions. Transition to $\CC$ maps $z/||z||^2$ to $1/\zz$. Because of (\ref{reflection}) we may replace $1/\zz$ by $1/z$.
\end{proof}
\begin{center}
\fcolorbox{white}{white}{
  \begin{picture}(336,130) (-45,38)
    \SetWidth{0.8}
    \SetColor{Black}
    \GBox(80,117)(112,149){0.882}
    \Text(74,151)[lb]{\normalsize{\Black{$0$}}}
    \Text(74,111)[lb]{\normalsize{\Black{$1$}}}
    \Text(114,111)[lb]{\normalsize{\Black{$z$}}}
    \Text(114,151)[lb]{\normalsize{\Black{$\infty$}}}
    \Line[arrow,arrowpos=1,arrowlength=5,arrowwidth=2,arrowinset=0.2](75,128)(42,95)
    \Line[arrow,arrowpos=1,arrowlength=5,arrowwidth=2,arrowinset=0.2](96,112)(96,95)
    \Line[arrow,arrowpos=1,arrowlength=5,arrowwidth=2,arrowinset=0.2](117,128)(150,95)

    \GBox(16,53)(48,85){0.882}
    \Text(8,87)[lb]{\normalsize{\Black{$\infty$}}}
    \Text(8,47)[lb]{\normalsize{\Black{$z$}}}
    \Text(50,47)[lb]{\normalsize{\Black{$1$}}}
    \Text(50,87)[lb]{\normalsize{\Black{$0$}}}
    
    \GBox(80,53)(112,85){0.882}
    \Text(74,87)[lb]{\normalsize{\Black{$1$}}}
    \Text(74,47)[lb]{\normalsize{\Black{$0$}}}
    \Text(114,47)[lb]{\normalsize{\Black{$\infty$}}}
    \Text(114,87)[lb]{\normalsize{\Black{$z$}}}

    \GBox(144,53)(176,85){0.882}
    \Text(138,87)[lb]{\normalsize{\Black{$z$}}}
    \Text(138,47)[lb]{\normalsize{\Black{$\infty$}}}
    \Text(178,47)[lb]{\normalsize{\Black{$0$}}}
    \Text(178,87)[lb]{\normalsize{\Black{$1$}}}
  \end{picture}
}
Figure 10: Double transpositions leave the label $z$ unchanged.

\end{center}
\vskip2ex

Theorem \ref{completionthm} is equivalent to (\ref{S4trafos}). In particular, the homomorphism $\phi$ has the kernel $\ZZ/2\ZZ\times\ZZ/2\ZZ$ of double transpositions which
is a normal subgroup in $\sS_4$. These transformations leave the argument $z$ unchanged (see figure 10). They give rise to the twist identity on periods \cite{SchnetzCensus}:
In a completed primitive graph $\Gamma$ with three labeled vertices $\{0,1,\infty\}$ we specify a fourth vertex $z$ and write the period as integral over $z$ as
in \S \ref{constper}. If deleting $0,1,z,\infty$ splits $\Gamma$ into two connected components then the integrand splits into two factors, each
of which is given by a graphical function in $z$. These graphical functions can be completed and thereafter one of the two may be transformed by a double transposition.
If the result can be interpreted again as the period of a primitive graph $\widetilde{\Gamma}$ then $\widetilde{\Gamma}$ has the same period as $\Gamma$. The smallest
example of such an identity is $P_{7,4}=P_{7,7}$ at seven loops with the notation from \cite{SchnetzCensus}.

\begin{ex}
The graphical function of the complete graph with four vertices was calculated in example \ref{onevertex}: after adding the external edges we obtain from (\ref{4iD})
in four dimensions
\begin{equation}
f_{K_4}^{(1)}(z)=\frac{4iD(z)}{z\zz(z-1)(\zz-1)(z-\zz)}.
\end{equation}
\begin{center}
\fcolorbox{white}{white}{
  \begin{picture}(319,91) (6,2)
    \SetWidth{0.8}
    \SetColor{Black}
    \Vertex(30,46){2.8}
    \Vertex(70,46){2.8}
    \Vertex(50,71){2.8}
    \Vertex(10,71){2.8}
    \Vertex(90,71){2.8}
    \Vertex(130,71){2.8}
    \Vertex(170,71){2.8}
    \Vertex(210,71){2.8}
    \Vertex(150,46){2.8}
    \Vertex(190,46){2.8}
    \Vertex(170,21){2.8}
    \Vertex(250,71){2.8}
    \Vertex(290,71){2.8}
    \Vertex(270,46){2.8}
    \Vertex(310,46){2.8}
    \Vertex(290,21){2.8}
    \Line(290,71)(290,21)
    \Line(170,71)(170,21)
    \Line(10,71)(30,46)
    \Line(50,71)(70,46)
    \Line(130,71)(150,46)
    \Line(170,71)(190,46)
    \Line(250,71)(270,46)
    \Line(290,71)(310,46)
    \Line(50,71)(30,46)
    \Line(90,71)(70,46)
    \Line(170,71)(150,46)
    \Line(210,71)(190,46)
    \Line(290,71)(270,46)
    \Line(30,46)(70,46)
    \Line(150,46)(190,46)
    \Line(270,46)(310,46)
    \Line(10,71)(70,46)
    \Line(130,71)(190,46)
    \Line(250,71)(310,46)
    \Arc(159.375,53.5)(34.193,149.216,288.104)
    \PhotonArc[double,sep=4,clock](180.625,53.5)(34.193,30.784,-108.104){2.5}{7.5}
    \Line(30,46)(90,71)
    \Line(150,46)(210,71)
    \Photon(130,71)(170,71){2.5}{5}
    \Photon(250,71)(290,71){2.5}{5}
    \PhotonArc[clock](150,33.5)(42.5,118.072,61.928){2.5}{5.5}
    \PhotonArc[clock](270,33.5)(42.5,118.072,61.928){2.5}{5.5}
    \PhotonArc(150,108.5)(42.5,-118.072,-61.928){2.5}{5.5}
    \PhotonArc(270,108.5)(42.5,-118.072,-61.928){2.5}{5.5}
    \Arc(279.375,53.5)(34.193,149.216,288.104)
    \Text(8,81)[lb]{\normalsize{\Black{$1$}}}
    \Text(48,81)[lb]{\normalsize{\Black{$0$}}}
    \Text(88,81)[lb]{\normalsize{\Black{$z$}}}
    \Text(128,81)[lb]{\normalsize{\Black{$1$}}}
    \Text(168,81)[lb]{\normalsize{\Black{$0$}}}
    \Text(208,81)[lb]{\normalsize{\Black{$z$}}}
    \Text(248,81)[lb]{\normalsize{\Black{$0$}}}
    \Text(288,81)[lb]{\normalsize{\Black{$1$}}}
    \Text(166,7)[lb]{\normalsize{\Black{$\infty$}}}
    \Text(288,7)[lb]{\normalsize{\Black{$z$}}}
  \end{picture}
}
Figure 11: The graphical function of $K_5$ can be calculated by completion.
\vskip2ex

\end{center}
The graphical function of the complete graph with five vertices reduces after the removal of the external edges $0z$, $1z$, $01$ to three triangles that are
glued together at a common edge. Its completion is depicted in the middle of figure 11. After swapping $z$ with $\infty$ and $0$ with $1$ we obtain a graphical function that
upon removal of the edges $0z$ and $1z$ reduces to a constant. By adding four edges 01 this constant becomes the period of the uncompleted $K_4$ which is $6\zeta(3)$.
Altogether we have
\begin{equation}
f_{K_5}^{(1)}(z)=\frac{6\zeta(3)}{[z\zz(z-1)(\zz-1)]^2}.
\end{equation}
\end{ex}

If a completed graph $\Gamma$ has $n$ external edges we can consider $\Gamma$ as the Feynman graph of an $n$-point function in $\phi^4$ theory where certain external
points are identified. The full amplitude of this Feynman graph is encapsulated in the graphical function $f_\Gamma$ \cite{PropGF}.

\subsection{Appending an edge}
In this subsection $G$ is a graph with three marked vertices $0,1,z$ and $G_1$ results from $G$ by appending an edge to the vertex $z$ thus
creating a new vertex $z$ (see figure 6). We will assume that $G$ and $G_1$ are uncompleted graphs of well-defined graphical functions.
\begin{prop}\label{appendprop}
The following identity holds
\begin{equation}\label{diffeq}
\left(\frac{1}{(z-\zz)^\lambda}\,\partial_z\partial_{\zz}\,(z-\zz)^\lambda+\frac{\lambda(\lambda-1)}{(z-\zz)^2}\right)f^{(\lambda)}_{G_1}(z)
=-\frac{1}{\Gamma(\lambda)}f^{(\lambda)}_G(z),
\end{equation}
where $\Gamma(\lambda)=\int_0^\infty x^{\lambda-1}\exp(-x)\dd x$ is the gamma function.
\end{prop}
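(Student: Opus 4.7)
The plan is to exploit the fact that appending an edge to $z$ amounts to convolving $f_G^{(\lambda)}$ with the propagator $\|x-z\|^{-2\lambda}$, and that in $d=2\lambda+2$ dimensions this propagator is (a constant multiple of) the fundamental solution of the Euclidean Laplacian. Identifying the new endpoint in $G_1$ with the external vertex $z$ and the former $z$-vertex of $G$ with the internal integration variable $x$, one has
\[
f^{(\lambda)}_{G_1}(z) \;=\; \int_{\RR^d}\frac{\dd^d x}{\pi^{d/2}}\,\frac{f_G^{(\lambda)}(x)}{\|x-z\|^{2\lambda}}.
\]
Since $2\lambda = d-2$, the standard distributional identity
\[
\Delta_{\RR^d}^{(z)}\,\|x-z\|^{-2\lambda} \;=\; -(d-2)\,S_{d-1}\,\delta^d(x-z) \;=\; -\frac{4\pi^{\lambda+1}}{\Gamma(\lambda)}\,\delta^d(x-z),
\]
where $S_{d-1}=2\pi^{d/2}/\Gamma(d/2)$ and $\Gamma(d/2)=\lambda\Gamma(\lambda)$, yields upon differentiating under the integral sign
\[
\Delta_{\RR^d}^{(z)}\,f_{G_1}^{(\lambda)}(z) \;=\; -\frac{4}{\Gamma(\lambda)}\,f_G^{(\lambda)}(z).
\]

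The second step is to translate this $\RR^d$-Laplacian statement into the two-variable operator on the left-hand side of (\ref{diffeq}). The graphical function is axially symmetric around the $e_1$-axis, so it depends only on $y_1=\mathrm{Re}\,z$ and the perpendicular radius $\rho = (y_2^2+\cdots+y_d^2)^{1/2}$. On such functions the Laplacian reduces to $\partial_{y_1}^2 + \partial_\rho^2 + \frac{d-2}{\rho}\partial_\rho$; restricting to the complex plane (so that $\rho = y_2 = (z-\zz)/(2i)$) and substituting $\partial_{y_1}=\partial_z+\partial_\zz$, $\partial_{y_2}=i(\partial_z-\partial_\zz)$ gives
\[
\Delta_{\RR^d} \;=\; 4\,\partial_z\partial_\zz \;+\; \frac{4\lambda}{z-\zz}\bigl(\partial_\zz - \partial_z\bigr).
\]
A direct expansion of $(z-\zz)^{-\lambda}\partial_z\partial_\zz[(z-\zz)^\lambda f]$ shows that this is exactly $4$ times the operator on the left-hand side of (\ref{diffeq}), so combining with the previous display yields the proposition after dividing by $4$.

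The main technical obstacle is justifying the distributional Laplacian formula inside the integral against $f_G^{(\lambda)}$. Under the convergence hypothesis of Lemma \ref{finitelem}---so that both $f_G^{(\lambda)}$ and $f_{G_1}^{(\lambda)}$ exist---together with sufficient regularity of $f_G^{(\lambda)}$ away from $\{0,1\}$ (as provided, for instance, by Lemma \ref{realanalytic}), one can excise a small ball around $x=z$, apply Green's identity on the complement, and pass to the limit; the ultraviolet-finiteness of $f_{G_1}^{(\lambda)}$ controls the boundary terms and delivers the pointwise identity above. The $(z,\zz)$ operator identity is then valid on $\mathrm{Im}\,z\neq 0$, which is the natural domain of the right-hand side of (\ref{diffeq}); the rest is routine.
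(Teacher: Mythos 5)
Your argument is correct and follows essentially the same route as the paper: both identify the appended propagator as the Green's function of the $d$-dimensional Laplacian (with the same constant $-4\pi^{\lambda+1}/\Gamma(\lambda)$), and both reduce the Laplacian on axially symmetric functions to the two-variable operator, differing only cosmetically (cylindrical coordinates and direct expansion of the conjugated operator versus the paper's full angular coordinates and comparison with $(r\sin\phi^1)^{-\lambda}\Delta^{(2)}(r\sin\phi^1)^{\lambda}$, and a Green's-identity justification of the distributional step versus the paper's Fourier-transform computation). No gaps.
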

\begin{proof}
In this proof we use $d$-dimensional angular coordinates
\begin{eqnarray}\label{angularcoords}
x^1&=&r\cos(\phi^1),\\
x^2&=&r\sin(\phi^1)\cos(\phi^2),\nonumber\\
&\vdots&\nonumber\\
x^{d-1}&=&r\sin(\phi^1)\cdots\sin(\phi^{d-2})\cos(\phi^{d-1}),\nonumber\\
x^d&=&r\sin(\phi^1)\cdots\sin(\phi^{d-2})\sin(\phi^{d-1}),\nonumber
\end{eqnarray}
where $r$ ranges from 0 to $\infty$, $\phi^1$,\ldots,$\phi^{d-2}$ range from 0 to $\pi$, and $\phi^{d-1}$ ranges from 0 to $2\pi$.
The metric tensor $g_{\mu\nu}$ is diagonal with entries 1, $r^2$, $(r\sin\phi^1)^2$, $(r\sin\phi^1\sin\phi^2)^2$, \ldots,
$(r\sin\phi^1\cdots\sin\phi^{d-2})^2$. The volume measure is
$$\sqrt{g}=r^{d-1}(\sin\phi^1)^{d-2}(\sin\phi^2)^{d-3}\cdots \sin\phi^{d-2}$$
and the Laplacian is
\begin{eqnarray*}
\Delta^{(d)}&=&\frac{1}{\sqrt{g}}\sum_{\mu,\nu}\partial_\mu\sqrt{g}g^{\mu\nu}\partial_\nu\\
&=&\frac{1}{r^{d-1}}\partial_rr^{d-1}\partial_r+\frac{1}{r^2(\sin\phi^1)^{d-2}}\partial_{\phi^1}(\sin\phi^1)^{d-2}\partial_{\phi^1}\\
&&\quad+\;\frac{1}{(r\sin\phi^1)^2(\sin\phi^2)^{d-3}}\partial_{\phi^2}(\sin\phi^2)^{d-3}\partial_{\phi^2}\\
&&\quad+\;\ldots\\
&&\quad+\;\frac{1}{(r\sin\phi^1\cdots\sin\phi^{d-3})^2\sin\phi^{d-2}}\partial_{\phi^{d-2}}\sin\phi^{d-2}\partial_{\phi^{d-2}}\\
&&\quad+\;\frac{1}{(r\sin\phi^1\cdots\sin\phi^{d-2})^2}\partial_{\phi^{d-1}}^2.
\end{eqnarray*}
The propagator $||y-z||^{-2\lambda}$ is proportional to the Green's function of the $d$-di\-men\-sional Laplacian:
\begin{equation}\label{greens}
\Delta^{(d)}_z\int F(y)\frac{1}{||y-z||^{2\lambda}}\dd^dy=-\frac{4\pi^{\lambda+1}}{\Gamma(\lambda)}F(z)
\end{equation}
whenever the integral on the left hand side exists. To prove the above equation we use angular coordinates to Fourier transform the function $||x||^{-\alpha}$:
A standard calculation using the definition and the properties of the gamma function yields for $0<\alpha<d$ in the limit $\epsilon\searrow0$
$$
\int\frac{\mathrm{e}^{\mathrm{i}x\cdot p-\epsilon||x||}}{||x||^\alpha}\dd^dx=\pi^{d/2}\left(\frac{2}{p}\right)^{d-\alpha}\frac{\Gamma((d-\alpha)/2)}{\Gamma(\alpha/2)}.
$$
Specifying $\alpha=2\lambda$ yields (\ref{greens}) by the convolution property of the Fourier transform.

If we specify $F(y)$ to the graphical function $f^{(\lambda)}_G(y)$ in (\ref{greens}) we obtain
\begin{equation}
\Delta^{(d)}f^{(\lambda)}_{G_1}(z)=-\frac{4}{\Gamma(\lambda)}f^{(\lambda)}_G(z).
\end{equation}
If we choose the unit vector $e_1$ in $f^{(\lambda)}_{G_1}$ to point into the 1-direction then $f^{(\lambda)}_{G_1}(z)$ becomes in angular
coordinates a function of $r$ and $\phi^1$ only. The above formula for the $d$-dimensional Laplacian hence simplifies when applied on $f^{(\lambda)}_{G_1}$ to
$$
\Delta^{(d)}f^{(\lambda)}_{G_1}(r,\phi^1)=
\left(\partial_r^2+\frac{2\lambda+1}{r}\partial_r+\frac{1}{r^2}\partial_{\phi^1}^2+\frac{2\lambda\cos\phi^1}{r^2\sin\phi^1}\partial_{\phi^1}\right)f^{(\lambda)}_{G_1}(r,\phi^1).
$$
Comparison with
$$
\frac{1}{(r\sin\phi^1)^\lambda}\Delta^{(2)}(r\sin\phi^1)^\lambda=
\partial_r^2+\frac{2\lambda+1}{r}\partial_r+\frac{1}{r^2}\partial_{\phi^1}^2+\frac{2\lambda\cos\phi^1}{r^2\sin\phi^1}\partial_{\phi^1}+\frac{\lambda(\lambda-1)}{(r\sin\phi^1)^2}
$$
yields the proposition upon transition to $\CC$ where $\Delta^{(2)}=4\partial_z\partial_{\zz}$ and $z-\zz=2\mathrm{i}r\sin\phi^1$.
\end{proof}
\begin{ex}\label{f2ex0}
Let $G$ be $I$ plotted in figure 4 (example \ref{exI}). Then $f^{(\lambda)}_{G_1}=f^{(\lambda)}_2$ is the sequential function of the word 2.
Its graphical function was calculated in example \ref{onevertex}. For the case $\lambda=1$ differentiation of (\ref{f24dim}) yields
$$
-\frac{1}{z-\zz}\left(\frac{1}{\zz(z-1)}-\frac{1}{z(\zz-1)}\right)
$$
which gives (\ref{Ieq}). If $\lambda>1$ we write the differential operator in (\ref{diffeq}) as
$$
\frac{1}{(z-\zz)^\lambda}\,\partial_z\partial_{\zz}\,(z-\zz)^\lambda+\frac{\lambda(\lambda-1)}{(z-\zz)^2}=\partial_z\partial_{\zz}-\lambda\frac{\partial_z-\partial_{\zz}}{z-\zz}
$$
and check by direct computation (compare equations (\ref{Ieq}) and (\ref{f2lambda}))
$$
\left(\partial_z\partial_{\zz}-\lambda\frac{\partial_z-\partial_{\zz}}{z-\zz}\right)
\frac{(z\zz)^{1-\lambda}-t^{2\lambda-2}}{(1-tz)^\lambda(1-t\zz)^\lambda}=-\partial_t\frac{(\lambda-1)t}{[z\zz(tz-1)(t\zz-1)]^\lambda}.
$$
\end{ex}

\begin{ex}\label{zbarzexample}
The graphical function of $G_6$ in figure 12 leads in four dimensions to the differential equation (see (\ref{4iD}))
$$
-\frac{1}{z-\zz}\partial_z\partial_\zz(z-\zz)f^{(1)}_{G_6}=\left(\frac{4{\mathrm i}D(z)}{z-\zz}\right)^2.
$$
The above differential equation cannot be solved in terms of SVMPs. For the solution one needs primitives of iterated integrals with differential form $\dd z/(z-\zz)$.
This more general case is first discussed by F. Chavez and C. Duhr in \cite{zbarz}. A comprehensive treatment of this setup will be given in \cite{GSVH} with an
implementation in \cite{Hyperlogproc}.

\begin{center}
\fcolorbox{white}{white}{
  \begin{picture}(315,105) (-110,-10)
    \SetWidth{1.0}
    \SetColor{Black}
    \Vertex(53,79){2.8}
    \Vertex(43,36){2.8}
    \Vertex(63,36){2.8}
    \Vertex(90,10){2.8}
    \Vertex(16,10){2.8}
    \Vertex(135,10){2.8}
    \Line(53,79)(43,36)
    \Line(53,79)(63,36)
    \Line(43,36)(90,10)
    \Line(63,36)(90,10)
    \Line(16,10)(43,36)
    \Line(16,10)(63,36)
    \Line(135,10)(90,10)
    \Text(51,86)[lb]{\normalsize{\Black{$1$}}}
    \Text(13,-4)[lb]{\normalsize{\Black{$0$}}}
    \Text(133,-4)[lb]{\normalsize{\Black{$z$}}}
    \Text(-20,42)[lb]{\normalsize{\Black{$G_6=$}}}
  \end{picture}
}
Figure 12: The graphical function of $G_6$ cannot be expressed in terms of SVMPs.
\vskip2ex

\end{center}

Up to six vertices $f^{(1)}_{G_6}$ is the only graphical function which is not expressible in terms of SVMPs. At seven vertices there exist graphical functions
which involve multiple elliptic polylogarithms.
\end{ex}

\begin{defn}
Let
\begin{equation}\label{Bdef}
\sB=\left\{\frac{f(z)}{z-\zz}:f\in\sA,f(z)=-f(\zz)\right\}.
\end{equation}
Likewise $\sB^0,\sB^{0\sv}$ are defined with $f\in\sP,f\in\sP^\sv$ in (\ref{Bdef}), respectively. Let $\sB^\bullet_n$ be the piece of $\sB^\bullet$ with (total) weight $n$.
\end{defn}
The differential operator in equation (\ref{diffeq}) generates denominators of the form $z-\zz$. If $f^{(\lambda)}_{G_1}$ is an element of the bi-differential algebra $\sA$
divided by $(z-\zz)^k$ then $f^{(\lambda)}_G$ is an element in $\sA$ divided by $(z-\zz)^{k+2}$.
It is unclear if solving (\ref{diffeq}) for $f^{(\lambda)}_{G_1}$ stays in the space of SVMPs even if one allows for denominators with arbitrary
powers of $(z-\zz)$ and square roots in odd dimensions. Due to completion none of these complications shows up in $f^{(\lambda)}_2$.

The structure of (\ref{diffeq}) is substantially different in four dimensions. If $\lambda=1$ the second term drops out and the differential operator maps graphical
function in $\sB$ into $\sB$. By integration in $\sA$ the differential operator can be inverted in $\sB$:

\begin{thm}\label{appendthm}
Assume the graphical function $f^{(1)}_{G_1}(z)$ is real analytic in $\CC\backslash\{0,1\}$
and for $z\in\RR^4$, $f^{(1)}_{G_1}(z)$, $f^{(1)}_{G_1}(e_1-z)\in\sC^{(1)}_{1,1}$.
Moreover, let
\begin{equation}\label{decomposition}
f^{(1)}_G(z)=\sum_{a,b\in\{0,1\}}\frac{g_{a,b}(z)}{(z-a)(\zz-b)(z-\zz)}\in\sB
\end{equation}
for $g_{a,b}\in\sP$ be a SVMP. Then
\begin{equation}\label{inteq}
f^{(1)}_{G_1}(z)=-\frac{1}{2(z-\zz)}\left(\int_0\dd z\!\int_0\dd\zz+\!\int_0\dd\zz\!\int_0\dd z\right)(z-\zz)f^{(1)}_G(z).
\end{equation}
The function $f^{(1)}_{G_1}$ is the unique solution of (\ref{diffeq}) in $\sB^0$.
\end{thm}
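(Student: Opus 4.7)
The plan is to reformulate (\ref{diffeq}) for $\lambda=1$, by the product rule, into the equivalent
\[
\partial_z\partial_{\zz}\bigl[(z-\zz)f^{(1)}_{G_1}(z)\bigr] = -(z-\zz)f^{(1)}_G(z).
\]
This is the source of the four-dimensional simplification: multiplying by $(z-\zz)$ turns the differential operator in (\ref{diffeq}) into the flat $\partial_z\partial_\zz$, whose right-inverses are the operators $\int_0\dd z$, $\int_0\dd\zz$ of \S\ref{integration}.

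Set $h:=(z-\zz)f^{(1)}_G$ and $\Phi:=-\tfrac{1}{2}\bigl(\int_0\dd z\int_0\dd\zz+\int_0\dd\zz\int_0\dd z\bigr)h$. Since $\partial_z\int_0\dd z$ and $\partial_\zz\int_0\dd\zz$ act as the identity, a direct application gives $\partial_z\partial_\zz\Phi=-h$, so the candidate $\tilde F:=\Phi/(z-\zz)$ solves the reformulated PDE. By (\ref{decomposition}) each $g_{a,b}\in\sP$, and by remark \ref{commrem} with theorem \ref{Stabilitythm} the basic integrals $\int_0\dd z/(z-a)$ and $\int_0\dd\zz/(\zz-b)$ send $\sP$ into $\sP$; hence $\Phi\in\sP$. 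Because $f^{(1)}_G\in\sB$ makes $h$ antisymmetric under $z\leftrightarrow\zz$ and the operator $\int_0\dd z\int_0\dd\zz+\int_0\dd\zz\int_0\dd z$ is symmetric under the same swap, $\Phi$ is antisymmetric, so $\tilde F\in\sB^0$.

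To identify $\tilde F$ with $f^{(1)}_{G_1}$, set $\Delta:=f^{(1)}_{G_1}-\tilde F$. The homogeneous reformulated PDE gives $(z-\zz)\Delta=A(z)+B(\zz)$ with $A$ holomorphic and $B$ antiholomorphic on $\CC\setminus\{0,1\}$. Real analyticity of $f^{(1)}_{G_1}$ and antisymmetry of $\Phi$ both force $(z-\zz)\Delta$ to vanish on $\RR\setminus\{0,1\}$, so $A(x)+B(x)=0$ on reals and by analytic continuation $B=-A$, whence $(z-\zz)\Delta=A(z)-A(\zz)$. Single-valuedness on $\CC\setminus\{0,1\}$ excludes logarithmic branches of $A$ (the monodromies around $0$ and $1$ would double rather than cancel), so $A$ is meromorphic on $\PP^1\CC$ with at worst poles at $0,1,\infty$. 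The hypotheses $f^{(1)}_{G_1}(z), f^{(1)}_{G_1}(e_1-z)\in\sC_{1,1}^{(1)}$, together with the MZV-valued regularized limits of $\Phi$ at $0,1,\infty$ (theorem \ref{Stabilitythm}, noting that the symmetric $P_{0^{\{n\}}}$ cannot occur in antisymmetric $\Phi$), bound the growth of $(z-\zz)\Delta$ near $0$ and $1$ by powers of logarithms and at $\infty$ by linear growth, so $A$ must be a polynomial of degree $\leq 1$. Thus $\Delta\equiv c$ is constant, and the decay of both $f^{(1)}_{G_1}$ and $\tilde F$ at infinity forces $c=0$.

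Uniqueness in $\sB^0$ then follows purely internally: the difference of two solutions in $\sB^0$ is $\Phi_0/(z-\zz)$ with $\Phi_0\in\sP$ antisymmetric and $\partial_z\partial_\zz\Phi_0=0$, so the same monodromy reasoning produces $\Phi_0=A(z)-A(\zz)$ for $A$ rational in $z,1/z,1/(z-1)$; hence $\Phi_0\in\sP\cap\sO\overline{\sO}=\CC$ by the linear independence of $\{P_w\}$ over $\sO\overline{\sO}$ in theorem \ref{Francisthm}, and antisymmetry forces $\Phi_0=0$. The hard part will be the asymptotic bookkeeping in the third paragraph, extracting sharp enough bounds on $\tilde F$ and $f^{(1)}_{G_1}$ near $0$, $1$, and $\infty$ to rule out poles of $A$ at $0,1$ and the linear coefficient at $\infty$.
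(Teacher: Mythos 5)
Your proposal is correct and follows essentially the same route as the paper: multiply (\ref{diffeq}) by $(z-\zz)$ to reduce to $\partial_z\partial_\zz$, exhibit the symmetrized double integral as a particular solution in $\sB^0$ via (\ref{decomposition}), write the discrepancy as $(A(z)-A(\zz))/(z-\zz)$ using the reflection symmetry, and kill $A$ by the growth hypotheses at $0,1,\infty$ together with single-valuedness. The only substantive difference is that the paper bounds the \emph{derivative} $h'(z)=\partial_z(z-\zz)(f^{(1)}_{G_1}-F)$ and applies Riemann's removable singularity theorem to conclude $h'=c/z+d/(z-1)$ before invoking single-valuedness, which cleanly sidesteps the step you leave implicit, namely extracting Laurent-coefficient bounds on $A$ itself from bounds on the antisymmetric combination $A(z)-A(\zz)$.
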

\begin{proof}
We denote the right hand side of (\ref{inteq}) by $F$. Clearly $F$ solves the differential equation (\ref{diffeq}). The kernel of $\partial_z\partial_{\zz}$ is a
sum of a function $h$ with $\partial_\zz h=0$ and a function $\overline{h}$ with $\partial_z\overline{h}=0$.
By (\ref{reflection}) the graphical functions $f^{(1)}_G$ and $f^{(1)}_{G_1}$ are symmetric under exchanging $z$ and $\zz$.
This symmetry is preserved in $F$ which restricts $f^{(1)}_{G_1}(z)$ to
$$
f^{(1)}_{G_1}(z)=F(z)+\frac{h(z)-h(\zz)}{z-\zz}.
$$
Because by assumption $f^{(1)}_{G_1}$ and by construction $F$ are real analytic in $\CC\backslash\{0,1\}$
we may take the derivative of $z-\zz$ times the above equation with respect to $z$ yielding
$$
h'(z)=\partial_z(z-\zz)(f^{(1)}_{G_1}(z)-F(z)).
$$
Now, $h'$ is a single-valued real analytic function in $\CC\backslash\{0,1\}$ with $\partial_\zz h'=0$. Hence $h'$ is holomorphic in $\CC\backslash\{0,1\}$.
We study its behavior at the singularities 0, 1, $\infty$.

Because $f^{(1)}_{G_1}(z)\in\sC^{(1)}_{1,1}$ we have $f^{(1)}_{G_1}(z)=o(|z|^{-1-\epsilon})$ for $z\to0$ and $\epsilon>0$
(meaning $\lim_{z\rightarrow0}f^{(1)}_{G_1}(z)|z|^{1+\epsilon}=0$). Therefore $\partial_z(z-\zz)f^{(1)}_{G_1}(z)=o(|z|^{-1-\epsilon})$.
From (\ref{decomposition}) we have $(z-\zz)F(z)\in\sP$. Hence $(z-\zz)F(z)=o(|z|^{-\epsilon})$ and $\partial_z(z-\zz)F(z)=o(|z|^{-1-\epsilon})$.
We conclude $h'(z)=o(|z|^{-1-\epsilon})$ and therefore $z^2h'(z)=o(|z|^{1-\epsilon})$. In particular $z^2h'(z)$ is continuous at $z=0$ and by Riemann's
removable singularity theorem it is holomorphic at 0. It vanishes at 0 and therefore $zh'(z)$ is holomorphic at 0.

The analogous argument for $z=1$ gives that $(z-1)h'(z)$ is holomorphic at 1. For $z\to\infty$ we have from $f^{(1)}_{G_1}(z)\in\sC^{(1)}_{1,1}$
that $\partial_z(z-\zz)f^{(1)}_{G_1}(z)=o(|z|^{-1+\epsilon})$. Because $(z-\zz)F(z)\in\sP$ we also have $\partial_z(z-\zz)F(z)=o(|z|^{-1+\epsilon})$.
Altogether this restricts $h'$ to be of the form $c/z+d/(z-1)$, for some constants $c,d\in\CC$. Therefore 
$$
f^{(1)}_{G_1}(z)-F(z)=\frac{c(\ln(z)-\ln(\zz))+d(\ln(z-1)-\ln(\zz-1))}{z-\zz}.
$$
The single-valuedness of the left hand side gives $c=d=0$ and hence $f^{(1)}_{G_1}=F$.

Moreover, $f^{(1)}_{G_1}\in\sB^0$ is clear from (\ref{decomposition}) and (\ref{inteq}). Assume $f\in\sB^0$ solves (\ref{diffeq}).
The kernel of $\partial_z\partial_{\zz}$ in $\sP$ is the constant function. Hence $f^{(1)}_{G_1}(z)=f(z)+c/(z-\zz)$.
By the symmetry of $f^{(1)}_{G_1}$ and $f$ under $z\leftrightarrow\zz$ we have $c=0$.
\end{proof}
\begin{ex}\label{f2example}
Straightforward integration of (\ref{Ieq}) gives
$$
f^{(1)}_2(z)=\frac{L_{\x_1\x_0}(\zz)+L_{\x_0}(\zz)L_{\x_1}(z)+L_{\x_0\x_1}(z)-L_{\x_1\x_0}(z)-L_{\x_0}(z)L_{\x_1}(\zz)-L_{\x_0\x_1}(\zz)}{z-\zz}
$$
which by the shuffle identity $L_{\x_0}L_{\x_1}=L_{\x_0\x_1}+L_{\x_1\x_0}$ is equivalent to (\ref{f24dim}).

The fact that the right hand side of (\ref{f24dim}) is in $\sB^0$ and that it reduces by (\ref{diffeq}) to $f^{(1)}_{\mathrm{I}}$, see example \ref{f2ex0},
gives an alternative proof of (\ref{f24dim}). 
\end{ex}

Our main application of theorem \ref{appendthm} is on sequential functions. The following corollary is used in the proof of the zig-zag conjecture \cite{ZZ}.
\begin{cor}\label{maincor}
Let $w$ be a word in 0,1,2 that begins with 2.
The sequential function in four dimensions $f^{(1)}_w$ is the unique solution in $\sB^0$ of the tower of differential equations
\begin{equation}
-\frac{1}{z-\zz}\partial_z\partial_{\zz}(z-\zz)f^{(1)}_{wa}(z)=\left\{\begin{array}{ll}
\frac{1}{(z-a)(\zz-a)}f^{(1)}_w(z)&\hbox{if }a=0,1,\\
\frac{1}{z\zz(z-1)(\zz-1)}f^{(1)}_w(z)&\hbox{if }a=2\end{array}\right.
\end{equation}
with initial condition (\ref{4iD}).
\end{cor}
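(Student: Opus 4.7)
The plan is induction on $|w|$. For the base case $w=2$, example \ref{onevertex} gives $f^{(1)}_2(z)=4\mathrm{i}D(z)/(z-\zz)$, and equation (\ref{BWdilog}) exhibits $D(z)\in\sP_2$ as antisymmetric under $z\leftrightarrow\zz$, so $f^{(1)}_2\in\sB^0$ and matches the initial condition.

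For the inductive step, assume $f^{(1)}_w\in\sB^0$, so that $f^{(1)}_w(z)=p(z)/(z-\zz)$ for some $p\in\sP$ with $p(z)=-p(\zz)$. The sequential graph for $f^{(1)}_{wa}$ is obtained from the one for $f^{(1)}_w$ by first adding vertical edges from the external vertex $z$ to $0$ and/or $1$ according as $a\in\{0,1,2\}$, and then appending a horizontal edge to create a fresh external $z$. Let $G$ denote the intermediate graph after the vertical edges but before the new horizontal one, and let $G_1$ be the final sequential graph. Then $f^{(1)}_{G_1}=f^{(1)}_{wa}$ and
\[
f^{(1)}_G(z)=\frac{p(z)}{(z-\zz)R_a(z)},\qquad R_a(z)=\begin{cases}(z-a)(\zz-a)&\text{if }a\in\{0,1\},\\ z\zz(z-1)(\zz-1)&\text{if }a=2,\end{cases}
\]
and proposition \ref{appendprop} in four dimensions ($\lambda=1$) rewrites the announced tower of differential equations as $-\frac{1}{z-\zz}\partial_z\partial_\zz(z-\zz)f^{(1)}_{wa}=f^{(1)}_G$.

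To apply theorem \ref{appendthm} we must put $f^{(1)}_G$ in the form (\ref{decomposition}) and verify the regularity hypotheses for $f^{(1)}_{wa}$. For $a\in\{0,1\}$ the expression for $f^{(1)}_G$ already has the form (\ref{decomposition}) with a single nonzero $g_{a,a}=p$. For $a=2$ the partial-fraction identity
\[
\frac{1}{z\zz(z-1)(\zz-1)}=\sum_{b,c\in\{0,1\}}\frac{(-1)^{b+c}}{(z-b)(\zz-c)}
\]
yields the required decomposition with $g_{b,c}=(-1)^{b+c}p\in\sP$. Real analyticity of $f^{(1)}_{wa}$ on $\CC\backslash\{0,1\}$ is corollary \ref{realanalyticcor}, applicable because $wa$ begins with $2$; and membership of both $f^{(1)}_{wa}(z)$ and $f^{(1)}_{wa}(e_1-z)$ in $\sC^{(1)}_{1,1}$ follows from propositions \ref{prop3} and \ref{prop4}, which even give the stronger inclusion in $\sC^{(1)}_{0,2}\subset\sC^{(1)}_{1,1}$ (the containment is immediate from definition \ref{Cdef} on comparing exponents at $0$ and $\infty$).

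Theorem \ref{appendthm} then delivers $f^{(1)}_{wa}\in\sB^0$ together with the integral formula (\ref{inteq}), and asserts that $f^{(1)}_{wa}$ is the unique solution in $\sB^0$ of the corresponding instance of (\ref{diffeq}). Assembling the inductive cases gives the tower of equations in the corollary, with uniqueness of the joint solution in $\sB^0$ inherited from the uniqueness at each step. The only step with any real content is the partial-fraction identity in case $a=2$; every other ingredient is already available from earlier sections, so I expect no substantial obstacle.
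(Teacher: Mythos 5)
Your proposal follows essentially the same route as the paper: induction over $|w|$, identification of the intermediate graph $G$ so that proposition \ref{appendprop} with $\lambda=1$ turns the tower into instances of (\ref{diffeq}), the partial fraction decomposition for $a=2$ to put $f^{(1)}_G$ into the form (\ref{decomposition}), and the final appeal to the last statement of theorem \ref{appendthm} for membership in $\sB^0$ and uniqueness. The base case and the partial-fraction identity are both correct.

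There is, however, one mis-citation that leaves a genuine (if small) gap. To invoke theorem \ref{appendthm} you must verify $f^{(1)}_{wa}(e_1-z)\in\sC^{(1)}_{1,1}$, and you claim this ``follows from propositions \ref{prop3} and \ref{prop4}.'' Proposition \ref{prop4} is about the function $g(y)=f^{(1)}_w((y+e_1)/2)$; its membership in $\sC^{(1)}_{0,2}$ controls the behaviour of $f^{(1)}_w$ near the point $e_1/2$ and near the sphere $||2z-e_1||=1$ (this is exactly what is needed for the real-analyticity argument in corollary \ref{realanalyticcor}), but it says nothing about expansions of $f^{(1)}_w$ around the point $e_1$ and at $\infty$, which is what $f^{(1)}_{wa}(e_1-z)\in\sC^{(1)}_{1,1}$ requires. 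The correct argument, which is the one the paper uses, is to observe via theorem \ref{completionthm} (the transposition $(0,1)$ with $z\mapsto 1-z$) that $f^{(1)}_{wa}(e_1-z)$ is itself the sequential function of the word $wa$ with the letters $0$ and $1$ exchanged; since that word also begins with $2$, proposition \ref{prop3} applies to it directly and gives membership in $\sC^{(1)}_{0,2}\subset\sC^{(1)}_{1,1}$. With that substitution your proof is complete and coincides with the paper's.
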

\begin{proof}
The proof is induction over the length of the word $w$.
Corollary \ref{realanalyticcor} states that $f^{(1)}_w$ is real analytic in $\CC\backslash\{0,1\}$.
From proposition \ref{prop3} we have $f^{(1)}_w\in\sC^{(1)}_{0,2}\subset\sC^{(1)}_{1,1}$.
The function $f^{(1)}_w(e_1-z)$ is by theorem \ref{completionthm}
the sequential function of the word with 0s and 1s flipped. Hence $f^{(1)}_w(e_1-z)\in\sC^{(1)}_{1,1}$.
We add edges from 0 to $z$ or from 1 to $z$ and append an edge to $z$ to go from $f_w$ to $f_{wa}$.
Equation (\ref{decomposition}) is immediate for $a=0,1$ and follows by a partial fraction decomposition in $z$ and $\zz$ for $a=2$.
The statement of the corollary for $wa$ follows from the last statement in theorem \ref{appendthm}.
\end{proof}
The solution of the tower of differential equations can be given explicitly.
\begin{cor}\label{intcor}
Let $w$ be a word in 0,1,2 that begins with 2. The sequential function in four dimensions $f^{(1)}_w$ is recursively given by equation (\ref{4iD}) and
\begin{equation}\label{intcoreq}
f^{(1)}_{wa}(z)=-\frac{1}{2(z-\zz)}\left(\int_0\dd z\int_0\dd\zz+\int_0\dd\zz\int_0\dd z\right)
\left\{\!\begin{array}{ll}
\frac{(z-\zz)f^{(1)}_w(z)}{(z-a)(\zz-a)}&\hbox{if }a=0,1,\hspace{-10pt}\\
\raisebox{15pt}{}\frac{(z-\zz)f^{(1)}_w(z)}{z\zz(z-1)(\zz-1)}&\hbox{if }a=2.\hspace{-10pt}\end{array}\right.
\end{equation}
Moreover, $f^{(1)}_w\in\sB^{0\sv}_{2|w|}$.
\end{cor}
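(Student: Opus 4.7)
The plan is to derive both claims as direct consequences of the earlier machinery. The recursion (\ref{intcoreq}) is Theorem \ref{appendthm} applied inductively: going from $f^{(1)}_w$ to $f^{(1)}_{wa}$ amounts to first inserting the appropriate edges between $z$ and the labeled vertices---which multiplies the graphical function by $1/((z-a)(\zz-a))$ for $a\in\{0,1\}$ or by $1/(z\zz(z-1)(\zz-1))$ for $a=2$---and then appending one new edge to $z$. All the hypotheses of Theorem \ref{appendthm} (real analyticity on $\CC\setminus\{0,1\}$, the $\sC^{(1)}_{1,1}$-bounds at $0$ and at $e_1-z$, and the decomposition (\ref{decomposition})) have already been verified in the proof of Corollary \ref{maincor}, so the explicit integral formula follows immediately from the integral form (\ref{inteq}).

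For the membership $f^{(1)}_w\in\sB^{0\sv}_{2|w|}$ I would proceed by induction on $|w|$. The base case $w=2$ is supplied by formulas (\ref{4iD}) and (\ref{BWdilog}): $f^{(1)}_2=(P_{01}-P_{10})/(z-\zz)$ with $P_{01}-P_{10}\in\sP^\sv_2$ and antisymmetric under $z\leftrightarrow\zz$ (because $P_{01}(\zz)=P_{10}(z)$ by the construction in \S\ref{FrancisSVMPs}). For the inductive step, writing $f^{(1)}_w=g(z)/(z-\zz)$ with $g\in\sP^\sv_{2|w|}$ antisymmetric, the integrand in (\ref{intcoreq}) is $g(z)$ multiplied by a rational function whose denominator (after partial-fractioning the $a=2$ case) is a product of simple poles at $\{0,1\}$ in $z$ and in $\zz$. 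Theorem \ref{Stabilitythm} ensures that each of $\int_0\dd z\int_0\dd\zz$ and $\int_0\dd\zz\int_0\dd z$ sends this integrand into $\sP^\sv_{2|w|+2}$; swapping $z\leftrightarrow\zz$ exchanges the two iterated integrals while negating the integrand, so their sum is antisymmetric and dividing by $z-\zz$ lands in $\sB^{0\sv}_{2|w|+2}$.

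The step requiring the most care is justifying that the prefactor $-\tfrac12$ does not take us out of the $\sH^\sv(\ZZ)$-submodule $\sP^\sv$. I would approach this via Lemma \ref{commutelem}, rewriting
\[
\int_0\dd z\int_0\dd\zz+\int_0\dd\zz\int_0\dd z \;=\; 2\int_0\dd z\int_0\dd\zz-\Bigl[\int_0\dd\zz,\int_0\dd z\Bigr];
\]
the first piece visibly absorbs the factor $\tfrac12$, while the commutator is a sum of residues of the integrand at $\{0,1\}$ multiplied by $P_1(z)$, and by antisymmetry of the integrand these residues pair up carrying matching factors of $2$. A bookkeeping argument then confirms that the entire expression is divisible by $2$ inside $\sP^\sv$, closing the induction and matching the expected weight $2|wa|=2|w|+2$.
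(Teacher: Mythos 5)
Your proposal is correct and follows essentially the same route as the paper's proof: equation (\ref{intcoreq}) is read off from theorem \ref{appendthm} via corollary \ref{maincor}, and the membership $f^{(1)}_w\in\sB^{0\sv}_{2|w|}$ is proved by induction using remark \ref{comm0} and lemma \ref{commutelem} together with theorem \ref{Stabilitythm}, with the antisymmetry of $(z-\zz)f^{(1)}_w(z)$ making the boundary terms in the commutator double up so that the prefactor $\tfrac{1}{2}$ is absorbed. Only a cosmetic slip: with $[X,Y]=XY-YX$ the identity should read $\int_0\dd z\int_0\dd\zz+\int_0\dd\zz\int_0\dd z=2\int_0\dd z\int_0\dd\zz+\bigl[\int_0\dd\zz,\int_0\dd z\bigr]$, which does not affect the argument.
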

\begin{proof}
Equation (\ref{intcoreq}) is clear from theorem \ref{appendthm} and corollary \ref{maincor}. To prove that the coefficients of $f^{(1)}_w$ are in $\sH^\sv(\ZZ)$ we proceed by induction.
The case $w=2$ is (\ref{4iDP}). If $f^{(1)}_w\in\sB^\sv$ then in the case $a=0$ we have $\int_0\dd z\int_0\dd\zz=\int_0\dd\zz\int_0\dd z$
from remark \ref{comm0}. The factor of two in the denominator cancels and by theorem \ref{Stabilitythm} we obtain $f^{(1)}_{w0}\in\sB^{0\sv}$.

If $a=1$ we use (\ref{commute}) to trade $\int_0\dd\zz\int_0\dd z$ for $\int_0\dd z\int_0\dd\zz$ canceling the factor of two in the denominator.
Using the notation
$$Q_1(z)=\int_0\frac{(z-\zz)f^{(1)}_w(z)\dd\zz}{\zz-1}$$
we pick up the term $(Q_1(1)+\overline{Q_1(1)})P_1(z)$ (note that $f^{(1)}_w(z)=f^{(1)}_w(\zz)$). Because $f^{(1)}_w\in\sB^{0\sv}$ we obtain from theorem
\ref{Stabilitythm} that $Q_1\in\sP^\sv$ and $Q_1(1)\in\sH^\sv(\ZZ)$. In particular $Q_1(1)=\overline{Q_1(1)}$. Altogether
$$
f^{(1)}_{w1}(z)=\frac{1}{z-\zz}\left(-\int_0\frac{Q_1(z)\dd z}{z-1}+Q_1(1)P_1(z)\right)$$
and $f^{(1)}_{w1}\in\sB^{0\sv}$ follows from theorem \ref{Stabilitythm}.

If $a=2$ we analogously obtain
$$
f^{(1)}_{w2}(z)=\frac{1}{z-\zz}\left(-\int_0\frac{Q_2(z)\dd z}{z(z-1)}+Q_2(1)P_1(z)\right)\hbox{ with }Q_2(z)=\int_0\frac{(z-\zz)f^{(1)}_w(z)\dd\zz}{\zz(\zz-1)}.$$
By theorem \ref{Stabilitythm} $Q_2\in\sP^\sv$ and $f^{(1)}_{w2}\in\sB^{0\sv}$. The total weight $2|w|$ also follows by induction from theorem \ref{Stabilitythm}.
\end{proof}

\begin{cor}\label{Pw}
In four dimensions $P(G_w)\in\sH^\sv(\ZZ)_{2|w|-1}$ for every word $w$ which begins and ends in 2.
\end{cor}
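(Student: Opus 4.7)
The plan is to express $P(G_w)$ as a two-dimensional complex integral of an integrand covered by the integer part of theorem \ref{Stabilitythm}, and to read off the conclusion from equation (\ref{2dintstab}).

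Write $w=2w'2$ with $|w|\geq 2$. The sequential function $f^{(1)}_{2w'}$ is well-defined by lemma \ref{convergencelemma}, and in the sequential graph $G_w$ the final chain vertex (which carries the letter $2$) is joined by propagators to both $0$ and $1$. The rest of $G_w$, together with the horizontal edge connecting this final vertex to its predecessor, is precisely the graph underlying $f^{(1)}_{2w'}$. Promoting this final vertex to an integration variable $z\in\RR^4$ gives
\[
P(G_w)=\int_{\RR^4}\frac{\dd^4 z}{\pi^2}\,\frac{f^{(1)}_{2w'}(z)}{\|z\|^2\,\|z-e_1\|^2}.
\]

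By rotational symmetry around the axis through $0$ and $e_1$ together with the reflection symmetry (\ref{reflection}), this 4D integral reduces to a 2D complex integral; a direct measure calculation in the parametrization $z=x^1+\mathrm{i}r$ with $r=\sqrt{(x^2)^2+(x^3)^2+(x^4)^2}$ yields $\int_{\RR^4}\dd^4 z/\pi^2 = -\tfrac{1}{2\pi}\int_\CC (z-\bar z)^2\,\dd^2 z$ when applied to rotationally invariant, reflection-symmetric functions. Using corollary \ref{intcor} to write $f^{(1)}_{2w'}=g/(z-\bar z)$ with $g\in\sP^\sv_{2|w|-2}$ and $g(\bar z)=-g(z)$, and then splitting $(z-\bar z)/(z\bar z)=1/\bar z-1/z$, one arrives at
\[
P(G_w)=-\frac{1}{2}\left[\frac{1}{\pi}\int_\CC\frac{g(z)\,\dd^2 z}{\bar z(z-1)(\bar z-1)}-\frac{1}{\pi}\int_\CC\frac{g(z)\,\dd^2 z}{z(z-1)(\bar z-1)}\right].
\]
The change of variable $z\leftrightarrow\bar z$ (which preserves $\dd^2 z$) together with the antisymmetry of $g$ shows that the second integral equals the negative of the first, so the two combine and the outer $\tfrac12$ cancels:
\[
P(G_w)=-\frac{1}{\pi}\int_\CC\frac{g(z)}{\bar z(z-1)(\bar z-1)}\,\dd^2 z.
\]
The integrand lies in $\sP^\sv/[\bar z(z-1)(\bar z-1)]$, whose denominator exponents all sit in $\{0,1,2\}$, so (\ref{2dintstab}) of theorem \ref{Stabilitythm} places $P(G_w)$ in $\sH^\sv(\ZZ)$. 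Since $g$ has total weight $2|w|-2$ and the complex integration raises the total weight by one, the result lies in $\sH^\sv(\ZZ)_{2|w|-1}$, as required.

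The chief subtlety is the measure-theoretic bookkeeping in the 4D-to-2D reduction: one must track the factor $-\tfrac{1}{2\pi}$ precisely, so that the antisymmetry-induced doubling cancels it and the result lands in $\sH^\sv(\ZZ)$ rather than $\tfrac12\sH^\sv(\ZZ)$. Convergence of the two split integrals individually is controlled by $g(0)=g(1)=0$ (forced by $g(\bar z)=-g(z)$) and by the polylogarithmic growth bounds at infinity inherent to $\sP^\sv$; alternatively, the split can be avoided altogether by applying the residue theorem (theorem \ref{residuethm}) directly to the combined integrand.
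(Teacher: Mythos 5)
Your argument is correct, but it follows a genuinely different route from the paper's. The paper proves this corollary by evaluating a sequential function at a boundary point: writing $w=v2$, it uses $P(G_{v2})=f^{(1)}_{v1}(0)$, applies L'H\^opital along $z=\mathrm{i}\epsilon$ to get $P(G_{v2})=\partial_z g(0)$ for $f^{(1)}_{v1}=g/(z-\zz)$, and then reads off $\partial_z g(0)=Q_1(0)-Q_1(1)\in\sH^\sv(\ZZ)_{2|w|-1}$ from the explicit formula for $g$ in the proof of corollary \ref{intcor}. That is a purely local computation requiring no integration over $\CC$ and no convergence analysis. You instead peel off the \emph{last} letter $2$ as an integration over the whole plane, reduce the $4$-dimensional measure to $-\frac{1}{2\pi}\int_\CC(z-\zz)^2\dd^2z$, exploit the antisymmetry $g(\zz)=-g(z)$ to cancel the factor $\frac12$ (which is exactly the point where integrality could otherwise be lost), and invoke (\ref{2dintstab}) of theorem \ref{Stabilitythm}; your bookkeeping of the constant, the denominator exponents, and the weight $2|w|-2\mapsto 2|w|-1$ is all correct, and you rightly flag that convergence of the split integrals needs $g(0)=g(1)=0$ and the polylogarithmic growth bounds. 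What your approach buys is that it is essentially the specialization to sequential graphs of the general ``constructible periods'' machinery of \S\ref{constper} (cf.\ remark \ref{Rsvremark}), so it generalizes beyond sequential words; what it costs is reliance on the heavier residue/stability apparatus, including the part of theorem \ref{Stabilitythm} whose proof is itself only sketched, where the paper's one-line evaluation at $z=0$ avoids all of that.
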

\begin{proof}
For $w=v2$ we have $P(G_{v2})=f^{(1)}_{v1}(0)$ with $f^{(1)}_{v1}(z)=g(z)/(z-\zz)$ and $g\in\sP^\sv_{2|w|}$.
Substituting (e.g.) $z=\mathrm{i}\epsilon$ and taking the limit $\epsilon\to0$
using L'H\^opital gives $P(G_{v2})=\frac{1}{2}(\partial_zg(0)-\partial_\zz g(0))$. Because $g(z)=-g(\zz)$ we have $P(G_{v2})=\partial_zg(0)=Q_1(0)-Q_1(1)\in\sH^\sv(\ZZ)$
with $Q_1$ from the proof of corollary \ref{intcor} for $w=v$.
\end{proof}

\begin{ex}\label{0ex}
The graphical function of the word $20^{\{n-1\}}$ can be calculated explicitly.
The initial case $f^{(1)}_2$ is given by (\ref{f24dim}) and (\ref{wt2}),
\begin{equation}\label{4iDP}
f^{(1)}_2(z)=\frac{P_{01}(z)-P_{10}(z)}{z-\zz}.
\end{equation}
Integration of $P_{0^{\{a\}}10^{\{b\}}}$ with $\int_0\dd z/z$ or $\int_0\dd\zz/\zz$ is given by appending or prepending a 0 (see (\ref{intonw}) and lemma \ref{Lielem}).
Therefore corollary \ref{intcor} gives (see also \cite{Drummond})
\begin{equation}\label{0exeq}
f^{(1)}_{20^{\{n-1\}}}(z)=(-1)^{n-1}\frac{P_{0^{\{n\}}10^{\{n-1\}}}(z)-P_{0^{\{n-1\}}10^{\{n\}}}(z)}{z-\zz}.
\end{equation}
Using (\ref{formula0s}) the above expression can be converted into polylogarithms,
\begin{equation}\label{0exeqLi}
f^{(1)}_{20^{\{n-1\}}}(z)=\sum_{k=0}^n(-1)^{n-k}\binom{k+n}{n}\frac{(\ln z\zz)^{n-k}}{(n-k)!}\frac{\Li_{n+k}(z)-\Li_{n+k}(\zz)}{z-\zz}.
\end{equation}
This result was first derived by N. Ussyukina and A. Davydychev in 1993 \cite{Ladder}.

If we specify the variable $z$ to 1 we obtain the period of the wheel with $n+1$ spokes. Substituting $z=1+\mathrm{i}\epsilon$ in (\ref{0exeq}) yields in the limit
$\epsilon\searrow0$
\begin{equation}
P(W\!S_{n+1})=\left.\frac{(-1)^{n-1}}{2}(\partial_z-\partial_{\zz})(P_{0^{\{n\}}10^{\{n-1\}}}(z)-P_{0^{\{n-1\}}10^{\{n\}}}(z))\right|_{z=1}.
\end{equation}
The polylogarithms are differentiated by deconcatenating the index. Because the second polylogarithm is the complex conjugate of the first we obtain
\begin{equation}
P(W\!S_{n+1})=(-1)^{n-1}(P_{0^{\{n\}}10^{\{n-2\}}}(1)-P_{0^{\{n-1\}}10^{\{n-1\}}}(1)).
\end{equation}
In particular the period of the wheel is in $\sH^\sv(\ZZ)$. With (\ref{formula0s}) we obtain
\begin{equation}\label{PWSn}
P(W\!S_{n+1})=\binom{2n}{n}\zeta(2n-1).
\end{equation}
This result was first derived by D. Broadhurst in 1985 \cite{B2}.
\end{ex}

\subsection{A convolution product}
\begin{center}
\fcolorbox{white}{white}{
  \begin{picture}(345,76) (-15,8)
    \SetWidth{0.5}
    \SetColor{Black}
    \GOval(60,42)(20,55)(0){0.882}
    \GOval(200,42)(20,55)(0){0.882}
    \GOval(255,42)(20,55)(0){0.882}
    \SetWidth{0.8}
    \COval(60,42)(20,20)(0){Black}{White}
    \COval(200,42)(20,20)(0){Black}{White}
    \COval(255,42)(20,20)(0){Black}{White}
    \SetColor{White}
    \CBox(200,12)(255,107){White}{White}
    \SetColor{Black}
    \Vertex(60,62){2.8}
    \Vertex(60,22){2.8}
    \Vertex(5,42){2.8}
    \Vertex(115,42){2.8}
    \Vertex(145,42){2.8}
    \Vertex(200,62){2.8}
    \Vertex(200,22){2.8}
    \Vertex(255,62){2.8}
    \Vertex(255,22){2.8}
    \Vertex(310,42){2.8}
    \Text(27,12)[lb]{\normalsize{\Black{$G$}}}
    \Text(162,12)[lb]{\normalsize{\Black{$G_1$}}}
    \Text(288,12)[lb]{\normalsize{\Black{$G_2$}}}
    \Text(-2,48)[lb]{\normalsize{\Black{$1$}}}
    \Text(139,48)[lb]{\normalsize{\Black{$1$}}}
    \Text(252,29)[lb]{\normalsize{\Black{$1$}}}
    \Text(57,68)[lb]{\normalsize{\Black{$0$}}}
    \Text(198,68)[lb]{\normalsize{\Black{$0$}}}
    \Text(253,68)[lb]{\normalsize{\Black{$0$}}}
    \Text(118,48)[lb]{\normalsize{\Black{$z$}}}
    \Text(198,29)[lb]{\normalsize{\Black{$z$}}}
    \Text(314,48)[lb]{\normalsize{\Black{$z$}}}
  \end{picture}
}
Figure 13: The convolution product: the graphical function of $G$ can be calculated as two dimensional convolution of the graphical functions
$G_1$ and $G_2$.
\end{center}
\vskip2ex

In four dimensions we can generalize the formula for appending an edge by a convolution product. Because we will not use this in the following we state it as a remark which
can be proved using Gegenbauer polynomials.
\begin{remark}
With the notation of figure 13 we have
\begin{eqnarray}\label{convol}
(z\partial_z-\zz\partial_{\zz})(z-\zz)f^{(1)}_G(z)&=&\frac{1}{\pi}\int_{\CC}\dd^2y\,(y-\overline{y})f^{(1)}_{G_1}(y)\nonumber\\
&&\cdot\,\left(\frac{z}{y}-\frac{\overline{z}}{\overline{y}}\right)f^{(1)}_{G_2}\!\left(\frac{z}{y}\right)(y\overline{y})^{2V_{G_2}^{\mathrm{int}}-N_{G_2}+1},
\end{eqnarray}
where $N_{G_2}$ and $V_{G_2}^{\mathrm{int}}$ are the number of edges and internal vertices of $G_2$, respectively.
The residue theorem \ref{residuethm} generalizes to hyperlogarithms defined in \cite{BrSVMPII}. This implies that there exists a convolution product
$$\begin{array}{ccl}m:\sA\otimes\sA&\rightarrow&\sA\\
m(f\otimes g)&\mapsto&\displaystyle\frac{1}{\pi}\int_\CC f(y)\,g\!\left(\frac{z}{y}\right)\frac{\dd^2y}{y\overline{y}}\end{array}
$$
which guarantees that the right hand side of (\ref{convol}) is in $\sA$ if $f^{(1)}_{G_1}$ and $f^{(1)}_{G_2}$ are in $\sB$.
The above equation unambiguously determines $f^{(1)}_G$ because the kernel of the differential operator on the left hand side are functions of $|z|$ and
$f^{(1)}_G$ has the symmetry (\ref{reflection}).
\end{remark}
\begin{ex}
The graphical function of figure 14 can be solved using the convolution product. The result is an expression in terms of SVMPs of weight six divided by $(z-\zz)(1-z\zz)$
\cite{Hyperlogproc}.

\begin{center}
\fcolorbox{white}{white}{
  \begin{picture}(315,90) (-110,-1)
    \SetWidth{1.0}
    \SetColor{Black}
    \Vertex(55,62){2.8}
    \Vertex(10,36){2.8}
    \Vertex(55,36){2.8}
    \Vertex(100,36){2.8}
    \Vertex(10,10){2.8}
    \Vertex(100,10){2.8}
    \Line(55,62)(10,36)
    \Line(55,62)(100,36)
    \Line(10,36)(100,36)
    \Line(10,36)(10,10)
    \Line(10,10)(55,36)
    \Line(100,36)(100,10)
    \Line(55,36)(100,10)
    \Text(1,7)[lb]{\normalsize{\Black{$1$}}}
    \Text(53,68)[lb]{\normalsize{\Black{$0$}}}
    \Text(105,7)[lb]{\normalsize{\Black{$z$}}}
  \end{picture}
}
Figure 14: The above six vertex graph has a graphical function which is expressible in terms of SVMPs.
\end{center}
\end{ex}

\subsection{Constructing periods}\label{constper}
\begin{center}
\fcolorbox{white}{white}{
  \begin{picture}(345,122) (28,-5)
    \SetWidth{0.8}
    \SetColor{Black}
    \Vertex(32,64){2.8}
    \Vertex(80,112){2.8}
    \Vertex(64,80){2.8}
    \Vertex(64,16){2.8}
    \Vertex(80,48){2.8}
    \Vertex(112,64){2.8}
    \Line(32,64)(80,112)
    \Line(32,64)(64,80)
    \Line(32,64)(80,48)
    \Line(32,64)(64,16)
    \Line(112,64)(64,16)
    \Line(112,64)(80,48)
    \Line(112,64)(64,80)
    \Line(112,64)(80,112)
    \Line(80,112)(64,80)
    \Line(64,80)(64,16)
    \Line(64,16)(80,48)
    \Line(80,48)(80,112)
    \Line(208,64)(240,16)
    \Line[arrow,arrowpos=1,arrowlength=5,arrowwidth=2,arrowinset=0.2](144,64)(176,64)
    \Vertex(208,64){2.8}
    \Line(208,64)(256,48)
    \Line(208,64)(240,80)
    \Line(208,64)(256,112)
    \Vertex(240,16){2.8}
    \Vertex(256,48){2.8}
    \Vertex(240,80){2.8}
    \Vertex(256,112){2.8}
    \Text(249,78)[lb]{\normalsize{\Black{$0$}}}
    \Text(249,14)[lb]{\normalsize{\Black{$z$}}}
    \Text(265,46)[lb]{\normalsize{\Black{$\infty$}}}
    \Text(265,110)[lb]{\normalsize{\Black{$1$}}}
    \Photon(240,16)(256,48){2.5}{3}
    \Photon(240,80)(256,112){2.5}{3}
    \Line(374,64)(326,16)
    \Line(374,64)(342,48)
    \Line(374,64)(326,80)
    \Line(374,64)(342,112)
    \Vertex(326,16){2.8}
    \Vertex(342,48){2.8}
    \Vertex(326,80){2.8}
    \Vertex(342,112){2.8}
    \Photon(326,16)(342,48){2.5}{3}
    \Photon(326,80)(342,112){2.5}{3}
    \Text(315,14)[lb]{\normalsize{\Black{$z$}}}
    \Text(326,46)[lb]{\normalsize{\Black{$\infty$}}}
    \Text(315,78)[lb]{\normalsize{\Black{$0$}}}
    \Text(331,110)[lb]{\normalsize{\Black{$1$}}}
    \Text(62,-2)[lb]{\normalsize{\Black{$\Gamma$}}}
    \Text(224,-2)[lb]{\normalsize{\Black{$\Gamma_1$}}}
    \Text(336,-2)[lb]{\normalsize{\Black{$\Gamma_2$}}}
  \end{picture}
}
Figure 15: Constructing periods. The completed primitive graph $\Gamma$ is split into two completed graphs $\Gamma_1$ and $\Gamma_2$.
\end{center}
\vskip2ex

As explained in the introduction we can construct graphical functions from the empty function $f_\emptyset=1$ by adding external edges, appending edges to external vertices and
by permuting external vertices (see figure 7). This construction is programmed as $\tt graphical\_function(edgeset)$ in \cite{Polylogproc}. The algorithm is quick
and works for graphs up to 15 internal vertices.

Sequential functions at $z=0$ or $z=1$ give sequential periods. In the cases of the zig-zags and the type $A$ and $B$ families (see \S \ref{phi4})
one can relate these periods by the Fourier identity to $\phi^4$ theory. Their calculation is quick and up to loop order twelve the MZVs in the result
are of weights less than or equal to 21 which allows us to reduce them to a standard basis by the datamine \cite{Datamine} or by {\tt zeta\_procedures} \cite{Zetaproc}.

There exists another method to calculate periods which is more general and directly applicable to $\phi^4$ graphs.
This method works for periods which are `constructible' by the following procedure (see figure 15):
\begin{enumerate}
\item Start from a completed $\phi^4$ graph $\Gamma$ and label any four vertices by $0,1,z,\infty$.
\item Reduce $\Gamma\backslash\{0,1,z,\infty\}$ into its connected components $G_i$, $i=1,\ldots,N$.
\item Every component $G_i$ is turned into a completed graph $\Gamma_i$ by completing the graph one obtains by attaching $0,1,z,\infty$ to $G_i$ in the way they were attached
in $\Gamma$.
\end{enumerate}
Then
\begin{equation}\label{constreq}
P(\Gamma)=\int\frac{\dd^dz}{\pi^{d/2}}\prod_{i=1}^Nf^{(\lambda)}_{\Gamma_i}(z)\frac{1}{||z||^{2\lambda\nu_{0z}}||z-1||^{2\lambda\nu_{1z}}},
\end{equation}
where $\nu_{vz}=1$ is the weight of the edge from $v=0,1$ to $z$ in $\Gamma$.

The integral is converted into an integral over the complex plane by the following lemma (see proposition 4 in \cite{S3}):
\begin{lem}
Let $f:\RR^d\rightarrow\RR$ be a function that depends after the introduction of angular coordinates (\ref{angularcoords}) on $r$ and $\phi^1$ only.
Let $f_\CC(z)$ with $z=r\exp(\mathrm{i}\phi^1)$ be its complex counterpart. Then
\begin{equation}\label{dint}
\frac{1}{\pi^{d/2}}\int_{\RR^d}f(z)\dd^dz=\frac{1}{(2\mathrm{i})^{d-2}\sqrt{\pi}\Gamma((d-1)/2)}\int_{\CC}f_\CC(z)(z-\zz)^{d-2}\dd^2z.
\end{equation}
\end{lem}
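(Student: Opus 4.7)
\medskip

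\noindent\textbf{Proof plan.} My strategy is to compute both sides in $d$-dimensional spherical coordinates (\ref{angularcoords}) and verify they agree. Since $f$ only depends on $r=\|z\|$ and the polar angle $\phi^1$, the $d-2$ remaining angles can be integrated out explicitly on the left-hand side, and the resulting two-dimensional integral can then be identified with a genuine integral over the complex plane on the right.

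\medskip

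\noindent First I would substitute the explicit volume form $\sqrt{g}\,\dd r\,\dd\phi^1\cdots\dd\phi^{d-1}$ recorded in the proof of proposition \ref{appendprop}. Since $f$ depends only on $(r,\phi^1)$, the angles $\phi^2,\ldots,\phi^{d-1}$ integrate against $(\sin\phi^2)^{d-3}\cdots\sin\phi^{d-2}$ over $[0,\pi]^{d-3}\times[0,2\pi]$, producing the $(d-2)$-dimensional sphere volume $2\pi^{(d-1)/2}/\Gamma((d-1)/2)$. Dividing by $\pi^{d/2}$ this yields
\begin{equation*}
\frac{1}{\pi^{d/2}}\int_{\RR^d}f(z)\,\dd^dz
\;=\;\frac{2}{\sqrt{\pi}\,\Gamma((d-1)/2)}\int_0^\infty\!\!\int_0^\pi f(r,\phi^1)\,r^{d-1}(\sin\phi^1)^{d-2}\,\dd\phi^1\,\dd r.
\end{equation*}

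\medskip

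\noindent Next I would transform the right-hand side of (\ref{dint}) to polar form. Writing $z=r\mathrm{e}^{\mathrm{i}\theta}$ with $\theta\in[0,2\pi)$ gives $\dd^2z=r\,\dd r\,\dd\theta$ and $z-\zz=2\mathrm{i}r\sin\theta$, so $(z-\zz)^{d-2}=(2\mathrm{i})^{d-2}r^{d-2}(\sin\theta)^{d-2}$. By the hypothesis $f_\CC(z)=f_\CC(\zz)$, the integrand is invariant under $\theta\mapsto-\theta$ (modulo the sign of $(\sin\theta)^{d-2}$, which is even for $d$ even and handled by the factor $(2\mathrm{i})^{d-2}$ in odd cases via the same symmetry of $(z-\zz)^{d-2}$). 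Splitting $[0,2\pi)$ into $[0,\pi]$ and $[\pi,2\pi)$ and reflecting produces a factor $2$:
\begin{equation*}
\int_\CC f_\CC(z)(z-\zz)^{d-2}\,\dd^2z
\;=\;2(2\mathrm{i})^{d-2}\int_0^\infty\!\!\int_0^\pi f_\CC(r\mathrm{e}^{\mathrm{i}\theta})\,r^{d-1}(\sin\theta)^{d-2}\,\dd\theta\,\dd r,
\end{equation*}
and under the identification $\theta=\phi^1$ the integrand equals $f(r,\phi^1)\,r^{d-1}(\sin\phi^1)^{d-2}$.

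\medskip

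\noindent Finally I would divide by $(2\mathrm{i})^{d-2}\sqrt{\pi}\,\Gamma((d-1)/2)$, so that the $(2\mathrm{i})^{d-2}$ cancels and the prefactor $2/(\sqrt{\pi}\,\Gamma((d-1)/2))$ reproduces exactly the expression already obtained for the left-hand side, proving (\ref{dint}).

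\medskip

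\noindent The only mildly delicate point is bookkeeping of the factor $(2\mathrm{i})^{d-2}$ when $d-2$ is odd: here $(z-\zz)^{d-2}$ is purely imaginary but antisymmetric under $\theta\mapsto-\theta$, and the assumption $f_\CC(z)=f_\CC(\zz)$ combined with the restriction $\sin\phi^1\geq 0$ on $[0,\pi]$ (so that $(\sin\phi^1)^{d-2}$ on the left is unambiguous) ensures the two halves of the $\theta$-integral add rather than cancel. Apart from this, the argument is purely a change-of-variables computation and offers no real obstacle.
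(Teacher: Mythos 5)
Your proof is correct and is essentially the paper's own argument: integrate out the $d-2$ redundant angles to produce the prefactor $2/(\sqrt{\pi}\,\Gamma((d-1)/2))$, then identify $2\int_0^\pi\dd\phi^1$ with $\int_0^{2\pi}\dd\theta$ via $z-\zz=2\mathrm{i}r\sin\theta$. Your closing remark about odd $d-2$ is the one misstep---there the two half-integrals genuinely cancel rather than add, so the right-hand side vanishes---but this defect is already present in the lemma as stated (and in the paper's proof, which silently uses the same reflection identity), and it is immaterial since the lemma is only invoked for $d=4$.
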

\begin{proof}
Use angular coordinates to write the integral on the left hand side of (\ref{dint}) as
$$
\frac{2}{\sqrt{\pi}\Gamma((d-1)/2)}\int_0^\pi\int_0^\infty(r\sin\phi^1)^{d-2}f_\CC(r\mathrm{e}^{\mathrm{i}\phi^1})r\dd r\dd\phi^1.
$$
With $2\int_0^\pi\dd\phi^1=\int_0^{2\pi}\dd\phi^1$ the result follows.
\end{proof}
If the graphical functions $f^{(1)}_{\Gamma_i}$ are constructible in four dimensions then $f^{(1)}_{\Gamma_i}\in\sB$.
If $N\leq2$ then the integrand in (\ref{constreq}) is in $\sA$ and the two-dimensional integral can be evaluated by the residue theorem \ref{residuethm}.

\begin{ex}
The example pictured in figure 15 gives a construction of the $Z_4=W\!S_4$ period. With (\ref{4iD}) and the residue theorem \ref{residuethm} we have (compare (\ref{PWSn}))
\begin{equation}
P(W\!S_4)=-\frac{1}{2\pi}\int_{\CC}\dd^2z\frac{-16D(z)^2}{z\zz(z-1)(\zz-1)}=20\zeta(5).
\end{equation}
\end{ex}

\begin{remark}\label{Rsvremark}
It can be proved by analyzing equation (\ref{decomposition}) in all possible situations that constructible graphical functions are in $\sB^{0\sv}$.
Because for the proof one also needs (G3) in lemma \ref{prop123} which is beyond the scope of this article we refer the reader to \cite{PropGF}.

As a consequence one can also prove that primitive constructible $n$ loop periods are in $\sH^\sv(\ZZ)_{2n-3}$.
In \cite{BrFeyn} it was shown that uncompleted Feynman graphs with `vertex-width' $\leq3$ give MZV periods.
The completion of a Feynman graph with vertex-width $\leq3$ has vertex-width $\leq4$. Because graphs with vertex-width $\leq4$ are constructible
we obtain the stronger result that completed Feynman graphs with vertex width $\leq4$ have periods of maximum weight in $\sH^\sv(\ZZ)$.
It is easy to show that those graphs are always `$K_5$-descendants' defined in \cite{SchnetzCensus}.
\end{remark}

Note that beginning at six loops there exist periods in $\phi^4$ theory which are MZVs but not in $\sH^\sv(\ZZ)$.
Integration of $P_{6,3}$ or $P_{6,4}$ in \cite{SchnetzCensus} using graphical functions inevitably leads to the problem of finding
single-valued primitives of iterated integrals with differential forms $\dd z/(z-\zz)$ or $\dd\zz/(z-\zz)$ (see example \ref{zbarzexample}).
The periods $P_{6,3}$ and $P_{6,4}$ can be calculated with \cite{Hyperlogproc}.

A Maple algorithm that uses theorems \ref{residuethm} and \ref{appendthm} to calculate constructible periods is $\tt period(edgeset)$ in \cite{Polylogproc}.
The algorithm works (by memory and time limitations) for graphs with up to eleven loops. In particular, the zig-zags and the type $A$ and $B$ periods are constructible.
Up to five loops all irreducible primitive periods are zig-zags.
At six loops two out of four periods are constructible, at seven loops three out of nine, at eight loops six out of at most 31 periods. All these periods are
zig-zag or type $A$ or $B$. At nine loops there exist at most 134 different primitive periods ten of which are constructible.
In the labeling of \cite{CENSUS1} they are $P_{9,1}=Z_9$, $P_{9,2}=A_{5,0}=B_{5,0}$, $P_{9,3}=A_{4,1}$, $P_{9,4}=B_{4,1}$, $P_{9,6}=P_{9,19}=A_{3,2}$,
$P_{9,7}$, $P_{9,8}=P_{9,9}=B_{3,2}$, $P_{9,17}=P_{9,28}$, $P_{9,18}=P_{9,22}=P_{9,26}$, $P_{9,20}=P_{9,21}=P_{9,24}$.
At ten loops 54 out of 1182 irreducible primitive graphs are constructible, at eleven loops 154 out of 8687.

The $\tt period$ command in a recent implementation of `generalized single-valued hyperlogarithms' \cite{GSVH}, \cite{Hyperlogproc} is able to calculate
constructible and many more periods up to eleven loops. The main result of these calculations is the `coaction conjecture' for $\phi^4$ periods \cite{coact}.

\section{Sequential functions and $\phi^4$ periods}
In this section we study sequential functions in four dimensions. We drop the superscript $\bullet^{(1)}$ throughout this section.

\subsection{Reduction modulo products}
In corollary \ref{intcor} we showed that we can construct sequential functions starting from the Bloch-Wigner dilogarithm
by successively integrating in $\sA$. In general these integrals generate multiple polylogarithms multiplied by MZVs.
To obtain closed expressions for sequential functions we calculate modulo the ideal $I_n$ generated by MZVs of weights between two and $n$ (definition \ref{Indef}).

Vertices in sequential graphs that connect to 0 and 1 are encoded by the letter 2. They lead to differential forms $\dd z/z(z-1)=\dd z/(z-1)-\dd z/z$ and
$\dd\zz/\zz(\zz-1)=\dd\zz/(\zz-1)-\dd\zz/\zz$. Accordingly the definition \ref{P0def} of $P^0_w$ is extended to words with the letter 2.
\begin{defn}
For a word $w$ in 0,1,2 let $P^0_w$ be given by (\ref{P0defeq}) if $w$ has no 2s and inductively by
\begin{equation}
P^0_{u2v}(z)=P^0_{u1v}(z)-P^0_{u0v}(z)
\end{equation}
for any words $u,v$ in 0,1,2. Similarly we extend the definition of $\zeta_w$ in (\ref{zetawdef}) by
\begin{equation}
\zeta_{u2v}=\zeta_{u1v}-\zeta_{u0v}
\end{equation}
and the definition of $c_w$ in (\ref{modintsconst}) by
\begin{equation}
c_{u2v}=c_{u1v}-c_{u0v}.
\end{equation}
\end{defn}
With this definition we have in addition to (\ref{diffeq0})
\begin{equation}\label{diffeq2}
\partial_z P_{w2}^0(z)=\frac{P_w^0(z)}{z(z-1)},\quad\partial_{\zz} P_{2w}^0(z)=\frac{P_w^0(z)}{\zz(\zz-1)}.
\end{equation}
Equations (\ref{zetaop}) and (\ref{P0w1}) remain valid for words that include the letter 2.
\begin{prop}
Let $w$ be a word in 0,1,2. Then
\begin{equation}\label{modI2w}
f_{2w}(z)\equiv(-1)^{|w|}\frac{P^0_{\widetilde{w}01w}(z)-P^0_{\widetilde{w}10w}(z)-c_{\widetilde{w}01w}(L_1(z)-L_1(\zz))}{z-\zz}\mod I_{2|w|}.
\end{equation}
\end{prop}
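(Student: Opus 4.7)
The plan is to induct on $|w|$. For the base case $w=\emptyset$, identity (\ref{4iDP}) gives $f_2(z)=(P_{01}(z)-P_{10}(z))/(z-\zz)$; at weight $2$ one has $P_u=P^0_u$ since $\x_1'\equiv\x_1$ up through weight $3$ by lemma \ref{Lielem}, and $c_{01}=c_{10}=0$ by (\ref{modintsconst}) as neither flanking letter is a $1$, so the claim holds on the nose.

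For the inductive step, assume (\ref{modI2w}) for $w$ and let $w'=wa$ with $a\in\{0,1,2\}$. Corollary \ref{intcor} writes $f_{2w'}$ as the symmetric double integral
\[
f_{2w'}(z) = -\frac{1}{2(z-\zz)}\left(\int_0 \dd z\int_0 \dd\zz + \int_0 \dd\zz\int_0 \dd z\right)\frac{(z-\zz)f_{2w}(z)}{N_a},
\]
with $N_0=z\zz$, $N_1=(z-1)(\zz-1)$, $N_2=z\zz(z-1)(\zz-1)$. Substituting $(z-\zz)f_{2w}\equiv(-1)^{|w|}g_w\bmod I_{2|w|}$, where $g_w:=P^0_{\widetilde{w}01w}-P^0_{\widetilde{w}10w}-c_{\widetilde{w}01w}(L_1(z)-L_1(\zz))$, reduces the task to computing the double integral of $g_w/N_a$ modulo $I_{2|w|+2}$, since each single integration raises weight by one.

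On the $P^0$ pieces of $g_w$ the integration is exact: the word $\widetilde{w}01w$ contains the letter $1$ so $P^0_{\widetilde{w}01w}(0)=0$, and (\ref{diffeq0})--(\ref{diffeq2}), together with the convention $P^0_{u2v}=P^0_{u1v}-P^0_{u0v}$, imply that $\int_0 \dd\zz/(\zz-a)$ prepends the letter $a$ and $\int_0 \dd z/(z-a)$ appends it, yielding $P^0_{a\widetilde{w}01wa}=P^0_{\widetilde{w'}01w'}$ and similarly for the $10$-term. The log piece in $g_w$ contributes an MZV of weight $|w|+1$ times a polylogarithm; it lies in $I_{|w|+1}\cdot\sA\subseteq I_{2|w|+2}\cdot\sA$ and is therefore negligible modulo products. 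For $a=2$ the partial fraction $1/z(z-1)=1/(z-1)-1/z$ reduces the integration to a linear combination of the $a=0$ and $a=1$ cases.

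The new correction $-c_{\widetilde{w'}01w'}(L_1(z)-L_1(\zz))$ in the target formula is produced when the exact $P^0$-integration is translated back into the SVMP basis via lemma \ref{modintlemma}, which introduces $\pm c_{bwa}L_1$ terms at each integration step. When $a=0$ the two integrations commute (remark \ref{comm0}), no correction appears, and consistently $c_{0\widetilde{w}01w0}=0$ by (\ref{modintsconst}). When $a=1$ the commutator (\ref{commute}) generates a boundary term proportional to $P_1(z)=L_1(z)+L_1(\zz)$, which combines with the antisymmetry $g_w(z)=-g_w(\zz)$ enforced by the symmetric double integral, together with $c_u=-c_{\widetilde{u}}$ from (\ref{csym}), to yield the antisymmetric combination $L_1(z)-L_1(\zz)$. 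The main obstacle is the resulting bookkeeping: one must verify that the regularized boundary values produced by (\ref{commute}), combined with the parity of the flanking letters of $\widetilde{w'}01w'$, reconstruct the value of $c_{\widetilde{w'}01w'}$ predicted by (\ref{modintsconst}); the case $a=2$ then follows by linearity from $a=0,1$.
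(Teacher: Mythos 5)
Your overall strategy coincides with the paper's: induct on $|w|$, start from (\ref{4iDP}), feed the inductive hypothesis into the double-integral formula of corollary \ref{intcor}, and extract the correction term via lemma \ref{modintlemma} and the antisymmetry (\ref{csym}). However, there is a genuine gap, and it sits exactly where the content of the proposition lies. Your third paragraph asserts that on the $P^0$ pieces "the integration is exact", i.e.\ that $\int_0\dd\zz/(\zz-a)$ and $\int_0\dd z/(z-a)$ simply prepend and append the letter $a$ to $P^0$. This is false as stated: the operators $\int_0$ are defined only on the single-valued algebra $\sP$, while the $P^0_v$ of definition \ref{P0def} are in general multivalued, and on genuine SVMPs the integrals do \emph{not} act by pure letter operations --- that failure is precisely what lemma \ref{modintlemma} quantifies, via the corrections $\pm c_{bwa}L_1$. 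If your exactness claim were correct, the term $-c_{\widetilde{w'}01w'}(L_1(z)-L_1(\zz))$ in the statement could never be generated, and indeed your fourth paragraph contradicts the third by reintroducing these corrections. The correct order of operations (the paper's) is: first use (\ref{PP0}) to replace $P^0_{\widetilde{w}01w}-P^0_{\widetilde{w}10w}$ by the SVMPs $P_{\widetilde{w}01w}-P_{\widetilde{w}10w}$ and discard the inductive $c$-term (its coefficient has weight $2|w|+1$, not $|w|+1$ as you write, but it still lies in $I_{2|w|+2}$), and only then integrate using lemma \ref{modintlemma}.

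The second, decisive gap is that you defer the computation of the coefficient to "bookkeeping one must verify". That computation is the proof. Concretely: in each of the two orderings the \emph{inner} integration produces a correction $c_{a\widetilde{w}01w}L_1$ whose MZV coefficient has weight $2|w|+2=2|w'|$ and is therefore absorbed into $I_{2|w'|}$; only the \emph{outer} integration's correction survives, equal to $+c_{\widetilde{w'}01w'}L_1(\zz)$ when the holomorphic integral is outermost and $-c_{\widetilde{w'}01w'}L_1(z)$ when the antiholomorphic one is, by the two halves of (\ref{modints}). Averaging the two orderings gives $-\tfrac12(c_{\widetilde{w'}01w'}-c_{\widetilde{w'}10w'})(L_1(z)-L_1(\zz))$, and (\ref{csym}) applied to the reversal-related words $\widetilde{w'}01w'$ and $\widetilde{w'}10w'$ turns the prefactor into $-c_{\widetilde{w'}01w'}$. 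Your plan to obtain this from the commutator (\ref{commute}) cannot work on its own: the commutator only yields the \emph{difference} of the two orderings (a multiple of the symmetric $P_1=L_1(z)+L_1(\zz)$), whereas the proposition requires their average, which needs the actual values from lemma \ref{modintlemma}. Two further small slips: $c_{01}=0$ because $\zeta_0=0$, not because no flanking letter is a $1$ (the last letter of $01$ is a $1$); and the $a=2$ case requires the linear extensions of $P^0_w$, $\zeta_w$ and $c_w$ to the letter $2$, which you use implicitly but should state.
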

\begin{proof}
The proof is by induction over $|w|$. The statement reduces to (\ref{4iDP}) for the empty word.
Let $w=ua$ for $a\in\{0,1,2\}$. Because $c_w$ is of weight $|w|-1$ we have by induction from (\ref{PP0})
$$
f_{2u}(z)\equiv(-1)^{|u|}\frac{P_{\widetilde{u}01u}(z)-P_{\widetilde{u}10u}(z)}{z-\zz}\mod I_{2|w|}.
$$
We use corollary \ref{intcor} and obtain by lemma \ref{modintlemma} and (\ref{PP0}) for $a\in\{0,1\}$
\begin{eqnarray*}
f_{2ua}(z)&\equiv&-\frac{1}{2(z-\zz)}\left[\int_0\frac{\dd z}{z-a}(-1)^{|u|}(P_{a\widetilde{u}01u}(z)-P_{a\widetilde{u}10u}(z))\right.\\
&&\quad+\;\left.\int_0\frac{\dd\zz}{\zz-a}(-1)^{|u|}(P_{\widetilde{u}01ua}(z)-P_{\widetilde{u}10ua}(z))\right]\mod I_{2|w|}
\end{eqnarray*}
Again with lemma \ref{modintlemma} we have for $f_{2ua}(z)$ modulo $I_{2|w|}$
$$
\frac{(-1)^{|u|+1}}{z-\zz}\left[P^0_{a\widetilde{u}01ua}(z)-P^0_{a\widetilde{u}10ua}(z)-\frac{1}{2}(c_{a\widetilde{u}01ua}-c_{a\widetilde{u}10ua})(L_1(z)-L_1(\zz))\right]
$$
and by (\ref{csym}) equation (\ref{modI2w}) follows.

If $a=2$ then a partial fraction decomposition in $z$ and $\zz$ leads to (\ref{modI2w}) by an analogous calculation.
\end{proof}

\begin{thm}\label{PGthm}
Let $w$ be a word in 0,1,2. Then
\begin{equation}\label{PG}
P(G_{2w2})\equiv2(-1)^{|w|}(\zeta_{\widetilde{w}01w0}-\zeta_{\widetilde{w}10w0}) \mod \sH_{>0}^2.
\end{equation}
\end{thm}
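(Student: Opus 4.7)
The plan is to combine the identification $P(G_{2w2})=f_{2w0}(1)$ of corollary \ref{Pw} with the proposition just proved, performing an L'H\^opital-style evaluation at $z=1$ where MZV values arise directly from $L_v(1)=\zeta_v$. First I would substitute $w\mapsto w0$ in (\ref{modI2w}) to obtain
\[
f_{2w0}(z) \equiv (-1)^{|w|+1}\frac{P^0_{0\widetilde{w}01w0}(z)-P^0_{0\widetilde{w}10w0}(z)-c_{0\widetilde{w}01w0}(L_1(z)-L_1(\zz))}{z-\zz} \mod I_{2|w|+2}.
\]
A quick check of (\ref{modintsconst}) shows $c_{0\widetilde{w}01w0}=0$, because both Kronecker-$\delta$ factors vanish (the outer letters of the index are $0$, not $1$). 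Moreover, at the weight $2|w|+3$ of $P(G_{2w2})$ the ideal $I_{2|w|+2}$ sits inside $\sH_{>0}^2$: any element factors as $c\cdot x$ with $|c|\in[2,2|w|+2]$ and $|x|\in[1,2|w|+1]$, and $\sH_1=0$ forces $|x|\geq 2$.

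Next I would apply L'H\^opital at $z=1$ exactly as in the proof of corollary \ref{Pw}: with $g(z)=(z-\zz)f_{2w0}(z)\in\sP^\sv$, the antisymmetry $g(z)=-g(\zz)$ gives $g(1)=0$ and $P(G_{2w2})=\partial_z g(1)$. Using $\partial_z P^0_{u0}(z)=P^0_u(z)/z$ from (\ref{diffeq0}) (extended by (\ref{diffeq2}) when $w$ contains the letter $2$), evaluation at $z=1$ reduces the problem to
\[
P(G_{2w2})\equiv (-1)^{|w|+1}\bigl[P^0_{0\widetilde{w}01w}(1)-P^0_{0\widetilde{w}10w}(1)\bigr] \mod \sH_{>0}^2.
\]

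The final step is to extract the MZV content. Using the definition (\ref{P0defeq}) together with $L_v(1)=\zeta_v$, I would write $P^0_u(1)=\sum_{ab=u}\zeta_{\widetilde{a}}\zeta_b$. The boundary terms $a=\emptyset$ and $b=\emptyset$ contribute $\zeta_u+\zeta_{\widetilde{u}}$; every other term is a product of two $\zeta$-values of weight $\geq 2$ (using $\zeta_0=\zeta_1=0$) and hence lies in $\sH_{>0}^2$. Since the reverse of $0\widetilde{w}01w$ is $\widetilde{w}10w0$, and likewise for the second word, the difference becomes $(\zeta_{0\widetilde{w}01w}-\zeta_{0\widetilde{w}10w})+(\zeta_{\widetilde{w}10w0}-\zeta_{\widetilde{w}01w0})$ modulo $\sH_{>0}^2$. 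The reversal identity (\ref{zetaop}) for words of odd length $2|w|+3$ then yields $\zeta_{0\widetilde{w}01w}\equiv\zeta_{\widetilde{w}10w0}$ and $\zeta_{0\widetilde{w}10w}\equiv\zeta_{\widetilde{w}01w0}$ modulo $I_{2|w|+2}\subset\sH_{>0}^2$, folding the expression to $2(\zeta_{\widetilde{w}10w0}-\zeta_{\widetilde{w}01w0})$. Substituting this back gives exactly $2(-1)^{|w|}(\zeta_{\widetilde{w}01w0}-\zeta_{\widetilde{w}10w0})$.

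The main obstacle I expect is the book-keeping: at each of the four reductions (vanishing of $c$, passage from $I_{2|w|+2}$ to $\sH_{>0}^2$, discarding the non-boundary terms in $P^0_u(1)$, and applying reversal) one must verify that the discarded contributions genuinely live in $\sH_{>0}^2$. These are all short parity/weight arguments but need to be tracked carefully. Letters $2$ inside $w$ require no separate treatment, since the extensions $\zeta_{u2v}=\zeta_{u1v}-\zeta_{u0v}$ and $P^0_{u2v}=P^0_{u1v}-P^0_{u0v}$ commute with every manipulation above; this is in fact where the $z=1$ evaluation shines over the alternative $P(G_{2w2})=f_{2w1}(0)$, which would demand a careful regularization of multi-valued $P^0$'s at the singular point $z=0$.
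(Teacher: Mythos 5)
Your proof is correct and follows essentially the same route as the paper: evaluate $f_{2w0}$ at $z=1$ via L'H\^opital using (\ref{modI2w}) with $w\mapsto w0$, deconcatenate $P^0$ at $1$, and fold with the reversal identity (\ref{zetaop}). The only (cosmetic) difference is that you dispose of the $c$-term by observing $c_{0\widetilde{w}01w0}=0$ directly from (\ref{modintsconst}), whereas the paper notes instead that $\partial_\epsilon[L_1(1+\mathrm{i}\epsilon)-L_1(1-\mathrm{i}\epsilon)]=0$; both are valid.
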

\begin{proof}
To derive the period from the sequential function we use the word $2w0$, set $z=1+\mathrm{i}\epsilon$ and use L'H\^opital on $\epsilon\to0$.
Because $\partial_\epsilon[L_1(1+\mathrm{i}\epsilon)-L_1(1-\mathrm{i}\epsilon)]=0$ we obtain by (\ref{diffeq0}) modulo $I_{2|w|+2}$
$$
P(G_{2w2})\equiv\frac{(-1)^{|w|+1}}{2}(P^0_{0\widetilde{w}01w}(1)-P^0_{\widetilde{w}01w0}(1)-P^0_{0\widetilde{w}10w}(1)+P^0_{\widetilde{w}10w0}(1)).
$$
With the extension of (\ref{zetaop}) and (\ref{P0w1}) to words with 2s we obtain (\ref{PG}) modulo $I_{2|w|+2}$. Because $P(G_{2w2})$ is homogeneous
of weight $2|w|+3$ (corollary \ref{Pw}) calculating modulo $I_{2|w|+2}$ is equivalent to calculating mod $\sH_{>0}^2$.
\end{proof}
Note that from words $w$ that end in 0 or 2 singular MZVs arise in (\ref{PG}). In a full calculation these singularities are canceled by terms in $I_{2|w|+2}$.
The singular words have to be treated as regularized values by (un-)shuffling to the left with $\zeta_0=0$ \cite{Deligne}. As an example the regularized limit
of $\ln(z)$ at $z=0$ is zero which leads to $0=\zeta_0$. Shuffling with $\zeta_1$ gives $0=\zeta_{01}+\zeta_{10}$ which provides
the regularized limit of the singular word 01: $\zeta_{01}=-\zeta_{10}=\zeta(2)$.

\begin{ex}
For the empty word $w=\emptyset$ we have
$$
P(G_{22})\equiv2(\zeta_{010}-\zeta_{100})\mod\sH_{>0}^2.
$$
Because $\sH_{>0}^2$ is trivial at weight three the above equivalence is an identity. From $0=\zeta_0\zeta_{10}=\zeta_{010}+2\zeta_{100}$ and $\zeta_{100}=-\zeta(3)$ we obtain
$$
P(G_{22})=6\zeta(3).
$$
Upon adding an edge 01 from 0 to 1, $G_{22}$ becomes the complete graph with four vertices which is the wheel with three spokes and the
uncompleted zig-zag graph with three loops (compare (\ref{PWSn}) for $n=3$).

For $w=1$ we get modulo $\sH_{>0}^2$
$$
P(G_{212})\equiv-2(\zeta_{10110}-\zeta_{11010})=2(\zeta(2,1,2)-\zeta(1,2,2))=20\zeta(5)-10\zeta(2)\zeta(3).
$$
Because $P(G_{212})\in\sH^\sv(\ZZ)_5$ by corollary \ref{Pw} we obtain from example \ref{Rsvex}
$$
P(G_{212})=20\zeta(5).
$$
By adding an edge 01 the graph $G_{212}$ becomes the wheel with four spokes which is the uncompleted zig-zag graph with four loops ($n=4$ in (\ref{PWSn})).
\end{ex}

\subsection{$\phi^4$ periods}\label{phi4}
Three families of $\phi^4$ periods are related to sequential periods.
\begin{lem}\label{phi4lemma}
The sequential graph $G_w$ can be made the planar dual of an uncompleted primitive $\phi^4$ graph by (possibly) adding an edge from 0 to 1 in exactly one of the
three cases
\begin{enumerate}
\item $w=2\underbrace{010\ldots}_{n-3}2$ or $w=2\underbrace{101\ldots}_{n-3}2$,
\item $w=2\underbrace{\ldots101}_{m}2\underbrace{010\ldots}_{n}2$ or $w=2\underbrace{\ldots010}_{m}2\underbrace{101\ldots}_{n}2$,
\item $w=2\underbrace{\ldots101}_{m}2\underbrace{101\ldots}_{n}2$ or $w=2\underbrace{\ldots010}_{m}2\underbrace{010\ldots}_{n}2$,
\end{enumerate}
where dotted sequences are alternating in 0 and 1. In the first case the $\phi^4$ graph is the zig-zag graph $Z_n$. In the second and the third
case the $\phi^4$ graphs are $A_{m,n}$ and $B_{m,n}$, respectively.
\end{lem}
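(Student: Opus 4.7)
The plan is to analyze $G_w$ via its natural planar embedding and identify its planar dual by direct inspection. First I would fix the embedding suggested by Figure 3: place $0$ at the bottom and $1$ at the top, with the horizontal chain $v_1, \ldots, v_{|w|}$ strung between them. The edge set consists of the $|w|-1$ horizontal edges $v_i v_{i+1}$, together with the vertical edges determined by the letters (connecting $v_i$ to $0$, to $1$, or to both according as the $i$-th letter of $w$ is $0$, $1$, or $2$). This gives an explicit planar embedding of $G_w$.

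Next I would enumerate the bounded faces of this embedding. Between consecutive chain vertices $v_i, v_{i+1}$, a triangular face appears below the chain precisely when both letters are in $\{0,2\}$, and above precisely when both are in $\{1,2\}$; when the two letters differ (one being in $\{0\}$, the other in $\{1\}$) a single quadrilateral spanning top to bottom results. The boundary letters $w_1 = w_{|w|} = 2$ close off the left and right ends. Thus the letter pattern determines the face structure combinatorially.

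Then I would compute the dual face-by-face and identify it with the claimed $\phi^4$ graph. For an alternating word $w = 2(01)^{\{m\}}2$ or $2(10)^{\{m\}}2$ (case 1), the upper and lower triangles alternate and share chain vertices only at the letters $2$, so the dual is the zigzag chain $\hat{Z}_n$ described in the introduction; adding the single dual edge coming from the outer face, namely the edge $01$, recovers the uncompleted zig-zag $\phi^4$ graph $Z_n$. For cases 2 and 3, the unique internal letter $2$ produces a distinguished dual vertex of valency four that corresponds to the central ``hub'' visible in Figure 8; the lengths $m$ and $n$ of the two alternating blocks on either side of the internal $2$ count the internal arcs on the left and right in the $A$ and $B$ graphs. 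The distinction between an internal $2$ flanked by different letters (case 2) versus the same letter (case 3) corresponds exactly to the $A$-type versus $B$-type local topology near the hub.

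The main obstacle is the careful face bookkeeping at the two boundary $2$'s and at the internal $2$ in cases 2 and 3, where the face adjacency is irregular, together with verifying the $\phi^4$ conditions: $4$-regularity of the dual amounts to showing every face of $G_w$ is bounded by exactly four edges (with the possible exception of the outer face, which is why the edge $01$ must sometimes be added), and internal $6$-connectedness translates into the absence of non-trivial $4$-edge cuts of $G_w$, which follows from the strict alternation of $0$ and $1$ inside each block. A clean treatment will draw out the small cases matching Figures 1 and 8 as base cases and then use the repetitive alternating structure to extend inductively in $m$ and $n$, since the planar embedding itself is built by iterating a fixed local pattern.
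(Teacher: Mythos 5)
Your overall strategy---reading the planar dual of $G_w$ off the face structure of its natural embedding---is the same as the paper's, but the criterion you impose on the faces is wrong, and this breaks both directions of the argument. You ask that every bounded face of $G_w$ be a quadrilateral, with the edge $01$ added to fix up the outer face. But the dual of an uncompleted primitive $\phi^4$ graph is not $4$-regular: such a graph has four vertices of valency three (the former neighbours of $\infty$) and the rest of valency four, so the correct condition is that $G_w$ (possibly together with the edge $01$) have \emph{exactly four triangular faces} and all remaining faces quadrilateral. Your local face bookkeeping is also incorrect: when $w_iw_{i+1}=01$ there is no quadrilateral ``spanning top to bottom''; every bounded face of $G_w$ is incident to exactly one of the labelled vertices $0,1$, and a run of $n$ equal letters in $\{0,1\}$ produces a face with $n+3$ edges. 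That count is precisely what delivers the ``exactly one of the three cases'' claim, which your proposal never establishes: $n+3\le 4$ forces strict alternation between consecutive $2$s, each internal $2$ contributes two triangles while each of the two boundary $2$s contributes one (and adding the edge $01$ splits the outer quadrilateral into two further triangles), so the budget of four triangles permits at most one internal $2$, with the edge $01$ added exactly when there is none. Without this counting you have only checked that the three listed families work, not that no other word does.

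The second gap is primitivity. You propose to verify internal $6$-connectedness directly, asserting that the absence of non-trivial $4$-edge cuts ``follows from the strict alternation,'' but you give no argument, and internal $6$-connectedness is in any case a property of the \emph{completed} graph. The paper instead deduces primitivity from the fact that the sequential period $P(G_{2w2})$ converges (corollary \ref{Pwelldef}): a subdivergence in the $\phi^4$ graph would render the period ill-defined. Your route could presumably be made to work, but as written it is an unproved claim sitting exactly where the content of that step should be.
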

\begin{proof}
A sequence of $n$ 0s or 1s in $w$ implies that the graph $G_w$ has a face with $n+3$ edges. The dual of an uncompleted $\phi^4$ graph with $V\geq4$ vertices
has four triangles and $V-4$ squares. Therefore $n\leq1$.

An internal 2 in $w$ gives rise to two triangles. An external 2 gives rise to one triangle.
Because by corollary \ref{Pwelldef} the word $w$ begins and ends in 2 we know that $w$ has at most one internal 2.
If we add an edge 01 in the case of no internal 2 the above three cases give rise to three families of uncompleted $\phi^4$ graphs.
Because in all three cases the sequential period exists the corresponding $\phi^4$ graphs are primitive.
\end{proof}
Figure 8 shows type $A$ and $B$ graphs after completion. We have proved in corollary \ref{Pw} that the $A$, $B$, and zig-zag periods are in $\sH^\sv(\ZZ)$.
For the zig-zags we obtain the following proposition:
\begin{prop}\label{modprop}
Modulo products the zig-zag periods for $n\geq4$ loops are given by
\begin{equation}
P(Z_n)\equiv\left\{\begin{array}{ll}
2\zeta(2^{\{(n-3)/2\}},3,2^{\{(n-3)/2\}})-2\zeta(2^{\{(n-5)/2\}},3,2^{\{(n-1)/2\}}),&\hbox{$n$ odd,}\\
2\zeta(2^{\{(n-4)/2\}},3,2^{\{(n-2)/2\}})-2\zeta(2^{\{(n-2)/2\}},3,2^{\{(n-4)/2\}}),&\hbox{$n$ even.}
\end{array}\right.
\end{equation}
\end{prop}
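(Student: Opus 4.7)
My plan is to apply Theorem \ref{PGthm} directly to each of the sequential representations in (\ref{Pzigzag}), then read off the $\zeta$-index from the resulting $0$-$1$ word using convention (\ref{zetaw}).

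For odd $n=2m+3$ with $m\geq 1$, take $w=(01)^{\{m\}}$, so that $\widetilde{w}=(10)^{\{m\}}$ begins with $1$ and $|w|=2m$ is even. Direct concatenation and regrouping reveals the admissible words
\[
\widetilde{w}\,01\,w\,0=(10)^{\{m-1\}}\cdot 100\cdot (10)^{\{m+1\}},\qquad
\widetilde{w}\,10\,w\,0=(10)^{\{m\}}\cdot 100\cdot (10)^{\{m\}},
\]
each a product of $2m+1$ canonical blocks $10^{\{n_i-1\}}$. By (\ref{zetaw}) these evaluate to $-\zeta(2^{\{m-1\}},3,2^{\{m+1\}})$ and $-\zeta(2^{\{m\}},3,2^{\{m\}})$, and substitution into (\ref{PG}) with $(-1)^{|w|}=1$ produces the odd formula.

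For even $n=2m+4$ with $m\geq 0$, take instead $w=(10)^{\{m\}}\,1$, so $\widetilde{w}=1(01)^{\{m\}}$ still begins with $1$. An analogous parse gives the admissible words
\[
\widetilde{w}\,01\,w\,0=(10)^{\{m+1\}}\cdot 1\cdot (10)^{\{m+1\}},\qquad
\widetilde{w}\,10\,w\,0=(10)^{\{m\}}\cdot 1\cdot (10)^{\{m+2\}},
\]
now with a central $1$-block in place of a $3$-block. Substitution into (\ref{PG}) with $(-1)^{|w|}=-1$ yields
\[
P(Z_{2m+4})\equiv 2\zeta(2^{\{m+1\}},1,2^{\{m+1\}})-2\zeta(2^{\{m\}},1,2^{\{m+2\}}) \pmod{\sH_{>0}^2}.
\]

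The remaining step is the purely MZV-theoretic equivalence
\[
\zeta(2^{\{m+1\}},1,2^{\{m+1\}})-\zeta(2^{\{m\}},1,2^{\{m+2\}})\equiv\zeta(2^{\{m\}},3,2^{\{m+1\}})-\zeta(2^{\{m+1\}},3,2^{\{m\}}) \pmod{\sH_{>0}^2}.
\]
I expect to derive this from the stuffle-regularized vanishing of $\zeta(1)\cdot\zeta(2^{\{2m+2\}})$, which expresses the sum of all weight-$(4m+5)$ MZVs of the shapes $\zeta(2^{\{i\}},1,2^{\{2m+2-i\}})$ and $\zeta(2^{\{j\}},3,2^{\{2m+1-j\}})$ as zero modulo products, together with the finer stuffle relations obtained by replacing $\zeta(2^{\{2m+2\}})$ by $\zeta(2^{\{k\}})\cdot\zeta(2^{\{2m+2-k\}})$ for $0\leq k\leq 2m+2$. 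A linear-algebra elimination in this system isolates the antisymmetric combination on either side, identifying them modulo products. The odd case is a direct block parse; the main obstacle is this final MZV identity in the even case, a combinatorial statement about double-shuffle relations at weight $4m+5$.
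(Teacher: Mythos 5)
Your odd case is correct and is essentially the paper's own argument: apply theorem \ref{PGthm} to $w=(01)^{\{m\}}$ and read off the blocks via (\ref{zetaw}). Your even case is also correct up to the intermediate formula $P(Z_{2m+4})\equiv 2\zeta(2^{\{m+1\}},1,2^{\{m+1\}})-2\zeta(2^{\{m\}},1,2^{\{m+2\}})$, but the final step --- converting the ``$1$-in-the-middle'' form into the ``$3$-in-the-middle'' form --- is a genuine gap: you assert the required MZV identity and sketch a strategy (``I expect to derive this from\ldots''), but you do not prove it. As described, the strategy is also doubtful. The regularized stuffle $\zeta(1)\ast\zeta(2^{\{2m+2\}})$ yields a single linear relation among the $4m+5$ quantities $\zeta(2^{\{i\}},1,2^{\{j\}})$ and $\zeta(2^{\{i\}},3,2^{\{j\}})$, far too little to isolate the antisymmetric combination you need; the ``finer'' relations coming from $\zeta(2^{\{k\}})\ast\zeta(2^{\{2m+2-k\}})$ produce only MZVs indexed by $2$s and $4$s, so they do not constrain the $1$- and $3$-indexed values at all; and you would additionally have to control the shuffle-versus-stuffle regularization discrepancy attached to $\zeta(1)$.

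The identity you need is in fact true exactly (not merely modulo $\sH_{>0}^2$), term by term, and follows in one line from the duality relation contained in (\ref{Zid}): $Z_{\x_0,\x_1}=\widetilde{Z}_{-\x_1,-\x_0}$ gives $\zeta_u=(-1)^{|u|}\zeta_{u'}$, where $u'$ is $u$ written in reverse order with the letters $0$ and $1$ interchanged. Applied to $u=(10)^{\{m+1\}}1(10)^{\{m+1\}}$ (your $\widetilde{w}01w0$) this produces $u'=(10)^{\{m\}}100(10)^{\{m+1\}}$, whence $\zeta(2^{\{m+1\}},1,2^{\{m+1\}})=\zeta(2^{\{m\}},3,2^{\{m+1\}})$; likewise $\zeta(2^{\{m\}},1,2^{\{m+2\}})=\zeta(2^{\{m+1\}},3,2^{\{m\}})$. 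This is precisely what the paper does in its even case (``we use (\ref{Zid}) to reverse the order and simultaneously swap $0$ and $1$''), and it closes your argument immediately; you should replace the proposed double-shuffle elimination by this duality step.
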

\begin{proof}
For odd $n$ we use the first word $w$ in (1) of lemma \ref{phi4lemma}.
Because by (\ref{zetaw}) $\zeta_{\widetilde{w}01w0}=-\zeta(2^{\{(n-5)/2\}},3,2^{\{(n-1)/2\}})$ and
$\zeta_{\widetilde{w}10w0}=-\zeta(2^{\{(n-3)/2\}},3,2^{\{(n-3)/2\}})$ the result follows.
For even $n$ we use the second word $w$ in (1) of lemma \ref{phi4lemma}. We use (\ref{Zid}) to reverse the order and simultaneously swap 0 and 1 in
$\widetilde{w}01w0$ and $\widetilde{w}10w0$. This gives an overall minus sign.
\end{proof}
\begin{cor}\label{modprod}
The zig-zag conjecture \ref{zzcon} holds modulo products.
\end{cor}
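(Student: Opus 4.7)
The plan is to combine Proposition \ref{modprop} with the closed evaluation of MZVs of the form $\zeta(2^{\{a\}},3,2^{\{b\}})$ due to Zagier \cite{Zagier} (and reproved in \cite{Li}). Zagier's theorem expresses each such MZV as an explicit finite sum
$$
\zeta(2^{\{a\}},3,2^{\{b\}}) = -2\sum_{r=1}^{a+b+1}\alpha_r^{a,b}\,\zeta(2r+1)\,\zeta(2^{\{a+b+1-r\}}),
$$
with rational coefficients $\alpha_r^{a,b}$ given by a difference of two binomial coefficients, the second of which carries a $2^{-2r}$ correction.

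First, I would observe that for every $k\geq1$ the value $\zeta(2^{\{k\}})$ is a rational multiple of $\pi^{2k}$, hence lies in $\sH_{>0}^2$. Consequently, modulo $\sH_{>0}^2$ only the term $r=a+b+1$ in Zagier's sum survives, and one obtains
$$
\zeta(2^{\{a\}},3,2^{\{b\}}) \equiv -2(-1)^{a+b+1}\!\left[\binom{2a+2b+2}{2a+2}-\bigl(1-2^{1-2(a+b+1)}\bigr)\binom{2a+2b+2}{2b+1}\right]\!\zeta(2a+2b+3) \mod \sH_{>0}^2.
$$

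Next, I would substitute this reduction into the two cases of Proposition \ref{modprop}. In both parities the two MZVs appearing in $P(Z_n)$ have the same total weight $2n-3$, so the right-hand side collapses to a single rational multiple of $\zeta(2n-3)$. The remaining task is a binomial identity: check that the resulting coefficient equals
$$
4\,\frac{(2n-2)!}{n!(n-1)!}\left(1-\frac{1-(-1)^n}{2^{2n-3}}\right).
$$
For $n=2m+3$ odd, this involves $\zeta(2^{\{m\}},3,2^{\{m\}})$ and $\zeta(2^{\{m-1\}},3,2^{\{m+1\}})$; the ``bare'' $\binom{4m+2}{\cdot}$ contributions combine via a Pascal-type identity, and the $2^{-2r}$-corrected contributions likewise combine. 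For $n=2m+4$ even, the corresponding pair is $\zeta(2^{\{m\}},3,2^{\{m+1\}})$ and $\zeta(2^{\{m+1\}},3,2^{\{m\}})$; here the uncorrected binomial parts cancel in the difference (recovering the vanishing of the $1/2^{2n-3}$ term since $1-(-1)^n=0$), and only the symmetric residue remains.

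The main obstacle is precisely this last binomial bookkeeping: one must track the alternating signs $(-1)^{|w|}$ from Proposition \ref{modprop}, the sign $(-1)^{a+b+1}$ from Zagier, and the asymmetry between the two binomials $\binom{2a+2b+2}{2a+2}$ and $\binom{2a+2b+2}{2b+1}$, and verify two parity-dependent closed-form identities. These identities are elementary but not quite tautologous, since the factor $4\binom{2n-2}{n-1}/n$ in the zig-zag conjecture must emerge from a cancellation between two distinct binomial difference terms. Once these identities are checked, the corollary follows immediately from Proposition \ref{modprop}.
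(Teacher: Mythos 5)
Your proposal is exactly the paper's proof: reduce Zagier's evaluation of $\zeta(2^{\{a\}},3,2^{\{b\}})$ modulo $\sH_{>0}^2$ (only the top term $r=a+b+1$ survives, since $\zeta(2^{\{k\}})\in\QQ\,\pi^{2k}$ is a product for $k\geq1$) and substitute into proposition \ref{modprop}, leaving a finite binomial verification. Before carrying out that verification, double-check your transcription of Zagier's constant: the paper's form is $2(-1)^r\bigl[\binom{2r}{2a+2}-(1-2^{-2r})\binom{2r}{2b+1}\bigr]\zeta(2r+1)$ with $r=a+b+1$, which differs from yours by an overall sign and by $2^{-2r}$ versus $2^{1-2r}$ in the correction term, and note that in the even case it is the \emph{corrected} binomials $\binom{4m+4}{2m+3}=\binom{4m+4}{2m+1}$ that cancel by symmetry (explaining the vanishing of the $2^{3-2n}$ term), not the uncorrected ones.
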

\begin{proof}
From \cite{Zagier} (see also \cite{Li}) we have an explicit formula for MZVs of 2s with a single 3 in terms of single zetas. Modulo products the result
simplifies to
$$
\zeta(2^{\{a\}},3,2^{\{b\}})\equiv2(-1)^r\left[\binom{2r}{2a+2}-(1-2^{-2r})\binom{2r}{2b+1}\right]\zeta(2r+1),
$$
where $r=a+b+1$. With proposition \ref{modprop} the theorem follows.
\end{proof}
\begin{cor}
The zig-zag conjecture \ref{zzcon} holds for $n\leq13$ loops.
\end{cor}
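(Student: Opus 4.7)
The plan is a direct computational verification, case by case for $4\le n\le 13$. For each such $n$, equation (\ref{Pzigzag}) identifies $P(Z_n)$ with the regularized value $f_w(0)$ (or equivalently $f_w(1)$) of an explicit sequential function whose word $w$ has length $n-1$ and begins with $2$. Corollary \ref{intcor} then gives a recursive algorithm to build $f_w\in\sB^{0\sv}_{2|w|}$: starting from the base case $f_2(z)=(P_{01}(z)-P_{10}(z))/(z-\zz)$, each letter of $w$ costs one application of the double integration formula (\ref{intcoreq}), carried out in $\sA$ by the integration routine of remark \ref{commrem}. Extracting the regularized limit at $z\to 0$ as in the proof of corollary \ref{Pw} outputs $P(Z_n)$ as an explicit integer linear combination of SVMP values $P_u(1)$, which by theorem \ref{Stabilitythm} lies in $\sH^{\sv}(\ZZ)_{2n-3}$.

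Both this computed expression and the conjectured value on the right of (\ref{PZ}) live in $\sH_{2n-3}$ with weight at most $23$. At this weight one can still reduce every MZV to a fixed basis using the zeta datamine \cite{Datamine} together with the {\tt zeta\_procedures} package \cite{Zetaproc}. Performing this reduction on both sides and matching the resulting rational coefficients verifies the identity for the given $n$.

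A useful shortcut is provided by corollary \ref{modprod}: the coefficient of the irreducible generator $\zeta(2n-3)$ already agrees on both sides, so it suffices to check that the product piece
$$P(Z_n)-4\frac{(2n-2)!}{n!(n-1)!}\Bigl(1-\frac{1-(-1)^n}{2^{2n-3}}\Bigr)\zeta(2n-3)\;\in\;\sH^{\sv}(\ZZ)\cap\sH_{>0}^{2}$$
vanishes at weight $2n-3$. Since $\sH^{\sv}$ is (conjecturally, and confirmed in the relevant weight range, cf.\ example \ref{Rsvex}) freely generated by the odd zetas together with a short list of extra generators ($g_{335}$ at weight $11$, etc.), the number of product-of-generators basis elements at weight $\le 23$ is modest, and only finitely many rational numbers need to be shown to be zero.

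The main obstacle is computational scale rather than conceptual: $\dim\sP_k=2^k$, and the intermediate SVMP expressions produced by iterating (\ref{intcoreq}) up to weight $22$ are very large, pushing the Maple implementation \cite{Polylogproc} to its practical memory and time limits. One must likewise rely on the correctness of the datamine's MZV reductions near its current weight frontier. Once those computations have been executed and the resulting coefficients compared with (\ref{PZ}), the corollary follows.
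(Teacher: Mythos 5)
Your strategy for $4\le n\le 12$ is exactly the paper's: compute $P(Z_n)$ by iterating the appending-an-edge integration (corollary \ref{intcor}, implemented in {\tt polylog\_procedures}), extract the value at $0$ as in corollary \ref{Pw}, and reduce the resulting weight-$(2n-3)\le 21$ MZVs to a standard basis with the datamine / {\tt zeta\_procedures}. The gap is at $n=13$. There the period has weight $23$, and your claim that ``at this weight one can still reduce every MZV to a fixed basis using the zeta datamine'' is precisely what fails: the paper states explicitly that for a weight-$23$ MZV this reduction is not possible with the available tables, and your proposed shortcut does not escape the problem, because deciding that the product piece $P(Z_{13})-c\,\zeta(23)\in\sH_{>0}^2$ vanishes still requires expressing a weight-$23$ element in a basis, which is the very step that is out of reach.

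The missing idea is the paper's use of the coaction. Instead of reducing $P(Z_{13})$ itself, one computes its reduced coaction $\Delta'x=\Delta x-1\otimes x-x\otimes 1$, whose components live in strictly lower weights where basis reductions are available, and checks with {\tt zeta\_procedures} that $\Delta'P(Z_{13})=0$. In the $f$-alphabet a weight-$23$ element with vanishing reduced coaction is a rational multiple of $f_{23}$ plus terms carrying a power of $f_2$; since $P(Z_{13})\in\sH^\sv(\ZZ)$ and $\sH^\sv$ has only odd-weight generators, those $f_2$-terms are absent, so $P(Z_{13})$ is a rational multiple of $\zeta(23)$ modulo nothing at all once corollary \ref{modprod} pins down that rational number. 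Without this (or some equivalent device that avoids a full weight-$23$ reduction) your case $n=13$ does not close.
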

\begin{proof}
For $n\leq12$ loops the zig-zag period is of weight $\leq21$. A direct calculation of the zig-zag period with {\tt polylog\_procedures} can be reduced
to a single zeta using {\tt zeta\_procedures}. For a weight 23 MZV this is not possible. However, for $n=13$ we can show with {\tt zeta\_procedures}
that the reduced coaction $\Delta' x=\Delta x-1\otimes x-x\otimes 1$ of $P(Z_{13})$ vanishes. Due to corollary \ref{modprod} this is equivalent to proving
the zig-zag conjecture for $n=13$.
\end{proof}
In \cite{ZZ} the zig-zag conjecture is proved in general.
\begin{cor}
The periods of the $A$ and $B$ type $\phi^4$ graphs are in $\sH^\sv(\ZZ)_{2m+2n+5}$ and given modulo $\sH_{>0}^2$ by MZVs of a string of 2s with 1s or 3s in three slots.
\end{cor}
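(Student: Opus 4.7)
The plan is to combine Lemma \ref{phi4lemma}, Corollary \ref{Pw}, and Theorem \ref{PGthm}, and then carry out an explicit expansion of the resulting MZV index.

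First, by Lemma \ref{phi4lemma} the period $P(A_{m,n})$, respectively $P(B_{m,n})$, equals the sequential period $P(G_{2u2v2})$, where $u$ and $v$ are alternating words in $0,1$ of lengths $m$ and $n$ chosen according to the $A$ or $B$ pattern. The full word $2u2v2$ has length $m+n+3$, so Corollary \ref{Pw} immediately gives
\[
P(A_{m,n}),\;P(B_{m,n})\in\sH^\sv(\ZZ)_{2(m+n+3)-1}=\sH^\sv(\ZZ)_{2m+2n+5},
\]
which settles the first assertion.

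For the mod-products claim, I would apply Theorem \ref{PGthm} to the inner word $w=u2v$ of length $m+n+1$, obtaining
\[
P(G_{2u2v2})\equiv 2(-1)^{m+n+1}\bigl(\zeta_{\widetilde v\,2\,\widetilde u\,01\,u\,2\,v\,0}-\zeta_{\widetilde v\,2\,\widetilde u\,10\,u\,2\,v\,0}\bigr)\mod\sH^2_{>0}.
\]
Then I would expand the two interior 2's by $\zeta_{\cdot 2 \cdot}=\zeta_{\cdot 1\cdot}-\zeta_{\cdot 0\cdot}$, turning the right hand side into a signed sum of eight MZV-indices in the alphabet $\{0,1\}$, and translate each via (\ref{zetaw}) into the standard argument notation $\zeta(n_1,\ldots,n_r)$. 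Singular indices ending in $0$ are handled by the regularization $\zeta_0=0$ together with shuffle relations; modulo $\sH^2_{>0}$ only the finite pieces survive.

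The key combinatorial observation driving the last step is that a purely alternating block $(01)^k$ or $(10)^k$ translates into a string of $2$'s, while each local deviation from the alternation by a single letter changes at most one adjacent argument, producing a $1$ (when two consecutive $1$'s appear) or a $3$ (when an extra $0$ is inserted between successive $1$'s). In each of the words $\widetilde v\,c_1\,\widetilde u\,ab\,u\,c_2\,v\,0$ there are exactly three such locations where the alternation can be broken: the position $c_1$ originating from the first interior $2$, the middle pair $ab\in\{01,10\}$, and the position $c_2$ from the second interior $2$ (the trailing $0$ only affects the last argument by at most one unit and, after the appropriate regularization, merges into the same pattern). A case distinction on the parities of $m,n$ and on the four choices $(c_1,c_2)\in\{0,1\}^2$ then shows that every surviving term is an MZV whose index is a string of $2$'s interrupted in exactly three slots by a $1$ or a $3$, which is the claim.

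The main obstacle is the bookkeeping in the final case analysis: one must verify in each of the parity/alternation cases distinguished in Lemma \ref{phi4lemma} that the inserted letters really do produce only $1$'s or $3$'s and never merge two slots into a single $4$ or split into more than three exceptional positions. I expect this to be routine but tedious, and to boil down to checking a small finite list of local configurations.
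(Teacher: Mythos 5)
Your proposal is correct and follows the paper's own argument exactly: the paper proves this corollary simply by citing theorem \ref{PGthm} together with lemma \ref{phi4lemma}, with the weight and integrality statement $P\in\sH^\sv(\ZZ)_{2m+2n+5}$ coming from corollary \ref{Pw} applied to the word $2u2v2$ of length $m+n+3$, just as you do. (One small point of bookkeeping: the indices requiring regularization are those \emph{beginning} with $0$ --- which arise when the inner word $w=u2v$ ends in $0$ or $2$ --- rather than those ending in $0$, since every index $\widetilde{w}01w0$ ends in $0$ and in the convention $\zeta_w=I(0;w;1)$ that end is the convergent one.)
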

\begin{proof}
Theorem \ref{PGthm} together with lemma \ref{phi4lemma}.
\end{proof}
To illustrate the above corollary we consider the following example.

\begin{ex}
The type $A$ period for even arguments is given modulo products by
\begin{eqnarray}\label{Aexample}
&&\hspace{2cm}P(A_{2m,2n})\equiv\\
&&\hspace{-18pt}\zeta(2^{\{n-1\}},3,2^{\{m\}},3,2^{\{m-1\}},3,2^{\{n\}})+\zeta(2^{\{n-1\}},3,2^{\{m\}},3,2^{\{m-1\}},1,2^{\{n+1\}})\nonumber\\
&&\hspace{-18pt}-\,\zeta(2^{\{n-1\}},3,2^{\{m-1\}},3,2^{\{m\}},3,2^{\{n\}})-\zeta(2^{\{n-1\}},3,2^{\{m-1\}},3,2^{\{m\}},1,2^{\{n+1\}})\nonumber\\
&&\hspace{-18pt}+\,\zeta(2^{\{n\}},1,2^{\{m\}},3,2^{\{m-1\}},3,2^{\{n\}})+\zeta(2^{\{n\}},1,2^{\{m\}},3,2^{\{m-1\}},1,2^{\{n+1\}})\nonumber\\
&&\hspace{-18pt}-\,\zeta(2^{\{n\}},1,2^{\{m-1\}},3,2^{\{m\}},3,2^{\{n\}})-\zeta(2^{\{n\}},1,2^{\{m-1\}},3,2^{\{m\}},1,2^{\{n+1\}}).\nonumber
\end{eqnarray}
\end{ex}

\noindent{\bf Summary.}
Completed primitive $\phi^4$ graphs with vertex connectivity three reduce to products of lower loop order graphs \cite{SchnetzCensus}. For irreducible $\phi^4$ periods we have:

The zig-zag periods exhaust the irreducible primitive $\phi^4$ periods up to 5 loops. Their periods are proved to all orders in \cite{ZZ} as (\ref{PZ}).
The type $A$ and $B$ periods start to differ from the zig-zags at 6 loops where $A_{2,0}=B_{2,0}=P_{6,2}$ in \cite{SchnetzCensus}. The type $A$ and $B$ periods
can be calculated and reduced to a standard MZV basis up to 12 loops with {\tt polylog\_procedures}. The first constructible periods which are neither zig-zags
nor type $A$ or $B$ arise at nine loops. Constructible periods can be calculated and reduced to a standard basis up to eleven loops with the {\tt period} command
in \cite{Polylogproc} or in \cite{Hyperlogproc}. They are in $\sH^\sv(\ZZ)$.

MZV periods (possibly not in $\sH^\sv(\ZZ)$) which are not constructible arise first at six loops as $P_{6,3}$ and $P_{6,4}$ in \cite{SchnetzCensus}.
The period $P_{6,3}$ was calculated by E. Panzer in 2012 by implementing the theory of F. Brown on integration in parametric space \cite{BrFeyn}.
The period $P_{6,4}$ was calculated in \cite{S3}. At seven loops exact numerical methods showed that (at least) eight of the nine periods are MZVs.
Their results are in \cite{BK}, \cite{SchnetzCensus} and \cite{B3}. The missing seven loop period features extensions of MZVs by sixth roots of unity \cite{coact}.
At eight loops exact numerical methods showed that at least 16 out of at most 31 periods are MZVs \cite{SchnetzCensus}. For recent results see \cite{coact}.

Conjectured non-MZV type periods start at eight loops where four periods with higher dimensional geometries were found. They are
$P_{8,37}$, $P_{8,38}$, $P_{8,39}$, $P_{8,41}$ in \cite{SchnetzCensus}. All geometries are modular \cite{modphi4}. The case $P_{8,37}$ was
studied in detail in \cite{K3}. Its period is by standard transcendentality conjectures not of MZV type \cite{BD}.

Periods with geometries which are not modular (of small level) first appear at nine loops. Their periods are conjectured to be not of MZV-type.
At very high loop order most periods are of this type.

\section{Graphical functions in two dimensions}\label{2d}
In this section we cursorily discuss graphical functions in two dimensions.
In $d>2$ dimensions we used massless bosonic propagators $||x-y||^{2-d}$ to define graphical functions. In two dimensions the massless boson propagator
$\ln||x-y||$ has an infrared singularity for $x,y\to\infty$. We resort to using a fermion type propagator in two dimensions.
\subsection{Definition}
In two dimensions we can define graphical functions of several complex variables.
\begin{defn}
Let $G$ be a graph with vertices $V$ labeled by complex variables and two types of directed edges which we call holomorphic and antiholomorphic (see figure 16).
Assign to a holomorphic edge $e$ from $x$ to $y$ the propagator $P_e=(x-y)^{-1}$ and to an antiholomorphic edge $f$ from $x$ to $y$ the propagator
$P_f=(\overline{x}-\overline{y})^{-1}$. The graphical function associated to $G$ is defined as the integral over the internal vertices $V^{\mathrm{int}}\subset V$
of the product of propagators. It depends on the external vertices $\{z_1,\ldots,z_2\}=V^{\mathrm{ext}}=V\backslash V^{\mathrm{int}}$,
\begin{equation}
f^{(0)}_{G}(z_1,\ldots,z_2)=\left(\prod_{v\in V^{\mathrm{int}}} \int_{\CC}\frac{\dd^2x_v}{\pi}\right) \prod_eP_e
\end{equation}
if the integral on the right hand side exists.
\end{defn}
\begin{center}
\fcolorbox{white}{white}{
  \begin{picture}(345,46) (12,-28)
    \SetWidth{0.8}
    \SetColor{Black}
    \Line[arrow,arrowpos=0.5,arrowlength=5,arrowwidth=2,arrowinset=0.2](48,-6)(96,-6)
    \Text(46,-21)[lb]{\normalsize{\Black{$x$}}}
    \Text(94,-23)[lb]{\normalsize{\Black{$y$}}}
    \Text(128,-18)[lb]{\normalsize{\Black{$\displaystyle\frac{1}{x-y}$}}}
    \Vertex(48,-6){2.8}
    \Vertex(96,-6){2.8}
    \Line[arrow,arrowpos=0.5,arrowlength=5,arrowwidth=2,arrowinset=0.2,double,sep=2](208,-6)(256,-6)
    \Vertex(208,-6){2.8}
    \Vertex(256,-6){2.8}
    \Text(206,-21)[lb]{\normalsize{\Black{$x$}}}
    \Text(254,-23)[lb]{\normalsize{\Black{$y$}}}
    \Text(290,-18)[lb]{\normalsize{\Black{$\displaystyle\frac{1}{\overline{x}-\overline{y}}$}}}
  \end{picture}
}
Figure 16: In two dimensions we define holomorphic $\bullet \hspace{-1ex}-\hspace{-1ex}-\hspace{-1ex}\bullet$
and antiholomorphic $\bullet \hspace{-1.3ex}=\hspace{-.5ex}=\hspace{-1.3ex}\bullet$ propagators.
\end{center}
\vskip2ex

By power counting the existence of the two-dimensional graphical function is equivalent to the existence of the four-dimensional analog (see lemma \ref{finitelem}).
Note that in the above definition the integration domain intersects the singular locus of the integrand. The integral may still exist because in the generic
case the singularity is of sufficiently low order.

The case of one internal vertex can be treated explicitly.
\begin{prop}\label{2dprop}
Let $a,b\in\CC$ and $R>\max\{|a|,|b|\}$, then
\begin{eqnarray}\label{2dint}
\frac{1}{\pi}\int_{|x|<R}\frac{1}{x-a}\dd^2x&=&-\overline{a},\nonumber\\
\frac{1}{\pi}\int_{|x|<R}\frac{1}{x-a}\frac{1}{\overline{x}-\overline{b}}\dd^2x&=&\ln\frac{R^2}{|a-b|^2}+O(R^{-2}).
\end{eqnarray}
\end{prop}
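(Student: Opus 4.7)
My plan is to derive both formulas from Stokes' theorem applied to explicit antiderivatives of the integrands with respect to $\overline{x}$. The key point is that if $\partial_{\overline{x}} F = f$ on an open set, then $f\,\dd^2 x = \tfrac{1}{2\mathrm{i}}\dd(F\,\dd x)$, so the two-dimensional integral reduces to a boundary integral along $|x|=R$ together with residue-type contributions from the interior singularities of $F$, which are punctured out by small disks before Stokes is applied.

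For the first identity I would take $F(x) = (\overline{x}-\overline{a})/(x-a)$, which satisfies $\partial_{\overline{x}} F = 1/(x-a)$ off $x=a$ and has modulus $1$ there. Excising a disk $|x-a|<\epsilon$ and applying Stokes, the boundary contribution around $a$ parametrizes as $\mathrm{i}\epsilon\int_0^{2\pi} \mathrm{e}^{-\mathrm{i}\phi}\dd\phi = 0$, so it vanishes as $\epsilon\to 0$. Along $|x|=R$ we have $\overline{x}=R^2/x$, which converts $F\,\dd x$ into a rational $1$-form in $x$; a short partial-fraction expansion and Cauchy's residue theorem then yield $\oint_{|x|=R} F\,\dd x = -2\pi\mathrm{i}\,\overline{a}$, giving the value $-\overline{a}$ after dividing by $\pi$.

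For the second identity I would use $F(x) = \ln|x-b|^2/(x-a)$, for which $\partial_{\overline{x}} F = 1/((x-a)(\overline{x}-\overline{b}))$ since $\partial_{\overline{x}}\ln|x-b|^2 = 1/(\overline{x}-\overline{b})$ away from $x=b$. After excising small disks around $a$ and $b$, the boundary integral around $|x-b|=\epsilon$ is $O(\epsilon\ln\epsilon)$ and vanishes because $\int_0^{2\pi}\mathrm{e}^{\mathrm{i}\phi}\dd\phi = 0$, while the integral around $|x-a|=\epsilon$ extracts the residue $\ln|a-b|^2$ of $F$ at $a$ and, once the boundary orientation and the factor $1/(2\mathrm{i})$ are accounted for, contributes $-\pi\ln|a-b|^2$. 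On the outer circle I would expand $\ln|x-b|^2 = 2\ln R + 2\,\mathrm{Re}\,\ln(1-b/x)$, valid for $|x|=R>|b|$, and substitute $\overline{x}=R^2/x$ so that the entire integrand is meromorphic in $x$. The constant piece $2\ln R$ gives $2\pi\ln R$ by the residue at $a$; the terms $(\overline{b}\,x/R^2)^n/n$ from the complex-conjugate part of $\ln|x-b|^2$ sum by geometric series to $\pi\ln(1-a\overline{b}/R^2) = O(R^{-2})$; and the terms $(b/x)^n/n$ contribute $0$ because a partial-fraction computation of $\oint \dd x/(x^n(x-a))$ shows that the residues at $0$ and $a$ cancel. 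Assembling everything yields $\pi\ln(R^2/|a-b|^2)+O(R^{-2})$.

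The main delicate point is handling the logarithm $\ln|x-b|^2$ on the contour $|x|=R$: although it is single-valued and real, neither $\ln(x-b)$ nor $\ln(\overline{x}-\overline{b})$ extends single-valuedly to the full disk, so the logarithm must not be split globally and only the branch-free expansion $2\ln R + 2\,\mathrm{Re}\,\ln(1-b/x)$, convergent on $|x|>|b|$, may be used on the contour. As a sanity check one could also derive the two identities directly by expanding each integrand in a Laurent series on the rings $|x|<|a|,\ |a|<|x|<|b|,\ |b|<|x|<R$ (assuming without loss of generality $|a|\le|b|$), integrating angle by angle so that only the constant angular Fourier modes survive, and resumming the radial integrals via $\sum_{n\ge 1} z^n/n=-\ln(1-z)$; this yields the same closed forms.
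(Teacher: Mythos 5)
Your proof is correct, but it takes a genuinely different route from the paper's. The paper proves the second identity by deleting thin annuli around the circles $|x|=|a|$ and $|x|=|b|$ (assuming $|a|<|b|$ without loss of generality), expanding both propagators as geometric series on each of the three resulting components, integrating first over the angle so that only the rotation-invariant Fourier mode survives, and then doing the elementary radial integrals; the first identity is obtained by the same method. This is precisely the two-dimensional degeneration of the Gegenbauer technique used for $d>2$, and it is the method you sketch only at the end as a sanity check. Your main argument instead exhibits explicit $\partial_{\overline{x}}$-primitives, $(\overline{x}-\overline{a})/(x-a)$ and $\ln|x-b|^2/(x-a)$, and applies Stokes' theorem, reducing everything to contour integrals on $|x|=R$ (evaluated by residues after the substitution $\overline{x}=R^2/x$) plus contributions from the excised disks; the individual steps you indicate — the vanishing of $\oint e^{-\mathrm{i}\phi}\,\dd\phi$ at $a$, the $O(\epsilon\ln\epsilon)$ bound at $b$, the cancellation of the residues of $\oint \dd x/(x^n(x-a))$ at $0$ and at $a$, and the resummation of the $(\overline{b}x/R^2)^n/n$ terms to $\pi\ln(1-a\overline{b}/R^2)=O(R^{-2})$ — all check out. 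What your approach buys is that it anticipates the residue theorem of theorem \ref{residuethm}: your primitive $\ln|x-b|^2/(x-a)$ is exactly an antiholomorphic primitive in the sense used there. What the paper's approach buys is uniformity with the rest of the text: the annular expansion is the $\lambda\to 0$ analogue of the Gegenbauer computations in appendix A and feeds directly into the iteration behind lemma \ref{2dlem}. Your caution about not splitting $\ln|x-b|^2$ into holomorphic and antiholomorphic branches globally on the disk is well placed.
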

\begin{proof}
In the second identity we may assume without restriction that $|a|<|b|$. We fix a small $\epsilon>0$ and calculate the integral over the domain
$D=\{|x|<|a|-\epsilon\}\cup\{|a|+\epsilon<|x|<|b|-\epsilon\}\cup\{|b|+\epsilon<|x|<R\}$. We split the integral according to the three components of $D$,
$$
\frac{1}{\pi}\int_D\frac{1}{x-a}\frac{1}{\overline{x}-\overline{b}}\dd^2x=I_1+I_2+I_3,
$$
Taylor expansion and integrating first over the angle in angular coordinates gives for the three integrals in the limit $\epsilon\to0$
$$
I_1=-\ln\left(1-\frac{\overline{a}}{\overline{b}}\right),\quad I_2=0,\quad I_3=\ln\frac{R^2}{b\overline{b}}-\ln\left(1-\frac{a}{b}\right)+O(R^{-2}).
$$
The first identity is proved by the same method.
\end{proof}
\begin{lem}\label{2dlem}
Let $a_i,b_j\in\CC$ for $i=1,\ldots,m$, $j=1,\ldots,n$, $n\geq2$, $m+n\geq3$, such that none of the $a_i$ is collinear with any of the $b_j$ then
\begin{eqnarray}
\frac{1}{\pi}\int_\CC\frac{1}{\prod_{i=1}^m(x-a_i)}\frac{1}{\prod_{j=1}^n(\overline{x}-\overline{b_j})}\dd^2 x
&\!=&\!\sum_{i=1}^m\left(\prod_{k\neq i}\frac{1}{a_i-a_k}\right)\int_{\overline{a_i}}^\infty\frac{1}{\prod_{j=1}^n(\overline{x}-\overline{b_j})}\dd x\nonumber\\
&\!-&\!\sum_{j=1}^n\left(\prod_{k\neq j}\frac{1}{\overline{b_j}-\overline{b_k}}\right)\int_0^{b_j}\frac{1}{\prod_{i=1}^m(x-a_i)}\dd x,
\end{eqnarray}
where the integral contours are straight lines which originate from 0.
\end{lem}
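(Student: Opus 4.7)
My plan is to prove the identity by partial fractions together with a cutoff regularization using proposition \ref{2dprop}. Write $P(x)=\prod_i(x-a_i)$, $Q(\overline{x})=\prod_j(\overline{x}-\overline{b_j})$, and decompose
\[
\frac{1}{P(x)}=\sum_{i=1}^m\frac{c_i}{x-a_i},\qquad \frac{1}{Q(\overline{x})}=\sum_{j=1}^n\frac{d_j}{\overline{x}-\overline{b_j}},
\]
with $c_i=\prod_{k\neq i}(a_i-a_k)^{-1}$ and $d_j=\prod_{k\neq j}(\overline{b_j}-\overline{b_k})^{-1}$. Expansion of $1/Q(\overline{x})$ at infinity, together with $n\geq 2$, forces $\sum_j d_j=0$; similarly $\sum_i c_i=0$ whenever $m\geq 2$.

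First I would establish the left-hand side. Cut off the integral to $|x|<R$, expand the integrand by partial fractions, and apply the second identity of proposition \ref{2dprop} term by term:
\[
\frac{1}{\pi}\int_{|x|<R}\frac{\dd^2 x}{P(x)Q(\overline{x})}=\sum_{i,j}c_id_j\ln\frac{R^2}{|a_i-b_j|^2}+O(R^{-2}).
\]
The $\ln R^2$ contribution cancels because $\sum_j d_j=0$, and letting $R\to\infty$ leaves
\[
I:=\frac{1}{\pi}\int_{\CC}\frac{\dd^2 x}{P(x)Q(\overline{x})}=-\sum_{i,j}c_id_j\bigl[\ln(a_i-b_j)+\ln(\overline{a_i}-\overline{b_j})\bigr],
\]
the splitting of $\ln|a_i-b_j|^2$ being consistent modulo $\ln(-1)$ terms which drop out by $\sum_j d_j=0$.

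Next I would match the two sums on the right. Expanding $1/P(x)$ by partial fractions and integrating along the straight segment from $0$ to $b_j$ gives
\[
\int_0^{b_j}\frac{\dd x}{P(x)}=\sum_i c_i\bigl[\ln(a_i-b_j)-\ln(a_i)\bigr],
\]
with branches determined by the parametrization; multiplying by $-d_j$ and summing over $j$ annihilates the $\ln(a_i)$ pieces (again via $\sum_j d_j=0$) and produces the holomorphic half of $I$. For the first sum, the analogous computation along the ray from the origin through $\overline{a_i}$ (which originates from $0$, in the stated convention, with $\dd x$ understood as the antiholomorphic differential along that ray) yields
\[
\sum_i c_i\int_{\overline{a_i}}^\infty\frac{\dd x}{Q(\overline{x})}=-\sum_{i,j}c_id_j\ln(\overline{a_i}-\overline{b_j}),
\]
after using $\sum_j d_j=0$ to discard the logarithmic divergence at infinity. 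Adding the two identifications reproduces $I$.

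The main obstacle is purely a matter of branch bookkeeping: one must fix consistent determinations of $\ln(a_i-b_j)$ and $\ln(\overline{a_i}-\overline{b_j})$ on both sides of the equation, and at each step invoke the telescoping identity $\sum_j d_j=0$ (or $\sum_i c_i=0$ in the case $m\geq 2$) to absorb every ambiguous $\ln(-1)$ or $\ln(\text{cutoff})$ constant. The non-collinearity hypothesis enters precisely here: it guarantees that the straight contours from $0$ to $b_j$ and from $\overline{a_i}$ to $\infty$ miss every pole, so each logarithmic primitive is unambiguously defined along its path.
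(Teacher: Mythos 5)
Your proof follows essentially the same route as the paper's: partial fractions on both factors, a cutoff at radius $R$, the second identity of proposition \ref{2dprop} applied term by term, and the vanishing of $\sum_{j}\prod_{k\neq j}(\overline{b_j}-\overline{b_k})^{-1}$ (a consequence of $n\geq2$) to cancel every $R$-dependent and branch-ambiguous contribution. The only point you leave out is the degenerate case $m=0$ (allowed by the hypotheses when $n\geq3$), which the paper handles separately by reducing to the complex conjugate of the first identity of proposition \ref{2dprop}; your main computation implicitly requires $m\geq1$ so that each antiholomorphic factor can be paired with a holomorphic one.
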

\begin{proof}
Let $m\geq1$. If we replace the integral to infinity by an integral to $R$ a partial fraction decomposition yields for the right hand side
$$
\sum_{i=1}^m\left(\prod_{k\neq i}\frac{1}{a_i-a_k}\right)\sum_{j=1}^n\left(\prod_{k\neq j}\frac{1}{\overline{b_j}-\overline{b_k}}\right)\left(\ln\frac{R^2}{|a_i-b_j|^2}
+\ln\frac{a_i}{R}\right).
$$
The term $\ln a_i/R$ drops out because $\sum_{j=1}^n\left(\prod_{k\neq j}\frac{1}{\overline{b_j}-\overline{b_k}}\right)=\lim_{\overline{x}\to\infty}\overline{x}
\prod_{j=1}^n\frac{1}{\overline{x}-\overline{b_j}}=0$. The term $\ln R^2/|a_i-b_j|^2$ gives the left hand side due to proposition \ref{2dprop}.

The case $m=0$ reduces by a partial fraction decomposition to the complex conjugate of the first identity in (\ref{2dint}).
\end{proof}

\begin{center}
\fcolorbox{white}{white}{
  \begin{picture}(345,101) (7,11)
    \SetWidth{0.8}
    \SetColor{Black}
    \Vertex(10,61){2.8}
    \Vertex(35,61){2.8}
    \Vertex(35,86){2.8}
    \Vertex(35,36){2.8}
    \Line[arrow,arrowpos=0.5,arrowlength=5,arrowwidth=2,arrowinset=0.2,double,sep=2](10,61)(35,86)
    \Line[arrow,arrowpos=0.5,arrowlength=5,arrowwidth=2,arrowinset=0.2,double,sep=2](10,61)(35,61)
    \Line[arrow,arrowpos=0.5,arrowlength=5,arrowwidth=2,arrowinset=0.2,double,sep=2](10,61)(35,36)
    \Vertex(65,61){2.8}
    \Vertex(90,61){2.8}
    \Vertex(115,86){2.8}
    \Vertex(115,36){2.8}
    \Line[arrow,arrowpos=0.5,arrowlength=5,arrowwidth=2,arrowinset=0.2](90,61)(65,61)
    \Line[arrow,arrowpos=0.5,arrowlength=5,arrowwidth=2,arrowinset=0.2,double,sep=2](90,61)(115,86)
    \Line[arrow,arrowpos=0.5,arrowlength=5,arrowwidth=2,arrowinset=0.2,double,sep=2](90,61)(115,36)
    \Line[arrow,arrowpos=0.5,arrowlength=5,arrowwidth=2,arrowinset=0.2,flip,double,sep=2](145,86)(170,61)
    \Line[arrow,arrowpos=0.5,arrowlength=5,arrowwidth=2,arrowinset=0.2,flip](195,61)(170,61)
    \Line[arrow,arrowpos=0.5,arrowlength=5,arrowwidth=2,arrowinset=0.2,double,sep=2](195,61)(220,86)
    \Line[arrow,arrowpos=0.5,arrowlength=5,arrowwidth=2,arrowinset=0.2,flip](250,86)(275,61)
    \Line[arrow,arrowpos=0.5,arrowlength=5,arrowwidth=2,arrowinset=0.2,flip,double,sep=2](300,61)(275,61)
    \Line[arrow,arrowpos=0.5,arrowlength=5,arrowwidth=2,arrowinset=0.2](300,61)(325,86)
    \Line[arrow,arrowpos=0.5,arrowlength=5,arrowwidth=2,arrowinset=0.2,double,sep=2](195,61)(220,36)
    \Line[arrow,arrowpos=0.5,arrowlength=5,arrowwidth=2,arrowinset=0.2,flip,double,sep=2](145,36)(170,61)
    \Line[arrow,arrowpos=0.5,arrowlength=5,arrowwidth=2,arrowinset=0.2,flip,double,sep=2](250,36)(275,61)
    \Line[arrow,arrowpos=0.5,arrowlength=5,arrowwidth=2,arrowinset=0.2,double,sep=2](300,61)(325,36)
    \Vertex(145,36){2.8}
    \Vertex(145,86){2.8}
    \Vertex(170,61){2.8}
    \Vertex(195,61){2.8}
    \Vertex(220,36){2.8}
    \Vertex(220,86){2.8}
    \Vertex(250,86){2.8}
    \Vertex(250,36){2.8}
    \Vertex(275,61){2.8}
    \Vertex(300,61){2.8}
    \Vertex(325,86){2.8}
    \Vertex(325,36){2.8}
    \Text(33,91)[lb]{\normalsize{\Black{$b_1$}}}
    \Text(33,66)[lb]{\normalsize{\Black{$b_2$}}}
    \Text(33,41)[lb]{\normalsize{\Black{$b_3$}}}
    \Text(62,66)[lb]{\normalsize{\Black{$a_1$}}}
    \Text(112,91)[lb]{\normalsize{\Black{$b_1$}}}
    \Text(113,41)[lb]{\normalsize{\Black{$b_2$}}}
    \Text(142,91)[lb]{\normalsize{\Black{$b_1$}}}
    \Text(140,41)[lb]{\normalsize{\Black{$b_2$}}}
    \Text(217,41)[lb]{\normalsize{\Black{$b_3$}}}
    \Text(218,91)[lb]{\normalsize{\Black{$b_4$}}}
    \Text(246,91)[lb]{\normalsize{\Black{$a_1$}}}
    \Text(245,41)[lb]{\normalsize{\Black{$b_1$}}}
    \Text(323,41)[lb]{\normalsize{\Black{$b_2$}}}
    \Text(322,91)[lb]{\normalsize{\Black{$a_2$}}}
    \Text(15,16)[lb]{\normalsize{\Black{$G_1$}}}
    \Text(85,16)[lb]{\normalsize{\Black{$G_2$}}}
    \Text(177,16)[lb]{\normalsize{\Black{$G_3$}}}
    \Text(284,16)[lb]{\normalsize{\Black{$G_4$}}}
  \end{picture}
}
Figure 17: Examples of graphical functions with one or two internal vertices.
\end{center}
\vskip2ex

\begin{ex}
For the graphs $G_1$ and $G_2$ in figure 17 one obtains
\begin{eqnarray}
f_{G_1}&=&\frac{b_1(\overline{b_2}-\overline{b_3})+b_2(\overline{b_3}-\overline{b_1})+b_3(\overline{b_1}-\overline{b_2})}
{(\overline{b_1}-\overline{b_2})(\overline{b_2}-\overline{b_3})(\overline{b_3}-\overline{b_1})},\nonumber\\
f_{G_2}&=&\frac{2}{\overline{b_2}-\overline{b_1}}\ln\left|\frac{a_1-b_1}{a_1-b_2}\right|.
\end{eqnarray}
\end{ex}
Note that unlike the higher dimensional case it is possible to iterate the above lemma to evaluate more complicated graphical functions in two dimensions.
This strategy leads to iterated integrals. In the case of two internal vertices one obtains:
\begin{ex}
For the graphs $G_3$ and $G_4$ in figure 17 one obtains
\begin{eqnarray}
f_{G_3}&=&2\frac{\displaystyle b_1\ln\left|\frac{b_1-b_4}{b_1-b_3}\right|-b_2\ln\left|\frac{b_2-b_4}{b_2-b_3}\right|
 +b_3\ln\left|\frac{b_3-b_1}{b_3-b_2}\right|-b_4\ln\left|\frac{b_4-b_1}{b_4-b_2}\right|}{(\overline{b_1}-\overline{b_2})(\overline{b_3}-\overline{b_4})},\nonumber\\
f_{G_4}&=&\frac{2}{\overline{b_1}-\overline{b_2}}\left(\mathrm{i}D\left(\frac{a_1-b_1}{a_1-a_2}\right)+\mathrm{i}D\left(\frac{a_2-b_2}{a_2-a_1}\right)\right.\nonumber\\
&&\quad\quad+\;\left.\ln\left|\frac{a_1-b_1}{a_1-a_2}\right|\ln\left|\frac{a_2-b_2}{a_2-b_1}\right|
+\ln\left|\frac{a_1-b_1}{a_1-b_2}\right|\ln\left|\frac{a_2-b_2}{a_2-a_1}\right|\right),
\end{eqnarray}
where $D$ is the Bloch-Wigner dilogarithm (\ref{BWdilog}).
\end{ex}
In general, the minimum of the number of holomorphic and the number of antiholomorphic edges is an upper bound for the weight of the graphical function.

\subsection{Completion}
Now we return to the situation where we have three labeled vertices 0, 1, $z$. To formulate a completion theorem we introduce a vertex $\infty$.
We define propagators that emanate from $\infty$ as 1 and propagators that lead into $\infty$ as $-1$.

\begin{center}
\fcolorbox{white}{white}{
  \begin{picture}(345,46) (8,-28)
    \SetWidth{0.8}
    \SetColor{Black}
    \Line[arrow,arrowpos=0.5,arrowlength=5,arrowwidth=2,arrowinset=0.2](8,-6)(46,-6)
    \Text(3,-21)[lb]{\normalsize{\Black{$\infty$}}}
    \Text(44,-21)[lb]{\normalsize{\Black{$x$}}}
    \Text(55,-9)[lb]{\normalsize{\Black{$=\,1,$}}}
    \Vertex(8,-6){2.8}
    \Vertex(46,-6){2.8}
    \Line[arrow,arrowpos=0.5,arrowlength=5,arrowwidth=2,arrowinset=0.2,flip](98,-6)(136,-6)
    \Text(93,-21)[lb]{\normalsize{\Black{$\infty$}}}
    \Text(134,-21)[lb]{\normalsize{\Black{$x$}}}
    \Text(145,-9)[lb]{\normalsize{\Black{$=\,-1,$}}}
    \Vertex(96,-6){2.8}
    \Vertex(136,-6){2.8}
    \Line[arrow,arrowpos=0.5,arrowlength=5,arrowwidth=2,arrowinset=0.2,double,sep=2](198,-6)(236,-6)
    \Vertex(198,-6){2.8}
    \Vertex(236,-6){2.8}
    \Text(193,-21)[lb]{\normalsize{\Black{$\infty$}}}
    \Text(234,-21)[lb]{\normalsize{\Black{$x$}}}
    \Text(245,-9)[lb]{\normalsize{\Black{$=\,1,$}}}
    \Line[arrow,arrowpos=0.5,arrowlength=5,arrowwidth=2,arrowinset=0.2,double,flip,sep=2](288,-6)(326,-6)
    \Vertex(288,-6){2.8}
    \Vertex(326,-6){2.8}
    \Text(283,-21)[lb]{\normalsize{\Black{$\infty$}}}
    \Text(324,-21)[lb]{\normalsize{\Black{$x$}}}
    \Text(335,-9)[lb]{\normalsize{\Black{$=\,-1.$}}}
  \end{picture}
}
Figure 18: Depending on the orientation propagators to $\infty$ are $\pm1$.
\end{center}
\vskip2ex

We define the weight of holomorphic propagators as (1,0) and the weight of antiholomorphic propagators as (0,1).

\begin{defn}
A graph $\Gamma$ is completed in $d=2$ dimensions if it has the labels $0,1,z,\infty$ and weights such that every unlabeled (internal) vertex has weighted valence (2,2)
and every labeled (external) vertex has valence (0,0). The graphical function of $\Gamma$ is
\begin{equation}\label{fGamma2dim}
f_\Gamma^{(0)}(z)=\left(\prod_{v\notin\{0,1,z,\infty\}} \int_{\CC}\frac{\dd^2x_v}{\pi}\right)\prod_eP_e^{\nu_e},
\end{equation}
where the products are over vertices and edges, respectively. The integer $\nu_e$ is the total weight of the edge $e$.
\end{defn}
Analogously to lemma \ref{completionlemma} we may uniquely complete a graph by adding (possibly inverse) propagators $+1$
(i.e.\ adding propagators from 1 to 0 and propagators emanating from $\infty$). The graphical function does not change under completion.

For the $\sS_4$ group of permutation of external vertices we define a transformation $\pi=\phi(\sigma)\circ\sigma$ as a permutation $\sigma\in\sS_4$ followed by a transformation
of the label $z$ as in figure 9. In complete analogy to theorem \ref{completionthm} we have the following theorem:

\begin{thm}\label{completionthm2d}
The completed graphical function is invariant under $\pi=\phi(\sigma)\circ\sigma$,
\begin{equation}
f^{(0)}_\Gamma(z)=f^{(0)}_{\sigma(\Gamma)}(\phi(\sigma)(z)).
\end{equation}
\end{thm}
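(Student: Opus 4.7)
The plan is to mirror the proof of Theorem \ref{completionthm}. Since $\sS_4$ is generated by the three transpositions $(0,\infty)$, $(0,1)$, $(1,z)$ from (\ref{transpos}), it suffices to verify invariance of $f^{(0)}_\Gamma$ under each of them. For a given $\sigma$, I would realize the relabeling on the integral (\ref{fGamma2dim}) by a change of variables at every internal vertex $x_i$, chosen so that the underlying M\"obius transformation of $\PP^1\CC$ implements the permutation of $\{0,1,z,\infty\}$: specifically $x_i = 1-u_i$ for $\sigma=(0,1)$, $x_i = 1/u_i$ for $\sigma=(0,\infty)$, and $x_i = z\cdot u_i$ for $\sigma=(1,z)$. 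In each case the resulting integrand in the $u_i$ variables is to be identified with the defining integrand of $f^{(0)}_{\sigma(\Gamma)}(\phi(\sigma)(z))$.

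For each substitution I would track the Jacobian of the measure together with the multiplicative factor that each fermionic propagator picks up. The $(0,1)$ case is the simplest: the measure is invariant, and each propagator $(x-y)^{-1}$ not incident to $\infty$ acquires a sign, while edges to $\infty$ are untouched. The $(0,\infty)$ case is the most delicate: the measure gains $|u_i|^{-4}$ per internal vertex, $\frac{1}{x_i - x_j}$ becomes $-\frac{u_iu_j}{u_i-u_j}$, and $\frac{1}{x_i - v}$ for $v\in\{1,z\}$ becomes $-\frac{u_i/v}{u_i - 1/v}$; edges from $x_i$ to $0$ transform via $1/x_i = u_i$ into new edges from $u_i$ to $\infty=\sigma(0)$ of propagator $-1$ (with accumulated factor $-u_i$), and symmetrically edges to $\infty$ become edges to $0$ with the same $-u_i$ factor. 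The $(1,z)$ case is analogous: each holomorphic propagator not incident to $\infty$ scales by $1/z$ and each antiholomorphic by $1/\bar z$, while $\dd^2 x_i$ scales by $|z|^2$. In every case the decisive identity is that the per-internal-vertex $u_i$- or $z$-factors cancel the Jacobian by the weighted valency $(2,2)$ (two holomorphic and two antiholomorphic edges incident at each $u_i$), and the remaining global $z$-prefactor and accumulated signs cancel by the weighted valency $(0,0)$ at the external vertices $0$, $1$, $z$, in direct analogy with (\ref{halfedges}) and the four external-valency equations of the proof of Theorem \ref{completionthm}.

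The main obstacle is the sign and factor bookkeeping: each fermionic propagator $(x-y)^{-1}$ is first order in the vertex positions and carries an orientation, so under inversion it distributes not as a uniform power but as two separate $u$-factors---one per endpoint---each with its own sign. Furthermore, edges incident to $\infty$ carry the special constant propagator $\pm 1$ of figure 17, and their transformation law must be reconciled edge by edge with the M\"obius action on the remaining labels so as to produce the same uniform $-u_i$ factor as the other edges. Once this verification is in place, the proof concludes by the same vertex-by-vertex cancellation argument as in Theorem \ref{completionthm}.
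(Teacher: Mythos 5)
Your overall strategy is exactly the paper's: reduce to the three generating transpositions, implement each by the substitutions $x_i\mapsto 1-x_i$, $x_i\mapsto 1/x_i$, $x_i\mapsto zx_i$, and cancel the per-vertex $u$-factors against the Jacobian using the weighted valency $(2,2)$ at internal vertices. That part of the bookkeeping is fine, and the $(1,z)$ case is indeed sign-free.

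The genuine gap is that you flag the sign bookkeeping as ``the main obstacle'' but never resolve it, and the mechanism you propose for resolving it is not the right one. The accumulated signs do \emph{not} cancel via the four external-valency equations in the way the $||z||$-powers do in the proof of theorem \ref{completionthm}: those linear relations control exponents, whereas here one is left with a global factor $(-1)^{N_\Gamma-N_\infty}$ for the transposition $(0,1)$ (one sign per propagator not incident to $\infty$) and $(-1)^{N_\Gamma}$ for $(0,\infty)$ (one sign per propagator --- note a single sign per edge, not one per endpoint). What is needed, and what the paper supplies, is a parity argument: $N_\infty$ is even because $\infty$ has weighted valency $(0,0)$, so the number of weight $+1$ edges at $\infty$ equals the number of weight $-1$ edges there; and $N_\Gamma$ is even because counting holomorphic (resp.\ antiholomorphic) half-edges at internal vertices with the $(2,2)$ condition gives $V^{\mathrm{int}}=N_--N_\sim=N_=-N_\approx$, whence $N_\Gamma=2V^{\mathrm{int}}+2N_\sim+2N_\approx\in2\ZZ$. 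Without these two evenness statements the proof does not close, since a priori the transformed integral could differ from $f^{(0)}_{\sigma(\Gamma)}(\phi(\sigma)(z))$ by an overall sign. Supplying this counting argument (and checking, as you began to, that the $\pm1$ conventions of figure 17 for edges created at or removed from $\infty$ are consistent with it) completes the proof along the lines you set up.
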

\begin{proof}
The proof is analogous to the proof of theorem \ref{completionthm} with extra signs due to the orientation of the edges.

The transformation $x\to1-x$ followed by swapping the labels 0 and 1 and changing the label $z$ to $1-z$ contributes with a sign $(-1)^{N_\Gamma-N_\infty}$ where
$N_\Gamma$ counts all propagators (holomorphic and antiholomorphic, normal and inverse) and $N_\infty$ counts all propagators connected to $\infty$. Because
$\infty$ has weight (0,0) the number of normal propagators equals the number of inverse propagators at $\infty$. Hence $N_\infty$ is even. Because every internal vertex has weight (2,2)
counting holomorphic minus inverse holomorphic half-edges gives that the number of internal vertices is the difference of the number of holomorpic and the number of inverse
holomorphic edges,
$$
V^{\mathrm{int}}=N_--N_\sim.
$$
The same identity holds for antiholomorphic edges so that the total number of edges $N_\Gamma$ is even.

The transformation $x\to1/x$ followed by swapping the labels 0 and $\infty$ and changing the label $z$ to $1/z$ changes the sign of every propagator.
Because $N_\Gamma$ is even the overall sign does not change.

The transformation $x\to zx$ followed by changing $z$ to 1 and 1 to $1/z$ does not change the signs of propagators.
\end{proof}

Due to the formula $\partial_z\zz^{-1}=\partial_\zz z^{-1}=\pi\delta^{(2)}(z)$ edges can be appended to two-dimensional graphical functions in much the same
way as in higher dimensions. Moreover one can construct two-dimensional graphical functions from graphical functions in many variables by equating external labels. This leads
to the following conjecture.

\begin{con}\label{2dcon}
For any graph $\Gamma$ the two-dimensional graphical function $f_\Gamma^{(0)}$ is in $\sA^\sv$ (see definition \ref{Adef}).
\end{con}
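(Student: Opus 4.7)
The plan is induction on the number $V^{\mathrm{int}}$ of internal vertices of the completed graph $\Gamma$. In the base case $V^{\mathrm{int}}=0$, the function $f_\Gamma^{(0)}$ is a finite product of propagators: the constants $\pm 1$ for edges touching $\infty$, rational factors $(z-a)^{\pm 1}$ or $(\zz-\overline a)^{\pm 1}$ for $a\in\{0,1\}$, and number constants for edges among $\{0,1,\infty\}$. Such a product lies in $\sO_\QQ\overline{\sO}_\QQ\subset\sA_0^\sv$.

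For the inductive step, fix an internal vertex $v$. The integrand, as a function of $v$, is a finite product of factors $(v-a_i)^{\pm 1}$ and $(\overline v-\overline b_j)^{\pm 1}$ where each $a_i,b_j$ is either an external label in $\{0,1,z,\infty\}$ or another internal variable. A partial-fraction decomposition in $v$ and $\overline v$ — and, if necessary, reducing multiple poles to simple poles by differentiating in the parameters $a_i,b_j$ — writes the $v$-integral as a finite sum of integrals of the form covered by lemma \ref{2dlem}. Integrating $v$ out therefore produces a sum of one-dimensional iterated integrals with letters drawn from the $a_i,b_j$, with coefficients rational in those parameters.

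Continuing inductively, one expresses $f_\Gamma^{(0)}(z)$ as a finite sum of iterated integrals on $\PP^1\CC\setminus\{0,1,\infty\}$ in the variable $z$, whose letters are in $\{0,1\}$, multiplied by rational functions of $z,\zz$. Because $f_\Gamma^{(0)}(z)$ is single-valued on $\CC\setminus\{0,1\}$ — this is the 2d analogue of (G2) and is compatible with the $\sS_4$ symmetry of theorem \ref{completionthm2d} — the uniqueness part of theorem \ref{Francisthm} identifies this iterated-integral representation with an element of $\sP^\sv$, whence $f_\Gamma^{(0)}\in\sA^\sv$.

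The main obstacle is the bookkeeping of the intermediate stage: the letters produced by lemma \ref{2dlem} at an early integration depend on still-unintegrated internal variables, so the argument cannot literally remain inside $\sA^\sv$ at every step. To handle this one must work inside a larger ring of multi-variable hyperlogarithms on $(\PP^1\CC)^{V^{\mathrm{int}}}\setminus\text{diagonals}$, closed under integration in each variable, and only specialize back to $\sA^\sv$ in the last step. A secondary subtlety is that the coefficient ring $\sH^\sv(\ZZ)$ implicit in the definition of $\sA^\sv$ must be preserved; this should follow from the integer coefficients visible in lemma \ref{2dlem} combined with theorem \ref{Stabilitythm}. I would try to minimise the bookkeeping by choosing the integration order carefully — integrating \emph{leaf} vertices (those whose non-$v$ neighbours are all external) first, so that each new integral introduces only letters from $\{0,1,z,\infty\}$ and can be absorbed immediately into $\sA^\sv$ via theorem \ref{Stabilitythm}.
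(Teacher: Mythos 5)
The statement you are trying to prove is not proved in the paper: it is stated as Conjecture \ref{2dcon}, and the author's only supporting discussion is precisely the heuristic you describe --- appending edges via $\partial_z\zz^{-1}=\pi\delta^{(2)}(z)$ and building two-dimensional graphical functions ``from graphical functions in many variables by equating external labels,'' i.e.\ iterating lemma \ref{2dlem}. So your proposal is not a proof that diverges from the paper's; it is an attempt to upgrade the paper's motivating sketch to a proof, and the gap you flag in your own last paragraph is exactly the gap that leaves this a conjecture. Concretely: (i) lemma \ref{2dlem} only applies under genericity ($n\geq2$, $m+n\geq3$, no $a_i$ collinear with any $b_j$) and its output is a sum of contour integrals from $0$ to $b_j$ and from $\overline{a_i}$ to $\infty$ whose endpoints are still-unintegrated internal variables; these are multi-valued functions of those variables and of the homotopy class of the contour, so ``remaining inside $\sA^\sv$ at every step'' genuinely fails, and you would need a closed, integrable class of multi-variable hyperlogarithms on configuration space together with a single-valuedness and rationality statement for it --- none of which is supplied here or in the paper. (ii) Your proposed fix of integrating leaf vertices first does not exist for general graphs: a chain of internal vertices, or any graph where every internal vertex has an internal neighbour, has no vertex all of whose other neighbours are external, so the intermediate multi-variable stage cannot be avoided.

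Two further points would need attention even granting the multi-variable machinery. Theorem \ref{Francisthm} identifies a \emph{single-valued} linear combination of $L_w(\zz)L_{w'}(z)$ with an element of $\sP$ over $\CC$; membership in $\sA^\sv$ additionally requires the coefficients to land in $\sH^\sv(\ZZ)$ (definition \ref{Adef}), and theorem \ref{Stabilitythm} only controls this for operations performed entirely within the single-variable algebra $\sA^\sv$, not for specializations of multi-variable iterated integrals. And the hypotheses of lemma \ref{2dlem} can fail for subintegrals of a convergent graph (e.g.\ a vertex seeing fewer than two antiholomorphic edges), which forces a regularization step you have not addressed. In short, the strategy is the right one --- it is the author's own --- but the proposal does not close the conjecture.
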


\subsection{Periods}
Periods of two-dimensional graphical functions arise from completed primitive graphs, which are (2,2)-regular internally 6-connected graphs $\Gamma$. We label the vertices of
$\Gamma$ by $0,1,\infty,v_1,\ldots,v_{V-3}$ and obtain in analogy to (\ref{Pdef}) that the period
\begin{equation}\label{Pdef2d}
P(\Gamma)=\left(\prod_{v\notin\{0,1,\infty\}} \int_{\CC}\frac{\dd^2x_v}{\pi}\right)\prod_eP_e
\end{equation}
is independent of the labeling. We may obtain the period of a graph $\Gamma$ by introducing an extra label $z$ and integrating the graphical function over $z$ with $\pi^{-1}\int_{\CC}\dd^2 z$.
A consequence of theorem \ref{Stabilitythm} and conjecture \ref{2dcon} is the following conjecture.

\begin{con}\label{2dcon2}
The period of any two-dimensional completed primitive graph is in $\sH^\sv$ (see definition \ref{Rsvdef}).
\end{con}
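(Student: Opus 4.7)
The plan is to reduce Conjecture \ref{2dcon2} to Conjecture \ref{2dcon} via the identity sketched in the paragraph preceding the conjecture statement. Given a two-dimensional completed primitive graph $\Gamma$ with at least four vertices, I would single out one internal vertex, relabel it $z$, and no longer integrate over it. This produces a four-labeled graph whose graphical function $f_\Gamma^{(0)}(z)$ depends on $z\in\CC$, and the independence of $P(\Gamma)$ from the choice of labeling (combined with definition (\ref{Pdef2d})) gives
$$
P(\Gamma)=\frac{1}{\pi}\int_\CC f_\Gamma^{(0)}(z)\,\dd^2z.
$$
Convergence of the outer $z$-integration follows from the same completed power-counting condition (\ref{ultravioletcompleted}) that makes $P(\Gamma)$ well-defined, since primitivity of $\Gamma$ forbids non-trivial four-edge cuts.

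Assuming Conjecture \ref{2dcon}, $f_\Gamma^{(0)}(z)\in\sA^\sv$. Theorem \ref{Stabilitythm}, specifically equation (\ref{Asveq2}), asserts that for every $f\in\sA^\sv$ one has $\tfrac{1}{\pi}\int_\CC f(z)\,\dd^2z\in\sH^\sv$. Composing these two statements yields $P(\Gamma)\in\sH^\sv$, which is the desired conclusion. So the entire weight of the conjecture rests on Conjecture \ref{2dcon}, and that is where all the real work lies.

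To attack Conjecture \ref{2dcon} itself I would imitate the constructive strategy used in four dimensions: induct on the number of internal vertices, taking as base case the elementary integrals of Proposition \ref{2dprop} and Lemma \ref{2dlem} (which naturally produce SVMPs and their straight-line iterated-integral cousins). The inductive step requires showing that three operations preserve $\sA^\sv$: adding an edge between already-labeled vertices (trivial, since it multiplies the function by a factor in $\sO_\QQ\overline{\sO}_\QQ$); relabeling under the $\sS_4$-action of Theorem \ref{completionthm2d} (a two-dimensional analogue of the $\sS_3$-stability already proved in Theorem \ref{Stabilitythm}); and integrating out an internal vertex. The last is the substantive step and the main obstacle. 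Whereas in four dimensions appending an edge translates to the clean iterated integral (\ref{inteq1}), the two-dimensional one-vertex integration of Lemma \ref{2dlem} produces a linear combination of straight-line integrals of products of propagators which, when iterated, threatens to generate multiple polylogarithms on punctured spheres with more than three singularities. Verifying that for completed graphs the singularities at additional punctures always cancel---so that one stays inside $\sA^\sv$ rather than escaping to Brown's hyperlogarithms of \cite{BrSVMPII}---is the delicate part of the program, and would likely require exploiting the $\sS_4$ symmetry of completion together with a careful partial-fraction analysis of the shape observed in the examples involving the graphs $G_3,G_4$ of figure 16.
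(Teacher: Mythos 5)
Your reduction is exactly the paper's own reasoning: the statement is presented there as a consequence of Theorem \ref{Stabilitythm} (specifically (\ref{Asveq2})) and Conjecture \ref{2dcon}, obtained by writing $P(\Gamma)$ as $\pi^{-1}\int_\CC f_\Gamma^{(0)}(z)\,\dd^2z$ after promoting an internal vertex to the label $z$. Note that the statement is itself only a conjecture in the paper precisely because Conjecture \ref{2dcon} is unproven, so your correctly identified ``real work'' is left open there as well.
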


We may ask if the periods of two-dimensional completed primitive graphs are in the ring $\sH^\sv(\ZZ)$.

\subsection{Cell zeta values}
A two-dimensional primitive completed graph $\Gamma$ is (2,2)-regular. Holomorphic (and antiholomorphic) propagators form a union of cycles in $\Gamma$.
If $\Gamma$ has at least six vertices then $\Gamma$ typically has several disjoint cycles. If there exists only a single Hamiltonian holomorphic (or antiholomorphic) cycle
we may assign to $\Gamma$ a cell zeta value by the following definition (see figure 19). In \cite{cellzetas} it is proved that zell zeta values are MZVs.

\begin{defn}
Let $\Gamma$ be a (2,2)-regular graph with a holomorphic Hamiltonian cycle. Label the vertices of $\Gamma$ by $0,1,\infty,x_1,\ldots,x_{V-3}$. If the Hamiltonian cycle is split
at $\infty$ it gives rise to an ordering $\Sigma$ of the labels by $x_{i_1}<\ldots<x_{i_k}<0<x_{i_{k+1}}<\ldots<x_{i_\ell}<1<x_{i_{\ell+1}}<\ldots<x_{i_m}<\infty$.
The sign $\sigma$ of $\Sigma$ is $(-1)^N$ where $N$ is the number of times that an edge points from the smaller value to the bigger value (we assume $\infty>x_i$ for all $i$).
An antiholomorphic edge $e$ from $x_i$ to $x_j$ gives rise to the (real-valued) propagator $P_e=(x_i-x_j)^{-1}$. An edge from $\infty$ to $x_i$ is 1 and an edge from $x_i$ to $\infty$ is $-1$.
The cell zeta value of the graph is
\begin{equation}
P_-(\Gamma)=\sigma\int_\Sigma\;\prod_{e\ \mathrm{antihol}}P_e\prod_{i=1}^{V-3}\dd x_i,
\end{equation}
where the first product is over all antiholomorphic edges $e$.

Interchanging the role of holomorphic and antiholomorphic edges we define $P_=(\Gamma)$ if $\Gamma$ has an antiholomorphic Hamiltonian cycle.
\end{defn}
\begin{center}
\fcolorbox{white}{white}{
  \begin{picture}(345,141) (-40,0)
    \SetWidth{0.8}
    \SetColor{Black}
    \Line[arrow,arrowpos=0.35,arrowlength=5,arrowwidth=2,arrowinset=0.2,double,sep=2](170,91)(260,41)
    \Vertex(85,26){2.8}
    \Vertex(55,116){2.8}
    \Vertex(5,81){2.8}
    \Vertex(25,26){2.8}
    \Vertex(105,81){2.8}
    \Vertex(215,116){2.8}
    \Vertex(215,16){2.8}
    \Vertex(170,91){2.8}
    \Vertex(170,41){2.8}
    \Vertex(260,41){2.8}
    \Vertex(260,91){2.8}
    \Line[arrow,arrowpos=0.5,arrowlength=5,arrowwidth=2,arrowinset=0.2](55,116)(105,81)
    \Line[arrow,arrowpos=0.5,arrowlength=5,arrowwidth=2,arrowinset=0.2](105,81)(85,26)
    \Line[arrow,arrowpos=0.5,arrowlength=5,arrowwidth=2,arrowinset=0.2](85,26)(25,26)
    \Line[arrow,arrowpos=0.5,arrowlength=5,arrowwidth=2,arrowinset=0.2](25,26)(5,81)
    \Line[arrow,arrowpos=0.5,arrowlength=5,arrowwidth=2,arrowinset=0.2](5,81)(55,116)
    \Line[arrow,arrowpos=0.5,arrowlength=5,arrowwidth=2,arrowinset=0.2,double,sep=2](55,116)(25,26)
    \Line[arrow,arrowpos=0.5,arrowlength=5,arrowwidth=2,arrowinset=0.2,double,sep=2](25,26)(105,81)
    \Line[arrow,arrowpos=0.5,arrowlength=5,arrowwidth=2,arrowinset=0.2,double,sep=2](105,81)(5,81)
    \Line[arrow,arrowpos=0.5,arrowlength=5,arrowwidth=2,arrowinset=0.2,double,sep=2](5,81)(85,26)
    \Line[arrow,arrowpos=0.5,arrowlength=5,arrowwidth=2,arrowinset=0.2,double,sep=2](85,26)(55,116)
    \Line[arrow,arrowpos=0.5,arrowlength=5,arrowwidth=2,arrowinset=0.2](215,116)(260,91)
    \Line[arrow,arrowpos=0.5,arrowlength=5,arrowwidth=2,arrowinset=0.2](260,91)(260,41)
    \Line[arrow,arrowpos=0.5,arrowlength=5,arrowwidth=2,arrowinset=0.2](260,41)(215,16)
    \Line[arrow,arrowpos=0.5,arrowlength=5,arrowwidth=2,arrowinset=0.2](215,16)(170,41)
    \Line[arrow,arrowpos=0.5,arrowlength=5,arrowwidth=2,arrowinset=0.2](170,41)(170,91)
    \Line[arrow,arrowpos=0.5,arrowlength=5,arrowwidth=2,arrowinset=0.2](170,91)(215,116)
    \Line[arrow,arrowpos=0.3,arrowlength=5,arrowwidth=2,arrowinset=0.2,double,sep=2](215,116)(170,41)
    \Line[arrow,arrowpos=0.65,arrowlength=5,arrowwidth=2,arrowinset=0.2,double,sep=2](170,41)(260,91)
    \Line[arrow,arrowpos=0.7,arrowlength=5,arrowwidth=2,arrowinset=0.2,double,sep=2](260,91)(215,16)
    \Line[arrow,arrowpos=0.3,arrowlength=5,arrowwidth=2,arrowinset=0.2,double,sep=2](215,16)(170,91)
    \Line[arrow,arrowpos=0.7,arrowlength=5,arrowwidth=2,arrowinset=0.2,double,sep=2](260,41)(215,116)
    \Text(50,123)[lb]{\normalsize{\Black{$\infty$}}}
    \Text(210,123)[lb]{\normalsize{\Black{$\infty$}}}
    \Text(2,90)[lb]{\normalsize{\Black{$0$}}}
    \Text(168,100)[lb]{\normalsize{\Black{$0$}}}
    \Text(102,90)[lb]{\normalsize{\Black{$1$}}}
    \Text(258,100)[lb]{\normalsize{\Black{$1$}}}
    \Text(21,13)[lb]{\normalsize{\Black{$x_1$}}}
    \Text(80,13)[lb]{\normalsize{\Black{$x_2$}}}
    \Text(166,28)[lb]{\normalsize{\Black{$x_1$}}}
    \Text(212,3)[lb]{\normalsize{\Black{$x_2$}}}
    \Text(258,28)[lb]{\normalsize{\Black{$x_3$}}}
    \Text(52,3)[lb]{\normalsize{\Black{$\Gamma_1$}}}
    \Text(190,3)[lb]{\normalsize{\Black{$\Gamma_2$}}}
  \end{picture}
}
Figure 19: Assuming conjectures \ref{2dcon2} and \ref{concellzeta} the completed primitive graphs $\Gamma_1$ and $\Gamma_2$ have periods 0 and $4\zeta(3)$, respectively.
\end{center}
\vskip2ex

It is possible to show that the definition is independent of the chosen labeling.

\begin{ex}\label{2dex}
In the situation of figure 19 we have
\begin{eqnarray*}
P_-(\Gamma_1)&=&(-1)^1\int_{0<x_1<x_2<1}(+1)\frac{1}{x_1-1}\frac{1}{1-0}\frac{1}{0-x_2}(-1)\dd x_1\dd x_2=\zeta(2),\\
P_=(\Gamma_1)&=&(-1)^1\int_{x_2<0,1<x_1}(+1)\frac{1}{1-x_2}\frac{1}{x_2-x_1}\frac{1}{x_1-0}(-1)\dd x_1\dd x_2=-\zeta(2),
\end{eqnarray*}
and
\begin{eqnarray*}
&&\hspace{-10pt}P_-(\Gamma_2)\;=\\
&&\hspace{-10pt}(-1)^1\int_{0<x_1<x_2<x_3<1}(+1)\frac{1}{x_1-1}\frac{1}{1-x_2}\frac{1}{x_2-0}\frac{1}{0-x_3}(-1)\dd x_1\dd x_2\dd x_3=2\zeta(3),\\
&&\hspace{-10pt}P_=(\Gamma_2)\;=\\
&&\hspace{-10pt}(-1)^1\int_{x_3<0<x_2<1<x_1}(+1)\frac{1}{1-x_3}\frac{1}{x_3-x_2}\frac{1}{x_2-x_1}\frac{1}{x_1-0}(-1)\dd x_1\dd x_2\dd x_3=2\zeta(3).
\end{eqnarray*}
\end{ex}
Iterating lemma \ref{2dlem} leads to the following conjecture.

\begin{con}\label{concellzeta}
Let $\Gamma$ be a two-dimensional completed primitive graph with $V$ vertices. The maximum weight piece of the period $P^{(0)}_{\mathrm{max}}(\Gamma)$ is equivalent
modulo products to the sum of the holomorphic and the antiholomorphic cell zeta values,
\begin{equation}
P^{(0)}_{\mathrm{max}}(\Gamma)\equiv P_-(\Gamma)+P_=(\Gamma)\mod \sH_{>0}^2,
\end{equation}
where the (anti-)holomorphic cell zeta value is 0 if there exists no (anti-)holomorphic Hamiltonian cycle.
\end{con}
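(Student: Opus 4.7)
The plan is to prove the conjecture by induction on the number $V-3$ of internal vertices, extracting the period as an iterated integral via lemma \ref{2dlem}. First, introduce an external label $z$ at any vertex and write $P(\Gamma) = \pi^{-1}\int_\CC f_\Gamma^{(0)}(z)\,\dd^2 z$, using completion to ensure the integral exists. Then integrate internal vertices one at a time: at each stage the relevant integral has the shape on the left hand side of lemma \ref{2dlem}, and that lemma splits the answer into two sums, one producing antiholomorphic iterated integrals $\int_{\overline{a_i}}^\infty\dd\overline{x}/\prod(\overline{x}-\overline{b_j})$ and one producing holomorphic iterated integrals $\int_0^{b_j}\dd x/\prod(x-a_i)$.

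Tracking these choices across all $V-3$ vertices yields an expansion indexed by $2^{V-3}$ subsets: at each internal vertex one picks either the ``holomorphic partial fractions'' contribution or the ``antiholomorphic partial fractions'' contribution. The central claim I would try to establish is that, modulo $\sH_{>0}^2$, only the two uniform choices contribute to the maximum weight piece: either every integration is antiholomorphic (yielding a pure antiholomorphic iterated integral) or every integration is holomorphic. Mixed choices manifestly factor as a product of an antiholomorphic and a holomorphic iterated integral, each of positive weight, and so lie in $\sH_{>0}^2$. Under the uniform choice, one would then show that a nonvanishing pure antiholomorphic contribution requires the antiholomorphic propagators to trace a Hamiltonian cycle (otherwise the partial fraction coefficients $\prod_{k\neq j}(\overline{b_j}-\overline{b_k})^{-1}$ telescope to zero as they did at the end of the proof of lemma \ref{2dlem}), and that reorganizing the nested simplex according to the cyclic order read off from that cycle—basepointed at $\infty$—produces exactly the integral defining $P_=(\Gamma)$, with the combinatorial sign $\sigma$ accounting for the edge orientations and the $\pm 1$ propagators at $\infty$. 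The pure holomorphic case is symmetric and yields $P_-(\Gamma)$.

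The main obstacle is controlling the mixed terms. Lemma \ref{2dlem} assumes distinct $a_i$ and distinct $b_j$; after one integration the resulting integrand has new singularities that can coincide with surviving external vertices, so rigorous iteration demands a regularization of the iterated integrals (akin to regularized shuffle relations for MZVs) and a proof that the regularized mixed contributions still factor into holomorphic $\times$ antiholomorphic pieces. One also has to prove that pure-choice contributions from non-Hamiltonian cycle structures really do cancel rather than merely appearing to. Both steps rely on the underlying structure of $\sA^\sv$ from theorem \ref{Stabilitythm} and on conjecture \ref{2dcon}, which guarantees that all intermediate graphical functions land in a space where weight is a well-defined filtration. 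Finally, because the statement is phrased at maximum weight, one needs conjecture \ref{2dcon2} together with the conjectural grading of $\sH$ by weight to make $P^{(0)}_{\mathrm{max}}(\Gamma)$ itself well-defined; absent these input conjectures the statement can only be proved at the level of the motivic or de Rham analogue, which I expect is the most realistic route to a genuine theorem.
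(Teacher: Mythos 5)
The statement you are trying to prove is stated in the paper as a \emph{conjecture}, not a theorem: the paper offers no proof, only the one-line motivation ``Iterating lemma \ref{2dlem} leads to the following conjecture.'' Your proposal is essentially an elaboration of exactly that heuristic --- integrate out internal vertices one at a time via lemma \ref{2dlem}, observe that at each step one chooses a holomorphic or antiholomorphic branch, argue that mixed choices land in $\sH_{>0}^2$ because they factor as products of positive-weight holomorphic and antiholomorphic iterated integrals, and identify the two uniform choices with $P_-(\Gamma)$ and $P_=(\Gamma)$ via the Hamiltonian cycle structure. So your strategy coincides with the paper's motivation; the issue is that neither you nor the paper turns it into a proof.

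The gaps you name yourself are the genuine obstructions, and they are not minor. Lemma \ref{2dlem} requires that no $a_i$ be collinear with any $b_j$ and that the poles be distinct; after the first integration the new singularities (logarithms and dilogarithms of cross-ratios of the remaining vertices) no longer fit the hypotheses of the lemma, so the ``iteration'' is not literally licensed and needs a regularization scheme whose compatibility with the mod-$\sH_{>0}^2$ reduction must itself be proved. The claim that mixed contributions ``manifestly factor'' is also not established: after the first step the integrand is no longer a product of simple poles, so the factorization of later mixed terms into a product of two positive-weight pieces is an assertion, not an observation. Finally, the well-definedness of $P^{(0)}_{\mathrm{max}}$ and the membership of all intermediate functions in $\sA^\sv$ rest on conjectures \ref{2dcon} and \ref{2dcon2} and on the weight grading of $\sH$, which are open. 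In short, your sketch reproduces the evidence for the conjecture but does not close any of the gaps that keep it a conjecture; an honest write-up should present it as a plausibility argument rather than a proof.
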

\begin{ex}
Because $\sH_{>0}^2$ is trivial up to weight three we obtain for the two graphs in figure 19:
$$
P(\Gamma_1)=0,\quad P(\Gamma_2)=4\zeta(3).
$$
The vanishing period of $\Gamma_1$ is consistent with conjecture \ref{2dcon2} because there exist no even weight generators in $\sH^\sv$.
The period of $\Gamma_2$ is in $\sH^\sv(\ZZ)$ (see example \ref{Rsvex}).
\end{ex}

\begin{appendix}
\section{Proof of proposition \ref{prop2}}
\subsection{Identity (C1)}
We need an estimate for the product of two Gegenbauer polynomials. We have
$$C^{(\lambda)}_k(x)C^{(\lambda)}_\ell(x)=\sum_{m=|k-\ell|}^{k+l}f_{k,\ell}^mC^{(\lambda)}_m(x),$$
where only those terms contribute to the sum where $n=(k+\ell+m)/2$ is an integer.
Dougall's linearization formula \cite{Askey} (p.\ 39) gives
$$f_{k,\ell}^m=\frac{(m+\lambda)m!(2\lambda)_n(\lambda)_{n-k}(\lambda)_{n-\ell}(\lambda)_{n-m}}
{(2\lambda)_m(\lambda)_{n+1}(n-k)!(n-\ell)!(n-m)!},$$
where $(x)_n=\Gamma(x+n)/\Gamma(x)$ is Pochhammer's symbol.
We only need that linearization coefficients are non-negative because in this case we can specialize the argument to $x=1$ and get the estimate
$$f_{k,\ell}^m\leq \frac{C^{(\lambda)}_k(1)C^{(\lambda)}_\ell(1)}{C^{(\lambda)}_m(1)}.$$
From
$$C^{(\lambda)}_n(1)=\binom{n+2\lambda-1}{n}$$
we obtain
$$f_{k,\ell}^m\leq(k+2\lambda-1)^{2\lambda-1}(\ell+2\lambda-1)^{2\lambda-1}\leq[2\lambda(k+\ell)]^{4\lambda-2}.$$
Now, assume there exist $A,\alpha,A',\alpha'>0$, $L_1,L_2\in\NN$ such then $f\in\sC^{(\lambda)}_{p_1,q_1}$ has the expansion
$$
f(z)=\sum_{\ell_1=0}^{L_1}\sum_{m_1=0}^\infty\sum_{n_1=0}^{m_1}a_{\ell_1,m_1,n_1}(\ln||z||)^{\ell_1}||z||^{m_1-p_1}C_{n_1}^{(\lambda)}(\cos\angle(z,e_1))
$$
for $||z||<1$ and $g\in\sC^{(\lambda)}_{p_2,q_2}$ has the expansion
$$
g(z)=\sum_{\ell_2=0}^{L_2}\sum_{m_2=0}^\infty\sum_{n_2=0}^{m_2}a'_{\ell_2,m_2,n_2}(\ln||z||)^{\ell_2}||z||^{m_2-p_2}C_{n_2}^{(\lambda)}(\cos\angle(z,e_1))
$$
for $||z||<1$ with
$$
|a_{\ell_1,m_1,n_1}|\leq Am_1^\alpha\quad\hbox{and}\quad |a'_{\ell_2,m_2,n_2}|\leq A'm_2^{\alpha'}.
$$
Then $fg$ has the expansion
\begin{equation}\label{apeq1}
f(z)g(z)=\sum_{\ell=0}^{L_1+L_2}\sum_{m=0}^\infty\sum_{n=0}^\infty a''_{\ell,m,n}(\ln||z||)^{\ell}||z||^{m-p_1-p_2}C_n^{(\lambda)}(\cos\angle(z,e_1))
\end{equation}
with
$$
a''_{\ell,m,n}=\sum_{\ell_1=0}^\ell\sum_{m_1=0}^m\sum_{n_1=0}^{m_1}\sum_{n_2=0}^{m-m_1} a_{\ell_1,m_1,n_1}a'_{\ell-\ell_1,m-m_1,n_2}f^n_{n_1,n_2}.
$$
We have $n_1+n_2\leq m$ and because $f^n_{n_1,n_2}$ is zero if $n>n_1+n_2$ the sum in $n$ in (\ref{apeq1}) terminates at $m$. Hence (\ref{apeq1}) has the
desired shape and we have only to check that $a''$ is sufficiently bounded. By the previous considerations we obtain
\begin{eqnarray*}
|a''_{\ell,m,n}|&\leq&\sum_{\ell_1=0}^\ell\sum_{m_1=0}^m\sum_{n_1=0}^{m_1}\sum_{n_2=0}^{m-m_1} A m_1^\alpha A' (m-m_1)^{\alpha'}[2\lambda(n_1+n_2)]^{4\lambda-2}\\
&\leq&\sum_{\ell_1=0}^\ell\sum_{m_1=0}^m\sum_{n_1=0}^{m_1}\sum_{n_2=0}^{m-m_1} AA'(2\lambda)^{4\lambda-2} m^{\alpha+\alpha'+4\lambda-2}\\
&\leq&\ell AA'(2\lambda)^{4\lambda-2} m^{\alpha+\alpha'+4\lambda+1}.
\end{eqnarray*}
This proves the boundedness in the case $||z||<1$. The case $||z||>1$ is analogous.

\subsection{Identity (C2)}
Assume there exist $A,\alpha,B,\beta>0$, $L\in\NN$ such that $f\in\sC^{(\lambda)}_{p,q}$ has the expansions
$$f(z)=\sum_{\ell=0}^{L}\sum_{m=0}^\infty\sum_{n=0}^{m}a_{\ell,m,n}(\ln||z||)^{\ell}||z||^{m-p}C_{n}^{(\lambda)}(\cos\angle(z,e_1))$$
for $||z||<1$ and
$$f(z)=\sum_{\ell=0}^{L}\sum_{m=0}^\infty\sum_{n=0}^{m}b_{\ell,m,n}(\ln||z||)^{\ell}||z||^{-m-q}C_{n}^{(\lambda)}(\cos\angle(z,e_1))$$
for $||z||>1$ with
$$
|a_{\ell,m,n}|\leq Am^\alpha\quad\hbox{and}\quad |b_{\ell,m,n}|\leq Bm^{\beta}.
$$
Then
$$
g(z)=\frac{1}{\pi^{d/2}}\int_{\RR^d}\frac{f(x)}{||x||^{2\alpha}||x-z||^{2\lambda}}\dd^dx
$$
can be calculated using expansion (\ref{propexpand}) and the orthogonality (\ref{ortho}) of the Gegenbauer polynomials. A straight forward but tedious calculation gives
for $g(z)$ if $2-q<2\alpha<2\lambda+2-p$ in the case $||z||<1$:
\begin{eqnarray}\label{Gegenbauerint1}
&&\sum_{\ell=0}^L\sum_{m=0}^\infty\sum_{n=0}^m\frac{2}{\Gamma(\lambda)(n+\lambda)}C^{(\lambda)}_n(\cos\angle(z,e_1))\nonumber\\
&&\cdot\,\Big[a_{\ell,m,n}\sum_{k=0}^\ell(-1)^{\ell-k}(\ln||z||)^k\,||z||^{m-p-2\alpha+2}\nonumber\\
&&\cdot\,\left(\frac{1}{(m+n-p+2\lambda-2\alpha+2)^{\ell-k+1}}-\frac{1-\delta_{m-n,p+2\alpha-2}}{(m-n-p-2\alpha+2)^{\ell-k+1}}\right)\nonumber\\
&&+\,\Big(a_{\ell,m,n}(1-\delta_{m-n,p+2\alpha-2})\frac{(-1)^\ell}{(m-n-p-2\alpha+2)^{\ell+1}}\nonumber\\
&&\quad-\,a_{\ell,m,n}\delta_{m-n,p+2\alpha-2}\frac{(\ln||z||)^{\ell+1}}{\ell+1}+b_{\ell,m,n}\frac{1}{(m+n+q+2\alpha-2)^{\ell+1}}\Big)||z||^n\Big],
\end{eqnarray}
where $\delta_{m,n}$ is the Konecker $\delta$. In the case $||z||>1$ we obtain
\begin{eqnarray}\label{Gegenbauerint2}
&&\sum_{\ell=0}^L\sum_{m=0}^\infty\sum_{n=0}^m\frac{2}{\Gamma(\lambda)(n+\lambda)}C^{(\lambda)}_n(\cos\angle(z,e_1))\nonumber\\
&&\cdot\,\Big[b_{\ell,m,n}\sum_{k=0}^\ell(\ln||z||)^k\,||z||^{-m-q-2\alpha+2}\nonumber\\
&&\cdot\,\left(\frac{1}{(m+n+q+2\alpha-2)^{\ell-k+1}}-\frac{1-\delta_{m-n,-q+2\lambda-2\alpha+2}}{(m-n+q-2\lambda+2\alpha-2)^{\ell-k+1}}\right)\nonumber\\
&&+\,\Big(b_{\ell,m,n}(1-\delta_{m-n,-q+2\lambda-2\alpha+2})\frac{1}{(m-n+q-2\lambda+2\alpha-2)^{\ell+1}}\nonumber\\
&&\quad+\,b_{\ell,m,n}\delta_{m-n,-q+2\lambda-2\alpha+2}\frac{(\ln||z||)^{\ell+1}}{\ell+1}\nonumber\\
&&\quad+\,a_{\ell,m,n}\frac{(-1)^\ell}{(m+n-p+2\lambda-2\alpha+2)^{\ell+1}}\Big)||z||^{-n-2\lambda}\Big].
\end{eqnarray}

Every term in the square brackets is in $\sC^{(\lambda)}_{\max\{0,p+2\alpha-2\},\min\{2\lambda,q+2\alpha-2\}}$ and hence
$g\in\sC^{(\lambda)}_{\max\{0,p+2\alpha-2\},\min\{2\lambda,q+2\alpha-2\}}$.
\end{appendix}

\bibliographystyle{plain}

\begin{thebibliography}{99}
\bibitem{Askey} {\bf R. Askey}, {\it Orthogonal polynomials and special functions}, Regional conference Series in Applied Mathematics, Vol.\ 21, SIAM, Philadelphia, PA (1975).
\bibitem{BEK} {\bf S. Bloch, H. Esnault, D. Kreimer}, {\it On Motives Associated to Graph Polynomials}, Comm.\ Math.\ Phys.\ 267, 181-225 (2006).
\bibitem{Datamine} {\bf J. Bl\"umlein, D. Broadhurst, J. Vermaseren}, {\it The Multiple Zeta Value Data Mine}, Comput.\ Phys.\ Commun.\ 181:582-625 (2010).
\bibitem{B1} {\bf D. Broadhurst}, Open University report OUT-4102-18 (1985).
\bibitem{B2} {\bf D. Broadhurst}, {\it Evaluation of a class of Feynman diagrams for all numbers of loops and dimensions}, Phys.\ Lett.\ B164, 356-360 (1985).
\bibitem{B3} {\bf D. Broadhurst}, {\it Multiple zeta values and other periods in quantum field theory}, conference talk, Bristol, 4 May 2011.
\bibitem{BK} {\bf D. Broadhurst, D.  Kreimer}, {\it  Knots and numbers in $\phi^4$ theory to 7 loops and beyond}, Int.\ J. Mod.\ Phys.\ C6, 519-524 (1995).
\bibitem{BK1} {\bf D. Broadhurst, D. Kreimer}, {\it Association of multiple zeta values with positive knots via Feynman diagrams up to 9 loops}, Phys.\ Lett.\ {\bf B 393} 403-412 (1997).
\bibitem{BrSVMP} {\bf F. Brown}, {\it Single-valued  multiple polylogarithms in one variable}, C.R. Acad.\ Sci.\ Paris, Ser.\ I 338,  527-532 (2004).
\bibitem{BrSVMPII} {\bf F. Brown}, {\it Single-valued  hyperlogarithms and unipotent differential equations}, unpublished notes available on the homepage of F. Brown (2004).
\bibitem{BrFeyn} {\bf F. Brown}, {\it On the periods of some Feynman integrals},  arXiv:0910.0114v2 [math.AG], (2009).
\bibitem{DEC} {\bf F. Brown}, {\it On the decomposition of multiple zeta values}, arXiv:1102.1310v2 [math.NT] (2011).
\bibitem{MMZ} {\bf F. Brown}, {\it Mixed Tate Motives over $\ZZ$}, Annals of Mathematics 175, no.\ 2, 949-976 (2012).
\bibitem{Bsv} {\bf F. Brown}, {\it Single-valued periods and multiple zeta values}, arXiv:1309.5309 [math.NT] (2013).
\bibitem{cellzetas} {\bf F. Brown, S. Carr, L. Schneps} {\it The algebra of cell-zeta values}, Compositio.\ Math.\ 146, no.\ 3, 731-771 (2010).
\bibitem{BD} {\bf F. Brown, D. Doryn}, {\it Framings for graph hypersurfaces}, arXiv:1301.3056 [math.AG] (2013).
\bibitem{K3} {\bf F. Brown, O. Schnetz}, {\it A K3 in $\phi^4$}, Duke Mathematical Journal, vol.\ 161, no.\ 10 (2012).
\bibitem{modphi4} {\bf F. Brown, O. Schnetz}, {\it Modular forms in quantum field theory}, Comm.\ in Number Theory and Physics 7 no.\ 2 (2014).
\bibitem{ZZ} {\bf F. Brown, O. Schnetz}, {\it Proof of the zig-zag conjecture}, arXiv:1208.1890 [math.NT] (2012).
\bibitem{zbarz} {\bf F. Chavez, C. Duhr}, {\it Three-mass triangle integrals and single-valued polylogarithms}, arXiv:1209.2722 [hep-ph] (2012).
\bibitem{Chen} {\bf K. Chen}, {\it Algebras of Iterated Path Integrals and Fundamental Groups}, Transactions of the American Mathematical Society, Vol.\ 156, 359-379 (1971).
\bibitem{C4} {\bf K.G. Chetyrkin, A.L. Kataev, F.V. Tkachov}, {\it New Approach to Evaluation of Multiloop Feynman Integrals: The Gegenbauer Polynomial $x$-Space Technique},
Nucl.\ Phys.\ B174 345-377 (1980).
\bibitem{C5} {\bf K.G. Chetyrkin, F.V. Tkachov}, {\it Integration by Parts: The Algorithm to Calculate $\beta$-Functions in 4 Loops}, Nucl.\ Phys.\ B192, 159-204 (1981).
\bibitem{Deligne} {\bf P. Deligne}, {\it Le groupe fondamental de la droite projective moins trois points}, in Galois groups over $\QQ$, (Berkeley, CA, 1987),
Math. Sci. Res. Inst. Publ. 16, Springer-Verlag, New York, 79–297 (1989).
\bibitem{Drummond} {\bf J. Drummond}, {\it Generalised ladders and single-valued polylogarithms}, Talk given at D. Broadhurst's birthday conference, Berlin, 27 June 2012
arXiv:1207.3824 [hep-th] (2012).
\bibitem{SYM} {\bf J. Drummond, C. Duhr, B. Eden, P. Heslop, J. Pennington, V. A. Smirnov}, {\it Leading singularities and off-shell conformal integrals}, arXiv:1303.6909 [hep-th] (2013).
\bibitem{KZ1} {\bf P. I. Etingof, I. Frenkel, A. A. Kirillov}, {\it Lectures on Representation Theory and Knizhnik–Zamolodchikov Equations}, Mathematical Surveys and Monographs 58,
American Mathematical Society (1998).
\bibitem{PropGF} {\bf M. Golz, E. Panzer, O. Schnetz}, {\it Properties of graphical functions}, in preparation.
\bibitem{GON} {\bf A. B. Goncharov}, {\it Galois symmetries of fundamental groupoids and noncommutative geometry}, Duke Math.\ J. Volume 128, no.\ 2, 209-284 (2005).
\bibitem{Hof} {\bf M. E. Hoffman}, {\it The Algebra of Multiple Harmonic Series}, Journ.\ of Algebra 194, 477-495 (1997).
\bibitem{Kazakov} {\bf D. Kazakov}, {\it The method of uniqueness, a new powerful technique for multiloop calculations}, Phys.\ Lett.\ B133 no.\ 6, 406 (1983).
\bibitem{K-Z} {\bf V. Knizhnik, A. Zamolodchikov}, {\it Current algebras and Wess-Zumino models in two dimensions}, Nucl.\ Phys.\ B247, 83-103 (1984).
\bibitem{L-D} {\bf I. Lappo-Danilevskii}, {\it M\'emoires sur la th\'eorie des syst\`emes des \'equations diff\'erentielles lin\'eaires}, Chelsea, New York (1953).
\bibitem{Li} {\bf Z. Li}, {\it  Another proof of Zagier's evaluation formula of the multiple zeta values $\zeta(2,...,$ $2,3,2,...,2)$}, arXiv:1204.2060 [math.NT] (2012).
\bibitem{coact} {\bf E. Panzer, O. Schnetz}, {\it The Galois coaction on $\phi^4$ periods}, in preparation.
\bibitem{S3} {\bf O. Schnetz}, {\it Calculation of the $\phi^4$ 6-loop non-zeta transcendental}, arXiv:hep-th/9912149 (1999).
\bibitem{CENSUS1} {\bf O. Schnetz}, {\it Quantum periods: A census of $\phi^4$-transcendentals (version 1)}, arXiv: 0801.2856v1 [hep-th] (2008).
\bibitem{SchnetzCensus} {\bf O. Schnetz}, {\it Quantum periods: A census of $\phi^4$ transcendentals}, Comm.\ in Number Theory and Physics 4 no.\ 1, 1-48 (2010).
\bibitem{Zetaproc} {\bf O. Schnetz}, {\tt zeta\_procedures}, http://www2.mathematik.hu-berlin.de/$\sim$kreimer/tools/ (2012).
\bibitem{Polylogproc} {\bf O. Schnetz}, {\tt polylog\_procedures}, http://www2.mathematik.hu-berlin.de/$\sim$kreimer/tools/ (2013).
\bibitem{Hyperlogproc} {\bf O. Schnetz}, {\tt hyperlog\_procedures}, in preparation.
\bibitem{GSVH} {\bf O. Schnetz}, {\it Generalized single-valued hyperlogarithms}, in preparation.
\bibitem{Ussy} {\bf N. Ussyukina}, {\it Calculation of multiloop diagrams in high orders of perturbation theory}, Phys.\ Lett.\ B267, 382 (1991).
\bibitem{Ladder} {\bf N. Ussyukina, A Davydychev}, {\it Exact results for three- and four-point ladder diagrams with an arbitrary number of rungs}, Phys.\ Lett.\ B305, 136-143 (1993).
\bibitem{Vilenkin} {\bf N. J. Vilenkin}, {\it Special Functions and the Theory of Group Representations}, Providence, RI, American Mathematical Society (1968).
\bibitem{Wein} {\bf S. Weinberg}, {\it High-Energy Behavior in Quantum Field Theory},  Phys.\ Rev.\ 118, no.\ 3 838-849 (1960).
\bibitem{Zagierdilog} {\bf D. Zagier}, {\it The remarkable dilogarithm}, in: Number theory and related topics. Papers presented at the Ramanujan Colloquium,
Bombay 1988, TIFR and Oxford University Press, pp.\ 231-249 (1989) and J. Math.\ Phys.\ Sci.\ no.\ 22 131-145 (1988).
\bibitem{Zagier} {\bf D. Zagier}, {\it Evaluation of the multiple zeta values $\zeta(2,\dots,2,3,2,\dots,2)$}, Annals of Mathematics, vol.\ 175, no.\ 1, 977-1000 (2012).
\end{thebibliography}
\renewcommand\refname{References}

\end{document}